\newcommand*\ExpandableInput[1]{\@@input#1 }
\definecolor{lightgray}{gray}{0.9}
\theoremstyle{definition}
\newtheorem{definition}{Definition}[section]
\newtheorem{example}[definition]{Example}
\newtheorem{remark}[definition]{Remark}
\theoremstyle{plain}
\newtheorem{theorem}[definition]{Theorem}
\newtheorem{lemma}[definition]{Lemma}
\newtheorem{proposition}[definition]{Proposition}
\newtheorem{corollary}[definition]{Corollary}
\DeclareMathOperator{\GL}{GL}
\DeclareMathOperator{\rk}{rk}
\DeclareMathOperator{\id}{id}
\DeclareMathOperator{\NS}{NS}
\DeclareMathOperator{\Aut}{Aut}
\DeclareMathOperator{\sign}{sign}
\DeclareMathOperator{\image}{Im}
\DeclareMathOperator{\HH}{H}
\DeclareMathOperator{\Tr}{Tr}
\DeclareMathOperator{\Nr}{N}
\DeclareMathOperator{\ord}{ord}
\DeclareMathOperator{\scale}{\mathfrak{s}}
\DeclareMathOperator{\norm}{\mathfrak{n}}
\newcommand{\QQ}{\mathbb{Q}}
\newcommand{\FF}{\mathbb{F}}
\newcommand{\NN}{\mathbb{N}}
\newcommand{\RR}{\mathbb{R}}
\newcommand{\CC}{\mathbb{C}}
\newcommand{\ZZ}{\mathbb{Z}}
\newcommand{\PP}{\mathbb{P}}
\newcommand{\DD}{\mathbb{D}}
\renewcommand{\AA}{\mathbb{A}}
\renewcommand{\O}{\mathcal{O}}
\newcommand{\X}{\mathcal{X}}
\newcommand{\M}{\mathcal{M}}
\renewcommand{\H}{\mathcal{H}}
\newcommand{\F}{\mathcal{F}}
\newcommand{\E}{\mathcal{E}}
\newcommand{\C}{\mathcal{C}}
\newcommand{\G}{\mathcal{G}}
\newcommand{\B}{\mathcal{B}}
\newcommand{\A}{\mathcal{A}}
\renewcommand{\L}{\mathcal{L}}
\renewcommand{\P}{\mathcal{P}}
\newcommand{\Gr}{\mathrm{Gr}}
\newcommand{\fp}{\mathfrak{p}}
\newcommand{\FP}{\mathfrak{P}}
\newcommand{\FD}{\mathfrak{D}}
\renewcommand\o{
  \mathchoice
    {{\scriptstyle\mathcal{O}}}%
    {{\scriptstyle\mathcal{O}}}%
    {{\scriptscriptstyle\mathcal{O}}}%
    {\scalebox{.7}{$\scriptscriptstyle\mathcal{O}$}}%
  }
\newcommand{\even}{\rm{II}}
\newcommand{\disc}[1]{{D_{#1}}}
\newcommand{\re}{\operatorname{Re}}
\newcommand{\im}{\operatorname{Im}}
\DeclareMathOperator{\spin}{spin}
\newcommand{\bigperp}{%
  \mathop{\mathpalette\bigp@rp\relax}%
  \displaylimits
}
\newcommand{\sprod}[2]{h( #1, #2)}
\newcommand{\sprodq}[2]{h( #1,#1)}
\newcommand{\sprodb}[2]{\langle #1, #2 \rangle}
\newcommand{\sprodbq}[1]{\langle #1, #1 \rangle}
\newcommand{\bigp@rp}[2]{%
  \vcenter{
    \m@th\hbox{\scalebox{\ifx#1\displaystyle2.1\else1.5\fi}{$#1\perp$}}
  }%
}
\def\hrulefill{\leavevmode\leaders\hrule height 0.8pt\hfill\kern\z@}
\title[]{Finite subgroups of automorphisms of K3 surfaces}
\keywords{K3 surfaces, automorphisms}
\author{Simon Brandhorst}
\address{Simon Brandhorst,
Fakult\"at f\"ur Mathematik und Informatik, Universit\"at des Saarlandes, Campus E2.4, 66123 Saarbr\"ucken, Germany}
\email{brandhorst@math.uni-sb.de}
\author{Tommy Hofmann}
\address{Tommy Hofmann,
Naturwissenschaftlich-Technische Fakult\"at, Universit\"at Siegen, Walter-Flex-Straße 3, 57068 Siegen, Germany}
\email{tommy.hofmann@uni-siegen.de}
\thanks{Gefördert durch die Deutsche Forschungsgemeinschaft (DFG) – Projektnummer 286237555 – TRR 195.
S.B. is funded by the Deutsche Forschungsgemeinschaft (DFG, German Research Foundation) – Project-ID 286237555 – TRR 195.}
\subjclass[2020]{14J28, 14J50, 11E39, 11H56}
\def\ackintro{%
\subsection*{Acknowledgments}
The first author thanks K. Oguiso and the university of Tokio for their hospitality and the organizers of the conference Moonshine and K3 surface in Nagoya 2016 for inspiration. Further thanks go to the SageMath community for teaching him programming.
We thank K. Hashimoto and G. Nebe for discussions.
We thank N. Beli, J. Hsia, R.S. Schulze-Pillot for discussions on a typo in T. O'Meara's paper \cite{omeara1958}. We thank M. Kirschmer for sharing his insights on hermitian lattices and his software packages \cite{kirschmer_software} for computations with them.
}
\begin{document}
\begin{abstract}
 We give a complete classification of finite subgroups of automorphisms of complex K3 surfaces up to deformation. The classification is in terms of Hodge theoretic data associated to certain conjugacy classes of finite subgroups of the orthogonal group of the K3 lattice.
 The moduli theory of K3 surfaces, in particular the surjectivity of the period map and the strong Torelli theorem allow us to interpret this datum geometrically. Our approach is computer aided and involves hermitian lattices over number fields.
 \end{abstract}
\maketitle

\setcounter{tocdepth}{1}
\tableofcontents

\section{Introduction}
We work over the field $\CC$ of complex numbers.
A \emph{K3 surface} is a compact, complex manifold $X$ of dimension $2$ with a nowhere vanishing symplectic, holomorphic $2$-form $\sigma_X \in \HH^0(X,\Omega^2_X)$ and vanishing irregularity $h^1(X,\O_X)$.

Since a K3 surface does not admit non-trivial global vector fields, its automorphism group is discrete. For a very general K3 surface it is even trivial. However, there are (families of) K3 surfaces with a non-trivial and even infinite automorphism group. Typical examples of groups that appear as automorphism groups are $\ZZ * \ZZ$, $\ZZ^r$ or $\ZZ/2 \ZZ$.

K3 surfaces with a finite automorphism group have been classified by Nikulin \cite{nikulin1981,nikulin1984}, Vinberg \cite{vinberg2007} and Kondo \cite{kondo1989} with a further recent refinement due to Roulleau \cite{Roulleau2021}.
The purpose of this work is to classify finite subgroups
$G \leq \Aut(X)$, more precisely, pairs $(X,G)$ consisting of a K3 surface $X$ and a finite subgroup of automorphisms $G \leq \Aut(X)$.

Let $X$ be a K3 surface with cotangent sheaf $\Omega_X$. Its automorphisms act on the symplectic forms $\CC \sigma_X = \HH^0(X,\Omega^2_X)$ by scalar multiplication. We call the ones with trivial action \emph{symplectic} and the ones with a non-trivial action \emph{non-symplectic}.
The action on the symplectic form gives rise to an exact sequence
\[1 \to \Aut_s(X) \to \Aut(X) \to \GL(\CC \sigma_X)\]
where by $\Aut_s(X)$ we denote the normal subgroup of symplectic automorphisms.
Now, let $G \leq \Aut(X)$ be a finite subgroup and set $G_s = G \cap \Aut_s(X)$. Then for $n = |G/G_s|$, we get an exact sequence
\[1 \to G_s \to G \to \mu_n \to 1, \]
where $\mu_n$ is the cyclic group of order $n$. We call the index $n = [G:G_s]$ the \emph{transcendental value} of $G$.
Let $\varphi$ denote Euler's totient function. By a result of Oguiso and Machida \cite{Oguiso-Machida1998} we know that $\varphi(n) \leq 20$ and $n\neq 60$.

The distinction between symplectic and non-symplectic is crucial. For instance a non-symplectic automorphism of finite order may fix smooth curves whereas
a symplectic automorphism of finite order $k$ fixes only finitely many points and their number $n_k$ depends only on $k$.
Let $M_{24}$ denote the Mathieu group on $24$ points and $M_{23}$ the stabilizer group of a point. Then the number of fixed points of an element of $M_{23}$ of order $k$ depends only on $k$ and is equal to $n_k$. This observation sparked the following theorem of Mukai.

\begin{theorem}[\cite{mukai1988}]
A finite group admits a faithful and symplectic action on some K3 surface if and only if it admits an embedding into the Mathieu group $M_{23}$ which decomposes the $24$ points into at least 5 orbits.
\end{theorem}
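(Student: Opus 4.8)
The plan is to translate the geometric hypothesis into a rational representation of $G$ and then to recognize that representation, through its character, as a restriction of the natural permutation representation of $M_{24}$ on $24$ points. First I would work with the full cohomology lattice $\tilde\Lambda = \HH^*(X,\ZZ) = \HH^0 \oplus \HH^2 \oplus \HH^4$ equipped with the Mukai pairing, an even unimodular lattice of signature $(4,20)$ and rank $24$ on which $G$ acts by pullback. Every $g\in G$ acts trivially on $\HH^0$ and $\HH^4$ and fixes the holomorphic $2$-form, so it fixes a positive-definite $3$-space inside $\HH^2(X,\RR)$, spanned by the real and imaginary parts of $\sigma_X$ together with a $G$-averaged K\"ahler class. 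Hence the coinvariant lattice $S_G = (\tilde\Lambda^G)^{\perp}$ is negative definite of rank at most $19$, and the invariant lattice satisfies $\rk \tilde\Lambda^G = 24 - \rk S_G \geq 5$. Since $G$ acts on $S_G$ without nonzero fixed vectors, the number of orbits of $G$ on the $24$ points we aim to produce will equal the multiplicity of the trivial representation, namely $\rk \tilde\Lambda^G \geq 5$.

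The key computation is the character $\chi_G$ of $G$ on $\tilde\Lambda \otimes \CC$. Because the odd cohomology of $X$ vanishes and the action is trivial in degrees $0$ and $4$, for $g$ of order $k$ the topological Lefschetz fixed point formula gives $\chi_G(g) = \Tr(g^*\mid \HH^*) = e(\mathrm{Fix}(g)) = n_k$, each isolated fixed point contributing index $+1$ by holomorphicity; here $n_k$ is Nikulin's count, which in particular forces $k \leq 8$. The crucial coincidence, which I would check directly against the character table of $M_{24}$, is that these values $n_k$ coincide with the number of fixed points of an element of the corresponding cycle type in the action of $M_{24}$ on $24$ points (for instance $n_2 = 8$ matches the cycle type $1^8 2^8$). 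Thus $\chi_G$ agrees on every element with the $24$-point permutation character restricted to $G$.

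The main obstacle is to upgrade this equality of characters into an honest embedding of $G$ as a permutation group inside $M_{23}$. I would do this lattice-theoretically following the route through Niemeier lattices: as $S_G$ is negative definite of rank $\leq 19$ and carries a $G$-action with no fixed roots, it admits a $G$-equivariant embedding into the root-free Niemeier lattice, i.e. the Leech lattice, whose orthogonal complement has rank $\geq 5$ and is fixed by $G$. This realizes $G$ inside the Conway group $\mathrm{Co}_0 = \Aut(\text{Leech})$, and the stabilizer of the fixed sublattice, together with the distinguished $G$-fixed isotropic vector coming from $\HH^0 \oplus \HH^4$ (which accounts for why the relevant point stabilizer is $M_{23}$ rather than merely $M_{24}$), forces $G$ into the chain $M_{24} \geq M_{23}$ with orbit count equal to $\rk \tilde\Lambda^G \geq 5$. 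Establishing that the fixed-point and cycle-type constraints genuinely confine $G$ to this chain, rather than to some larger subgroup of $S_{24}$, is the delicate group-theoretic heart of the argument.

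For the converse I would reverse the construction. Given $G \hookrightarrow M_{23}$ with at least five orbits on the $24$ points, let $T$ be the sublattice of the Leech lattice fixed by $G$ and $S = T^{\perp}$ its coinvariant complement, a negative-definite $\ZZ[G]$-lattice of rank $\leq 19$ with no nonzero fixed vectors. By Nikulin's existence and uniqueness criteria for primitive embeddings, compare discriminant forms to embed $S$ primitively into the K3 lattice $\Lambda = U^3 \oplus E_8(-1)^2$ with orthogonal complement $R = S^{\perp}$ of signature $(3, 19 - \rk S)$; the inequality $\rk S \leq 19$ coming from the orbit bound is exactly what makes this signature admissible, and the $G$-action extends by acting trivially on $R$. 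Choosing a period in a positive $2$-plane of $R \otimes \RR$ and a $G$-invariant K\"ahler class inside a chamber of the positive cone, the surjectivity of the period map produces a marked K3 surface $X$ realizing $\Lambda$, and the strong Torelli theorem shows the isometry group $G$ is induced by automorphisms. The action is faithful because $G$ acts faithfully on $S \subseteq \HH^2(X,\ZZ)$, and symplectic because $G$ fixes the period; the absence of fixed roots in $S$ is what guarantees the chosen K\"ahler class survives in an ample chamber, so that Torelli yields genuine automorphisms rather than compositions with reflections.
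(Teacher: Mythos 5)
You should first be aware that the paper does not prove this theorem: it is quoted from Mukai \cite{mukai1988}, and the paper only records that alternative proofs were given by Xiao \cite{Xiao} and, via Niemeier lattices, by Kondo \cite{kondo1998}. Your proposal is in spirit a sketch of Kondo's route, and its outer layers are correct: the coinvariant lattice $S_G$ is negative definite of rank at most $19$ and contains no $(-2)$-vectors, the Lefschetz computation $\Tr(g^*\mid \HH^*(X,\ZZ)) = n_k$ and its agreement with the $24$-point permutation character is Nikulin's observation (note that the comparison must be made with $M_{23}$, not $M_{24}$: the latter contains fixed-point-free involutions of cycle type $2^{12}$), and the converse direction via a primitive embedding, surjectivity of the period map and the strong Torelli theorem is the standard argument -- essentially the trivial-character case of \Cref{iseffective}.

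The genuine gap sits exactly at the center. First, you assert that $S_G$ embeds $G$-equivariantly into the Leech lattice with pointwise fixed complement. Nikulin's embedding theory cannot deliver this: at best it produces a primitive embedding into \emph{some} negative definite even unimodular lattice of rank $24$, with no control over which of the $24$ Niemeier classes one lands in; to land in the Leech lattice one must produce a \emph{root-free} complementary lattice, and that is a problem of the same depth as the theorem being proved (the statement is true for K3 coinvariant lattices, but only as a consequence of classifications such as \cite{Hashimoto2012,mason2016}, not as an input to them). Kondo's proof is engineered precisely to avoid this: he adjoins an $A_1$ to $S_G$ so that the ambient Niemeier lattice $N$ is forced to have a nonempty, hence full-rank, root system; then $\Aut(N)$ is the semidirect product of the Weyl group $W(N)$ with the diagram-symmetry group $H(N)$, the group $G$ meets $W(N)$ trivially, and so $G$ embeds into $H(N)$ -- this Weyl-chamber mechanism is what produces a Mathieu-type group out of a lattice action, and it is unavailable in the root-free Leech lattice. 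Second, even granting the Leech embedding, your claim that the invariant data ``forces $G$ into the chain $M_{24} \geq M_{23}$'' is precisely the statement to be proved, and no argument is offered: there is no soft confinement of subgroups of $Co_0$ with a large fixed sublattice into $M_{24}$ (one rank lower the statement is false -- the K3 sigma-model symmetry groups fix rank-$4$ sublattices of the Leech lattice and need not embed into $M_{24}$ -- and for rank at least $5$ it holds only by virtue of H\"ohn--Mason type classification results \cite{mason2016}). You flag this step as the ``delicate group-theoretic heart'', which is accurate, but flagging it is not supplying it: what your argument actually establishes, namely the bound $\rk \HH^*(X,\ZZ)^G \geq 5$ and the character identity, is the classical starting point of Mukai's and Kondo's proofs, not their substance. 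Two smaller points: in the converse, ``compare discriminant forms'' hides that Nikulin's sufficient criterion needs $l(D_S) \leq 22 - \rk S$, whereas the orbit bound only gives $l(D_S) \leq 24 - \rk S$, so a finer argument (or Mukai's explicit models for the eleven maximal groups) is required; and the property of $S_G$ needed throughout is that it contains no roots at all, not merely no $G$-fixed roots as you write.
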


Later, Xiao \cite{Xiao} gave a new proof shedding light on the combinatorics of the fixed points using the relation between
$X$, its quotient $X/G_s$ and its resolution which is a K3 surface again.
A conceptual proof involving the Niemeier lattices was given by Kondo \cite{kondo1998}.
Finally, Hashimoto \cite{Hashimoto2012} classified all the symplectic actions on the K3 lattice. Since the corresponding period domains are connected, this is a classification up to deformation.
See \cite{kondo2018} for a survey of symplectic automorphisms.

In view of these results it is fair to say that our knowledge of finite symplectic subgroups of automorphisms is fairly complete.

Similar to Hashimoto's classification for symplectic actions, our main result is a classification up to deformation (see \Cref{defequivalent} for a precise definition).
Let $X$ be a K3 surface and $G$ a finite subgroup of  automorphisms of $X$.
We call the largest subgroup $S \leq \Aut(X)$ such that the fixed lattices satisfy $\HH^2(X,\ZZ)^{G_s}=\HH^2(X,\ZZ)^{S}$ the \emph{saturation} of $G_s$.
Necessarily, the group $S$ is finite and symplectic. The group generated by $G$ and $S$ is finite as well. It is called the saturation of $G$.
We call $G$ saturated if it is equal to its saturation.
The subgroup $G \leq \Aut(X)$ is called non-symplectic, if $G_s \neq G$ and mixed if further $1\neq G_s$. If $G_s=1$, then it is called purely non-symplectic. If $G$ is non-symplectic then, $X$ is in fact projective. Therefore all K3 surfaces are henceforth assumed to be projective.

\begin{theorem}\label{mainthm}
There are exactly $4167$ deformation classes of pairs $(X,G)$ consisting of a complex K3 surface $X$ and a saturated, non-symplectic, finite subgroup $G \leq \Aut(X)$ of automorphisms.
For each such pair the action of $G$ on some lattice $L \cong \HH^2(X,\ZZ)$ is listed in \cite{K3Groups}.
\end{theorem}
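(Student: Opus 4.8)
The plan is to translate the geometric classification into a lattice- and Hodge-theoretic one by means of the strong Torelli theorem and the surjectivity of the period map, and then to carry out the resulting finite enumeration with the arithmetic of hermitian lattices over cyclotomic fields. After fixing a marking $L \cong \HH^2(X,\ZZ)$, a finite subgroup $G \leq \Aut(X)$ produces a faithful representation $\rho\colon G \to \mathrm{O}(L)$ whose image preserves the Hodge decomposition, hence fixes the period line $\CC\omega \subseteq L \otimes \CC$ setwise and preserves an ample class. Conversely, by the strong Torelli theorem together with surjectivity of the period map, any finite subgroup of $\mathrm{O}(L)$ that fixes a point $\CC\omega$ of the period domain on which it acts through a faithful character $\mu_n$ on $\CC\omega$ (encoding the transcendental value), and that preserves some class in the corresponding positive cone lying off all $(-2)$-walls, is realized by such a pair. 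By \Cref{defequivalent}, deformation equivalence corresponds to conjugacy of the representations in $\mathrm{O}(L)$ together with membership in the same connected component of the associated $G$-fixed period domain; these eigenspace period domains turn out to be connected (in the spirit of the analysis Hashimoto \cite{Hashimoto2012} carried out in the symplectic case), so the number of deformation classes equals the number of conjugacy classes of effective, saturated, non-symplectic actions, each decorated with its Hodge datum.

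Next I would decompose this datum. Writing $L^{G_s}$ for the fixed lattice of the symplectic part and $L_{G_s} = (L^{G_s})^\perp$ for the co-invariant lattice, the action of $G_s$ is governed by the classification of symplectic actions on the K3 lattice. The quotient $\mu_n$ acts on the transcendental lattice $T$ — the smallest primitive sublattice with $\omega \in T \otimes \CC$ — through an isometry whose minimal polynomial is the $n$-th cyclotomic polynomial; this endows $T \otimes \QQ$ with the structure of a hermitian space over $E = \QQ(\zeta_n)$ whose trace form recovers the bilinear pairing, and $T$ with that of a hermitian $\ZZ[\zeta_n]$-lattice. The signature constraint forced by the period, namely a single positive two-plane in the correct eigenspace, together with the Oguiso--Machida bound $\varphi(n) \leq 20$, $n \neq 60$ \cite{Oguiso-Machida1998}, restricts $n$ to a finite explicit list.

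For each admissible $n$ I would then enumerate, up to isometry, the hermitian lattices realizing the action on $T$ together with the gluing data reconstructing the full action on $L$. Concretely: enumerate the genera of hermitian $\ZZ[\zeta_n]$-lattices of the prescribed rank and signature; classify isometry classes within each genus via mass formulas and the local--global theory of O'Meara (using the corrected local statement, cf.\ the acknowledgments) as implemented in the hermitian-lattice packages and in the authors' own code; then glue $T$ to the orthogonal $G_s$-datum through a primitive embedding into $L$, classified by Nikulin's theory of discriminant forms. Finally I would impose effectivity — existence of a $G$-invariant ample class, equivalently of a $G$-invariant chamber of the positive cone — and saturation, discarding every $G$ not equal to its own saturation, and then remove conjugate duplicates. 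Counting the survivors should yield $4167$.

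The principal obstacle is the hermitian-lattice enumeration together with its faithful matching to conjugacy classes in $\mathrm{O}(L)$: one must prove that the genus enumeration is exhaustive, that the within-genus isometry classification is correct (this is precisely where the delicate local density computations, and the O'Meara typo, intervene), and that distinct hermitian data yield non-conjugate subgroups while conjugate data are identified. Verifying effectivity uniformly across all candidates — in particular the non-emptiness of the invariant ample locus off the $(-2)$-walls — is the remaining subtle point. Once these are in place the final count is bookkeeping whose reliability rests entirely on the verified completeness of each enumeration step.
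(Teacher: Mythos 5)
Your overall strategy --- Torelli plus surjectivity of the period map to reduce to a lattice-theoretic classification, Hashimoto's list for the symplectic part, hermitian/cyclotomic lattice enumeration with Nikulin-style gluing for the non-symplectic part --- is indeed the paper's strategy. But there is one genuine gap that breaks the count as you have set it up: your claim that the eigenspace period domains ``turn out to be connected,'' so that deformation classes biject with conjugacy classes of effective, saturated, non-symplectic subgroups. This is false. By \Cref{perioddomain}~(3), the punctured period domain $\DD_H\setminus\Delta$ has exactly \emph{two} connected components for every non-symplectic $H$: when the effective character is real (the case $[H:H_s]=2$, which covers all non-symplectic involutions and all mixed actions of transcendental value $2$) the single domain $\DD^\chi\setminus\Delta$ itself has two components, and when $[H:H_s]>2$ one must take $\DD^\chi\cup\DD^{\bar\chi}$, with one component each. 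Consequently a single conjugacy class of $H$ contributes either one or two deformation classes, according to whether the normalizer $N(H)$ exchanges the two components (\Cref{deformation class vs F_H}). For $[H:H_s]>2$ this is governed by whether $\chi$ and $\bar\chi$ are $N(H)$-conjugate; for $[H:H_s]=2$ it is governed by sign structures, i.e.\ by whether the image of $N(H)$ in $O(T(H))$ contains an orientation-reversing isometry (\Cref{quadruplewithtriple}). Deciding this last question is precisely what the quadratic Miranda--Morrison theory with spinor norms is needed for (\Cref{plusindexcompute}); your proposal contains no mechanism for it, and whenever $\F_H$ is disconnected you would miscount, so the total $4167$ cannot be certified by your argument.

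A secondary inaccuracy: the non-symplectic datum is not just a hermitian $\ZZ[\zeta_n]$-lattice structure on the transcendental lattice $T$. The object that must be classified up to conjugacy is the pair $(F,f)$ with $F=L^{G_s}$ the full fixed lattice of the symplectic part and $f$ an isometry of order $n$ whose characteristic polynomial involves all cyclotomic polynomials $\Phi_d$ with $d\mid n$, not only $\Phi_n$. Classifying these requires the recursive equivariant primitive-extension machinery (\Cref{extensions}, \Cref{alg:extensions}, and the algorithms of \Cref{ConjAlg}), in which the eigenlattices $\ker\Phi_d(f)$ for every divisor $d$ are glued along discriminant groups, and in which the image of $U(L)\to U(D_L)$ --- computed via the hermitian Miranda--Morrison theory of \Cref{hermitianmm} --- controls the double cosets of glue maps; the analogous double-coset statement for spines (\Cref{fiber}) is what makes the enumeration of effective characters faithful. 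Restricting attention to the $\Phi_n$-eigenpart alone loses the invariant lattice $L^H$ and the intermediate eigenlattices, and is not sufficient to recover the conjugacy class of $H$ in $O(L)$, i.e.\ distinct actions could be conflated even before the connectivity issue arises.
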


While a list of the actions of these finite groups $G$ is too large to reproduce here,
we present a condensed
version of the data in Table~\ref{table:action} in \Cref{appendixA}.
More precisely we list all finite groups $G$ admitting a faithful, saturated, mixed action on some K3 surface, their symplectic subgroups as well as the number $k(G)$ of deformation types.

Since the natural representation
$\Aut(X) \to O(\HH^2(X,\ZZ))$ is faithful and K3 surfaces are determined up to isomorphism by their Hodge structure,
a large extent of geometric information is easily extracted from our Hodge-theoretical model of the family of surfaces and its subgroup of automorphisms.
For instance, one can compute the N\'eron--Severi and transcendental lattice of a very-general member of the family, the invariant lattice, invariant ample polarizations, the (holomorphic and topological) Euler characteristic of the fixed locus of an automorphism,
the isomorphism class of the group, its subgroup consisting of symplectic automorphisms, the number of connected components of the moduli space and the dimension of the moduli space.
The pairs $(X,G)$ with $G_s$ among the $11$ maximal groups have been classified in \cite{brandhorst-hashimoto2021}. In many cases projective models are listed.

\subsection*{Purely non-symplectic automorphisms}
On the other end of the spectrum are purely non-symplectic groups, which are the groups $G$ with $G_s=1$. These groups satisfy $G \cong \mu_n$ and, by a result of Oguiso and Machida \cite{Oguiso-Machida1998},
we know that $n$ satisfies $\varphi(n) \leq 20$ and $n\neq 60$.
To the best of our knowledge the following corollary completes the existing partial classifications for orders 4 \cite{order4}, 6 \cite{order6}, 8 \cite{order8}, 16 \cite{order16}, 20, 22, 24, 30 \cite{onedim}, $n$ with $(\varphi(n)\geq 12)$ \cite{brandhorst2019} and is completely new for orders 10, 12, 14 and 18. For order 26 it provides a missing case in \cite[Thm 1.1]{brandhorst2019}.
For order $6$, \cite[Thm. 4.1]{order6} misses the case of
a genus 1 curve and four isolated fixed points (0.6.2.29).

\begin{corollary}
  Let $k(n)$ be the number of deformation classes  of K3 surfaces with a purely non-symplectic automorphism acting by $\zeta_n$ on the symplectic form. The values $k(n)$ are given in \Cref{pure}.
\begin{table}[ht]
\caption{Counts of purely non-symplectic automorphisms.}\label{pure}
\rowcolors{1}{}{lightgray}
 \begin{tabular}[t]{cc|cc|cc|cc|cc|cc|cc}
 \toprule
  $n$ &$k(n)$ & $n$ &$k(n)$ &$n$ &$k(n)$ &$n$ &$k(n)$&$n$ &$k(n)$ &$n$ &$k(n)$&$n$ &$k(n)$\\
 \hline
$2$&$75$&$8$&$38$&$14$&$12$&$20$&$7$&$27$&$2$&$36$&$3$&$50$&$1$\\
$3$&$24$&$9$&$13$&$15$&$8$&$21$&$4$&$28$&$3$&$38$&$2$&$54$&$1$\\$4$&$79$&$10$&$37$&$16$&$7$&$22$&$4$&$30$&$10$&$40$&$1$&$60$&$0$\\
$5$&$7$&$11$&$3$&$17$&$1$&$24$&$9$&$32$&$2$&$42$&$3$&$66$&$1$\\
$6$&$150$&$12$&$48$&$18$&$16$&$25$&$1$&$33$&$1$&$44$&$1$&$$&$$\\
$7$&$5$&$13$&$1$&$19$&$1$&$26$&$3$&$34$&$2$&$48$&$1$&$$&$$\\
\bottomrule
\end{tabular}
\end{table}
\end{corollary}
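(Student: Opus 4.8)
The plan is to deduce this from \Cref{mainthm} by specializing to the purely non-symplectic case and then reading off the counts from the classification data.

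First I would make the reduction explicit. A finite group $G \leq \Aut(X)$ is purely non-symplectic precisely when $G_s = 1$, so that the exact sequence $1 \to G_s \to G \to \mu_n \to 1$ collapses to an isomorphism $G \cong \mu_n$, where $n = [G:G_s] = |G|$ is the transcendental value. In particular the action $G \to \GL(\CC\sigma_X) = \CC^\times$ is injective, so a generator of $G$ acts on the symplectic form by a primitive $n$-th root of unity. Fixing one such root $\zeta_n$ singles out the unique generator $g \in G$ with $g^*\sigma_X = \zeta_n \sigma_X$; conversely $g$ generates $G$. This sets up a bijection between deformation classes of pairs $(X,G)$ with $G \cong \mu_n$ purely non-symplectic and deformation classes of K3 surfaces equipped with an automorphism acting by $\zeta_n$, so the two counting problems agree.

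Next I would observe that such $G$ are automatically saturated, which places them inside the enumeration of \Cref{mainthm}. Indeed, when $G_s = 1$ we have $\HH^2(X,\ZZ)^{G_s} = \HH^2(X,\ZZ)$, so the saturation $S$ of $G_s$ is the largest \emph{symplectic} subgroup of $\Aut(X)$ acting trivially on $\HH^2(X,\ZZ)$. Since the representation $\Aut(X) \to O(\HH^2(X,\ZZ))$ is faithful, this forces $S = 1$, and hence the saturation of $G$ equals $G$ itself. Thus every purely non-symplectic finite $G$ is saturated and non-symplectic, so the pairs in question form exactly the sub-collection of the $4167$ classes of \Cref{mainthm} for which $G_s = 1$.

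Finally I would extract the numbers. By the theorem of Oguiso and Machida \cite{Oguiso-Machida1998}, a cyclic purely non-symplectic group has order $n$ with $\varphi(n) \leq 20$ and $n \neq 60$, so the admissible $n$ are precisely those appearing in \Cref{pure}. I would then filter the classification data of \cite{K3Groups} underlying \Cref{mainthm} for the classes with $G_s = 1$, read off the order $n = |G|$ of each, and tabulate the resulting counts $k(n)$. The content here is not conceptual but organizational: the main point to guard against is the bookkeeping, namely correctly identifying the transcendental value with $|G|$, avoiding over-counting arising from the $\varphi(n)$ choices of generator (which the bijection above rules out), and ensuring the filter captures all and only the cyclic purely non-symplectic deformation classes. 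As a consistency check one can compare the entries against the previously known partial classifications in orders $4, 6, 8, 16, 20, 22, 24, 26, 30$ and all $n$ with $\varphi(n) \geq 12$ cited in the text, which is also where the claimed new and corrected cases can be verified.
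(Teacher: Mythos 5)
Your proposal is correct and matches the paper's (implicit) derivation: the corollary is obtained from \Cref{mainthm} by observing that a purely non-symplectic group is cyclic and automatically saturated (since $\rho_X$ is faithful, the saturation of $G_s=1$ is trivial), that fixing $\zeta_n$ picks out a unique generator and hence a bijection with deformation classes of pairs $(X,\sigma)$, and then filtering the classification data of \cite{K3Groups} for the classes with $G_s=1$. The saturation remark and the generator bookkeeping are exactly the two points needed to make the reduction rigorous, and you handle both correctly.
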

The most satisfying picture is for non-symplectic automorphisms of odd prime order,
where the fixed locus alone determines the deformation class, see \cite{artebani-sarti-taki}.
The key tools to this result are the holomorphic and topological Lefschetz' fixed point formulas as well as Smith theory. These relate properties of the fixed locus with the action of the automorphism on cohomology.
Conversely, given the action on cohomology as per our classification, we determine its fixed locus.

In what follows $\sigma$ is an automorphism of order $n$ on a K3 surface acting by multiplication with $\zeta_n = \exp(2 \pi i/n)$ on the holomorphic $2$-form of $X$.
We denote by
\[X^\sigma = \{x \in X \mid \sigma(x) = x\}\]
the fixed point set of $\sigma$ on $X$.
A curve $C \subseteq X$ is fixed by $\sigma$ if $C \subseteq X^\sigma$ and it is called invariant
by $\sigma$ if $\sigma(C) = C$.
Let $P \in X^\sigma$ be a fixed point. By \cite[lemme 1]{cartan1954} $\sigma$ can be linearized locally at $P$. Hence there are local coordinates $(x,y)$ in a small neighborhood centered at $P$ such that
\[\sigma(x,y) = (\zeta_n^{i+1}x,\zeta_n^{-i} y)\quad \mbox{  with } \quad 0 \leq i \leq s = \left\lfloor\frac{n-1}{2}\right\rfloor.\]
We call $P$ a fixed point of type $i$ and denote the number of fixed points of type $i$ by $a_i$.
If $i = 0$, then $P$ lies on a smooth curve fixed by $\sigma$. Otherwise $P$ is an isolated fixed point. Note that for $n=2$ there are no isolated fixed points and at most $2$ invariant curves pass through a fixed point.

In general, the fixed point set $X^\sigma$ is a disjoint union of $N = \sum_{i=1}^s a_i$ isolated fixed points, $k$ smooth rational curves and either a curve of genus $>1$
or $0, 1, 2$ curves of genus 1.
Denote by $l$ the number of genus $g \geq 1$ curves fixed by $\sigma$. If no such curve is fixed, set $g=1$. We describe the fixed locus by the tuple
$((a_1, \dots, a_s), k, l, g)$. It is a deformation invariant.
To sum up:
\[X^\sigma = \{p_1, \dots, p_N\} \sqcup R_1 \sqcup \dots \sqcup R_k \sqcup C_1 \sqcup \dots \sqcup C_l\]
where the $R_i$'s are smooth rational curves and the $C_j$'s smooth curves of genus $g \geq 1$.

Let $L$ be a $\ZZ$-lattice and $f\in O(L)$ an isometry of order $n$.
Set $L_k:=\{x \in L \mid f^k(x)=x\}$.
The small local type of $f$ is the collection of genera $\G(L_k)_{k \mid n}$. If the genus of $L$ is understood, then we omit $\G(L_n)=\G(L)$ from notation.
Let $\Phi_k(x)\in \ZZ[x]$ denote the $k$-th cyclotomic polynomial.
The global type of $f$ consists of the small local type as well as the isomorphism classes of the $\ZZ$-lattices $\ker \Phi_k(f)$, where $k \mid n$. A genus of $\ZZ$-lattices is denoted by its Conway--Sloane symbol \cite{splag}. The type of an automorphism $\sigma$ of a K3 surface $X$ is defined as the type of the isometry $\sigma^{*-1}|\HH^2(X,\ZZ)$.

\begin{theorem}
 Let $X$ be a K3 surface and $\sigma \in \Aut(X)$ of order $n$ acting by $\zeta_n$ on $\HH^0(X,\Omega_X^2)$. The deformation class of $(X, \sigma)$ is determined by the small local type of $\sigma$ unless $\sigma$ is one the $6$ exceptional types in \Cref{exceptionaltype}.
 For each deformation class, the invariants $((a_1, \dots, a_s), k, l, g)$ of the fixed locus are given in \Cref{appendixB}.
\end{theorem}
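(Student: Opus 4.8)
The plan is to reduce the classification to an arithmetic problem about isometries of the K3 lattice and then to apply the hermitian-lattice enumeration underlying \Cref{mainthm}. Write $L$ for the abstract K3 lattice and $f = \sigma^{*-1}|\HH^2(X,\ZZ)$, an isometry of $L$ of order $n$. By the strong Torelli theorem and surjectivity of the period map, the deformation class of $(X,\sigma)$ corresponds to the $O(L)$-conjugacy class of $f$ among isometries of K3 type: those of order $n$ for which the eigenspaces of $f$ on $L\otimes\CC$ satisfy the signature constraints imposed by a K3 Hodge structure, for which the period line lies in $\ker\Phi_n(f)\otimes\CC$, and for which the invariant lattice $\ker(f-\Id)$ contains an ample class. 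The decomposition attached to $x^n-1 = \prod_{k\mid n}\Phi_k(x)$ endows each $\ker\Phi_k(f)$ with the structure of a hermitian $\ZZ[\zeta_k]$-lattice, and the form on $L$ is recovered from these together with gluing data along their discriminant forms. Classifying the conjugacy classes of $f$ therefore amounts to a finite enumeration of hermitian lattices over cyclotomic fields with compatible gluings.

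First I would specialize the enumeration of \Cref{mainthm} to the purely non-symplectic cyclic case $G\cong\mu_n$, running over the finitely many admissible $n$ (those with $\varphi(n)\le 20$ and $n\neq 60$). This produces, for each $n$, the complete finite list of global types, that is, the isomorphism classes of the lattices $\ker\Phi_k(f)$ for $k\mid n$ together with the genera of the fixed sublattices $L_k=\ker(f^k-\Id)$. From each global type I read off its small local type $\G(L_k)_{k\mid n}$. Since $L_k$ is, up to finite index, the orthogonal direct sum $\bigoplus_{d\mid k}\ker\Phi_d(f)$, the small local type is genuinely coarser than the global type, and the first assertion is precisely that the forgetful map (global type $\mapsto$ small local type) is injective on the enumerated list, except at the six exceptional types of \Cref{exceptionaltype}, where two distinct deformation classes are not separated by their small local type. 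I would verify this by a direct comparison across all enumerated classes and record the exceptions; a collision arises exactly when two non-isometric lattices lying in the same genus both extend to K3-type isometries inducing identical genera of all fixed sublattices, and the content of the computation is that this degeneracy occurs only there.

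Second, for each deformation class I would compute the fixed-locus invariants $((a_1,\dots,a_s),k,l,g)$ from the cohomological data alone. The topological Lefschetz fixed point formula gives $\chi(X^\sigma) = 2 + \Tr(f|\HH^2(X,\CC))$, the trace being read off from the characteristic polynomial $\prod_{k\mid n}\Phi_k(x)^{m_k}$ of $f$; equating this with $\chi(X^\sigma)=N+2k+l(2-2g)$, where $N=\sum_{i=1}^s a_i$, yields one relation. The holomorphic Lefschetz fixed point formula, applied to $\sigma$ and to its powers $\sigma^m$, gives a system $\sum_{i} a_i\,c_i(m) + (\text{curve terms}) = 1+\zeta_n^{-m}$ in the unknowns $a_i$, where $c_i(m) = \bigl((1-\zeta_n^{(i+1)m})(1-\zeta_n^{-im})\bigr)^{-1}$ is the standard local contribution of an isolated fixed point of type $i$; together with Smith theory, which relates the ranks of the invariant and anti-invariant sublattices modulo each prime $p\mid n$ to the $\FF_p$-Betti numbers of $X^\sigma$, this determines $k$, $l$ and $g$. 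Solving the system for every class produces \Cref{appendixB}.

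The main obstacle is not any single computation but establishing completeness and the injectivity claim. One must prove that the hermitian-lattice enumeration captures every $O(L)$-conjugacy class of K3-type isometries of order $n$, with the correct accounting of the connected components of the moduli (coming from the action of the stabilizer of $f$ in $O(L)$ on the period domain), and then that the passage to small local types fails injectivity at exactly the stated types. Both rest on the certified, computer-aided classification used for \Cref{mainthm}; the geometric inputs (Torelli, the period map, the two Lefschetz formulas and Smith theory) are classical, and the genuinely new content is that the coarse invariant $\G(L_k)_{k\mid n}$ already pins down the deformation class away from the exceptional list.
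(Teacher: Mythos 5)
Your first step---specializing the lattice-theoretic enumeration behind \Cref{mainthm} to cyclic purely non-symplectic groups, reading off the small local types, and checking class by class where the forgetful map from global data to small local types fails to be injective---is essentially the paper's own route to the first assertion, including the accounting of connected components of $\F_H$. That part of the proposal is sound.

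The gap is in your second step. You assert that the topological and holomorphic Lefschetz formulas, applied to $\sigma$ and its powers, together with Smith theory, determine $((a_1,\dots,a_s),k,l,g)$, and that solving this system for every class produces \Cref{appendixB}. This is false as stated: both formulas are blind to fixed curves of genus one, since such a curve contributes $2-2g=0$ to the topological Lefschetz number and $l(1-g)\frac{1+\zeta_n}{(1-\zeta_n)^2}=0$ to the holomorphic one. Consequently the numerical system---even augmented by the inductive compatibility conditions describing how $X^\sigma$ sits inside $X^{\sigma^p}$ and by Hurwitz-type bounds, which is exactly what the paper does first---admits two solutions in $33$ cases, typically of the form $((a_i),k,0,1)$ versus $((a_i),k,1,1)$: the question is whether $\sigma$ fixes an elliptic curve pointwise or merely leaves it invariant (e.g.\ acts on it as a translation). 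Smith theory does not rescue this: its rank and Betti-number relations concern actions of groups of prime order, hence apply to the prime-order powers of $\sigma$, whose fixed loci are already known by induction; there is no analogous statement giving the $\FF_p$-Betti numbers of the fixed locus of the composite-order automorphism $\sigma$ itself. The paper resolves these $33$ ambiguous cases by genuinely geometric arguments that are absent from your proposal: one identifies the $\sigma$-invariant elliptic fibration $\pi$ induced by the curve fixed by a power of $\sigma$, proves that $\sigma$ fixes the fiber $E$ if and only if $\pi$ admits a $\sigma$-invariant section (\Cref{lemma:sect}, \Cref{identifyE}, \Cref{hassection}), detects such sections lattice-theoretically via the condition $\langle f,\NS(X)^\sigma\rangle=\ZZ$, and analyzes the $\sigma$-action on the dual graphs of the reducible invariant fibers (types $I_{4m}$ and $IV^*$, and the configurations $2E_8$, $2E_7$, $2D_6$, $2D_4\perp 2A_1$). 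Without this ingredient your argument terminates with two candidate fixed loci in these cases and cannot produce the tables of \Cref{appendixB}.
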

\begin{remark}
 For each of the $5$ exceptional types of order $6$ there are exactly two deformation classes. They are separated by the global type.
 For order $4$, the two classes have the same global-type. They are separated by the isometry class of the glue between $L_2$ and $L_4$. It is given by the image $L_4 \to \disc{L_2}$ induced by orthogonal projection.
\end{remark}

 \begin{table}[h]
  \caption{Exceptional types of purely non-symplectic automorphisms.}\label{exceptionaltype}
  \rowcolors{1}{}{lightgray}
 \renewcommand{\arraystretch}{1.2}
 \begin{tabular}{llll|llll}
 \toprule
 $n$ & $\G_1$ & $\G_2$ & $\G_3$
 &$n$ & $\G_1$ & $\G_2$ & $\G_3$\\
 \hline
 $6$ & $\even_{1,8}2^9_1$& $\even_{1,17}3^{-3}$& $\even_{1,8}2^9_1$ &
 $6$ & $\even_{1,7}2^8_2$& $\even_{1,15}3^{3}$ & $\even_{1,7}2^8_2$\\
 $6$ & $\even_{1,5}2^6_4$& $\even_{1,15}3^{-4}$& $\even_{1,7}2^6_4$&
 $6$ & $\even_{1,3}2^4_6$& $\even_{1,11}3^{5}$ & $\even_{1,3}2^4_6$\\
 $6$ & $\even_{1,2}2^3_7$& $\even_{1,9}3^{-6}$ & $\even_{1,3}2^3_7$ &
 $4$ & $\even_{1,9}2^8$ & $\even_{1,17}2^2$& - \\
 \bottomrule
 \end{tabular}
 \end{table}

 \begin{remark}
The first $5$ exceptional types in \Cref{exceptionaltype} are due to a failure of the local to global principle for conjugacy of isometries. For the last one, the example shows that the small local type is not fine enough to determine local conjugacy.
\end{remark}

\subsection*{Enriques surfaces}
Since the universal cover $X$ of a complex Enriques surface $S$ is a K3 surface, our results apply to classify finite subgroups of automorphisms of Enriques surfaces. The kernel of $\Aut(S) \to \GL(\HH^0(2K_S))$ consists of the so called semi-symplectic automorphisms of $S$. They lift to automorphisms acting by $\pm 1$ on $\HH^0(X,\Omega_X^2)$.
Cyclic semi-symplectic automorphisms are studied by Ohashi in
\cite{ohashi2015}. Mukai's theorem on symplectic actions and the Mathieu group has an analogue for Enriques surfaces,
see \cite{mukai-ohashi2015}. However, not every semi-symplectic action is of `Mathieu type`.

\begin{corollary}
 A group $H_s$ admits a faithful semi-symplectic action on some complex Enriques surface if and only if $H_s$ embeds into one of the following $6$ groups

 \begin{center}
   \rowcolors{1}{}{lightgray}
 \renewcommand{\arraystretch}{1.2}
 \begin{tabular}{lc|lc|lc}
 \toprule
 $G$ & id & $G$ & id& $G$ & id\\
 \hline
 $A_6$ & $(360, 118)$ &
 $H_{192}$  & $ (192, 955)$  &
 $S_5$ &  $ (120, 34)$ \\
 $A_{4,4}$ & $ (288, 1026)$ &
 $2^4D_{10}$ &  $ (160, 234)$ &
 $N_{72}$ &  $ (72, 40)$ \\
 \bottomrule
 \end{tabular}
 \end{center}

 A group $H$ admits a faithful action on some complex Enriques surface if and only if it embeds into one of the following $9$ groups:

 \begin{center}
   \rowcolors{1}{}{lightgray}
 \renewcommand{\arraystretch}{1.2}
 \begin{tabular}{lc|lc|lc}
 \toprule
 $G$ & id & $G$ & id& $G$ & id\\
 \hline
 $A_6.\mu_2$ & $(720, 765)$ &
 $H_{192}$ &$(192, 955)$ &
 $\Gamma_{25}a_1.\mu_2$ & $(128, 929)$ \\
 $A_{4,4}.\mu_2$ & $(576, 8652)$ &
 $N_{72}.\mu_2$& $(144, 182)$&
 $S_5$& $(120, 34)$ \\
 $2^4D_{10}.\mu_2$ &$(320, 1635)$ &
 $(Q_8 * Q_8).\mu_4$ &$(128, 135)$ &
 $(C_2 \times D_8).\mu_4$ & $(64, 6)$ \\
 \bottomrule
 \end{tabular}
 \end{center}
\end{corollary}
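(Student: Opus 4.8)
The plan is to transfer the problem to the K3 universal cover and read off the answer from the data underlying \Cref{mainthm}. Fix an Enriques surface $S$ with universal cover $\pi\colon X\to S$ and covering (Enriques) involution $\epsilon\in\Aut(X)$, so $S=X/\langle\epsilon\rangle$. Then $\epsilon$ is fixed-point free and acts by $-1$ on $\HH^0(X,\Omega_X^2)$; conversely any such involution on a K3 surface is an Enriques involution, and lattice-theoretically these are exactly the involutions of $O(\HH^2(X,\ZZ))$ whose invariant lattice is isometric to $U(2)\oplus E_8(-2)$, equivalently the $2$-elementary involutions with fixed-lattice invariants $(r,a,\delta)=(10,10,0)$. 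Since $\langle\epsilon\rangle$ is the deck group of $\pi$, any finite $H\le\Aut(S)$ lifts to the preimage $\tilde H$ of $H$ in the normalizer $N_{\Aut(X)}(\langle\epsilon\rangle)$; here $\epsilon$ is central in $\tilde H$, one has $\tilde H/\langle\epsilon\rangle\cong H$, and $\tilde H$ is non-symplectic since $\epsilon\in\tilde H$. This sets up a correspondence between faithful finite actions on Enriques surfaces and finite subgroups of $\Aut(X)$ containing a central Enriques involution.

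Under this correspondence the semi-symplectic case is especially clean. A lift of $h\in\Aut(S)$ is ambiguous by $\epsilon$, so acts on $\sigma_X$ only up to sign; $h$ is semi-symplectic precisely when this scalar lies in $\{\pm1\}$, i.e.\ when $h$ admits a \emph{symplectic} lift to $X$. The symplectic lifts of a semi-symplectic group $H_s$ form a subgroup $G_s'\cong H_s$ centralizing $\epsilon$, so that $\tilde H_s=G_s'\times\langle\epsilon\rangle$. Hence $H_s$ acts semi-symplectically on some Enriques surface if and only if it occurs as a symplectic group on a K3 surface centralizing an Enriques involution. Via surjectivity of the period map and the strong Torelli theorem, existence of the pair $(X,\tilde H)$, and thus of $(S,H)$, is equivalent to the existence of the corresponding action on the abstract K3 lattice, reducing everything to the lattice data classified for \Cref{mainthm} and, for the symplectic part, to Hashimoto's list \cite{Hashimoto2012}.

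It remains to enumerate. First I would run through the classification list \cite{K3Groups} and, possibly after passing to saturations, select those finite groups $G\le O(L)$ realized on some $X$ that contain a central involution $\epsilon$ with invariant lattice $U(2)\oplus E_8(-2)$ acting without fixed points; for the semi-symplectic statement I restrict to $G=G_s'\times\langle\epsilon\rangle$ with $G_s'$ symplectic. For each surviving $G$ I form $H=G/\langle\epsilon\rangle$ (respectively $H_s=G_s'$) and record its isomorphism type. Finally I determine the maximal members under the embedding order, verifying that every group so produced embeds, compatibly with an Enriques action, into one of the $6$ groups (semi-symplectic) or $9$ groups (full) listed, and that each listed group is realized and maximal. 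The symbols $.\mu_2$ and $.\mu_4$ record the non-symplectic extension contributed by the surviving transcendental value.

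The main obstacle is the combination of the fixed-point-free condition with the maximality step. Fixed-point-freeness of a candidate central involution is delicate: it is not implied by being merely non-symplectic, and must be read off from the precise $2$-elementary invariants $(r,a,\delta)$ of its invariant lattice, so that involutions whose fixed locus is a nonempty curve are correctly excluded. Establishing maximality is the genuinely subtle combinatorial point: one must prove that the finitely many groups emerging from the classification organize, under the embedding relation respecting the Enriques structure, into exactly the stated maximal groups, with none slipping through. This is where the computer-aided search over the classification data is decisive, and where one must ensure that an abstract embedding $H\hookrightarrow G$ is induced by an honest inclusion of Enriques actions rather than a spurious group-theoretic coincidence.
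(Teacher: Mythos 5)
Your overall route is the paper's own: pass to the K3 cover, use the exact sequence $1\to\langle\epsilon\rangle\to\Aut(X,\epsilon)\to\Aut(S)\to1$ for the centralizer of the covering involution, characterize Enriques involutions lattice-theoretically (your $(r,a,\delta)=(10,10,0)$, i.e.\ invariant lattice $U(2)\oplus E_8(-2)$, is exactly the paper's condition $\HH^2(X,\ZZ)^\epsilon\in\even_{1,9}2^{10}$), identify semi-symplectic Enriques groups with symplectic K3 groups centralizing $\epsilon$, and then enumerate by computer from the classification underlying \Cref{mainthm}.

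There is, however, one concrete flaw in your enumeration recipe. The classification list contains only \emph{saturated} groups, and you propose to select from it "those finite groups $G$ \dots that contain a \emph{central} involution $\epsilon$" with the Enriques invariant lattice. But for a finite $H\leq\Aut(S)$, the lift $\tilde H$ contains $\epsilon$ centrally, while its saturation $G$ --- which is what actually appears in the list --- in general does not: $\epsilon$ is central in $\tilde H$, not necessarily in $G$. Filtering the listed groups by centrality of an Enriques involution therefore misses precisely those Enriques actions whose saturated K3 group has $\epsilon$ non-central, and your hedge about "passing to saturations" points in the wrong direction: one must descend from the listed $G$, not ascend from $\tilde H$. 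The paper's procedure repairs this: for each $G$ in the list, find \emph{all} Enriques involutions $\epsilon\in G$ (central or not), compute the centralizer $C(\epsilon)$ in $G$ --- in which $\epsilon$ is automatically central --- and form $H\cong C(\epsilon)/\langle\epsilon\rangle$ and $H_s\cong C(\epsilon)_s$; since $\tilde H\leq C(\epsilon)$, every finite Enriques group embeds into one of these finitely many quotients, which is what makes the "only if" direction complete. With that correction, the remainder of your argument (realizability of the listed groups by the converse lattice criterion, and the maximality check over the finitely many resulting groups) goes through exactly as in the paper.
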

\begin{proof}
 Let $S$ be an Enriques surface and $X$ its covering K3 surface. Let $\epsilon$ be the covering involution of $X \to S$. Let $\Aut(X,\epsilon)$ denote the centralizer of $\epsilon$.
 Then $1 \to \langle \epsilon \rangle \to \Aut(X,\epsilon) \to \Aut(S) \to 1$ is exact.
 In particular, if $H \leq \Aut(S)$ is a finite group then it is the image of a finite group $G \leq \Aut(X)$ containing the covering involution.
 Conversely, $\epsilon \in \Aut(X)$ is the covering involution of some Enriques surface
 if and only if $\HH^2(X,\ZZ)^\epsilon \in \even_{1,9}2^{10}$.
 Thus we can obtain the list of all finite groups acting on some Enriques surface by taking the corresponding list for K3 surfaces. For each group $G$ in the list one computes the Enriques involutions $\epsilon$, their centralizer $C(\epsilon)$ in $G$, $C(\epsilon)_s\cong H_s$ and the quotient $H\cong C(\epsilon)/\langle \epsilon \rangle$.
\end{proof}

Our method of classification applies as soon as a Torelli-type theorem is available, for instance
to supersingular K3 surfaces in positive characteristic and compact hyperk\"ahler manifolds.

\subsection*{Outline of the paper}

In \Cref{sec:lat} we recall basic notions of lattices with an emphasis towards primitive extensions and lattices with isometry.
The geometric setting of K3 surfaces is treated in \Cref{k3moduli}. We set up a coarse moduli space
parametrizing K3 surfaces together with finite subgroups of automorphisms. Next, we determine the connected components of the respective moduli spaces.
We show that this translates the problem of classifying pairs of K3 surfaces and finite subgroups of automorphisms
into a classification problem for lattices with isometry and extensions thereof.

The next sections deal with these algorithmic problems related to lattices, where it is shown that practical solutions exist.
In particular, in \Cref{ConjAlg} it is described how isomorphism classes of lattices with isometry can be enumerated.
This leads to questions related to canonical images of orthogonal and unitary groups, which are addressed in the final sections. For the classical case of $\ZZ$-lattices we review Miranda--Morison theory in \Cref{mmtheory}. For hermitian lattices we develop the necessary tools in \Cref{hermitianmm}.

Finally, in \Cref{sec:fixed} we classify the fixed point sets of purely non-symplectic automorphisms of finite order on complex K3 surfaces.

\ackintro

\section{Preliminaries on lattices and isometries}\label{sec:lat}
In this section we fix notation on lattices, and refer the reader to \cite{nikulin1980_forms,splag, kneser2002} for standard facts and proofs.

\subsection{Lattices}\label{sect:lattices}
Let $R$ be an integral domain of characteristic $0$ and $K$ its field of fractions.
We denote by $R^\times$ its group of units.
In this paper an $R$-\emph{lattice} consists of a finitely generated projective $R$-module $M$ and
a non-degenerate, symmetric bilinear form $\sprodb{\cdot} \colon M \times M \to K$.

We call it integral if the bilinear form is $R$-valued and we call it even if the square-norm of every element with respect to the bilinear form is in $2 R$. 
If confusion is unlikely, we drop the bilinear form from notation and denote for $x,y \in M$ the value
$\sprodb{x}{y}$ by $xy$ and $\sprodbq{x}$ by $x^2$.
The associated quadratic form is $Q(x) = x^2/2$.
We denote the \emph{dual lattice} of $M$ by $M^\vee$. We call $M$ unimodular if $M=M^\vee$.
For two lattices $M$ and $N$ we denote by $M\perp N$ their orthogonal direct sum.
The scale of $M$ is $\scale(M)=\sprodb{M}{M}$
and its norm $\norm(L)$
is the fractional ideal generated by $\sprodbq{x}$ for $x \in M$.
The set of self-isometries of $M$ is the \emph{orthogonal group} $O(M)$ of $M$.

We fix the following convention for the \emph{spinor norm}: Let $L$ be an $R$-lattice and $V = L \otimes K$. Let $v \in V$ with $v^2 \neq 0$. The reflection $\tau_v(x) = x - 2xv/v^2\cdot v$ is an isometry of $V$.
The spinor norm of $\tau_v$ is defined to be $Q(v)=v^2/2 \in k^\times/(k^\times)^2$.
By the Cartan--Dieudonn\'e theorem $O(V)$ is generated by reflections.
One can show that this defines a homomorphism $\spin: O(V) \to k^\times/(k^\times)^2$ by using the Clifford algebra of $(V,Q)$.

An embedding $M \to L$ of lattices is said to be \emph{primitive} if its cokernel is torsion-free. For $M\subseteq L$ we denote by $M^{\perp L}=\{x \in L \mid \sprodb{x}{M}=0\}$ the maximal submodule of $L$ orthogonal to $M$. If confusion is unlikely we denote it simply by $M^\perp$.
The minimum number of generators of a finitely generated $R$-module $A$ will be denoted by $l(A)$.

Let $L$ be an even integral $R$-lattice. Its \emph{discriminant group} is the group $D_L = L^\vee/ L$ equipped with the \emph{discriminant quadratic form}
$q_L\colon L \to K/2 R$.
Note that $l(\disc{L})\leq l(L)=\rk L$.
Denote by $O(D_L)$ its \emph{orthogonal group}, that is, the group of linear automorphisms preserving the discriminant form.
If $f \colon L \to M$ is an isometry of even $\ZZ$-lattices, then it induces an isomorphism $D_f \colon D_L \to D_M$.
Likewise we obtain a natural map $O(L) \to O(D_L)$, whose kernel is denoted by $O^\sharp(L)$. For an isometry $f \in O(L)$ and $H$ some subquotient of $L \otimes K$ preserved by $f$, we denote by $f|H$ the induced automorphism of $H$.
Let $G\leq O(L)$ be a subgroup. We denote the fixed lattice by $L^G=\{x \in L \mid \forall g \in G \colon g(x)=x \}$ and its orthogonal complement by $L_G=(L^G)^\perp$.

For $R=\RR$, let $s_+$ be the number of positive eigenvalues of a gram matrix and $s_-$ the number of negative eigenvalues. We call $(s_+,s_-)$ the \emph{signature pair} or just \emph{signature} of $L$.

\subsection{Primitive extensions and glue}
Let $R \in \{\ZZ, \ZZ_p\}$ and $L$ be an even integral $R$-lattice.
We call $M \perp N \subseteq L$ a \emph{primitive extension} of $M \perp N$ if $M$ and $N$ are primitive in $L$ and $\rk L = \rk M + \rk N$.
Since $L$ is integral, we have a chain of inclusions
\[M \perp N \subseteq L \subseteq L^\vee \subseteq M^\vee \perp N^\vee.\]
The projection $M^\vee \perp N^\vee \to M^\vee$ induces a homomorphism $L/(M \perp N) \to D_M$. This homomorphism is injective if and only if $N$ is primitive in $L$.
Let $H_M$ denote its image and define $H_N$ analogously. The composition
\[\phi \colon H_M \to L/(M \perp N) \to H_N\]
is called a \emph{glue} map. It is an anti-isometry, i.e. $q_M(x) = -q_N(\phi(x))$ for all $x \in H_M$. Note that $L/(M\perp N) \leq H_M \perp H_N \leq D_M \perp D_N$ is the graph of $\phi\colon H_M \to H_N$.

Conversely, any anti-isometry $\phi \colon H_M \to H_N$ between subgroups $H_M \subseteq D_M$ and $H_N \subseteq D_N$ is the glue map of a primitive extension: $M \perp N \subseteq L_\phi$ where $L_\phi$ is defined by
the property that $L_\phi/(M \perp N)$ is the graph of $\phi$.

The determinants of the lattices in play are related as follows:
\[\lvert\det L\rvert = \lvert D_M/H_M \rvert \cdot \lvert D_N/H_N \rvert = \lvert \det M \rvert \cdot \lvert \det N \rvert/[L : (M \perp N)]^2.\]
If $f_M \in O(M)$ and $f_N \in O(N)$ are isometries, then $g = f_M \oplus f_N$ preserves the primitive extension $L_\phi$ if and only if $\phi \circ D_{f_M} = D_{f_N} \circ \phi$. We call $\phi$ an \emph{equivariant glue map} with respect to $f_M$ and $f_N$.

\subsection{Lattices with isometry}
We are interested in classifying conjugacy classes of isometries of a given $\ZZ$-lattice.
If the lattice in question is definite, its orthogonal group is finite. Using computer algebra systems, one can compute the group as well as representatives for its conjugacy classes. This approach breaks down if $L$ is indefinite.

\begin{definition}
A \emph{lattice with isometry} is a pair $(L,f)$ consisting of a lattice $L$ and an isometry $f \in O(L)$.
We frequently omit $f$ from the notation and denote the lattice with isometry simply by $L$. Its isometry $f$ is then denoted by $f_L$.
We say that two lattices with isometry $M$ and $N$ are \emph{isomorphic}
if they are equivariantly isometric, i.e., if there exists an isometry
$\psi \colon M \to N$ with $\psi \circ f_{M} = f_{N} \circ \psi$.
We view $L$ as a $\ZZ[x,x^{-1}]$ module via the action $x \cdot m = f(m)$, $x^{-1} \cdot m = f^{-1}(m)$.
\end{definition}
The unitary group $U(L)$ of the lattice with isometry $L$ is the centralizer of $f_L$ in $O(L)$. This is nothing but $\Aut(L)$ in the category of lattices with isometry.

Terminology for lattices applies to lattices with isometry verbatim. For instance the discriminant group $D_L$ of a lattice with isometry comes equipped with the induced isometry $D_{f_L} \in O(D_L)$ and
$U(D_L)$ is the centralizer of $D_{f_L}$ in $O(D_L)$. We denote by
$G_L = \im( U(L) \to U(D_L))$.

\begin{proposition}\label{extensions}
Let $M,N$ be lattices with isometry.
Suppose that the characteristic polynomials of $f_M$ and $f_N$ are coprime.
Then the double coset
\[U(N) \backslash \{\mbox{equivariant glue maps } \disc{M} \supseteq H_M \xrightarrow{\phi } H_N \subseteq D_{N} \} / U(M)\]
is in bijection with the set of isomorphism classes of lattices with isometry $(L,f)$ with characteristic polynomial $\chi_f(x) = \chi_{f_{M}}(x)\chi_{f_{N}}(x)$ and $M \cong (\ker \chi_{f_M}(f))$ and $N \cong (\ker \chi_{f_N}(f))$.
\end{proposition}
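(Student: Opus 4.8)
The plan is to reduce everything to the non-equivariant correspondence between primitive extensions and glue maps recalled above, carrying the extra isometry along at each step. The content that is genuinely new is the passage from the full orthogonal groups $O(M), O(N)$ to their centralizers $U(M), U(N)$, and the equivariance condition on the glue; the orthogonality of the two characteristic kernels is where coprimality does its work.

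First I would pass from a lattice with isometry to glue data. Given $(L,f)$ with $\chi_f = \chi_{f_M}\chi_{f_N}$, set $M' = \ker \chi_{f_M}(f)$ and $N' = \ker \chi_{f_N}(f)$. As kernels of polynomials in $f$ these are $f$-invariant and primitive (saturated) in $L$, and coprimality gives $L \otimes \QQ = (M' \otimes \QQ) \oplus (N' \otimes \QQ)$, so $\rk L = \rk M' + \rk N'$. The crucial point is that $M'$ and $N'$ are mutually orthogonal. Since $f$ is an isometry its adjoint with respect to the form is $f^{-1}$, so over $\bar\QQ$ the generalized eigenspace for an eigenvalue $\lambda$ pairs nontrivially only with the one for $\lambda^{-1}$; moreover $\chi_{f_N}$ is self-reciprocal (the char.\ poly.\ of an isometry has roots stable under $\lambda \mapsto \lambda^{-1}$), so $\lambda^{-1}$ is a root of $\chi_{f_N}$ exactly when $\lambda$ is, and coprimality then forbids any pairing between $M'\otimes\QQ$ and $N'\otimes\QQ$. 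Hence $M' \perp N' \subseteq L$ is a primitive extension; the recalled correspondence furnishes its glue map $\phi$, and since $f$ restricts to $f_M \oplus f_N$ under the given identifications $M' \cong M$, $N' \cong N$, the criterion that $f_M\oplus f_N$ preserves $L_\phi$ if and only if $\phi \circ D_{f_M} = D_{f_N}\circ\phi$ shows $\phi$ is equivariant.

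Conversely, from an equivariant glue map $\phi \colon H_M \to H_N$ I would form the primitive extension $L_\phi$ and equip it with $f_M \oplus f_N$; equivariance is precisely what lets this isometry preserve $L_\phi$, and by construction $(L_\phi, f_M\oplus f_N)$ has characteristic polynomial $\chi_{f_M}\chi_{f_N}$ with $\ker\chi_{f_M}(\cdot) \cong M$ and $\ker\chi_{f_N}(\cdot)\cong N$. These two constructions are mutually inverse once the choices involved are accounted for.

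The remaining and most delicate step is to match the ambiguities on both sides. On the lattice side, the identifications $M' \cong M$ and $N'\cong N$ are canonical only up to isometries commuting with $f_M$ and $f_N$, that is, up to $U(M)$ and $U(N)$; changing them replaces $\phi$ by $D_{v}\circ \phi \circ D_{u}^{-1}$ with $u \in U(M)$, $v \in U(N)$, which is exactly the double coset action, and since $D_u \in U(D_M)$, $D_v \in U(D_N)$ it preserves equivariance. On the isomorphism side, any isomorphism $\psi \colon (L_\phi, f_M\oplus f_N) \to (L_{\phi'}, f_M\oplus f_N)$ intertwines the isometries, hence carries $\ker\chi_{f_M}(f)$ to $\ker\chi_{f_M}(f)$ and $\ker\chi_{f_N}(f)$ to $\ker\chi_{f_N}(f)$ (coprimality again), so decomposes as $\psi_M \oplus \psi_N$ with $\psi_M \in U(M)$, $\psi_N \in U(N)$, and compatibility with the glue forces $\phi' = D_{\psi_N}\circ \phi \circ D_{\psi_M}^{-1}$. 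Thus isomorphism classes of $(L,f)$ correspond bijectively to orbits of equivariant glue maps under $U(N)\times U(M)$, i.e.\ to the double cosets in the statement. I expect the orthogonality of the two kernels and the careful bookkeeping of these two group actions to carry the real weight; the rest is the standard glue formalism applied verbatim in the category of lattices with isometry.
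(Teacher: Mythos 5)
Your proof is correct and follows essentially the same route as the paper's: both reduce to the glue-map correspondence in the category of lattices with isometry, showing that any isomorphism of primitive extensions restricts to elements of $U(M)$ and $U(N)$ and hence acts on the glue map by the double coset, and conversely. The paper's own proof is simply terser, leaving implicit the points you spell out (orthogonality and primitivity of the two kernels via self-reciprocity of characteristic polynomials of isometries, and the bookkeeping of identifications $M' \cong M$, $N' \cong N$).
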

\begin{proof}
We work in the category of lattices with isometry. Let $L_\phi$ and $L_{\psi}$ be primitive extensions of $M \perp N$ and $h\colon L_\phi \to L_\psi$ an isomorphism.
Then $h|_M \in U(M)$ and $h|_N \in U(N)$.
We have $D_{h|_N}\phi D_{h|_M}^{-1} = \psi$, so $\psi \in U(M) \phi U(N)$.
Conversely if $D_a \phi D_b = \psi$. Then $a \oplus b \colon L_\phi \to L_\psi$ is an isomorphism.
\end{proof}

\section{K3 surfaces with a group of automorphisms}\label{k3moduli}
See \cite{bhpv} or \cite{huybrechts2016} for generalities on K3 surfaces.
\subsection{$\mathbf{H}$-markings}
Let $X$ be a K3 surface. We denote by
\[\rho_X\colon \Aut(X) \rightarrow O(\HH^2(X,\ZZ)),\  g\mapsto (g^{-1})^*\]
the natural representation of the automorphism group $\Aut(X)$ on the second integral cohomology group of $X$.
It is faithful.
\begin{definition}\label{defequivalent}
Let $X$, $X'$ be K3 surfaces and $G \leq \Aut(X)$, $G' \leq \Aut(X')$. We call $(X,G)$ and $(X',G')$ \emph{conjugate}, if
there is an isomorphism $\phi\colon X \to X'$ such that
$\phi G \phi^{-1} = G'$.
  They are called \emph{deformation equivalent} if there exists a connected family $\X \to B$ of K3 surfaces, a group of automorphisms $\G \leq \Aut(\X/B)$ and
two points $b,b' \in B$ such that the restriction of $(\X,\G)$ to the
fiber above $b$ is conjugate to $(X,G)$ and to the fiber above $b'$ is conjugate to $(X',G')$.
\end{definition}

Let $L$ be a fixed even unimodular lattice of signature $(3,19)$.
An $L$-marking of a K3 surface $X$ is an isometry
$\eta \colon \HH^2(X,\ZZ) \to L$.
The pair $(X, \eta)$ is called an $L$-marked K3-surface.
A family of $L$-marked K3 surfaces is a family $\pi \colon \X \to B$ of K3 surfaces with an isomorphism of local systems $\eta \colon R^2\pi_*\underline{\ZZ} \to \underline{L}$. If the base $B$ is simply connected, then a marking of a single fiber extends to the whole family.

\begin{definition}
  Let $H \leq O(L)$ be a \emph{finite} subgroup. An $H$-marked K3 surface is a triple $(X, \eta, G)$ such that $(X, \eta)$ is an $L$-marked K3 surface and $G \leq \Aut(X)$ is a group of automorphisms with $\eta \rho_X(G) \eta^{-1} = H$.
 We say that $(X,G)$ is $H$-markable if there exists some marking by $H$.

 Two $H$-marked K3 surfaces $(X_1, \eta_1, G_1)$
 and $(X_2,\eta_2, G_2)$ are called \emph{conjugate} if there exists an isomorphism $f \colon X_1 \to X_2$ such that $\eta_1 \circ f^* = \eta_2$. In particular, $fG_1f^{-1}=G_2$. We call $H$ \emph{effective} if there exists at least one $H$-marked K3 surface.
\end{definition}

A family of $H$-marked K3 surfaces is a family of $L$-marked K3 surfaces
$\pi \colon \X \to B$ of K3 surfaces with an isomorphism of local systems $\eta \colon R^2\pi_*\underline{\ZZ} \to \underline{L}$ and group of automorphisms $\G \leq \Aut(\X/B)$ such that for each $b\in B$ the fiber $(\X_b,\eta_b,\G_b)$ is an $H$-marked K3 surface.

Let $(X,\eta,G)$ be an $H$-marked K3 surface. The action of $G$ on $\HH^{2,0}(X)$ induces via the marking $\eta$ a character $\chi\colon H \rightarrow \CC^\times$. We call such a character \emph{effective}. We denote complex conjugation by a bar $\bar\cdot$.
Set $H_s = \ker \chi$, and denote the $\chi$-eigenspace by
\[L_\CC^\chi = \{x \in L \otimes \CC \mid h(x) = \chi(h)\cdot x  \mbox{ for all }h \in H\}.\]
 Similarly let
\[L_\RR^{\chi+\bar\chi}=\{x \in L_\RR \mid (h+h^{-1})(x) = \chi(h)x+\bar \chi(h)x \text{ for all } h \in H\}.\]
The generic transcendental lattice $T(\chi)$ is the smallest primitive sublattice of $L$ such that $T(\chi) \otimes \CC$ contains $(L\otimes \CC)^\chi$. We call
$\NS(\chi) = T(\chi)^\perp$ the generic N\'eron--Severi lattice.
Recall that $L_{H_s}$ is the complement of the fixed lattice $L^{H_s}$.

\begin{proposition}\label{iseffective}
Let $H\leq O(L)$ be a finite group and $\chi \colon H \to \CC^\times$ a non-trivial character. Recall that $H_s := \ker \chi$. Then $\chi$ is effective if and only if the following hold:
\begin{enumerate}
 \item $L_{H_s}$ is negative definite;
 \item $L_{H_s}$ does not contain any $(-2)$-vector;
 \item the signature of $L^H$ is $(1,*)$;
 \item $L_\RR^{\chi+\bar \chi}$ is of signature $(2,*)$;
 \item $\NS(\chi)_H$ does not contain any vector of square $(-2)$.
 \end{enumerate}
If $\chi$ is trivial, then $\chi$ is effective if and only if (1) and (2) hold.
\end{proposition}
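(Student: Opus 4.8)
The plan is to read off effectivity from the surjectivity of the period map together with the strong Torelli theorem, reducing the existence of an $H$-marked K3 surface inducing $\chi$ to two ingredients: the existence of a suitable period point in the $\chi$-eigenspace, and the existence of an $H$-invariant ample class. The organizing principle is the distribution of the three positive directions of $L_\RR$ of signature $(3,19)$ among the $H$-isotypical real components. Condition (1) forces all three positive directions into $L^{H_s}\otimes\RR$; conditions (3) and (4) then split them as one positive direction in the invariant part $L^H\otimes\RR$ (the future ample class) and two in $L_\RR^{\chi+\bar\chi}$ (spanned by $\re\sigma$ and $\im\sigma$). I treat necessity and sufficiency separately and dispose of the trivial character at the end. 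Throughout I use that $T(\chi)\subseteq L^{H_s}$, since $H_s=\ker\chi$ acts trivially on the eigenspace $(L\otimes\CC)^\chi$, so that $L_{H_s}\subseteq \NS(\chi)$.

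For necessity, suppose $(X,\eta,G)$ induces $\chi$ and set $\sigma=\eta(\sigma_X)$. As $G_s$ corresponds to $H_s$ and symplectic automorphisms act trivially on the transcendental lattice, $T(X)\subseteq L^{H_s}$, hence $L_{H_s}\subseteq \NS(X)$; since $\NS(X)$ has signature $(1,*)$ with its positive direction carried by the invariant ample class $h_0\in L^H$ and $L_{H_s}\perp L^H$, the lattice $L_{H_s}$ is negative definite, giving (1). Projectivity and averaging over $G$ produce such an $H$-invariant ample class $h_0\in L^H$ with $h_0^2>0$, forcing signature exactly $(1,*)$ on $L^H$ (the remaining two positive directions being used by $\sigma$), which is (3); the positive plane $\langle\re\sigma,\im\sigma\rangle\subseteq L_\RR^{\chi+\bar\chi}$ together with (1) and (3) then yields (4). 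Finally, any $(-2)$-class $\delta$ lying in $L_{H_s}$ or in $\NS(\chi)_H$ is orthogonal to $\NS(\chi)^H\ni h_0$, so $\delta\cdot h_0=0$; but $\pm\delta$ is effective by Riemann--Roch, contradicting ampleness of $h_0$. This gives (2) and (5).

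For sufficiency, assume (1)--(5). By (4) the eigenperiod domain $\{\sigma\in L_\CC^\chi : \sigma^2=0,\ \sigma\bar\sigma>0\}$ is nonempty, and I choose $\sigma$ in it generically, so that its transcendental lattice is exactly $T(\chi)$ and $H$ acts on $\CC\sigma$ precisely through $\chi$ (the latter being automatic from $\sigma\in L_\CC^\chi$). By surjectivity of the period map there is an $L$-marked K3 surface $(X,\eta)$ with this period, so $\NS(X)=\NS(\chi)$ and every $h\in H$ acts as a Hodge isometry. It remains to exhibit an $H$-invariant ample class: by (3) the invariant positive cone in $L^H\otimes\RR$ is nonempty, and no wall $\delta^\perp$ of a $(-2)$-class $\delta\in\NS(\chi)$ meets it, because such a $\delta$ would be orthogonal to all of $L^H$, hence lie in $L_H\cap\NS(\chi)\subseteq\NS(\chi)_H$, which is rootless by (5) (and contains $L_{H_s}$, rootless by (2)). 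After adjusting $\eta$ by an element of the Weyl group generated by reflections in $(-2)$-classes, a vector $h_0\in L^H$ in the positive cone becomes ample, and by the strong Torelli theorem each Hodge isometry $h\in H$ fixing $h_0$ is induced by a unique automorphism; the resulting $G\leq\Aut(X)$ satisfies $\eta\rho_X(G)\eta^{-1}=H$ and acts by $\chi$ on $\sigma_X$, proving effectivity.

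I expect the ampleness step in sufficiency to be the main obstacle: one must produce an ample class that is genuinely $H$-invariant, which forces the fundamental chamber of the positive cone to be $H$-stable. The mechanism is that the reflections in $(-2)$-classes act simply transitively on the chambers of the positive cone, and conditions (2) and (5) say precisely that no such wall separates the $H$-invariant part of the cone, so an $H$-stable chamber containing an invariant class exists; making this choice equivariant, rather than merely finding some ample class, is the delicate point. The trivial character case is lighter: here $H=H_s$ is necessarily symplectic, the period $\sigma$ is chosen $H$-invariantly in $L^H\otimes\CC$, and (1) automatically upgrades $L^H$ to signature $(3,*)$ since all positive directions are invariant. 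Thus (3) and (4) become vacuous and the criterion reduces to Nikulin's characterization of finite symplectic group actions, namely (1) and (2) alone.
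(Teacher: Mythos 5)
Your proof is correct and takes essentially the same route as the paper's: necessity from the two positive squares carried by the period plane, the averaged $H$-invariant ample class, and Riemann--Roch applied to $(-2)$-classes; sufficiency from a generic period in the $\chi$-eigenspace, surjectivity of the period map, an invariant positive class moved into the ample chamber by the Weyl group, and the strong Torelli theorem. The one imprecision is your claim that no wall $\delta^\perp$ of a $(-2)$-class $\delta\in\NS(\chi)$ meets the invariant positive cone: walls of roots \emph{not} orthogonal to $L^H$ can certainly meet (and even separate) that cone, and what (2) and (5) actually guarantee is that no wall \emph{contains} it, so $h_0$ must be chosen generically in the invariant cone rather than arbitrarily --- exactly as the paper chooses $h\in L^H$ with $h^2>0$ and $h^\perp\cap\NS(\chi)$ free of $(-2)$-classes.
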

\begin{proof}
  If $\chi$ is trivial, this is known (cf. \cite[4.2, 4.3]{nikulin1980}).
  So let $\chi$ be non-trivial. We show that (1-5) are necessary. (1-2) follow from \cite[4.2]{nikulin1980}. Let $H$ be an effective subgroup and $(X,\eta,G)$ an $H$-marked K3 surface.
  Since $T(\chi)\otimes \CC$ contains the period $\eta(\omega)$ of $(X,\eta)$, we have $\eta(\omega+\bar \omega) \in L_\RR^{\chi+\bar \chi}$.
  Recall that $\omega.\bar \omega >0$. Thus $L_\RR^{\chi+\bar \chi}$ has at least two positive squares.
  Let $h \in \NS(X)$ be ample. Then $h' = \sum_{g\in G} g^*h$ is ample and $G$-invariant i.e. $h' \in H^2(X,\ZZ)^G=\eta^{-1}(L^H)$. Since $h'^2>0$, $L^H$ has at least one positive square. Since $H^{1,1}(X)$ and $H^{2,0}(X)\oplus H^{0,2}(X)$ are orthogonal, so are $h$ and $L_\RR^{\chi+\bar \chi}\subseteq \eta(H^{2,0}(X)\oplus H^{0,2}(X))$. Therefore all $3$ positive squares of $L$ are accounted for. This proves (3) and (4).
  For (5) we note that $\NS(\chi)\subseteq \eta(\NS(X))$.
  For $\delta \in \NS(X)$ with $\delta^2=-2$ we know that $\delta$ or $-\delta$ is effective by Riemann--Roch.
  Therefore $h'.\delta \neq 0$, because $h'$ is ample.
  Since $\NS(X)_G \subseteq h^\perp \cap \NS(X)$,
  $\NS(X)_G$ does not contain any vector of square (-2).
  The same holds true for $\NS(\chi)_H\subseteq \eta(\NS(X)_G)$.

Since the signature of $L^{\chi+\bar\chi}_\RR$ is $(2,*)$, we can find an element $\omega$ in $(L\otimes \CC)^\chi$ such that
  $\omega.\bar \omega>0$, $\omega^2=0$. Choosing $\omega$ general enough we achieve $\omega^\perp \cap T(\chi) = 0$. By the surjectivity of the period map \cite[VII (14.1)]{bhpv}, we can find an $L$-marked K3 surface $(X,\eta)$ with period $\omega$.
  By construction $\eta(T(X)) = T(\chi)$, so $X$ is projective and $\eta(\NS(X)) = \NS(\chi)$.
Since $L^H$ is of signature $(1,*)$ and $L_H$ does not contain any $(-2)$-roots, we find $h \in L^H$ with $h^2>0$ and $h^\perp \cap \NS(\chi)$ not containing any $-2$ roots either and after possibly replacing $h$ by $-h$, we can assume that $h$ lies in the positive cone. Thus $h$ lies in the interior of a Weyl chamber.
Since the Weyl group $W(\NS(X))$ acts transitively on the Weyl chambers of the positive cone, we find an element $w \in W(\NS(\chi))$ such that that $(\eta\circ w)^{-1}(h)$ is ample. Let $\eta'= \eta\circ w$. By construction, every element of $G' = \eta'^{-1} H \eta'$
preserves this ample class and the period of $X$. So $G'$ is a group of effective Hodge isometries. By the strong Torelli theorem (see e.g. \cite[VIII \S 11]{bhpv}), $G' = \rho_X(G)$ for some $G \leq \Aut(X)$.

\end{proof}

Note that for any effective character $\chi\colon H \to \CC^\times$ of $H$,
\[\ker \chi = H_s = \{h \in H \mid L^h \mbox{ is of signature } (3,*)\}\]
is independent of $\chi$. Indeed, because $L_{H_s}$ is negative definite, $L^{H_s}\subseteq L^h$ is of signature $(3,*)$ for any $h \in H_s$. On the other hand if $g\in H$ is not in $H_s$, then $\chi(g)\neq 1$ and so $L^g$ is orthogonal to $L_\RR^{\chi+\bar \chi}$ and contains $L^H$. Therefore $L^g$ is of signature $(1,*)$.
The kernel is a normal subgroup and $H/H_s \cong \mu_n$ is cyclic. We say that an effective group $H$ is \emph{symplectic} if $H=H_s$ and non-symplectic otherwise.

\begin{lemma}
 Let $H \leq O(L)$ be effective. There are at most two effective characters $\chi \colon H \to \CC^\times$. They are complex conjugate.
\end{lemma}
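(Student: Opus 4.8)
The plan is to combine the signature condition (4) of \Cref{iseffective} with the orthogonal eigenspace decomposition of $L\otimes\RR$ under $H$ and then count positive squares. First I would record that, by the discussion following \Cref{iseffective}, every effective character $\chi$ has the same kernel $H_s$, and that $H/H_s\cong\mu_n$ is cyclic. Hence each effective $\chi$ factors as $H\twoheadrightarrow\mu_n\hookrightarrow\CC^\times$ through a \emph{faithful} character of $\mu_n$. Fixing a generator $gH_s$ of $\mu_n$, the candidate effective characters are therefore $\chi_a\colon gH_s\mapsto\zeta_n^a$ with $\gcd(a,n)=1$, and $\overline{\chi_a}=\chi_{-a}$. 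The whole point is to cut these $\varphi(n)$ candidates down to a single conjugate pair.

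Next I would decompose $V:=L^{H_s}\otimes\RR$ under the induced $\mu_n$-action. Writing $E_a=\{x\in L^{H_s}\otimes\CC\mid gx=\zeta_n^a x\}$, complex conjugation gives $\overline{E_a}=E_{-a}$, and since $g$ acts as an isometry the form satisfies $\langle u,v\rangle=\zeta_n^{a+b}\langle u,v\rangle$ for $u\in E_a$, $v\in E_b$, so $E_a\perp E_b$ whenever $a+b\not\equiv0\pmod n$. This yields an orthogonal decomposition
\[
L\otimes\RR \;=\; (L_{H_s}\otimes\RR)\;\perp\;(L^{H}\otimes\RR)\;\perp\;\bigperp_{[a]\neq 0} W_{a},
\]
where $[a]$ runs over the conjugate pairs $\{a,-a\}$ of nonzero residues mod $n$ occurring in $V$ and $W_a=(E_a\oplus E_{-a})\cap L_\RR$. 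I would then identify $W_a$ with $L_\RR^{\chi_a+\bar\chi_a}$: since each $h\in H_s$ is a finite order, hence semisimple, isometry, the defining relation $(h+h^{-1})x=2x$ forces $(h-1)x=0$, so $L_\RR^{\chi_a+\bar\chi_a}\subseteq L^{H_s}\otimes\RR$, and inside $V$ the remaining condition $(g+g^{-1})x=(\zeta_n^a+\zeta_n^{-a})x$ cuts out exactly $W_a$.

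Finally I would count positive squares. Condition (1) makes $L_{H_s}$ negative definite (no positive square), condition (3) gives $L^H$ signature $(1,*)$ (one positive square), and $L\otimes\RR$ has three; hence $\sum_{[a]}p_a=2$, where $p_a$ denotes the number of positive squares of $W_a$. Condition (4) says an effective $\chi_a$ requires $W_a=L_\RR^{\chi_a+\bar\chi_a}$ to have signature $(2,*)$, i.e. $p_a=2$. As the $p_a\geq0$ sum to $2$, at most one conjugate pair $[a_0]$ can satisfy this and all others have $p_a=0$. Therefore every effective character lies in $\{\chi_{a_0},\chi_{-a_0}\}=\{\chi,\bar\chi\}$: there are at most two, and they are complex conjugate. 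The symplectic case $H=H_s$ (so $n=1$) is subsumed, the only effective character then being the self-conjugate trivial one.

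The main obstacle I anticipate is the clean identification $L_\RR^{\chi+\bar\chi}=W_a$, in particular verifying that this space has no component in $L_{H_s}\otimes\RR$; this rests on the semisimplicity of the finite order isometries, which collapses $\ker(h-1)^2$ to $\ker(h-1)$. Once the orthogonal decomposition and this identification are in place, the conclusion is just the additivity of at most three positive squares.
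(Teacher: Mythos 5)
Your proof is correct and follows essentially the same route as the paper: both arguments rest on the orthogonal eigenspace decomposition of the real form under the cyclic quotient $H/H_s$ together with the observation that the signature-$(2,*)$ requirement of \Cref{iseffective}(4) can be satisfied by at most one conjugate pair of eigenvalues, because only two positive squares are available outside $L^H$. The only cosmetic difference is that the paper restricts $(h+h^{-1})$ to $T(\chi)_\RR$, whose remaining eigenspaces are then negative definite, whereas you perform the positive-square bookkeeping in all of $L_\RR$, invoking conditions (1) and (3) of \Cref{iseffective} explicitly.
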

\begin{proof}
  Fix a generator $h H_s$ of $H/H_s$ and let $\chi$ be an effective character. It is determined by its value $\chi(h)$, which is a primitive $n$-th root of unity. Set $T=T(\chi)$.
Since $\chi$ is effective,
$L_\RR^{\chi +\bar \chi}= T_\RR^{\chi +\bar \chi}$ is of signature $(2,*)$.
It is equal to the $\chi(h)+\bar \chi(h)$ eigenspace of $(h+h^{-1})|T_\RR$. The other real eigenspaces of $(h+h^{-1})|T_\RR$ are negative definite. Thus any effective character $\chi'$ is equal to $\chi$ or $\bar \chi$.
\end{proof}
In particular this shows that $T(\chi)$ and $\NS(\chi)$ do not depend on the choice of the effective character $\chi$, but only on $H$. We may denote them as $T(H)$ and $\NS(H)$.
\subsection{Moduli spaces and periods}
Let $\M_H$ denote the fine moduli space parametrizing isomorphism classes of $H$-marked K3 surfaces $(X,\eta,G)$. It is a non-Hausdorff complex manifold.
It can be obtained by gluing the base spaces of the universal deformations of $(X,\eta,G)$, see \cite[\S 3]{brandhorst-cattaneo}.

Let $\chi\colon H \to \CC^\times$ be an effective character. We have $\M_H = \M^\chi_H \cup \M^{\bar\chi}_H$ where $\M_H^\chi$ parametrizes isomorphism classes of $H$-marked K3 surfaces $(X,\eta, G)$ with $\chi(\eta \rho_X(g)\eta^{-1}) \cdot \omega_X = (g^*)^{-1} \omega_X$ for all $g \in G$.
By \cite[Prop. 3.9]{brandhorst-cattaneo} the forgetful map $\M_H \to \M_L$, $(X,\eta,G) \mapsto (X,\eta)$ into the moduli space of $L$-marked K3 surfaces is a closed embedding.

\begin{definition}
 Let $H\leq O(L)$ be effective and $\chi\colon H \to \CC^\times$ be a non-trivial effective character.
 We denote by
 \[\DD^\chi= \{\CC \omega \in \PP((L\otimes \CC)^\chi) \mid \sprodb{\omega}{\omega}=0, \sprodb{\omega}{\bar\omega}>0\}\]
 the corresponding period domain and its period map by
 \[\P \colon \M_H^\chi \to \DD^\chi.\]
It is a local isomorphism (see e.g. \cite{brandhorst-cattaneo}).
The discriminant locus is $\Delta = \bigcup  \left\{\PP(\delta^\perp) \mid \delta \in L_H, \delta^2=-2\right\} \subseteq \PP(L_\CC)$.
\end{definition}

\begin{proposition}\label{perioddomain}
Let $\chi \colon H \to \CC^\times$ be an effective and non-trivial character.
 \begin{enumerate}
  \item The image of $\P$ is $\DD^\chi \setminus \Delta $.
  \item If $(X_1,\eta_1,G_1)$ and $(X_2,\eta_2,G_2)$ lie in the same fiber $\P^{-1}(\CC \omega)$, then $(X_1,G_1)$ and $(X_2,G_2)$ are conjugate.
  \item If $\chi$ is real, then $\DD^\chi \setminus \Delta $ has two connected components. They are complex conjugate. If $\chi$ is not real, then $\DD^\chi \setminus \Delta$ is connected.
 \end{enumerate}
\end{proposition}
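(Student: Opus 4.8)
The plan is to treat the three assertions separately, relying throughout on surjectivity of the period map and the strong Torelli theorem exactly as in the final paragraph of the proof of \Cref{iseffective}. For (1) I would first show that the image is contained in $\DD^\chi\setminus\Delta$: if $(X,\eta,G)$ has period $\CC\omega$ with $\omega$ orthogonal to some $\delta\in L_H$ of square $-2$, then $\eta^{-1}(\delta)$ is a $(-2)$-class orthogonal to the period, hence lies in $\NS(X)$, so by Riemann--Roch $\pm\eta^{-1}(\delta)$ is effective and pairs nontrivially with the $G$-invariant ample class $h'$; this contradicts $\delta\in L_H=(L^H)^\perp$ together with $\eta(h')\in L^H$. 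For the reverse inclusion, given $\omega\in\DD^\chi\setminus\Delta$ I would use surjectivity of the period map to get an $L$-marked $(X,\eta)$ with period $\omega$. Since $\omega\in\DD^\chi$, the group $H$ fixes the line $\CC\omega$, so $\eta^{-1}H\eta$ consists of Hodge isometries, the signature conditions force $X$ projective, and $\NS(X)=\omega^\perp\cap L$. Because $\omega\notin\Delta$, no $(-2)$-vector of $\NS(X)$ lies in $L_H$, so a generic $h\in L^H$ in the positive cone has $h\cdot\delta\neq 0$ for every $(-2)$-vector $\delta\in\NS(X)$; thus $\eta^{-1}(h)$ lies in a Weyl chamber interior. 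Applying $w\in W(\NS(X))$ carrying that chamber to the ample cone, and replacing $\eta$ by $\eta\circ w$ (which fixes the period, as $w$ acts trivially on $\NS(X)^\perp$), the new group $\eta'^{-1}H\eta'$ fixes the now-ample class precisely because $h\in L^H$; strong Torelli then realizes it by automorphisms, placing $\omega$ in the image.

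For (2), given two surfaces with the same period $\CC\omega$, I would study $\psi=\eta_2^{-1}\circ\eta_1\colon\HH^2(X_1,\ZZ)\to\HH^2(X_2,\ZZ)$. This is an isometry, a Hodge isometry since both markings send the holomorphic form to $\CC\omega$, and equivariant: $\psi\,\rho_{X_1}(G_1)\,\psi^{-1}=\rho_{X_2}(G_2)$. After possibly replacing $\psi$ by $-\psi$ it preserves the positive cones. Taking a $G_1$-invariant ample class $a_1$, its image $\psi(a_1)$ is $G_2$-invariant, lies in the positive cone, and—being orthogonal to no $(-2)$-vector—lies in the interior of a unique Weyl chamber $C$. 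Both $C$ and the ample cone $C_2$ of $X_2$ are $\rho_{X_2}(G_2)$-invariant, so the unique $w\in W(\NS(X_2))$ with $wC=C_2$ commutes with $\rho_{X_2}(G_2)$, by the uniqueness argument $gwg^{-1}=w$. Then $w\psi$ is an equivariant effective Hodge isometry, strong Torelli produces an isomorphism $\phi\colon X_1\to X_2$ inducing it, and equivariance together with faithfulness of $\rho$ yields $\phi G_1\phi^{-1}=G_2$.

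For (3) I would split on whether $\chi$ is real. If $\chi$ is real, then $(L\otimes\CC)^\chi$ is the complexification of the real space of signature $(2,*)$, and $\DD^\chi$ is the classical type IV domain, which has exactly two connected components interchanged by $\omega\mapsto\bar\omega$; concretely a point records a positively oriented positive-definite $2$-plane, and the two orientations give the two halves. If $\chi$ is not real, then $\chi(h)^2\neq 1$ for some $h$, so the bilinear form vanishes identically on $(L\otimes\CC)^\chi$; the condition $\omega^2=0$ becomes vacuous, and $\DD^\chi=\{[\omega]\mid\omega\bar\omega>0\}$ is the positive locus of a Hermitian form of signature $(1,*)$, hence a connected complex ball. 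In both cases it remains to delete $\Delta$ without changing the component count: by effectivity condition (5) no $(-2)$-vector $\delta\in L_H$ is orthogonal to the entire eigenspace, so each $\PP(\delta^\perp)\cap\DD^\chi$ is a proper complex-analytic hypersurface; since $\Delta$ is a countable union of such sets of real codimension $2$, removing it from each connected piece preserves connectedness.

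The hard part will be the equivariant Weyl correction in (1) and (2): one must push an invariant positive class into the ample cone while keeping the group action intact. In (1) this is avoided by absorbing $w$ into the marking and exploiting $h\in L^H$, whereas in (2) it rests on the fact that any Weyl element carrying one $G_2$-invariant chamber to another is automatically $G_2$-equivariant. A secondary point requiring care is checking that $\Delta$ meets each component of $\DD^\chi$ in a proper subset, which is precisely where effectivity condition (5) enters.
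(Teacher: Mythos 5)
Your proposal is correct, and its skeleton coincides with the paper's proof: part (1) by surjectivity of the period map plus a Weyl-group adjustment of the marking and the strong Torelli theorem, part (2) by correcting the Hodge isometry $\eta_2^{-1}\circ\eta_1$ (up to sign) with an equivariant Weyl element before applying strong Torelli, and part (3) by identifying the components of $\DD^\chi$ and removing the real-codimension-two locus $\Delta$. The genuine difference is that you make self-contained two steps the paper delegates to the literature. In (2) the paper invokes Oguiso--Sakurai (Lemma 1.7 and Theorem 1.8 of \cite{oguiso-sakurai}) for the existence of a unique $w \in \langle \pm 1\rangle \times W(\NS(X_1))$ commuting with the group action; you instead prove equivariance directly: the chamber $C$ containing the invariant class $\psi(a_1)$ and the ample chamber $C_2$ are both $\rho_{X_2}(G_2)$-invariant, $W(\NS(X_2))$ is normal in the isometries preserving the positive cone, and simple transitivity forces $gwg^{-1}=w$. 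In (3) the paper cites Dolgachev--Kond\=o for the component count; you rederive it (type IV domain with two components swapped by $\omega \mapsto \bar\omega$ when $\chi$ is real; a connected ball for the signature-$(1,*)$ hermitian form when $\chi$ is not real, using that the bilinear form vanishes on the eigenspace), and you correctly pinpoint that effectivity condition (5) is what guarantees each hyperplane $\PP(\delta^\perp)$ meets $\DD^\chi$ properly, a point the paper leaves implicit. These substitutions buy a more self-contained proof at the cost of length; the paper's citations buy brevity. Two trivial points to tidy: the isomorphism produced by strong Torelli from $\psi\colon \HH^2(X_1,\ZZ)\to\HH^2(X_2,\ZZ)$ goes $X_2 \to X_1$ (harmless for the conjugacy conclusion), and for the removal of $\Delta$ it is cleanest to invoke local finiteness of the $(-2)$-hyperplane arrangement, as the paper does, rather than only countability.
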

\begin{proof}
(1) Let $\CC \omega \in \DD^\chi$.
By the surjectivity of the period map we find a marked K3 surface $(X,\eta)$ with period $\CC \omega$.
Set $N = \CC \omega^\perp \cap L$. By Lefschetz' theorem on $(1,1)$-classes we have $\eta(\NS(X))=N$.

  If $\CC\omega \not \in \Delta$, we have that $\NS(H)_H = \NS(H) \cap (L^H)^\perp$ does not contain any roots. This means that $\eta^{-1}(\NS(H)^H)\subseteq \NS(X)$ contains an ample class.
Thus $\eta^{-1} H \eta$ preserves the period and the ample cone. By the strong Torelli theorem there is a (unique) group of automorphisms $G\leq \Aut(X)$ with
$\eta \rho_X(G) \eta^{-1}=H$.

If conversely $\CC \omega \in \Delta$, then $H$ does not preserve a Weyl chamber of $N$ so that $H$ cannot come from a group of automorphisms of $X$.

(2) Let $(X_1,\eta_1,G_1)$ and $(X_2,\eta_2,G_2)$ be $H$-marked K3 surfaces in the fiber of $\CC \omega$.
Then $\varphi = \eta_1^{-1} \circ \eta_2$ is a Hodge isometry which conjugates $\rho_{X_1}(G_1)$ and $\rho_{X_2}(G_2)$. However, it may not preserve the ample cones. By \cite[Lemma 1.7 and Theorem 1.8]{oguiso-sakurai} there exists a unique element $w \in \langle \pm 1\rangle \times W(\NS(X_1))$
such that $w \circ \varphi$ preserves the ample cones and $w g^*= g^* w$ for all $g \in G_1$. Since now $w \circ \varphi$ is an effective Hodge isometry, the strong Torelli Theorem applies and provides an isomorphism $F\colon X_1 \to X_2$ with $F^*=w \circ \varphi$.
By construction we have $FG_1 F^{-1} = G_2$ as desired.

  (3) By \cite[\S9 \& \S 11]{dolgachev-kondo2005} the period domain $\DD^\chi$ has $2$ connected components if $\chi$ is real and one else. The discriminant locus is a locally finite union of real codimension two hyperplanes. Removing it does not affect the number of connected components.
\end{proof}

\begin{remark}
If $\omega \in L_\CC^\chi$, then we have $\omega^2 = \chi(g)^2 \omega^2$ for any $g \in G$. Thus $(1-\chi(g)^2)\omega^2=0$.
Let $n = [H:H_s]=\lvert \image \chi \rvert$. For $n>2$ this condition implies $\omega^2=0$ and hence the dimension of $\DD^\chi$ is $\dim_\CC L^\chi_\CC -1$.
While for $n=2$ it is $\dim L^\chi_\CC -2$.
\end{remark}

Denote by $N(H)=\{f \in O(L) \mid f H = H f\}$ the normalizer of $H$ in $O(L)$.
If $(X,\eta,G)$ is an $H$-marked K3 surface and $f \in N(H)$, then $(X,f \circ \eta, G)$ is an $H$-marked K3 surface as well. In fact all $H$-markings of $(X,G)$ arise in this way.
So $N(H)$ is the group of changes of marking.

Set $\DD_H = \DD^\chi \cup \DD^{\bar \chi}$.
The group $N(H)$ acts on $\DD_H$ via an arithmetic subgroup of $O(T(H))$, respectively $U(T(H))$.
Therefore by \cite{baily-borel1966} the space $\DD_H/N(H)$ is a quasi-projective variety with only finite quotient singularities.

\begin{theorem}
 The coarse moduli space $\F_H:=\M_H/N(H)$ of $H$-markable K3 surfaces admits a bijective period map
 $\F_H \to (\DD_H \setminus \Delta)/N(H)$.
\end{theorem}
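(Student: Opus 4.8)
The plan is to realize the asserted period map as the descent of the period map $\P$ of \Cref{perioddomain} along the $N(H)$-action, and then check bijectivity orbit-wise. Recall that $\M_H=\M_H^\chi\cup\M_H^{\bar\chi}$ and $\DD_H=\DD^\chi\cup\DD^{\bar\chi}$, and that on each piece $\P$ is a local isomorphism with image $\DD^\chi\setminus\Delta$, resp. $\DD^{\bar\chi}\setminus\Delta$. First I would record that $\P$ is $N(H)$-equivariant: since $N(H)$ is precisely the group of changes of marking, an element $f\in N(H)$ sends $(X,\eta,G)$ to $(X,f\circ\eta,G)$, whose period is the $f$-translate of the period of $(X,\eta,G)$. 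Because $f$ normalizes $H$ it fixes $L^H$ and hence preserves $L_H$, so it permutes the hyperplanes $\PP(\delta^\perp)$ with $\delta\in L_H$, $\delta^2=-2$, i.e. it preserves $\Delta$. Thus $N(H)$ acts on $\DD_H\setminus\Delta$ and $\P$ descends to a well-defined map $\bar\P\colon\F_H\to(\DD_H\setminus\Delta)/N(H)$.

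Surjectivity is then immediate from \Cref{perioddomain}(1): already before passing to quotients, $\P$ surjects onto $\DD_H\setminus\Delta$, so $\bar\P$ is onto.

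The heart of the argument is injectivity. Suppose $(X_1,\eta_1,G_1)$ and $(X_2,\eta_2,G_2)$ represent points of $\F_H$ with $\bar\P$-images that agree, so their periods lie in a single $N(H)$-orbit. Replacing $\eta_2$ by $f\circ\eta_2$ for a suitable $f\in N(H)$ does not change the class in $\F_H$, so I may assume the two periods coincide; both surfaces then lie in one fiber of $\P$ over a point of $\DD^\chi$ (or both over $\DD^{\bar\chi}$). Now \Cref{perioddomain}(2) yields an isomorphism $\phi\colon X_1\to X_2$ with $\phi G_1\phi^{-1}=G_2$. To upgrade this conjugacy of pairs to an equality of points of $\F_H$, I would set $f=\eta_2\circ(\phi^*)^{-1}\circ\eta_1^{-1}\in O(L)$. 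A direct check gives $f\circ\eta_1\circ\phi^*=\eta_2$, so that $(X_2,\eta_2,G_2)$ is conjugate to $f\cdot(X_1,\eta_1,G_1)$ as $H$-marked K3 surfaces, provided $f\in N(H)$. The latter is the only real computation: writing $\rho_X(g)=(g^{-1})^*$ and using $\rho_{X_2}(\phi g\phi^{-1})=(\phi^*)^{-1}\rho_{X_1}(g)\phi^*$, one obtains $f\,\bigl(\eta_1\rho_{X_1}(G_1)\eta_1^{-1}\bigr)\,f^{-1}=\eta_2\rho_{X_2}(G_2)\eta_2^{-1}$, and since both sides equal $H$ this gives $fHf^{-1}=H$. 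Hence the two representatives lie in the same $N(H)$-orbit of $\M_H$, i.e. define the same point of $\F_H$, proving injectivity.

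I expect the main obstacle to be exactly this last bookkeeping step: one must verify that the Torelli isomorphism $\phi$ intertwines the two $H$-markings up to an element of the \emph{normalizer} $N(H)$, and not merely up to some isometry of $L$. The equivariance and the normalizer computation are where the direction conventions for $\rho_X$ and pullbacks must be handled with care; everything else reduces to \Cref{perioddomain} and the description of $N(H)$ as the group of changes of marking.
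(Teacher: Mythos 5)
Your proposal is correct and takes essentially the same route as the paper: descend the period map along the $N(H)$-action, obtain surjectivity from \Cref{perioddomain}~(1) and injectivity from \Cref{perioddomain}~(2). Your explicit verification that $f=\eta_2\circ(\phi^*)^{-1}\circ\eta_1^{-1}$ lies in $N(H)$ carefully spells out the bookkeeping the paper leaves implicit (there it is hidden in the proof of part~(2), where the correcting Weyl-group element commutes with the group action), but it is a completion of the same argument rather than a different one.
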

\begin{proof}
We can use the action of the normalizer to forget the marking and thus obtain a period map $\F_H =\M_H/N(H) \to (\DD_H\setminus \Delta)/N(H)$.
That it is bijective follows from \Cref{perioddomain}. Part (1) gives surjectivity and part (2) injectivity. Indeed, if two $H$ polarizable K3 surfaces have the same image, then we can find markings on them such that they lie in the same fiber of the period map $\M_H \to \DD_H$. Then they are conjugate by \Cref{perioddomain} (2).
\end{proof}
See \cite{alexeev-engel,alexeev-engel-han} for more on moduli of K3 surfaces and their compactifications.

\subsection{Connected components}

We next show that deformation classes of K3 surfaces with
finite groups of automorphisms are precisely the connected components of the coarse moduli space
of $H$-markable K3 surfaces $\F_H$. In the following, for a topological space $Y$ we denote by $\pi_0(Y)$ the set of (path) connected components of $Y$.

\begin{theorem}\label{deformation class vs F_H}
  Let $(X,G)$ and $(X',G')$ be two pairs consisting of K3 surfaces $X, X'$ and finite groups of automorphisms $G \leq \Aut(X)$, $G' \leq \Aut(X')$. Then $(X,G)$ and $(X',G')$ are deformation equivalent if and only if they are markable by the same effective subgroup $H\leq O(L)$ and they lie in the same connected component of $\F_H$.
\end{theorem}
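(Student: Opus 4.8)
The plan is to deduce both implications from two features of the fine moduli space $\M_H$: by the universal property a family of $H$-marked K3 surfaces over a base is the same datum as a continuous map into $\M_H$, and the universal family over $\M_H$ may be restricted to any connected subset; moreover $\F_H = \M_H/N(H)$, where $N(H)$ acts by the change-of-marking rule $f\cdot(X,\eta,G) = (X,f\circ\eta,G)$, which leaves the underlying pair $(X,G)$ unchanged.

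First I would prove that being markable by the same effective $H$ and lying in one component of $\F_H$ forces deformation equivalence. The $H$-markings give points $m,m' \in \M_H$ whose images lie in the same component of $\F_H$. Since $N(H)$ acts on $\M_H$ by biholomorphisms and hence permutes its connected components, one has $\pi_0(\F_H)=\pi_0(\M_H)/N(H)$; thus there is $f \in N(H)$ with $f\cdot m'$ in the same component of $\M_H$ as $m$. By the change-of-marking rule $f\cdot m'$ still underlies $(X',G')$. As $\M_H$ is a locally Euclidean (though non-Hausdorff) complex manifold, its components are path-connected, so I may join $m$ and $f\cdot m'$ by a path and restrict the universal family over $\M_H$ to it. This is a connected family of K3 surfaces with a group of automorphisms interpolating between pairs conjugate to $(X,G)$ and $(X',G')$, which is exactly a deformation equivalence.

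For the converse I would begin with a connecting family $\pi\colon\X \to B$, a group $\G \leq \Aut(\X/B)$ and points $b,b'$ whose fibers are conjugate to $(X,G)$ and $(X',G')$. Choosing a path from $b$ to $b'$ in the connected base $B$ and trivializing the local system $R^2\pi_*\underline{\ZZ}$ along it produces a continuous family of markings $\eta_t\colon \HH^2(\X_{b_t},\ZZ)\to L$. The decisive point is that $\G$, being a group of automorphisms of the whole family, acts on $R^2\pi_*\underline{\ZZ}$ by automorphisms of local systems; hence the induced subgroup $\eta_t\,\rho_{\X_{b_t}}(\G)\,\eta_t^{-1} \leq O(L)$ is locally constant in $t$ and therefore equals a single effective $H$ for every $t$. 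In particular both end fibers, and with them $(X,G)$ and $(X',G')$, are markable by this same $H$. The resulting family of $H$-marked K3 surfaces over the path defines, by the universal property of $\M_H$, a path in $\M_H$ from $m_b$ to $m_{b'}$; projecting to $\F_H$ and recalling that conjugate pairs map to the same point there shows that $(X,G)$ and $(X',G')$ lie in one component of $\F_H$.

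The crux is the flatness input used in the converse: one must argue that the action of $\G$ is parallel for the local-system (Gauss--Manin) structure, so that under a flat marking its image in $O(L)$ does not move with $t$. This is what promotes the fibrewise ``conjugate subgroups of $O(L)$'' to a single fixed subgroup $H$, and it is also the mechanism that, via the change-of-marking action of $N(H)$, matches the components of $\M_H$ with those of $\F_H$ in the first implication.
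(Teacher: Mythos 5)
Your proof is correct and follows essentially the same route as the paper's: one direction via parallel transport of the marking in the local system $R^2\pi_*\underline{\ZZ}$ (you make explicit the local constancy of $\eta_t\,\rho(\G)\,\eta_t^{-1}$, which the paper leaves implicit when it transports the $H$-marking from the fiber over $b$ to the fiber over $b'$), and the other direction via the identification $\pi_0(\F_H)\cong\pi_0(\M_H)/N(H)$ together with the universal family over the fine moduli space $\M_H$. The only cosmetic difference is that you restrict the universal family to a path, whereas the paper in effect takes the whole connected component of $\M_H$ as the base of the connecting family; both produce a connected family as required by \Cref{defequivalent}.
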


\begin{proof}
Every pair $(X,G)$ is $H$-markable for some effective subgroup $H\leq O(L)$.

Let $(X',G')$ be deformation equivalent to $(X,G)$. This means that we find a connected family
$\X \to B$, a group of automorphisms $\G\leq \Aut(\X/B)$ and points $b,b' \in B$ such that the fibers above $b$ and $b'$ are conjugate to $(X,G)$ and $(X',G')$. The $H$-marking of $(X,G)$ induces an $H$-marking of the fiber above $b$. By parallel transport in the local system $R^2 \pi_*\underline{\ZZ}$ we move the marking from $(X,G)$ to $(X',G')$ along some continuous path $\gamma$ in $B$ connecting $b$ and $b'$. Therefore the fiber above $b'$ is $H$-markable. The isomorphism of the fiber with $(X',G')$ allows to transport this marking to $(X',G')$.  Therefore $(X',G')$ is $H$-markable. Its point in the moduli space $\F_H$ of $H$-markable K3-surfaces coincides with that of the fiber above $b'$. Likewise $(X,G)$ gives the same point in $\F_H$ as the fiber above $b$. Since the two fibers lie in the same connected component of $\M_H$ their images lie in the same connected component of $\F_H$.

Conversely let $(X,G)$ and $(X',G')$ be $H$-markable and in the same connected component of $\F_H$.
Then we can find markings $\eta$, and $\eta'$ such that $(X,\eta,G)$ and $(X,\eta',G')$ are $H$-marked.
Then $\pi_0(\F_H) = \pi_0(\M_H/N(H)) \cong \pi_0(\M_H)/N(H)$. Since $(X,G)$ and $(X',G')$ lie in the same connected component of $\F_H$, we find $n \in N(H)$ such that $(X,\eta,G)$ and $(X,n \circ \eta',G)$ lie in the same connected component of $\M_H$. Since $\M_H$ is a fine moduli space it has a universal family and this gives a deformation of $(X,\eta,G)$ and $(X,n \circ \eta',G)$ as $H$-marked K3 surfaces. By forgetting the markings we obtain a deformation of $(X,G)$ with $(X',G')$.
\end{proof}

\begin{corollary}
  The set of deformation classes of pairs $(X,G)$ with $X$ a K3 surface and $G \leq \Aut(X)$ with $G \neq G_s$ is in bijection with the set $\bigcup_{H \in T} \pi_0(\F_H)$, where $T$ is a transversal of the set of conjugacy classes effective, non-symplectic subgroups of $O(L)$.
\end{corollary}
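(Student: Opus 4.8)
The plan is to construct the bijection directly from \Cref{deformation class vs F_H}, so that the substance of the corollary reduces to two bookkeeping points: that every non-symplectic pair is markable by a well-defined conjugacy class of effective non-symplectic subgroups, and that the symplectic/non-symplectic dichotomy for pairs $(X,G)$ matches the one for subgroups $H \leq O(L)$.

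First I would pin down the map. Given a pair $(X,G)$ with $G \neq G_s$, choose any marking $\eta$ with $\eta \rho_X(G) \eta^{-1} = H$ for some effective $H \leq O(L)$; such a marking exists, as recorded at the start of the proof of \Cref{deformation class vs F_H}. Changing the marking by $f \in O(L)$ replaces $H$ by $fHf^{-1}$, and conversely every $O(L)$-conjugate of $H$ is realized by a suitable marking. Hence the set of subgroups by which $(X,G)$ is markable is exactly one $O(L)$-conjugacy class, so there is a unique representative $H \in T$ with which $(X,G)$ is markable. I would then check that $G \neq G_s$ forces $H$ non-symplectic: via $\eta$, the effective character $\chi$ of $H$ is precisely the one recording the action of $G$ on $\CC \sigma_X$, so $H_s = \ker \chi$ corresponds to $\rho_X(G_s)$ and $H = H_s$ if and only if $G = G_s$. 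Thus $H \in T$ lies among the effective non-symplectic subgroups. Forgetting the marking, $(X,G)$ determines a point of $\F_H$, since $\F_H = \M_H/N(H)$ parametrizes $H$-markable pairs up to conjugacy and change of marking, and hence a component $\Psi(X,G) \in \pi_0(\F_H) \subseteq \bigcup_{H \in T} \pi_0(\F_H)$.

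Next I would verify that $\Psi$ descends to deformation classes and is a bijection. If $(X,G)$ and $(X',G')$ are deformation equivalent, then by the forward implication of \Cref{deformation class vs F_H} they are markable by the same subgroup (which I may take to be the common transversal representative $H \in T$) and lie in the same component of $\F_H$, so $\Psi$ is constant on deformation classes. Injectivity is exactly the converse implication of \Cref{deformation class vs F_H}: two pairs assigned the same $H \in T$ and the same component of $\F_H$ are deformation equivalent. For surjectivity, fix $H \in T$ and a component $C \in \pi_0(\F_H)$; since $H$ is effective, $\F_H$ is nonempty and $C$ is nonempty, and any point of $C$ is the class of some $H$-markable pair $(X,G)$. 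As $H$ is non-symplectic, $G \neq G_s$, and $\Psi(X,G) = C$ by construction.

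I do not expect an analytic obstacle here, since the hard content is already packaged into \Cref{deformation class vs F_H}; the main difficulty is organizational. Specifically, the two points that need care are that \emph{markability by $H$ is an $O(L)$-conjugacy-invariant property}, so that the transversal $T$ indexes the pairs without overcounting and each pair lands in exactly one $\pi_0(\F_H)$, and that the equivalence $G = G_s \Leftrightarrow H = H_s$ genuinely holds. The first is the short change-of-marking computation indicated above; the second follows from identifying the geometric character, given by the action on the period $\sigma_X$, with the lattice-theoretic character $\chi$ used to define $H_s$.
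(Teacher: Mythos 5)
Your proposal is correct and follows essentially the same route as the paper, which states this corollary without proof as an immediate consequence of \Cref{deformation class vs F_H}; your write-up simply makes explicit the bookkeeping the paper leaves implicit (markability classes being $O(L)$-conjugacy classes, the correspondence $G=G_s \Leftrightarrow H=H_s$, and surjectivity via effectiveness of $H$). No gaps.
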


Let $L$ be a K3 lattice and $H\leq O(L)$ an effective subgroup. Our next goal is to determine the connected components of the coarse moduli space $\F_H$ of $H$-markable K3 surfaces.
Since the period domain $\DD_H \setminus \Delta$ has exactly two connected components (\Cref{perioddomain}~(3)), $\F_H$ has at most two components as well. It has only one connected component if and only if the action of $N(H)$ on $\DD_H$ exchanges the two components.

Let $\chi: H \to \CC^\times$ be an effective character. For $n \in N(H)$ denote by
$\chi^n$ the character defined by $\chi^n(h) = \chi(n^{-1} h n)$.
Denote by $N(\chi)$ the stabilizer of $\chi$ in $N(H)$. For completeness sake we mention the following proposition.

\begin{proposition}
 Let $\chi \colon H \to \CC^\times$ be an effective character and $[H:H_s]>2$.
 Then the number of connected components of $\F_H$ is $2/[N(H):N(\chi)]$.
\end{proposition}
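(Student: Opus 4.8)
The plan is to realise $\pi_0(\F_H)$ as the set of $N(H)$-orbits on the connected components of the period domain and then count those orbits, which will turn out to be governed entirely by a $\ZZ/2\ZZ$-action on a two-element set. By the bijective period map $\F_H \cong (\DD_H \setminus \Delta)/N(H)$ established above, it suffices to analyse the $N(H)$-action on $\pi_0(\DD_H \setminus \Delta)$. First I would record the component count: since $[H:H_s] = n > 2$, the value $\chi(h)$ on a generator of $H/H_s$ is a primitive $n$-th root of unity, so $\chi$ is not real; hence by \Cref{perioddomain}~(3) each of $\DD^\chi \setminus \Delta$ and $\DD^{\bar\chi} \setminus \Delta$ is connected, and since $\DD^\chi$ and $\DD^{\bar\chi}$ are disjoint, $\DD_H \setminus \Delta$ has exactly two connected components.

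Next I would pin down how $N(H)$ permutes these two components. For $g \in N(H)$ and $0 \neq \omega \in (L\otimes\CC)^\chi$, the computation $h(g\omega) = g\bigl((g^{-1}hg)(\omega)\bigr) = \chi(g^{-1}hg)\,g\omega = \chi^g(h)\,g\omega$ shows that $g$ sends $(L\otimes\CC)^\chi$ into $(L\otimes\CC)^{\chi^g}$, and since $g$ is a real isometry it preserves the conditions $\sprodb{\omega}{\omega}=0$, $\sprodb{\omega}{\bar\omega}>0$; thus $g$ carries $\DD^\chi$ to $\DD^{\chi^g}$. As $N(H)$ preserves the intrinsic subgroup $H_s = \ker\chi$, the character $\chi^g$ has the same kernel and is again effective, so by the lemma above $\chi^g \in \{\chi, \bar\chi\}$. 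Assigning to $g$ whether it fixes or interchanges $\chi$ and $\bar\chi$ therefore defines a homomorphism $N(H) \to \ZZ/2\ZZ$ whose kernel is, by definition, the stabiliser $N(\chi)$; in particular $[N(H):N(\chi)] \in \{1,2\}$.

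Finally I would count the orbits according to the two cases. If $[N(H):N(\chi)] = 1$, every $g \in N(H)$ fixes $\chi$ and hence preserves each of the two components separately, so there are two orbits and $\F_H$ has two connected components. If $[N(H):N(\chi)] = 2$, some $g \in N(H)$ interchanges $\DD^\chi$ and $\DD^{\bar\chi}$, so $N(H)$ acts transitively on the two components and $\F_H$ is connected. In either case the number of components equals $2/[N(H):N(\chi)]$.

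The step demanding the most care is the claim $\chi^g \in \{\chi, \bar\chi\}$, which is what forces the permutation action to factor through $N(H)/N(\chi)$. It relies on two inputs: that conjugation by $N(H)$ preserves $H_s$ (so $\chi^g$ keeps the right kernel and stays effective) and that effectivity pins the character down to one of exactly two complex-conjugate choices. Once this is in hand the remainder is routine orbit-counting, together with the invariance of $\Delta$ under $N(H)$ needed to make the action descend to $\DD_H \setminus \Delta$.
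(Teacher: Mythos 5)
Your proposal is correct and follows essentially the same route as the paper: both arguments reduce the component count to the $N(H)$-action on the two-element set $\{\chi,\bar\chi\}$ with stabilizer $N(\chi)$ — the paper via the change-of-marking action $(X,\eta,G)\mapsto (X,n\circ\eta,G)$ on the two components $\M_H^{\chi},\M_H^{\bar\chi}$ of $\M_H$, you via the equivalent eigenspace action $g(\DD^{\chi})=\DD^{\chi^g}$ on the two components of $\DD_H\setminus\Delta$. One small caveat: the effectivity of $\chi^g$ (needed before the two-character lemma can be applied) does not follow from kernel preservation alone, as your parenthetical suggests, but it is immediate from the paper's observation that $(X,g\circ\eta,G)$ is again an $H$-marked K3 surface, which realizes $\chi^g$ geometrically.
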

\begin{proof}
 Since $[H:H_s]>2$, the connected components of $\M_H$ are $\M_H^\chi$ and $\M_H^{\bar \chi}$. If $(X,\eta,G) \in \M_H^\chi$ and $n \in N(H)$ then $(X,n \circ \eta,G) \in \M_H^{\chi^n}$.
\end{proof}

Let now $[H:H_s]=2$. Then we have seen that $\chi=\bar \chi$ and $\DD^\chi$ has two connected components.
This can be dealt with by introducing \emph{positive sign structures}. Our account follows \cite{shimada2018}.
A period $\CC \omega \in \DD^\chi$ can be seen as an oriented, positive definite real $2$-plane. Indeed, the two real vectors $\re \omega, \im \omega \in L_\RR$ give an ordered (and thus oriented), orthogonal basis of a positive definite plane in $L$.

\begin{definition}
 Let $L$ be a $\ZZ$-lattice of signature $(s_+,s_-)$. Then a \emph{sign structure} on $L$ is defined as a choice of one of the connected components $\theta$ of the manifold parametrizing oriented, $s_+$-dimensional, positive definite, real subspaces $S$ of $L_\RR$. Unless $L$ is negative definite it has exactly two positive sign structures.
\end{definition}

For $[H:H_s]>2$, the periods in $\DD^\chi$ all give the same sign structure. But for $[H:H_s]=2$ there are two sign structures which give the two connected components of the period domain.
Whether or not $N(H)$ preserves the sign structure is encoded by a certain character. See \cref{sect:lattices} for our conventions on the spinor norm.
\begin{proposition}[\cite{looijenga-wahl1986,miranda-morrisonI}]
 Let $L$ be an $\RR$-lattice, so that the spinor norm takes values in $\{\pm 1\}\cong \RR^\times/\RR^{\times 2}$.
 The action of an isometry $g \in O(L)$ on the set of positive sign structures of $L$ is trivial if and only if $\det(g) \cdot \spin(g) > 0$.
\end{proposition}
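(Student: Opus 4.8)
The plan is to relate the action of $g$ on positive sign structures to the sign of $\det(g) \cdot \spin(g)$ by reducing to the two irreducible cases of isometries of a real quadratic space. First I would diagonalize the situation: since $g \in O(L)$ is an isometry of a real space $L_\RR$ of signature $(s_+, s_-)$, its action on the positive cone of sign structures is governed only by how $g$ permutes the orientation of maximal positive-definite subspaces. Because any two such subspaces are connected through the manifold of positive-definite $s_+$-planes, the question reduces to a $\ZZ/2$-valued invariant: whether $g$ preserves or reverses the chosen orientation $\theta$. Call this invariant $\epsilon(g) \in \{\pm 1\}$; it is a homomorphism $O(L) \to \{\pm 1\}$, so it suffices to check the formula $\epsilon(g) = \det(g) \cdot \spin(g)$ on a generating set.

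By the Cartan--Dieudonn\'e theorem, $O(L_\RR)$ is generated by reflections $\tau_v$, so the heart of the argument is to verify the identity on a single reflection $\tau_v$ and use multiplicativity. For a reflection I would split into two cases according to the sign of $v^2$. If $v^2 > 0$, then $v$ lies in the positive part, $\det(\tau_v) = -1$, and the spinor norm is $\spin(\tau_v) = Q(v) = v^2/2 > 0$, so $\det \cdot \spin < 0$; geometrically $\tau_v$ flips the orientation of any positive-definite plane containing $v$ (and hence, by connectedness, reverses every sign structure), giving $\epsilon(\tau_v) = -1$. If $v^2 < 0$, then $\det(\tau_v) = -1$ again but $\spin(\tau_v) = v^2/2 < 0$, so $\det \cdot \spin > 0$; here $\tau_v$ acts trivially on the negative directions' complement and preserves the orientation of a maximal positive-definite subspace lying in $v^\perp$, so $\epsilon(\tau_v) = +1$. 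In both cases the claimed identity $\epsilon(\tau_v) = \det(\tau_v)\cdot \spin(\tau_v)$ holds, and since both sides are homomorphisms into $\{\pm 1\}$, the formula extends to all of $O(L_\RR)$, in particular to $O(L)$.

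The one genuinely delicate point, and the step I expect to be the main obstacle, is justifying rigorously that the geometric orientation-flipping invariant $\epsilon$ is \emph{well-defined and multiplicative} independently of the auxiliary choices. Concretely: for the reflection $\tau_v$ with $v^2 > 0$ one must argue that reversing orientation on one positive plane through $v$ forces reversal on \emph{every} positive sign structure, which uses that the space of positive-definite $s_+$-planes is connected and that $\tau_v$ acts continuously on it; and for $v^2 < 0$ one must exhibit a maximal positive-definite subspace contained in $v^\perp$ on which $\tau_v$ restricts to the identity, which requires $s_+ \leq s_- + (\dim - 1)$, automatic here. Rather than tracking orientations by hand, the clean route is to invoke the cited results \cite{looijenga-wahl1986, miranda-morrisonI}, where exactly this computation on reflections is carried out; I would therefore state the reflection computation as the key lemma and reference those sources for the verification that $\epsilon$ is the sign character $\det \cdot \spin$, leaving only the case analysis above as the content to be checked.
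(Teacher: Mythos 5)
The paper itself contains no proof of this proposition: it is imported verbatim from the cited sources \cite{looijenga-wahl1986,miranda-morrisonI}, so there is no in-paper argument to compare against. Your proposal supplies the standard argument from those references, and it is correct and essentially complete; in fact it is more complete than you give it credit for. The point you single out as the main obstacle --- well-definedness and multiplicativity of the invariant $\epsilon$ --- is automatic and needs no hand-tracking of orientations: the unoriented Grassmannian of positive-definite $s_+$-planes in $L_\RR$ is connected (fixing one positive plane $W$, every other one is the graph of a linear map $W \to W^{\perp}$, and this parameter set is convex), so its oriented double cover has exactly two connected components, which are precisely the two positive sign structures; since $O(L_\RR)$ acts continuously it permutes these two components, and any group action on a two-element set \emph{is} a homomorphism to $\{\pm 1\}$. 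Granting that, your reflection computation is exactly right under the paper's convention $\spin(\tau_v) = v^2/2$: for $v^2 > 0$ one has $\det(\tau_v)\spin(\tau_v) = -1$ and $\tau_v$ reverses the orientation of the maximal positive-definite subspace $\RR v \oplus W''$ (with $W''$ a maximal positive subspace of $v^{\perp}$), while for $v^2 < 0$ one has $\det(\tau_v)\spin(\tau_v) = +1$ and $\tau_v$ restricts to the identity on a maximal positive-definite subspace of $v^{\perp}$, which has signature $(s_+, s_- - 1)$. Together with Cartan--Dieudonn\'e and the fact that both $\epsilon$ and $\det \cdot \spin$ are $\{\pm 1\}$-valued homomorphisms, this closes the proof with no appeal to the literature, so your final deferral back to the cited sources is unnecessary. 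One small correction: the inequality $s_+ \leq s_- + (\dim - 1)$ you invoke is not the relevant condition; what matters is only that $v^{\perp}$ still contains a positive-definite subspace of dimension $s_+$ when $v^2 < 0$, which always holds, the only degenerate situation being $L$ negative definite, which the paper's definition of sign structure explicitly excludes.
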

Let $L$ be an $\RR$-lattice. We denote by $O^+(L)=\ker (\det \cdot \spin)$ the subgroup of orientation preserving isometries. If $G \leq O(L)$ is a subgroup, we denote by $G^+ = G \cap O^+(L)$ its normal subgroup of orientation preserving elements.

\begin{proposition}\label{quadruplewithtriple}
 Let $\chi \colon H \to \CC^\times$ be an effective character and $[H:H_s]=2$.
  Set $T = T(\chi)$, let $\pi \colon N(H) \to O(T)$ be the restriction and $N_T= \pi(N(H))$.
 Then the number of connected components of $\F_H$ is $2/[N_T:N_T^+]$.
\end{proposition}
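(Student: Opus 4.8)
The plan is to reduce the count of connected components of $\F_H$ to an action-theoretic statement about the normalizer $N(H)$ acting on the two positive sign structures of $T = T(\chi)$. Recall from the discussion preceding the statement that since $[H:H_s]=2$ we have $\chi = \bar\chi$, so $\DD^\chi = \DD_H$ has exactly two connected components, and these two components are distinguished precisely by the two positive sign structures on the positive definite real $2$-plane $\re\omega, \im\omega \subseteq T_\RR$. Consequently $\F_H = \DD^\chi/N(H)$ (away from $\Delta$) has either one or two connected components, and it has a single component if and only if the action of $N(H)$ interchanges the two sign structures of $T$.

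\textbf{Reducing to the sign structure on $T$.}

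First I would observe that the relevant positive $2$-plane lies inside $T_\RR$, so the sign structure being tracked is a sign structure on the lattice $T$ in the sense of the preceding definition (here $T$ has signature $(2,*)$, so it is not negative definite and indeed has exactly two positive sign structures). The normalizer $N(H)$ acts on $\DD^\chi$, and since every element of $N(H)$ preserves $T(H) = T(\chi)$ (which depends only on $H$ by the lemma following Proposition~\ref{iseffective}), this action factors through the restriction $\pi \colon N(H) \to O(T)$, with image $N_T$. The action of $N_T$ on the set of two sign structures is what determines whether the two components of $\DD^\chi$ get identified in the quotient.

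\textbf{Applying the sign-structure criterion.}

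Next I would invoke the Looijenga--Wahl/Miranda--Morrison criterion stated just above: an isometry $g \in O(T)$ fixes each positive sign structure of $T$ if and only if $\det(g)\cdot\spin(g) > 0$, i.e. exactly when $g \in O^+(T)$. Thus $N_T$ preserves each sign structure individually if and only if $N_T \subseteq O^+(T)$, that is $N_T = N_T^+$; otherwise some element of $N_T$ swaps them and acts transitively on the two-element set of sign structures. Writing this as an orbit count: the number of $N_T$-orbits on the two sign structures is $2/[N_T : N_T^+]$, since $[N_T:N_T^+] \in \{1,2\}$ and equals $2$ exactly when the action is transitive. Because the two components of $\DD^\chi$ correspond bijectively and $N(H)$-equivariantly to these two sign structures, the number of connected components of $\F_H = (\DD^\chi \setminus \Delta)/N(H)$ equals the number of $N_T$-orbits, namely $2/[N_T:N_T^+]$, which is the claim.

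\textbf{Main obstacle.}

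The one point requiring genuine care, rather than citation, is the identification that the $N(H)$-action on the set of connected components of $\DD^\chi$ agrees with the $N_T$-action on positive sign structures of $T$ — in particular that removing the discriminant locus $\Delta$ does not alter the component count (this follows as in Proposition~\ref{perioddomain}(3), since $\Delta$ is a locally finite union of real-codimension-two loci) and that the action genuinely factors through the restriction to $T$ with no loss from $\NS(H)$. Once this compatibility is pinned down, the result is a direct consequence of the sign-structure criterion and the orbit-counting observation above.
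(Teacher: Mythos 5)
Your proof is correct and follows essentially the same route as the paper's: both identify the two components of $\DD^\chi$ with the two positive sign structures on $T(\chi)$, note that the $N(H)$-action factors through $\pi\colon N(H)\to O(T)$, and apply the Looijenga--Wahl/Miranda--Morrison criterion so that the component count becomes the orbit count $2/[N_T:N_T^+]$. The paper's proof is just a terser version of this (it states directly that the subgroup of $N(H)$ fixing the sign structures is $\pi^{-1}(N_T^+)$), while you additionally spell out the equivariance and the irrelevance of removing $\Delta$, which the paper leaves implicit.
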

\begin{proof}
  The subgroup of $N(H)$ fixing the sign structures of $T(\chi)$ is by the definition $\pi^{-1}(N_T^+)$.
Therefore $2/[\pi(N(H)):\pi(N(H))^+]$ is the number of connected components of $N(H)$.
\end{proof}

\begin{lemma}
 Keep the notation of \Cref{quadruplewithtriple}.
 Let $\disc{} \colon O(T) \to O(\disc{T})$ be the natural map, $J =D(N_T)$, $J^+=D(N_T^+)$ and
 $K = \ker \disc{}$.
 Then $[N_T:N_T^+] = [K:K^+][J:J^+]$.
\end{lemma}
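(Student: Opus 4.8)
The plan is to recognize this identity as the second isomorphism theorem in disguise: I would first translate everything into pure group theory, and then isolate the single geometric input, namely the inclusion $K\subseteq N_T$.

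First I would recall from \Cref{sect:lattices} that $O^+(T)=\ker(\det\cdot\spin)$, so that $N_T^+=N_T\cap O^+(T)$ is exactly the kernel of the character $\sigma:=(\det\cdot\spin)|_{N_T}\colon N_T\to\{\pm1\}$. Hence $N_T^+\trianglelefteq N_T$ and $[N_T:N_T^+]=\lvert\sigma(N_T)\rvert\in\{1,2\}$. Restricting the natural map $D$ to $N_T$ gives a homomorphism onto $J$ whose kernel is $K$ (granting $K\subseteq N_T$); it induces an isomorphism $N_T/K\xrightarrow{\sim}J$ under which the image of $N_T^+$ is $J^+=D(N_T^+)$, i.e.\ $J^+=N_T^+K/K$. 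This already yields $[J:J^+]=[N_T:N_T^+K]$.

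Next I would insert $N_T^+K$ into the tower $N_T^+\le N_T^+K\le N_T$ — a genuine subgroup since $N_T^+$ is normal — and combine multiplicativity of the index with the second isomorphism theorem $N_T^+K/N_T^+\cong K/(K\cap N_T^+)$ to obtain
\[[N_T:N_T^+]=[N_T:N_T^+K]\,[N_T^+K:N_T^+]=[J:J^+]\,[K:K\cap N_T^+].\]
Because $K\subseteq N_T$ one has $K\cap N_T^+=K\cap O^+(T)=K^+$, and the asserted identity follows.

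The one step needing geometry, and the main (if minor) obstacle, is the inclusion $K=\ker D=O^\sharp(T)\subseteq N_T$. I would verify it by extension: any $g\in O^\sharp(T)$ acts trivially on $\disc{T}$, so $g\oplus\id_{\NS(\chi)}$ respects the glue and defines $\tilde g\in O(L)$ with $\tilde g|_T=g$. Since $[H:H_s]=2$ forces $H$ to act on $T$ through $\{\pm1\}$, the isometry $g$ commutes with this action, whence $\tilde g$ centralizes $H$; in particular $\tilde g\in N(H)$ and $g=\pi(\tilde g)\in N_T$. Once this inclusion is in hand the remaining bookkeeping is purely formal.
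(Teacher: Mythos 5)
Your proof is correct, and its geometric core coincides exactly with the paper's: both proofs hinge on the inclusion $K=O^\sharp(T)\subseteq N_T$, established by extending $g\in K$ to $g\oplus\id_{\NS(\chi)}\in O(L)$ and observing that this extension commutes with $H$ because every $h\in H$ restricts to $\pm\id_T$. Where you diverge is in the group-theoretic bookkeeping. The paper assembles two short exact sequences $1\to K^{+}\to N_T^{+}\to J^{+}\to 1$ and $1\to K\to N_T\to J\to 1$ into a commutative diagram with vertical inclusions, and then extracts the exact sequence $1\to K/K^+\to N_T/N_T^+\to J/J^+\to 1$ of cokernels by "following the proof of the snake lemma," a step it only asserts is valid in this non-abelian setting. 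You instead observe that $N_T^+$ is the kernel of the character $(\det\cdot\spin)|_{N_T}$, hence normal, insert $N_T^+K$ into the tower $N_T^+\le N_T^+K\le N_T$, and finish with index multiplicativity plus the second isomorphism theorem, using $K\subseteq N_T$ twice: once to identify $\ker(D|_{N_T})=K$ (hence $[J:J^+]=[N_T:N_T^+K]$) and once to get $K\cap N_T^+=K^+$. The two computations are of course equivalent, but your route is the more elementary and self-contained one — it sidesteps any appeal to a snake-lemma argument for groups — while the paper's diagrammatic formulation is more symmetric and hands you the exact sequence of quotients, not just the index identity.
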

\begin{proof}
We claim that that $K \subseteq N_T$. By definition any element $g\in K$ acts trivially on the discriminant group $\disc{T}$. Therefore $g\oplus \id_{\NS(\chi)}$ extends to $L$. Since the restriction of any $h\in H$ to $T$ is given by $\pm \id_{T}$, $g \oplus \id_{\NS(\chi)}$ commutes with $h$. The claim is proven.
As a consequence we have $N_T = \disc{}^{-1}(J)$.
Thus we have a commutative diagram with exact rows
\[
\begin{tikzcd}
  1\arrow[r]& K^+ \arrow[r]\arrow[d] &N_T^+ \arrow[r]\arrow[d]& J^+\arrow[r]\arrow[d] & 1\\
1\arrow[r]& K\arrow[r] &N_T\arrow[r]{}{D} & J \arrow[r] &1
\end{tikzcd}\]
where the vertical arrows are inclusions of normal subgroups.
Hence the cokernels exist and so
we obtain the corresponding exact sequence
\[1 \to K/K^+ \to N_T/N_T^+ \to J/J^+ \to 1\]
of the cokernels. To see this follow the proof of the snake lemma, which indeed is valid in this situation.
\end{proof}

\begin{remark}\label{rem:mmapplication}
We will see in \Cref{plusindexcompute}
how to compute the number of components $[N_T:N_T^+]=[K:K^+][J:J^+]$ using Miranda--Morrison theory.
\end{remark}

\subsection{Saturated effective subgroups}\label{HpolEnum}
By \Cref{deformation class vs F_H}, the set of connected components of $\F_H$ is in bijection with the deformation classes of $H$-markable K3 surfaces.
Our next goal is to enumerate all possible effective groups $H$ up to conjugacy in $O(L)$.
The symplectic fixed and cofixed lattices $L^{H_s}$ and $L_{H_s}$ turn out to be the crucial invariants for this task.
Let $L$ be a $\ZZ$-lattice and $M$ a subset of $L \otimes \CC$. We denote by $O(L,M)=\{f \in O(L) \mid f_\CC(M)=M\}$ the stabilizer of $M$.

\begin{definition}
 Let $L$ be a K3 lattice and $H \leq O(L)$ an effective subgroup. Then we call the kernel $S$ of $O(L,L^{H_s}) \to O(L^{H_s})$ the saturation of $H_s$.
 The group generated by $H$ and $S$ is called the saturation of $H$.
 For a K3 surface $X$ we call a finite subgroup $G\leq \Aut(X)$ saturated if its image $\rho_X(G_s)$ is saturated.
 We call a saturated symplectic group $H_s\leq O(L)$ a \emph{heart}.
\end{definition}

\begin{remark}
Note that the saturation $S$ of $H_s$ is the largest subgroup $S\leq O(L)$ with $L^S=L^{H_s}$. Further the saturation of an effective group $H\leq O(L)$ is effective.
Indeed, if $L = \HH^2(X,\ZZ)$ for some K3 surface $X$ and $G_s$ a finite group of symplectic automorphisms, then the strong Torelli theorem implies that the saturation of $\rho_X(G_s)=H_s$ is in the image of $\rho_X$ by a finite group of symplectic automorphisms containing $G_s$ and with the same fixed lattice.
We conclude that if a pair $(X,G)$ is markable by $H$ then it is also markable by the saturation of $H$.
Therefore it is enough to enumerate the saturated effective subgroups of $O(L)$.
\end{remark}

\begin{remark}\label{rem:hashimoto}
For the symplectic groups, the saturated subgroups are known:
By a theorem of Hashimoto \cite{Hashimoto2012}, there are exactly $44$ conjugacy classes of effective, saturated subgroups $H_s \leq O(L)$.
They are determined by the isometry classes of their fixed and cofixed lattices.
The fixed lattices are listed by Hashimoto while the cofixed lattices can be obtained from the permutation representation of the Mathieu group $M_{24}$ on the type $24 A_1$ Niemeier lattice.
Alternatively, one may obtain them from isometries of the Leech lattice and the tables enumerated in \cite{mason2016}.
\end{remark}

\subsection{Enumerating effective characters}
Let $H_s \leq O(L)$ be a symplectic effective subgroup.
We would like to enumerate conjugacy classes of effective characters $\chi \colon H \to \CC^\times$
with the given heart $H_s$.

\begin{definition}
Let $\chi\colon H \to \CC^\times$ be an effective character and $n=[H:H_s]$.
The distinguished generator of $H/H_s$ is $g H_s$ with $\chi(g) = \zeta_n:= \exp(2\pi i /n)$.
Set $F = L^{H_s}$ and $C= L_{H_s}$. The distinguished generator $gH_s$ restricts to an isometry $f=g|_F$ of $F$ of order $n$. We call the lattice with isometry $(F,f)$ the \emph{head} of $H$ and $H_s$ its \emph{heart}.
The \emph{spine} of $\chi$ is the glue map $\phi\colon \disc{F} \to \disc{C}$
with $L_\phi = L$.
\end{definition}

Our next goal is to see how heart, head and spine determine the character.
The first step is to make the definition of heart and head independent of a character.

\begin{definition}\label{heads and hearts}
 Let $H_s \leq O(L)$ be a heart, $F=L^{H_s}$ its fixed lattice and
 $f \in O(F)$ of order $n$. We call the lattice with isometry $(F,f)$ a head (of $H_s$) if the following hold:

 \begin{enumerate}
  \item $\ker(f+f^{-1} - \zeta_n - \bar \zeta_n)$ is of signature $(2,*)$,
  \item $(\ker \Phi_n(f)\Phi_1(f))^\perp$ does not contain any vector of square $(-2)$.
 \end{enumerate}

\end{definition}
By abuse of notation we will identify $O(C)$ with $O(C)\times \{\id_F\} \subseteq O(L\otimes \QQ)$.
Recall that for a glue map $\phi \colon \disc{F} \to \disc{C}$, we denote by $F \perp C \subseteq L_\phi$ the corresponding primitive extension.
Note that $L_\phi\cong L$ is a K3 lattice as well and $H_s$ preserves $L_\phi$ because all its elements act trivially on the discriminant group of $C$.
Suppose that $\phi D_f \phi^{-1} = D_c$ for some $c \in O(C)$.
Then $g = f \oplus c$ preserves $L_\phi$. Set  $H_\phi = \langle g , H_s \rangle$.
Since $H_s$ is saturated, any other choice of $c$ is in $cH_s$, so $H_\phi$ is independent of this choice. Let $\chi_\phi\colon H_\phi \to \CC^\times$ be defined by $\chi(g)=\zeta_n$.

\begin{definition}\label{spines}
Let $H_s$ be a heart and $(F,f)$ a head.
A glue map $\phi \colon \disc{F} \to \disc{C}$ is called a spine if
\begin{enumerate}
 \item  $\phi \circ  D_f \circ \phi^{-1}$ is in the image of $O(C) \to O(\disc{C})$ and
 \item  $\NS(\chi_\phi)_{H_\phi}$ does not contain any vector of square $(-2)$.
\end{enumerate}
\end{definition}

\begin{proposition}\label{effectivespine}
 Let $H_s$ be a heart, $(F,f)$ a head and $\phi\colon \disc{F} \to \disc{C}$ a spine.
 Then the corresponding character $\chi_\phi \colon H_\phi \to \CC^\times$ is effective.
\end{proposition}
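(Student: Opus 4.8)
The plan is to derive the statement directly from the effectiveness criterion \Cref{iseffective}, applied to the finite group $H_\phi$ and its non-trivial character $\chi_\phi$. Writing $L := L_\phi$ for brevity, it then suffices to verify the five numbered conditions of \Cref{iseffective} with $H = H_\phi$ and $\chi = \chi_\phi$.

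First I would identify the symplectic part and the relevant sublattices. The character $\chi_\phi$ is trivial on $H_s$ and sends $g$ to the primitive $n$-th root $\zeta_n$; moreover $g^n$ restricts to $\id_F$ and hence lies in the pointwise stabiliser of $F$, which equals $H_s$ since the heart $H_s$ is saturated. Consequently $H_\phi/H_s$ is cyclic of order $n$ and $\ker\chi_\phi = H_s$, so that $(H_\phi)_s = H_s$. As $F \perp C \subseteq L$ is a primitive extension, $F$ and $C$ are primitive in $L$, whence $L^{H_s} = F$ and $L_{H_s} = C$. Conditions (1) and (2) of \Cref{iseffective} now hold for $L_{H_s} = C$, because the heart $H_s$ is an effective symplectic group and so its cofixed lattice is negative definite and contains no $(-2)$-vector. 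Condition (5) is word for word the second requirement in the definition of a spine, \Cref{spines}.

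For condition (4) I would compute the relevant real eigenspace. Every element of $H_s$ fixes $F = L^{H_s}$ pointwise and $g|_F = f$. Since a finite-order isometry $h$ is semisimple, the relation $(h + h^{-1})x = 2x$ forces $hx = x$; hence any $x \in L_\RR^{\chi_\phi + \bar\chi_\phi}$ already lies in $F_\RR$ and, testing against $g$, is an eigenvector of $f + f^{-1}$ for the eigenvalue $\zeta_n + \bar\zeta_n$, the reverse inclusion being immediate. Thus $L_\RR^{\chi_\phi + \bar\chi_\phi} = \ker(f + f^{-1} - \zeta_n - \bar\zeta_n)$, and condition (4) is precisely the first requirement in the definition of a head, \Cref{heads and hearts}.

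It remains to verify condition (3), that $L^{H_\phi}$ has signature $(1,*)$, and this is the main obstacle. From $L^{H_s} = F$ and $g|_F = f$ one obtains $L^{H_\phi} = F^f = \ker\Phi_1(f)$. As $f + f^{-1}$ is self-adjoint, its eigenspaces are mutually orthogonal, so $\ker\Phi_1(f)$ is orthogonal to the signature-$(2,*)$ subspace $L_\RR^{\chi_\phi + \bar\chi_\phi}$ from condition (4); since $F$ has signature $(3,*)$ (its complement $C$ being negative definite), this alone forces $\ker\Phi_1(f)$ to carry at most one positive square. The delicate part is to show it carries exactly one, i.e.\ that the third positive square of $F$ lies in the invariant lattice $\ker\Phi_1(f)$ rather than in some other eigenspace of $f$ (for instance a $(-1)$-eigenspace when $n$ is even). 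This is exactly the point where head condition (1) must be used in full, together with the structure of $F = L^{H_s}$, to confine the two ``transcendental'' positive squares to the $\zeta_n$-eigenspace and leave precisely one for $L^{H_\phi}$. Once condition (3) is in hand, all hypotheses of \Cref{iseffective} are met and $\chi_\phi$ is effective.
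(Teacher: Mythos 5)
Your verification of conditions (1), (2), (4) and (5) of \Cref{iseffective} is correct, and it is exactly the content behind the paper's own proof, which simply declares the statement immediate from \Cref{iseffective} and the definitions. But your argument stops at condition (3): you prove only that $L^{H_\phi}=\ker\Phi_1(f)$ carries \emph{at most} one positive square, and you defer the claim that it carries \emph{exactly} one to an unspecified use of head condition (1) ``in full''. This is a genuine gap, and it is not one that can be closed: for $n\geq 4$, condition (3) does not follow from \Cref{heads and hearts} and \Cref{spines} at all. Head condition (1) pins two positive squares to the $(\zeta_n+\bar\zeta_n)$-eigenspace, but nothing in the definitions prevents the third positive square of $F$ from lying in $\ker\Phi_m(f)$ for an intermediate divisor $m\mid n$, $m\neq 1,n$; your signature count settles only $n=2,3$, where no intermediate eigenvalue exists.

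Concretely, let $\sigma$ be the order $4$ automorphism $w\mapsto iw$ of a smooth quartic $X=\{w^4+f_4(x,y,z)=0\}\subseteq\PP^3$. Its fixed locus is the genus $3$ curve $X\cap\{w=0\}$, and a Lefschetz--Nikulin computation gives $\ker\Phi_1(\sigma^*)=\ZZ h\cong\langle 4\rangle$ (the polarization), $\ker\Phi_2(\sigma^*)$ negative definite of rank $7$, and $\ker\Phi_4(\sigma^*)$ of signature $(2,12)$. Now take the trivial heart, so $F=L$ and $C=0$, and set $f:=-\sigma^*$, an isometry of order $4$. Then $\ker(f+f^{-1})=\ker\Phi_4(f)_\RR$ has signature $(2,12)$, and $(\ker\Phi_4(f)\Phi_1(f))^\perp=\ker\Phi_2(f)=\ZZ h\cong\langle 4\rangle$ contains no $(-2)$-vectors, so $(L,f)$ is a head; the trivial glue map is a spine, since $\NS(\chi_\phi)_{H_\phi}=\langle 4\rangle$ as well. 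Yet $L^{H_\phi}=\ker\Phi_1(f)=\ker\Phi_2(\sigma^*)$ is negative definite of rank $7$, so condition (3) of \Cref{iseffective} fails and $\chi_\phi$ is \emph{not} effective. Thus the proposition requires strengthening the definition of a head by the requirement that $\ker\Phi_1(f)$ have signature $(1,*)$ --- which is what the paper's actual computations enforce anyway, cf.\ \Cref{rem:latticeK3}\,(1) --- and with that extra hypothesis your argument (and the paper's one-line proof) goes through. In short: you located the crux correctly, but the missing step is not provable from the stated definitions, so the route you propose for condition (3) cannot succeed; the defect lies in the definitions rather than in a missing easy argument.
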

\begin{proof}
This follows immediately from \Cref{iseffective} and the definitions.
\end{proof}
\begin{definition}
Let $i=1,2$, $H_i \leq O(L_i)$ two effective subgroups and
$\chi_i\colon H_i \to \CC^\times$ two effective characters.
We say that $\chi_1$ is isomorphic to $\chi_2$ if and only if there is an isometry $\psi \colon L_1 \to L_2$ with $H_2 = \psi H_1 \psi^{-1}$ and $\chi_1(h) = \chi_2(\psi \circ h \circ \psi^{-1})$ for all $h \in H_1$.
\end{definition}
If $L_1=L_2$, then $\chi_1$ and $\chi_2$ are isomorphic if and only if they are conjugate.
If moreover $H_1=H_2$ then they are isomorphic if and only if they are conjugate by an element of $N(H)$.
Recall that $U(F,f)$ denotes the centralizer of $f$ in $O(F)$.
\begin{theorem}\label{fiber}
Let $H_s$ be a heart, $(F,f)$ a head of $H_s$ and $S$ the set of spines
$\phi \colon \disc{F} \to \disc{C}$.
Then the double coset
\[O(C) \backslash S / U(F,f)\]
is in bijection with the set of isomorphism classes of effective characters $\chi$ with the given heart and head.
\end{theorem}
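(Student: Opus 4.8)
The plan is to follow the template of \Cref{extensions}, realizing each spine as the glue datum of a primitive extension and translating equivalence of characters into a two-sided coset relation on spines. The forward map is essentially already in place: to a spine $\phi\colon \disc{F}\to\disc{C}$ we attach the lattice with isometry $(L_\phi, g)$ with $g = f\oplus c$, where $c\in O(C)$ is any isometry with $D_c = \phi\circ D_f\circ\phi^{-1}$ (such $c$ exists by condition (1) of a spine), together with the group $H_\phi = \langle g, H_s\rangle$ and the character $\chi_\phi$ sending $g\mapsto \zeta_n$. By \Cref{effectivespine} this character is effective, and by construction its heart is $H_s$ and its head is $(F,f)$. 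Since $H_s$ is a heart, i.e.\ saturated, the choice of $c$ is irrelevant: any two choices differ by an element of $H_s$, so $H_\phi$ and $\chi_\phi$ are well defined. I will show that $\phi\mapsto\chi_\phi$ induces the asserted bijection on double cosets.

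For surjectivity I would start from an effective character $\chi\colon H\to\CC^\times$ whose heart is $H_s$ and whose head is $(F,f)$. After conjugating by a suitable isometry (allowed by the definition of isomorphism of characters) I may assume that the ambient lattice is the standard $L$ with $L^{H_s}=F$ and $L_{H_s}=C$, and that the distinguished generator restricts to $f$ on $F$. Writing $L = L_\phi$ for its glue map $\phi$, the conditions of \Cref{spines} are precisely the specializations of the effectivity criterion \Cref{iseffective} to this situation, so $\phi\in S$ and $\chi_\phi\cong\chi$.

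The core of the argument is the coset computation. Write $g_i = f\oplus c_i$ for the distinguished generator of $H_{\phi_i}$. Suppose $\psi\colon L_{\phi_1}\to L_{\phi_2}$ is an isomorphism of the characters $\chi_{\phi_1}$ and $\chi_{\phi_2}$. Since the heart is the kernel of the character, $\psi$ normalizes $H_s$, hence preserves $F=L^{H_s}$ and $C=L_{H_s}$ and decomposes as $\psi = \psi_F\oplus\psi_C$ with $\psi_F\in O(F)$, $\psi_C\in O(C)$. Matching the distinguished generators gives $\psi g_1\psi^{-1}\in g_2 H_s$; restricting to $F$, where $H_s$ acts trivially and both generators restrict to $f$, yields $\psi_F f\psi_F^{-1}=f$, so $\psi_F\in U(F,f)$. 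Finally $\psi$ carries the graph of $\phi_1$ onto the graph of $\phi_2$, which reads $\phi_2 = D_{\psi_C}\circ\phi_1\circ D_{\psi_F}^{-1}$; hence $\phi_1,\phi_2$ lie in a single $O(C)$-$U(F,f)$ double coset. Conversely, given $\phi_2 = D_c\circ\phi_1\circ D_u^{-1}$ with $c\in O(C)$ and $u\in U(F,f)$, the map $\psi=u\oplus c$ sends the graph of $\phi_1$ to that of $\phi_2$ and so defines an isometry $L_{\phi_1}\to L_{\phi_2}$; a direct check shows it conjugates $g_1$ into $g_2 H_s$ and $H_{\phi_1}$ onto $H_{\phi_2}$, giving an isomorphism of characters.

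The step I expect to require the most care is verifying that conjugation by $\psi=u\oplus c$ really sends $H_s$ into $H_s$ for an \emph{arbitrary} $c\in O(C)$; this is exactly where saturation enters. Indeed $\psi h\psi^{-1}$ is an isometry of $L_{\phi_2}$ fixing $F$ pointwise for every $h\in H_s$, and because $H_s$ is saturated it is by definition the full group of such isometries, so $\psi h\psi^{-1}\in H_s$ automatically; the same principle shows that the companion isometry $c_i$ on $C$ is only well defined modulo $H_s$. This is what forces the \emph{full} orthogonal group $O(C)$ to appear on one side of the double coset, in contrast with the symmetric centralizer-by-centralizer statement of \Cref{extensions}, while the centralizer $U(F,f)$ on the other side reflects that the isometry $f$ of the head is fixed once and for all whereas its partner on $C$ is free up to $H_s$.
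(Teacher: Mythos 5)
Your proof is correct and follows essentially the same route as the paper's: the forward map via \Cref{effectivespine}, surjectivity by reading off the glue map of a standardly embedded character, and the double-coset correspondence by decomposing any isomorphism $\psi \colon L_{\phi_1} \to L_{\phi_2}$ of characters as $a \oplus b$ with $a \in O(C)$ and $b \in U(F,f)$ acting on the glue maps. The paper's argument is terser; your additional verifications (well-definedness of $H_\phi$ via saturation, and why conjugation by $u \oplus c$ preserves $H_s$) are precisely the details the paper leaves implicit.
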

\begin{proof}
By \Cref{effectivespine} any spine $\phi \in S$ determines an effective character $\chi_\phi\colon H_\phi \to \CC^\times$ with $L_\phi$ a K3 lattice.
Conversely any effective character with the given heart and head arises in this fashion.

Let $\phi$ and $\phi'$ be two spines. We have to show that $\chi_\phi$ and $\chi_{\phi'}$ are conjugate if and only if $\phi' \in O(C) \phi U(F,f)$.

Suppose $\phi' = D_a \phi D_b$ with $a \in O(C)$ and $b \in U(F,f)$. Then $a \oplus b\colon L_\phi \to L_{\phi'}$ gives the desired isomorphism of the characters.

Conversely let $\psi\colon L_\phi \to L_{\phi'}$ be an isomorphism of $\chi$ and $\chi'$.
By construction $V:=L_\phi \otimes \QQ=L_{\phi'}\otimes \QQ$. We may view $\psi$ as an element of $O(V)$. Note that $F \perp C \subseteq V$. Since $\psi$ preserves the common heart $H_s$ and head $(F,f)$ of $\chi_1$ and $\chi_2$, we can write it as $\psi = a \oplus b$ with $a \in O(C)$ and $b \in U(F,f)$. Then $ D_a \circ \phi \circ D_b = \phi'$.
\end{proof}

\begin{remark}
The previous results yield the following procedure for determining a transversal of the set of isomorphism classes of effective characters.
  \begin{enumerate}
    \item
      Let $\mathcal{H}$ be the set of possible hearts up to conjugacy, which have been determined by Hashimoto \cite{Hashimoto2012} (see \Cref{rem:hashimoto}).
    \item
      For each $H_s \in \mathcal{H}$, determine a transversal of the isomorphism classes of the heads $(F, f)$, which amounts to classifying conjugacy classes of isometries of finite order $n$ of a given $\ZZ$-lattice.
      This is explained in \Cref{ConjAlg}, see in particular \Cref{rem:latticeK3} for the possible values of $n$.
    \item
      For each heart $H_s \in \mathcal{H}$ and possible head $(L, f)$ apply \Cref{fiber} to determine
      a transversal of the double coset of spines and therefore a transversal of the isomorphism classes of the effective characters with the given heart and head.
  \end{enumerate}
  Altogether we obtain a transversal of the isomorphism classes of effective characters.
Each is represented by some K3 lattice $L$ depending on the character, a finite subgroup $H \leq O(L)$, a normal subgroup $H_s \leq H$ and a distinguished generator of $H/H_s$.
\end{remark}

\section{Conjugacy classes of isometries}\label{ConjAlg}
We have seen that to classify finite subgroups of automorphisms of K3 surfaces up to deformation equivalence we need to classify conjugacy classes of isometries of finite order of a given $\ZZ$-lattice.
So given a polynomial $\mu(x) \in \ZZ[x]$ and a $\ZZ$-lattice $L$
we seek to classify all conjugacy classes of isometries of $L$ with the given characteristic polynomial $p$.
By general results of Grunewald and Segal \cite[Cor. 1]{grunewald:conjugacy} from the theory of arithmetic groups, we know that the number of such conjugacy classes is finite and (in theory) computable.
Necessary and sufficient conditions for the existence of an isometry of some unimodular lattice $L$ with a given characteristic polynomial have been worked out by Bayer-Fluckiger and Taelman in \cite{fluckiger20,fluckiger21,fluckiger21b}.

\subsection{Hermitian lattices and transfer}\label{transfer}
In this subsection $E$ is an \'etale $\QQ$-algebra with a $\QQ$-linear involution $\overline{\phantom{x}}\colon E \to E$.

\begin{definition}
Let $V$ be an $E$-module. A hermitian form on $V$ is a sesquilinear form $h \colon V \times V \to E$ with $h(x,y) = \overline{h(y,x)}$. We call $(V,h)$ a hermitian space over $E$.
For a $\ZZ$-order $\Lambda \subseteq E$ we call a $\Lambda$-module $L \subseteq V$ a hermitian $\Lambda$-lattice. All hermitian forms and lattices are assumed non-degenerate.
\end{definition}
Let  $\Tr \colon E \to \QQ$ be the trace.
Note that $E$ being \'etale is equivalent to the trace form $E\times E \to \QQ, (x,y) \mapsto \Tr(xy)$ being non-degenerate.

\begin{definition}[Transfer]
A $\QQ$-bilinear form $b \colon V \times V \to \QQ$ on an $E$-module $V$ is said to be an $E$-bilinear form if $b(ev,w) = b(v,\bar e w)$ for all $v,w \in V$ and $e \in E$.
Let $(V,h)$ be a hermitian space over $E$. Then $b = \Tr \circ h$ is an $E$-bilinear form. It is called the trace form of $h$.
\end{definition}

\begin{proposition}[\cite{milnor1969}]
Every $E$-bilinear form on $V$ is the trace form of a unique hermitian form on $V$.
\end{proposition}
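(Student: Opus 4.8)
The plan is to use the non-degeneracy of the trace form—which is precisely the hypothesis that $E$ is \'etale—to reconstruct $h$ from $b$ one slot at a time. Concretely, since the pairing $E \times E \to \QQ$, $(x,y) \mapsto \Tr(xy)$ is non-degenerate, the $\QQ$-linear map $E \to \Hom_\QQ(E,\QQ)$ sending $c$ to the functional $x \mapsto \Tr(cx)$ is an isomorphism; hence every $\QQ$-linear functional on $E$ has the form $x \mapsto \Tr(cx)$ for a unique $c \in E$. This is the only substantive input, and it is what forces both existence and uniqueness.

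For existence, fix $v,w \in V$ and consider the functional $E \to \QQ$, $e \mapsto b(ev,w)$, which is $\QQ$-linear since $b$ is $\QQ$-bilinear and $e \mapsto ev$ is $\QQ$-linear. By the remark above there is a unique $h(v,w) \in E$ with
\[ b(ev,w) = \Tr\bigl(e\, h(v,w)\bigr) \quad \text{for all } e \in E. \]
Setting $e = 1$ gives $b(v,w) = \Tr(h(v,w))$, so $b = \Tr \circ h$. I would then read off sesquilinearity from this defining relation together with the uniqueness clause: additivity in each slot is immediate; $E$-linearity in the first slot follows from $\Tr(e\,h(av,w)) = b((ea)v,w) = \Tr(ea\,h(v,w))$; and conjugate-linearity in the second from $\Tr(e\,h(v,aw)) = b(ev,aw) = b(\bar a e v, w) = \Tr(e\,\bar a\, h(v,w))$, where the middle equality is the defining property of $E$-bilinearity (applied in the form $b(v',aw)=b(\bar a v',w)$).

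The step requiring the most care is the hermitian symmetry $h(v,w) = \overline{h(w,v)}$ and its interplay with symmetry of $b$. Here I use that the trace is invariant under the involution, $\Tr(\bar z) = \Tr(z)$ (because $z \mapsto \bar z$ is a $\QQ$-algebra automorphism, so the multiplication operators by $\bar z$ and by $z$ are conjugate $\QQ$-linear maps). Then, for a symmetric $b$,
\[ \Tr\bigl(e\, h(w,v)\bigr) = b(ew,v) = b(v,ew) = b(\bar e v, w) = \Tr\bigl(\bar e\, h(v,w)\bigr) = \Tr\bigl(e\, \overline{h(v,w)}\bigr), \]
and uniqueness of the representing element forces $h(w,v) = \overline{h(v,w)}$. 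Conversely $\Tr \circ h$ is symmetric whenever $h$ is hermitian, again by invariance of the trace under the involution, so the correspondence matches symmetric $E$-bilinear forms with hermitian forms exactly.

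Uniqueness of $h$ is then automatic: if $\Tr \circ h = \Tr \circ h' = b$ with both sesquilinear, then $\Tr(e\,h(v,w)) = b(ev,w) = \Tr(e\,h'(v,w))$ for all $v,w$ and all $e \in E$, whence $h(v,w) = h'(v,w)$ by non-degeneracy of the trace form. I expect no serious obstacle beyond bookkeeping: the entire argument is driven by the single isomorphism $E \cong \Hom_\QQ(E,\QQ)$ coming from \'etaleness, and the only place demanding attention is tracking the involution through the second-slot and symmetry computations.
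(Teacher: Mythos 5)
Your proof is correct, and it is essentially the standard argument: the paper itself gives no proof of this proposition, delegating it to Milnor's paper, and Milnor's own proof runs exactly along your lines, using the non-degeneracy of the pairing $(x,y) \mapsto \Tr(xy)$ (equivalently, the isomorphism $E \cong \Hom_\QQ(E,\QQ)$) to define $h(v,w)$ slotwise by $b(ev,w) = \Tr\bigl(e\,h(v,w)\bigr)$ and then reading off sesquilinearity and uniqueness from the uniqueness of the representing element. One point where you are in fact more careful than the paper's terse statement: the trace form of a hermitian form is automatically symmetric (since $\Tr(\bar z)=\Tr(z)$), so the bijection is really between hermitian forms and \emph{symmetric} $E$-bilinear forms; this is consistent with the paper because its standing convention is that all bilinear forms on lattices are symmetric, but your explicit verification that hermitian symmetry of $h$ corresponds to symmetry of $b$ is exactly the right bookkeeping, and your second-slot computation correctly pins down the conjugate-linearity convention compatible with $b(ev,w)=b(v,\bar e w)$.
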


Note that an $E$-linear map preserves a hermitian form if and only if it preserves the respective trace form. In view of these facts we may work with $E$-bilinear forms and hermitian forms over $E$ interchangeably.

Given $x^n-1=\mu(x)\in \ZZ[x]$ and a lattice $L$ we seek to classify the conjugacy classes of isometries $f\in O(L)$ satisfying $\mu(f) = 0$.
To this aim we put
$E:=\QQ[x,x^{-1}]/(\mu)\cong \QQ[x]/(\mu)$. This is an \'etale algebra and
$x \mapsto  x^{-1}$ defines an involution
$\overline{\phantom{x}} $ on $E$.
For $i \in I =: \{i \in \NN \mid i \text{ divides } n\}$ set $E_i=\QQ[x]/(\Phi_i(x))$.
The algebra $E$ splits as a direct product $E= \prod_{i \in I} E_i$
of cyclotomic fields with the induced involution.
Let $(e_i)_{i \in I}$ be the corresponding system of primitive idempotents in $E$, such that $\overline{e_i} = e_i$ and $E_i = E e_i$.

Let $\Lambda = \ZZ[x]/(\mu)$ and $\Gamma = \prod_{i \in I} \Lambda e_i$.
The conductor of $\Gamma$ in $\Lambda$ is
\[\mathfrak{f}_\Gamma= \{x \in \Lambda \mid \Gamma x \subseteq \Lambda\}.\]
It is the largest $\Gamma$-ideal contained in $\Lambda$.
We obtain the following series of inclusions:

 \begin{equation}\label{eqn:sandwich_ring}
 \mathfrak{f}_{\Gamma} \subseteq \Lambda \subseteq \Gamma = \prod_{i\in I} \Lambda e_i
 \end{equation}

\begin{lemma}
 The conductor satisfies $\mathfrak{f}_\Gamma = \bigoplus_{i \in I} (\Lambda \cap e_i \Lambda)$.
\end{lemma}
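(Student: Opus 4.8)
The plan is to prove the equality $\mathfrak{f}_\Gamma = \bigoplus_{i \in I} (\Lambda \cap e_i \Lambda)$ by a double inclusion, exploiting the fact that $\Gamma = \prod_{i \in I} \Lambda e_i$ is a direct product and that multiplication by the idempotent $e_i$ is the projection of $E$ onto the factor $E_i$. The key observation I would exploit throughout is that an element $x \in \Lambda \subseteq E$ lies in the conductor precisely when $\Gamma x \subseteq \Lambda$, and since $\Gamma$ is generated as an abelian group (or as a ring) by the idempotents $e_i$ together with $\Lambda$, the condition $\Gamma x \subseteq \Lambda$ is equivalent to the finitely many conditions $e_i x \in \Lambda$ for all $i \in I$.

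\textbf{The inclusion $\bigoplus_i (\Lambda \cap e_i \Lambda) \subseteq \mathfrak{f}_\Gamma$.} First I would take an element of the form $y_i \in \Lambda \cap e_i \Lambda$ for a fixed $i$, and show $\Gamma y_i \subseteq \Lambda$. Since $y_i \in e_i \Lambda$ and the $e_i$ are orthogonal idempotents with $e_i e_j = \delta_{ij} e_i$, we have $e_j y_i = 0$ for $j \neq i$ and $e_i y_i = y_i$. Hence for any $\gamma \in \Gamma$, writing $\gamma = \sum_j \gamma e_j$ with $\gamma e_j \in \Lambda e_j$, the product $\gamma y_i = (\gamma e_i) y_i$; and because $y_i \in \Lambda$ and $\gamma e_i \in \Lambda e_i \subseteq \Gamma$, I need $(\gamma e_i) y_i \in \Lambda$. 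Here I use that $y_i \in e_i\Lambda$ means $y_i = e_i z$ for some $z \in \Lambda$, so $(\gamma e_i) y_i = \gamma e_i z = \gamma y_i$, and the cleanest way is to note $\gamma y_i = \gamma e_i y_i$ with $\gamma e_i \in \Lambda$ and $y_i \in \Lambda$, so the product lies in $\Lambda$ since $\Lambda$ is a ring. Thus each summand lies in $\mathfrak{f}_\Gamma$, and since $\mathfrak{f}_\Gamma$ is an ideal (in particular an additive subgroup), the whole direct sum is contained in it.

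\textbf{The reverse inclusion $\mathfrak{f}_\Gamma \subseteq \bigoplus_i (\Lambda \cap e_i \Lambda)$.} Conversely, let $x \in \mathfrak{f}_\Gamma$, so $\Gamma x \subseteq \Lambda$. In particular $e_i x \in \Lambda$ for every $i$, since each $e_i \in \Gamma$. But $e_i x \in e_i E = E_i$ as well, and $e_i x = e_i(e_i x) \in e_i \Lambda$; so $e_i x \in \Lambda \cap e_i \Lambda$ for each $i$. Because the $e_i$ form a complete system of orthogonal idempotents, $x = \sum_{i \in I} e_i x$ in $E$, and this expresses $x$ as a sum of elements $e_i x \in \Lambda \cap e_i\Lambda$, with the sum being direct because the summands live in the distinct factors $E_i$. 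This gives $x \in \bigoplus_i (\Lambda \cap e_i \Lambda)$ and completes the second inclusion.

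\textbf{Main obstacle and bookkeeping.} The step I expect to require the most care is verifying that the sum $\sum_i \Lambda \cap e_i \Lambda$ is genuinely \emph{direct} and that the decomposition $x = \sum_i e_i x$ lands in it, which rests on the completeness relation $\sum_{i\in I} e_i = 1$ and the orthogonality $e_i e_j = \delta_{ij} e_i$ stated in the excerpt. These identities hold because $E = \prod_{i\in I} E_i$ with $E_i = E e_i$, so the directness is automatic once one observes $(\Lambda \cap e_i \Lambda) \subseteq E_i$ and the $E_i$ intersect trivially. The only subtle point is confirming that $e_i x \in \Lambda$ (not merely in $E$), which is exactly the defining property of the conductor applied to the element $e_i \in \Gamma$; everything else is formal manipulation with idempotents and the ring structure of $\Lambda$. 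No deep input is needed beyond the idempotent decomposition of $E$ and the definition of $\mathfrak{f}_\Gamma$.
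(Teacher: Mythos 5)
Your strategy is the same as the paper's: characterize membership in $\mathfrak{f}_\Gamma$ by the finitely many conditions $e_i x \in \Lambda$ and decompose $x = \sum_i e_i x$ via the complete system of orthogonal idempotents. Your reverse inclusion and your discussion of directness are correct. The flaw is in the forward inclusion, where you ultimately justify $(\gamma e_i) y_i \in \Lambda$ by asserting that $\gamma e_i \in \Lambda$ for $\gamma \in \Gamma$. That claim is false in general: $\gamma e_i$ lies only in $\Lambda e_i$, and $\Lambda e_i \subseteq \Lambda$ would force $e_i = 1 \cdot e_i \in \Lambda$, which is exactly what fails whenever $\Gamma \neq \Lambda$. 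Indeed, taking $\gamma = e_i$ itself gives a counterexample; in the paper's example with $\mu(x) = x^p - 1$ one has $e_1 = \Phi_p(x)/p \notin \Lambda$. So the claim holds only in the degenerate case $\Gamma = \Lambda$, where the lemma is trivial.

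The repair is immediate and uses only what you already set up: since $\Gamma = \bigoplus_j \Lambda e_j$, write $\gamma = \sum_j \lambda_j e_j$ with $\lambda_j \in \Lambda$. Orthogonality of the idempotents together with $e_i y_i = y_i$ gives
\[
\gamma y_i \;=\; \sum_j \lambda_j e_j e_i y_i \;=\; \lambda_i e_i y_i \;=\; \lambda_i y_i \in \Lambda,
\]
because $\lambda_i$ and $y_i$ both lie in the ring $\Lambda$. (Equivalently: $e_j y_i \in \{0, y_i\} \subseteq \Lambda$ for every $j$, and $\Gamma$ is generated as a $\Lambda$-module by the $e_j$ --- which is precisely the observation you made in your opening paragraph but then abandoned in the detailed verification.) With this one-line fix your argument is correct and coincides with the paper's proof, which compresses both inclusions into a single equivalence: for $x \in \Lambda$, $x\Gamma = \bigoplus_i \Lambda e_i x$ is contained in $\Lambda$ if and only if $e_i x \in \Lambda \cap e_i \Lambda$ for all $i$.
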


\begin{proof}
  For $x \in \Lambda$ we have $x = 1 x = \sum_i e_i x$ and so $x \Gamma = \bigoplus_{i \in I} \Lambda e_i x$ is contained in $\Lambda$ if and only if $e_i x \in \Lambda$ for all $i\in I$. This means that $e_i x \in \Lambda \cap e_i \Lambda$.
\end{proof}

\begin{example}\label{ex:conductor}
 Let $p \in \ZZ$ be a prime. For $\mu(x) = x^p-1 = (x-1)\Phi_p(x)$
 we obtain
 \[E = \QQ[x] / (x^p-1) \cong \QQ[x]/(x-1) \times \QQ[x]/\Phi_p(x) \cong \QQ \times \QQ[\zeta_p].\]
 Set $g(x) = \sum_{i=0}^{p-2}(i+1-p)x^i$.
 One finds that $p = (x-1)g(x) + \Phi_p(x)$.
 Hence $e_1=\Phi_p(x)/p$ and $e_p = (x-1)g(x)/p$.
 The conductor ideal is $\mathfrak{f}_\Gamma=p e_1 \Lambda + p e_p \Lambda$. It contains $p$.
\end{example}
Let $b$ be the bilinear form of the lattice $L$.
A given isometry $f\in O(L,b)$ with minimal polynomial $\mu$
turns $(L,b)$ into a hermitian $\Lambda$-lattice $(L,h)$ by letting the class of $x$ act as $f$.
Note that for $x,y \in L\otimes E$ we have
\[h(e_i x, e_j x) = e_i \bar e_j h(x,y) = e_i e_j h(x,y) = \delta_{ij} h(x,y).\]
Thus $e_i L$ is orthogonal to $e_j L$ for $i\neq j$.
\Cref{eqn:sandwich_ring}
yields the corresponding chain of finite index inclusions
\begin{equation}\label{eqn:sandwich}
\mathfrak{f}_{\Gamma}L \subseteq L \subseteq \Gamma L.
\end{equation}

Setting $L_i = e_i \mathfrak{f}_{\Gamma}  L = L \cap e_i L = \ker \Phi_i(f)$ and $L_i' = e_i  \Gamma L$ the outermost lattices are
\[(\mathfrak{f}_{\Gamma}L,h) =\bigperp_{i\in I}(L_i,h_i)\quad \text{ and } \quad
(\Gamma L,h) = \bigperp_{i\in I} (L_i',h_i).\]
Since $\Lambda e_i=\ZZ[x]/\Phi_i(x) = \ZZ_{E_i}$ is the maximal order in $E_i$, $L_i$ and $L_i'$ are hermitian lattices over the
ring of integers of a number field. Such lattices are well understood.
We use the outermost lattices of the sandwich to study the $\Lambda$-lattice $(L,h)$.
\begin{example}\label{ex:porder}
 Let $L$ be a $\ZZ$-lattice and $f\in O(L)$ an isometry of prime order $p$.
 Then $L_1 = \ker \Phi_1(f)$ and $L_p = \ker \Phi_p(f)$. Since, by \Cref{ex:conductor}, $p \in \mathfrak{f}_\Gamma$ we have
 \begin{equation}\label{index-p}
p L \subseteq \mathfrak{f}_\Gamma L= L_1 \perp L_p \subseteq L
 \end{equation}
\end{example}

The idea for the classification is as follows:
Given $\mu(x)$ and the $\ZZ$-lattice
$(L,b)$,
we get restrictions on the possible genera of the lattices
$(L_i,h_i)$ from $(L,b)$ and the conductor.
We take $L$ as an overlattice of the orthogonal direct sum $\bigperp_{i\in I} (L_i,h_i)$
up to the action of the product of unitary groups $\prod_{i \in I}U(L_i,h_i)$.
Thanks to \cref{eqn:sandwich}
this is a finite problem. In practice, we successively take equivariant primitive extensions.

\subsection{Glue estimates}\label{glue_estim}
The caveat of dealing with primitive extensions $A \perp B \subseteq C$ is that we do not know how to predict the genus of $C$.
Or more precisely, how to enumerate all glue maps such that $C$ lies in a given genus.
So we have to resort to check this in line \ref{keepC?} of \Cref{alg:extensions} only after constructing $C$.
To reduce the number of glue maps that have to be checked, in this section we prove various necessary conditions.

\begin{proposition}\label{glue:det-estimate}
Let $C$ be an integral $\ZZ$-lattice and $f \in O(C)$ an isometry of prime order $p$ with characteristic polynomial
$\Phi_1^{e_1}\Phi_p^{e_p}$.
Set $A= \ker \Phi_1(f)$, $B = \ker \Phi_p(f)$ and $m = \min\{e_1,e_p,l(D_A),l(D_B)\}$.
Then $pC \subseteq A \perp B$ and $[C: A\perp B] \mid p^m$.
\end{proposition}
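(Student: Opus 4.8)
The plan is to first settle the containment and then bound the index. The relation $pC \subseteq A \perp B$ is immediate from \Cref{ex:porder}: using the Bézout identity $p = (x-1)g(x) + \Phi_p(x)$ of \Cref{ex:conductor} and substituting $f$, every $v \in C$ satisfies $pv = (f-1)g(f)v + \Phi_p(f)v$, where $\Phi_p(f)v \in \ker(f-1) = A$ (since $(x-1)\Phi_p(x)=x^p-1$ and $f^p=\id$) and $(f-1)g(f)v \in \ker \Phi_p(f) = B$ (since $\Phi_p(f)(f-1)g(f)=0$). Hence $A \perp B \subseteq C \subseteq \tfrac1p(A \perp B)$, so the finite quotient $C/(A \perp B)$ is killed by $p$ and the index equals $p^k$ with $k = \dim_{\FF_p} C/(A\perp B)$. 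It therefore suffices to show $k \leq m$.

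Next I would identify $k$ with the size of the glue. By the glue formalism of \Cref{sec:lat}, $A \perp B \subseteq C$ is a primitive extension whose glue groups $H_A \subseteq \disc{A}$ and $H_B \subseteq \disc{B}$ are both isomorphic to $C/(A\perp B)$, so $\dim_{\FF_p} H_A = \dim_{\FF_p} H_B = k$ and both are killed by $p$. Since $H_A \subseteq \disc{A}[p]$ and $\dim_{\FF_p}\disc{A}[p] \leq l(\disc{A})$, this yields $k \leq l(\disc{A})$, and likewise $k \leq l(\disc{B})$. As $\disc{A}$ is a quotient of $A^\vee \cong \ZZ^{e_1}$ we have $l(\disc{A}) \leq \rk A = e_1$, hence $k \leq e_1$ as well. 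This disposes of three of the four bounds defining $m$.

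The remaining, and most delicate, bound is $k \leq e_p$, and here I would exploit the isometry $f$. The glue map $\phi \colon H_A \to H_B$ is equivariant with respect to $D_{f|A}$ and $D_{f|B}$; since $A = \ker(f-1)$, the restriction $f|A$ is the identity, so $D_{f|B}$ fixes $H_B$ pointwise. Thus $H_B \subseteq \ker\bigl(D_{f|B} - \id\bigr) = \ker\bigl((\zeta_p-1)\mid \disc{B}\bigr)$, where $\disc{B}$ is viewed as a module over $\ZZ[\zeta_p] = \ZZ[x]/\Phi_p(x)$ with $f$ acting as $\zeta_p$. It then remains to bound this kernel. Localizing at the unique prime $\fp = (\zeta_p-1)$ above $p$, the lattice $B$ is projective over $\ZZ[\zeta_p]$ of rank $e_p$, so $B_\fp$ and $B^\vee_\fp$ are free of rank $e_p$ over the discrete valuation ring $\ZZ[\zeta_p]_\fp$; by the elementary divisor theorem $(\disc{B})_\fp = B^\vee_\fp/B_\fp$ is a direct sum of at most $e_p$ cyclic modules, each contributing one dimension to the $(\zeta_p-1)$-torsion, whence $\dim_{\FF_p}\ker\bigl((\zeta_p-1)\mid \disc{B}\bigr) \leq e_p$. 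Combining everything gives $k \leq e_p$ and thus $k \leq m$, so $[C : A\perp B] = p^k \mid p^m$.

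I expect the main obstacle to be precisely this last step: one must pass correctly to the $\fp$-local hermitian picture, justify that $B$ is $\ZZ[\zeta_p]$-projective of rank $e_p$ (using that $B=\ker\Phi_p(f)$ is torsion-free of $\QQ(\zeta_p)$-dimension $e_p$ over the Dedekind domain $\ZZ[\zeta_p]$), and observe that over a discrete valuation ring the $(\zeta_p-1)$-torsion of a finite module has $\FF_p$-dimension equal to its number of cyclic invariant factors. The three easier bounds are routine bookkeeping with the glue formalism already established in \Cref{sec:lat}.
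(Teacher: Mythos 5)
Your proposal is correct and follows essentially the same route as the paper: the B\'ezout/conductor identity gives $pC \subseteq A \perp B$, the glue formalism identifies $[C:A\perp B]$ with the order of the $p$-torsion glue group $H_A \cong H_B$, and equivariance together with the $\ZZ[\zeta_p]$-module structure of $B$ supplies the one non-routine bound by $e_p$. The only difference is bookkeeping in that last step: you bound $\dim_{\FF_p}$ of the $(\zeta_p-1)$-torsion of $D_B$ by localizing at $\fp=(\zeta_p-1)$ and invoking elementary divisors over the DVR $\ZZ[\zeta_p]_\fp$, whereas the paper bounds the number of $\ZZ[\zeta_p]$-generators of $H_B$ realized as a torsion quotient of the torsion-free rank-$e_p$ module $B' \leq p^{-1}B$ via the invariant factor theorem over Dedekind domains --- two dual formulations of the same structural fact.
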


\begin{proof}
By \Cref{index-p} $p C \subseteq A \perp B$.
Let $D_A \geq H_A \cong C/(A\perp B) =H\cong H_B \leq D_B$ be the glue between $A$ and $B$. Note that these are isomorphisms as $\ZZ[x]$-modules and $p H =0$ by \Cref{ex:porder}.

The polynomial $\Phi_p$ annihilates $B$; hence it annihilates $B^\vee$ and $H_B \leq D_B=B^\vee/B$.
Let $B' \leq p^{-1}B$ be defined by $B'/B = H_B$.
The $\ZZ[\zeta_p]$-module $B'$ is a finitely generated torsion-free module of rank $e_p$.
  By the invariant factor theorem over Dedekind domains \cite[(22.13)]{curtis2006} any torsion quotient module of $B'$ is generated by at most $e_p$ elements.
On the other hand, since $H_B \leq D_B$, it is generated by at most $l(D_B)$ elements as a $\ZZ$-module, in particular as $\ZZ[x]$-module.

Similarly $H_A$ is annihilated by $\Phi_1$ and generated by at most $\min\{e_1,l(D_A)\}$ elements.
Since the glue map is equivariant, the minimal number $n$ of generators of $H$ as $\ZZ[x]$-module satisfies $n \leq m$.
As $H=C/(A\perp B)$, viewed as a $\ZZ[\zeta_p]$-module, is annihilated by the prime ideal $P$ generated by $\Phi_1(\zeta_p)$, we have $H\cong (\ZZ[\zeta_p]/P)^n$.
Since $p$ is totally ramified in $\ZZ[\zeta_p]$, the ideal $P$ has norm $p$ and thus
\[[C:A\perp B]=|\ZZ[\zeta_p]/P|^n = p^n \mid p^m. \qedhere\]
\end{proof}

We call a torsion bilinear form $b \colon A \times A \to \QQ_2/\ZZ_2$ \emph{even }if $b(x,x)=0$ for all $x \in A$, otherwise we call it \emph{odd}.
Imitating \cite[II \S 2]{miranda-morrison}, we define the functors $\rho_k$.

\begin{definition}
  Let $N$ be an integral lattice over $\ZZ_p$. Set $G_k = G_k(N) = p^{-k}N \cap N^\vee$ and define
 $\rho_k(N) = G_k/(G_{k-1}+pG_{k+1})$. It is equipped with the non-degenerate torsion bilinear form $b_k(\bar x,\bar y) = p^{k-1}xy \bmod \ZZ_p$.
 If $p=2$ and both $\rho_{k-1}(N)$ and $\rho_{k+1}(N)$ are even, then we call $\rho_{k}(N)$ \emph{free}. Otherwise we call it \emph{bound}. If it is free, $\rho_{k}(N)$ carries the torsion quadratic form $q_k(\bar x) = 2^{k-1}x^2 \bmod 2\ZZ_2$.
\end{definition}
Let $L=\bigperp_{j=0}^l (L_j,p^jf_{j})$ be a Jordan decomposition with $f_{j}$ a unimodular bilinear form. Then one checks that
$\rho_j(L) \cong (L_j/pL_j, \bar f_i)$ where $\bar f_i$ is the composition of $p^{-1}f_i$ and the natural map $\QQ_p \to \QQ_p/\ZZ_p$.

\begin{remark}
Note that $\bar f_i$ determines the rank of $f_i$, its parity and its determinant modulo $p$. Thus, if $p$ is odd, it determines $f_j$ up to isometry. For $p=2$ this is not the case.
\end{remark}

Let $N$ be an integral lattice over $\ZZ_p$ and $l \in \ZZ$ such that
$p^{l+1} N^\vee \subseteq N$. Then $G_{l+1}(N)=G_{l+2}(N) = N^\vee$ and $G_{l}(N)=p^{-l}N \cap N^\vee$.
Using $pN^\vee \subseteq p^{-l}N$ we obtain
\[\rho_{l+1}(N)
= N^\vee /(p^{-l}N \cap N^\vee). \]
Multiplication by $p^l$ gives the isomorphism
\[\rho_{l+1}(N) \cong
p^l N^\vee/ (N \cap p^l N^\vee) \cong
p^lD_N\]

\begin{proposition}\label{glue:estimate-level}
 Let $N_1 \perp N_2 \subseteq L$ be a primitive extension of $\ZZ_p$-lattices with corresponding glue map
 $D_1 \supseteq H_1 \xrightarrow{\phi} H_2 \subseteq D_2$ where $D_i=N_i^\vee/N_i$ is the discriminant group of $N_i$, $i\in \{1,2\}$. Suppose that $p L \subseteq N_1 \perp N_2$.

 Then $p^l L^\vee \subseteq L$ if and only if the following four conditions are met.
 \begin{enumerate}
  \item $p^{l+1} D_i = 0$, i.e. $p^{l+1}N_i^\vee\subseteq N_i$,
  \item $p^l D_i \subseteq H_i$,
  \item $\phi(p^l D_1) = p^l D_2$,
  \item $\hat \phi \colon \rho_{l+1}(N_1)\cong p^lD_1 \to p^lD_2 \cong \rho_{l+1}(N_i)$ is an anti-isometry with respect to the bilinear forms $b_{l+1}$.
 \end{enumerate}
If moreover both $\rho_{l+1}(N_i)$ are free, then $\rho_l(L)$ is even if and only if
 \begin{enumerate}
\item [(4')] $\hat\phi$ is an anti-isometry with respect to the quadratic forms $q_{l+1}$.
 \end{enumerate}
\end{proposition}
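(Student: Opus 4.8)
The plan is to reduce the divisibility statement about $L^\vee$ to a computation inside the discriminant group $D := D_1 \oplus D_2$ of $M := N_1 \perp N_2$, and then to read off the four conditions from the glue map. Let $\Gamma = L/M \subseteq D$ be the graph of $\phi$; since $\phi$ is an anti-isometry of the discriminant bilinear forms, $\Gamma$ is isotropic for the discriminant form $b$ of $M$. By the glue description of primitive extensions recalled in \Cref{sec:lat} (the bilinear analogue of Nikulin's correspondence), one has $L^\vee/M = \Gamma^\perp$, the orthogonal complement of $\Gamma$ in $D$. Because $M \subseteq L$ and $p^l L^\vee \subseteq M^\vee$, passing to images shows that $p^l L^\vee \subseteq L$ holds if and only if $p^l \Gamma^\perp \subseteq \Gamma$ in $D$. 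This last inclusion is what I will analyse.

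First I record two structural facts. The standing hypothesis $pL \subseteq M$ says exactly that $\Gamma$, hence each $H_i$, is killed by $p$; consequently condition (2), namely $p^l D_i \subseteq H_i$, already forces $p^{l+1}D_i = p\cdot p^l D_i \subseteq pH_i = 0$, which is (1). So (1) is a formal consequence of (2) here, and I only need to produce (2)–(4). Second, since $b_2$ is nondegenerate and $\phi$ is an isomorphism, both projections $D \to D_i$ restrict to surjections $\Gamma^\perp \to D_i$. Now unwind membership in the graph: $(a,c)\in\Gamma$ iff $a\in H_1$ and $c=\phi(a)$, so $p^l\Gamma^\perp\subseteq\Gamma$ means that for every $(u,v)\in\Gamma^\perp$ one has $p^lu\in H_1$, $p^lv\in H_2$ and $\phi(p^lu)=p^lv$. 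Surjectivity of the projections turns the first two requirements into (2), and letting $u$ range over $D_1$ turns $\phi(p^lu)=p^lv$ into $\phi(p^lD_1)=p^lD_2$, which is (3) and makes $\hat\phi=\phi|_{p^lD_1}$ the map appearing in (4).

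It remains to match the identity $\phi(p^lu)=p^lv$ with the anti-isometry (4), and this is the delicate point. Here I use the isomorphisms $\rho_{l+1}(N_i)\cong p^lD_i$ established just before the proposition, under which $b_{l+1}$ becomes the rescaled form sending $(p^l\alpha,p^l\beta)$ to $p^l\langle\tilde\alpha,\tilde\beta\rangle$ on $p^lD_i$. Note that $b_i$ itself vanishes on $p^lD_i$ for $l\geq 1$, so (4) genuinely carries information not implied by $\phi$ being a glue map; everything hinges on bookkeeping the powers of $p$ correctly. Pairing the defining relation $b_1(u,x)+b_2(v,\phi x)=0$ of $\Gamma^\perp$ against a test vector $x=p^lu'$ coming from a pair $(u',v')\in\Gamma^\perp$ and translating through these isomorphisms gives $b_{l+1}(p^lu,p^lu') + b_{l+1}(p^lv,\phi(p^lu')) = 0$. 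Combined with $\phi(p^lu)=p^lv$ this is precisely the anti-isometry (4), proving necessity. Conversely, assuming (2)–(4), the same identity shows that $\delta:=\phi(p^lu)-p^lv\in p^lD_2$ is $b_{l+1}$-orthogonal to all of $p^lD_2=\hat\phi(p^lD_1)$, whence $\delta=0$ by nondegeneracy of $b_{l+1}$; this yields $p^l\Gamma^\perp\subseteq\Gamma$ and finishes the equivalence.

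For the quadratic refinement, assume (1)–(4) and that both $\rho_{l+1}(N_i)$ are free. Then $q_{l+1}$ is defined, and freeness together with (1) (which kills $\rho_{l+2}(N_i)$) forces the scale-$l$ Jordan blocks of the $N_i$ to be even, so they contribute nothing to the parity of $\rho_l(L)$; the scale-$l$ block of $L$ is produced entirely by the order-$p$ glue between the top blocks. Tracking a glue vector $x=x_1-x_2$ with $x_i\in N_i^\vee$ representing matched classes in $\rho_{l+1}(N_i)$, a direct computation gives $b_l(\bar x,\bar x)=2^{l-1}(x_1^2+x_2^2)=\tfrac12\bigl(q_{l+1}(\bar x_1)+q_{l+1}(\hat\phi\,\bar x_1)\bigr)\bmod \ZZ_2$. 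Hence $\rho_l(L)$ is even exactly when $q_{l+1}(\bar x_1)+q_{l+1}(\hat\phi\,\bar x_1)\in 2\ZZ_2$ for all such $x_1$, i.e.\ when $\hat\phi$ is an anti-isometry for $q_{l+1}$, which is (4$'$). The main obstacle throughout is precisely the rescaling bookkeeping: the forms on $D_i$ degenerate on the deep subgroup $p^lD_i$, so (4) and (4$'$) become visible only after moving to $\rho_{l+1}(N_i)$, and one must get the power $p^l$ and, in the last step, the factor $\tfrac12$ converting the $b_{l+1}$-level identity into the parity of $\rho_l(L)$ exactly right; the remaining manipulations are formal.
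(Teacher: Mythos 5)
Your proof is correct, and it reorganizes the argument in a way the paper does not: you first translate the whole problem into the discriminant group $D=D_1\oplus D_2$ via $L/M=\Gamma$ and $L^\vee/M=\Gamma^\perp$, so that $p^lL^\vee\subseteq L$ becomes the purely finite condition $p^l\Gamma^\perp\subseteq\Gamma$, and then read off (1)--(4) from that. The paper instead works with lattice vectors throughout: it uses the surjectivity of the projections $\pi_i\colon L^\vee\to N_i^\vee$ (primitivity) to prove (1) and (2) directly from $p^{l+1}L^\vee\subseteq pL\subseteq N_1\perp N_2$, proves (3) by projecting the graph, and obtains (4) from the integrality $\sprodb{p^l(x_1+y_1)}{x_2+y_2}\in\ZZ_p$ of pairings of glue vectors; your ``pairing the defining relation of $\Gamma^\perp$ against test vectors $p^lu'$'' is exactly this computation transported to $D$, and both converses hinge on the same nondegeneracy of $b_{l+1}$ on $\rho_{l+1}(N_2)$. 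Your surjectivity of $\Gamma^\perp\to D_i$ is the paper's surjectivity of $\pi_i$ in disguise, and your observation that (1) follows formally from (2) together with $pH_i=0$ is a small economy the paper doesn't make. What your packaging buys is a cleaner logical skeleton (one finite inclusion to analyze, with (2), (3), (4) appearing as its three layers); what the paper's buys is that the auxiliary facts needed later (e.g.\ that glue vectors with $p^l(x+y)\in L$ actually lie in $L^\vee$) fall out explicitly along the way.

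That last point is the one place where you should add a line. In the necessity direction of (4$'$), you ``track a glue vector $x=x_1-x_2$ with $x_i\in N_i^\vee$ representing matched classes'' and evaluate $b_l(\bar x,\bar x)$ in $\rho_l(L)$; but for an \emph{arbitrary} pair of representatives of matched classes one only gets $p^l(x_1-x_2)\in L$, not $x_1-x_2\in L^\vee$, since $\Gamma$ isotropic only forces $b\bigl((\bar x_1,\bar x_2),\Gamma\bigr)$ to be killed by $p^l$. So $\bar x$ need not define a class of $\rho_l(L)$ at all. The fix is immediate with tools you already have: by the surjectivity of $\Gamma^\perp\to D_1$ (equivalently $L^\vee\twoheadrightarrow N_1^\vee$), every class $\alpha\in\rho_{l+1}(N_1)$ is represented by the first component of some $w\in L^\vee$, and then $w\in G_l(L)$ has matched components, so evenness of $\rho_l(L)$ applied to $\bar w$ gives $q_{l+1}(\alpha)+q_{l+1}(\hat\phi\alpha)\equiv 0\bmod 2\ZZ_2$. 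This is precisely the care the paper takes when it notes ``from the proof of (3), we know a little more, namely that $p^l(x_i+y_i)\in p^lL^\vee$, so that $x_i+y_i\in L^\vee$.'' With that one sentence inserted, your argument is complete.
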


\begin{proof}
Suppose that $p^l L^\vee \subseteq L$. We prove (1-4).

Let $i \in\{1,2\}$. By the assumptions
\[p^{l+1} L^\vee \subseteq p L \subseteq N_1 \perp N_2.\]
Since the extension is primitive, the orthogonal projection $\pi_i \colon L^\vee \rightarrow N_i^\vee$ is surjective. Applying $\pi_i$ to the chain of inclusions yields
\[p^{l+1} N_i^\vee \subseteq p \pi_i(L) \subseteq N_i\]
which proves (1).

For (2) consider the inclusion $p^lL^\vee \subseteq L$.
A projection yields $p^l N_i^\vee \subseteq \pi_i(L)$. Now (2) follows with $D_i = N_i^\vee / N_i$ and  $H_i = \pi_i(L)/N_i$.

(3) We have $\pi_i(p^l L^\vee) =  p^l N_i^\vee$.
Recall that the glue map $\phi$ is defined by its graph $L/(N_1 \perp N_2)$.
Its subset
\[p^l L^\vee / ((N_1 \perp N_2) \cap p^lL^\vee)\]
projects onto both $p^l D_1$ and $p^l D_2$. This proves the claim.

(4) Let $i\in{1,2}$, $x_i \in N_1^\vee$ and $y_i \in N_2^\vee$ with $\hat \phi(\bar x_i) = \bar y_i$, i.e. $\phi(p^l x_i + N_1) = p^l y_i + N_2$, i.e.
\[p^l(x_i+y_i) \in L.\]
In fact, from the proof of (3), we know a little more, namely that $p^l(x_i+y_i) \in p^l L^\vee$, so that $x_i+y_i \in L^\vee$.
This implies that $\sprodb{p^l(x_1+y_1)}{x_2+y_2} \in \ZZ_p$ which results in
\[b_{l+1}(\bar x_1, \bar x_2) \equiv p^{l}\sprodb{x_1}{x_2} \equiv - p^{l}\sprodb{y_1}{y_2} \equiv - b_{l+1}(\bar y_1, \bar y_2) \mod \ZZ_p.\]

(4') Suppose furthermore that both $\rho_{l+1}(N_i)$ are free and that $\rho_l(L)$ is even. Take $x=x_1=x_2 \in N_1^\vee$ and $y=y_1=y_2 \in N_2^\vee$.
Then $2^{l-1}\sprodbq{x+y}{} \in  \ZZ_2$ since $\rho_{l}(L)$ is even.
Therefore $2^{l}\sprodbq{x}{} \equiv 2^l\sprodbq{y}{} \mod 2 \ZZ$.

Now suppose that (1--4) hold for the glue map $\phi$.
Let $x + y \in L^\vee$. We have to show that $p^l(x+y) \in L$.

Let $w \in N_1^\vee$, and $z \in N_2^\vee$ with $\hat \phi( \bar w )=\bar z$.
By the definition of $\hat \phi$ and $\phi$, this implies that $p^l(w+z) \in L$. Therefore
\begin{eqnarray*}
b_{l+1}(\bar y - \hat \phi(\bar x),\bar z)
&= &b_{l+1}( \bar y, \bar z) - b_{l+1}(\hat \phi (\bar x), \hat \phi(\bar w))\\
&=&b_{l+1}( \bar y, \bar z) + b_{l+1}(\bar x, \bar w)\\
&\equiv& p^l\sprodb{y}{ z} + p^{l}\sprodb{x}{w} \\
&\equiv & \sprodb{x+y}{p^l(w+z)}\\
&\equiv& 0 \mod \ZZ_p.
\end{eqnarray*}
Since the bilinear form on $\rho_l(N_2)$ is non-degenerate, this shows that
$\hat \phi(\bar x)=\bar y$. By the definition of a glue map we obtain $p^l(x+y) \in L$.

Suppose furthermore that (4') holds, so $p=2$, $\rho_{l+1}(N_i)$ is free and $\hat \phi$ preserves the induced quadratic forms. Let $x+y \in L^\vee$ we have to show that $q_l(\overline{x+y})\equiv 0 \mod \ZZ$.
Indeed,
\begin{eqnarray*}
2q_l(\overline{x+y})
&=& 2^{l} \sprodbq{x+y}{}\\
&\equiv& q_{l+1}(\bar x)+q_{l+1}(\bar y)\\
&\equiv&
q_{l+1}(\bar x)+q_{l+1}(\hat \phi(\bar x)) \\
&\equiv& 0 \mod 2 \ZZ.
\qedhere
\end{eqnarray*}
\end{proof}

\begin{definition}\label{def:admissible-glue-map}
 We call a glue map \emph{admissible} if it satisfies \Cref{glue:estimate-level}
(1--3) and (4) respectively (4').
\end{definition}

\begin{example}
 In the special case that $l=0$ we recover the result that the discriminant bilinear forms of $N_1$ and $N_2$ are anti-isometric.
 And if further $L$ is even, that the discriminant quadratic forms are anti-isometric.
\end{example}

\begin{definition}
 Let $p$ be a prime number and $A,B,C$ be integral $\ZZ$-lattices. Let $p^{-l}\ZZ =\scale(C^\vee)$.
 We say that the triple $(A,B,C)$ is
 \emph{$p$-admissible} if the following hold:
 \begin{enumerate}\setlength{\itemsep3pt}
  \item $(A  \perp B) \otimes \ZZ_q \cong C \otimes \ZZ_q$ for all primes $q\neq p$,
  \item $\det A\cdot \det B=p^{2g}\det C$ where $g \leq l(D_A),(\rk B)/(p-1),l(\disc{B})$,
  \item $\scale(A \perp B)\subseteq \scale(C)$ and $p \scale(A^\vee \perp B^\vee) \subseteq \scale(C^\vee)$
  \item $\rho_{l+1}(A\otimes \ZZ_p)$ and $\rho_{l+1}(B\otimes \ZZ_p)$ are anti-isometric as torsion bilinear modules. If further $p=2$, both are free and $\rho_l(C\otimes \ZZ_2)$ is even, then they are anti-isometric as torsion quadratic modules,
  \item there exist embeddings $pC\otimes \ZZ_p \hookrightarrow (A \perp B) \otimes \ZZ_p \hookrightarrow C\otimes \ZZ_p$,
  \item $\dim \rho_{l+1}(A\otimes \ZZ_p)\oplus \rho_{l+1}(B \otimes \ZZ_p) \leq \dim \rho_l(C \otimes \ZZ_p)$.
 \end{enumerate}
\end{definition}

Note that $(C,0,C)$ and $(0,C,C)$ are $p$-admissible for all $p$.
We call a triple $(\A,\B,\C)$ of genera of $\ZZ$-lattices $p$-admissible if for any representatives $A$ of $\A$, $B$ of $\B$ and $C$ of $\C$ the triple $(A,B,C)$ is $p$-admissible.

\begin{remark}
 For the existence of the (not necessarily primitive!) embeddings in (6) there is a necessary and sufficient criterion found in \cite[Theorem 3]{omeara1958}. Note that condition (V) in said theorem is wrong.
 The correct condition is
 \[(\mathrm{V}) \qquad 2^i (1+4 \omega) \to (2^i \oplus \mathfrak{L}_{i+1})/ \mathfrak{l}_{[i]}.\]
 Thus being $p$-admissible is a condition that can be checked easily algorithmically.
\end{remark}

\begin{lemma}\label{ispadmissible}
 Let $C$ be a $\ZZ$-lattice and $f \in O(C)$ an isometry of order $p$.
 Let $A = \ker \Phi_1(f)$ and $B = \ker \Phi_p(f)$.
 Then $(A,B,C)$ is $p$-admissible.
\end{lemma}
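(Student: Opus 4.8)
\emph{Overall strategy.} The plan is to localise at $p$ and verify the six conditions of $p$-admissibility for $(A,B,C)$ one by one, using the structural facts already available. By \Cref{index-p} we have $pC \subseteq A \perp B \subseteq C$; the sublattices $A = \ker\Phi_1(f)$ and $B = \ker\Phi_p(f)$ are primitive in $C$ (kernels of polynomials in $f$ are saturated) and mutually orthogonal (eigenlattices for the coprime factors $\Phi_1,\Phi_p$); and by \Cref{glue:det-estimate} the index $[C:A\perp B]$ equals $p^{g}$ with $g \leq m = \min\{\rk A,\ \rk B/(p-1),\ l(D_A),\ l(D_B)\}$. Throughout I set $N_1 = A\otimes\ZZ_p$, $N_2 = B\otimes\ZZ_p$, $L = C\otimes\ZZ_p$, so that $N_1\perp N_2\subseteq L$ is a primitive extension with $pL\subseteq N_1\perp N_2$, and I let $l$ be determined by $\scale(C^\vee)=p^{-l}\ZZ$, equivalently $p^{l}L^\vee\subseteq L$ with $l$ minimal.

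\emph{The elementary conditions.} Conditions (1), (2) and (5) are essentially immediate. Since $C/(A\perp B)$ is a $p$-group it vanishes after $\otimes\,\ZZ_q$ for $q\neq p$, giving (1). For (2) the determinant formula for primitive extensions gives $\det A\,\det B = [C:A\perp B]^2\det C = p^{2g}\det C$, and $g\leq m$ already yields the three bounds $g\leq l(D_A),\ \rk B/(p-1),\ l(D_B)$ required. Condition (5) is witnessed by the localised inclusions $pC\otimes\ZZ_p\subseteq (A\perp B)\otimes\ZZ_p\subseteq C\otimes\ZZ_p$ themselves. For (3), $A\perp B\subseteq C$ immediately gives $\scale(A\perp B)\subseteq\scale(C)$; for the dual statement I would use the norm operator $\NNN_f = 1+f+\dots+f^{p-1}\in\ZZ[f]$, which acts as $p\cdot\id$ on $A$ and as $0$ on $B$, hence equals $p\,\pi_A$ for the orthogonal projection $\pi_A$. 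As $\NNN_f$ preserves $C^\vee$ and $\pi_A(C^\vee)=A^\vee$ by primitivity, we get $pA^\vee\subseteq C^\vee$, and symmetrically $pB^\vee\subseteq C^\vee$. For $a,a'\in A^\vee$ choose $x'\in C^\vee$ with $\pi_A(x')=a'$; then $p\langle a,a'\rangle = \langle pa,x'\rangle\in\scale(C^\vee)$ by orthogonality of $A$ and $B$, and likewise for $B$, proving $p\,\scale(A^\vee\perp B^\vee)\subseteq\scale(C^\vee)$.

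\emph{The top-level conditions.} Conditions (4) and (6) are the heart of the matter and I would deduce them from \Cref{glue:estimate-level} applied to $N_1\perp N_2\subseteq L$ with the level $l$ above. Since $p^{l}L^\vee\subseteq L$ holds, that proposition returns the admissibility of the glue map: its item (4) provides an anti-isometry $\hat\phi\colon\rho_{l+1}(N_1)\to\rho_{l+1}(N_2)$ of torsion bilinear modules, which is precisely condition (4); and when $p=2$, both $\rho_{l+1}(N_i)$ are free and $\rho_l(L)$ is even, item (4') upgrades $\hat\phi$ to an anti-isometry of torsion quadratic modules. For the inequality (6) I would argue at the top Jordan level: $\hat\phi$ forces $\dim\rho_{l+1}(N_1)=\dim\rho_{l+1}(N_2)=:d$ and matches the level-$(l+1)$ discriminants of $N_1$ and $N_2$; gluing these two $p^{l+1}$-modular blocks of rank $d$ along the isotropic graph of $\hat\phi$ produces a $p^{l}$-modular block of rank $2d$ inside $L$, so the level-$l$ constituent of $L$ has rank at least $2d$, i.e. $\dim\bigl(\rho_{l+1}(N_1)\oplus\rho_{l+1}(N_2)\bigr)=2d\leq\dim\rho_l(L)$. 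Equality need not hold, since $L$ may carry further level-$l$ constituents.

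\emph{Main obstacle.} The delicate point is the top-level bookkeeping behind (4) and (6): one must keep the two roles of $l$ consistent — the one fixed by $\scale(C^\vee)=p^{-l}\ZZ$ and the one governing the $\rho$-functors in \Cref{glue:estimate-level} — and make the informal "gluing two matched blocks yields a rank-$2d$ block one level down" precise in terms of Jordan decompositions over $\ZZ_p$. Alternatively, (6) can be read off from O'Meara's embedding criterion \cite{omeara1958}: the embedding $(A\perp B)\otimes\ZZ_p\hookrightarrow C\otimes\ZZ_p$ of (5) genuinely exists, and the rank inequality (6) is among its necessary conditions, hence holds automatically. Everything else reduces to the inclusion-and-projection arguments sketched above.
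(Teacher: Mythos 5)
Your handling of conditions (1), (2), (3) and (5) is correct and essentially the paper's: (1), (2), (5) come from $pC\subseteq A\perp B\subseteq C$ and \Cref{glue:det-estimate}, and (4) is quoted from \Cref{glue:estimate-level}(4)/(4') exactly as in the paper. For the dual-scale half of (3) your projection argument (using $pA^\vee\subseteq C^\vee$, $\pi_A(C^\vee)=A^\vee$ and vanishing cross terms) is a valid, in fact slightly more careful, version of the paper's one-line dualization of $pC\subseteq A\perp B$.

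The genuine gap is condition (6), which is the only nontrivial part of the lemma. Your primary sketch (``gluing two matched $p^{l+1}$-modular blocks produces a $p^l$-modular block of rank $2d$'') is the right intuition, but you leave it as a slogan and flag it yourself as the unresolved obstacle; the missing precision is precisely what the paper supplies. Concretely, the paper shows first that the graph $\Gamma$ of $\hat\phi$ injects into $\rho_l(C\otimes\ZZ_p)$ (if $x+y$ vanishes in $\rho_l(C)$, then $p^{l-1}(x+y)\in C\subseteq p^{-1}(A\perp B)$ forces $\bar x=0$ in $\rho_{l+1}(A)$ and $\bar y=0$ in $\rho_{l+1}(B)$), and second that $\Gamma$ together with the image of $pA^\vee$ spans a \emph{non-degenerate} subspace of $\rho_l(C)$ of dimension $2\dim\rho_{l+1}(A\otimes\ZZ_p)$, via the identity $b_l(\overline{x+y},pa)\equiv b_{l+1}(\bar x,\bar a)$. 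This second step is indispensable: $\Gamma$ itself is isotropic (as $\hat\phi$ is an anti-isometry), so the injection of $\Gamma$ alone does not bound $\dim\rho_l(C)$ by a Jordan-block count.

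Your fallback route is false: condition (6) is \emph{not} among the necessary conditions for the existence of the embeddings in (5), so it cannot be ``read off from O'Meara's criterion.'' Counterexample: let $p$ be odd and, over $\ZZ_p$, take $C=\langle 1\rangle\perp\langle 1\rangle\perp\langle p\rangle$, $A=\langle p^2\rangle\perp\langle p^2\rangle$, $B=\langle p\rangle$. Embedding $A\perp B$ as $p\ZZ_p e_1\perp p\ZZ_p e_2\perp \ZZ_p e_3\subseteq C$ gives $pC\subseteq A\perp B\subseteq C$, so (5) holds; but here $l=1$ and $\dim\rho_2(A)+\dim\rho_2(B)=2+0>1=\dim\rho_1(C)$, so (6) fails. (This is also why (6) appears as a separate clause in the definition of $p$-admissibility and as a genuine pruning condition in \Cref{alg:admissible}: it is exactly the condition that remembers the equivariant glue coming from the isometry $f$, beyond the mere existence of embeddings.) So as it stands the proposal does not prove (6), and hence does not prove the lemma.
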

\begin{proof}
  Let $p^{-l}\ZZ = \scale(C^\vee)$.
  First note that \Cref{glue:estimate-level} is applicable to $L = C$, since $p^{l}C^\vee \subseteq C$ by the definition of $l$.

(1) From \Cref{glue:det-estimate} we obtain $pC \subseteq A\perp B \subseteq C$. After tensoring with $\ZZ_q$ for $q\neq p$ we obtain (since $p$ is a unit in $\ZZ_q$) that $C \otimes \ZZ_q = (A\perp B)\otimes \ZZ_q$.

(2) This is \Cref{glue:det-estimate}.

(3) $A \perp B \subseteq C$ gives $\scale(A\perp B) \subseteq \scale(C)$.
Dualizing $pC \subseteq A\perp B$ yields $p (A^\vee \perp B^\vee) \subseteq C^\vee$. Now take the scales.

(4) This is \Cref{glue:estimate-level}~(4) and~(4').

(5) We know that $pC \subseteq A\perp B \subseteq C$.

(6) Let $x \in A^\vee$ and $y \in B^\vee$ with $\hat \phi(\bar x) = \bar y$, i.e. $p^l(x+y)\in C$.
By the proof of \Cref{glue:estimate-level} (4), $x+y \in C^\vee$.
Suppose that $x+y$ is zero in $\rho_l(C)$, i.e. $x+y \in p^{-l+1}C \cap C^\vee$. Then $p^{l-1} (x+y) \in C\subseteq p^{-1}(A\perp B)$, therefore
$x \in p^{-l} A$. Thus $\bar x = 0$ in $\rho_{l+1}(A)$. Similarly $\bar y = 0$ in $\rho_{l+1}(B)$. This shows that
the graph $\Gamma$ of $\hat \phi$ injects naturally into $\rho_l(C)$.
Note that $p C \subseteq A^\vee \perp B^\vee$ gives $p A^\vee \subseteq C^\vee$.
Suppose that $\bar x \neq 0$.
Since $b_{l+1}$ is non-degenerate, we find $a \in A^\vee$ with
\[1/p = b_{l+1}(\bar x, \bar a)\equiv  p^l \sprodb{x}{a} = p^{l-1}\sprodb{x}{pa} \equiv b_{l}(\overline{x+y},p a) \mod \ZZ_p.\]
This shows that the span of $p A^\vee$ and
$\Gamma$ in $\rho_l(C)=C^\vee/(p^{-l}C + C^\vee)$ is a non-degenerate subspace of dimension $2 \dim \rho_{l+1}(A)$.
\end{proof}

\begin{definition}
  Let $L$ be a $\ZZ$-lattice with $p^{l+1} L^\vee \subseteq L$ and let $H\leq \disc{L}$ with $p^{l} \disc{L} \leq H$. We denote by
 $O(H,\rho_l(L))$ the set of isometries $g$ of $H$ which preserve $p^l\disc{L}$ and such that the map $\hat g$ induced by $g$ on $\rho_L(L)$ preserves the torsion bilinear (respectively quadratic) form of $\rho_l(L)$.
\end{definition}

\begingroup
\captionof{algorithm}{AdmissibleTriples}\label{alg:admissible}
\endgroup
\begin{algorithmic}[1]
\REQUIRE A prime $p$ and a $\ZZ$-lattice $C$.
\ENSURE All tuples $(\A ,\B)$ of genera of $\ZZ$-lattices such that $(\A,\B,\C)$ is $p$-admissible with $\C$ the genus of $C$ and $\rk \B$ divisible by $p-1$.
\STATE $n \gets \rk C$
\STATE $d \gets \det(C)$
\STATE Initialize the empty list $L = [\;]$.
\FOR{ $e_p\in \{r \in \ZZ \mid 0 \leq r \leq n/(p-1)\}$}
  \STATE $r_p \gets (p-1)e_p$
  \STATE $r_1 \gets n - r_p$
  \STATE $m \gets \min\{e_p,r_1\}$
  \STATE Form the set
  \[D = \left\{ (d_1,d_p) \in \NN^2 \;\middle|\; \exists g \mid\gcd\left(d_1,d_p,p^m\right): dg^2=d_1 d_p\right\}.\]
  \FOR{$(d_1,d_p) \in D$}
    \STATE Form the set $\L_1$ consisting of all genera of $\ZZ$-lattices $A$ with
    \[\rk A = r_1, \quad \det A = d_1,\quad  \scale(A) \subseteq \scale(C),\quad  \scale(A^\vee)\subseteq p\scale(A^\vee).\]
    \\[-10pt]
    \STATE Form the set $\L_p$ consisting of all genera of $\ZZ$-lattices $B$ with
    \[\rk B = r_p, \quad \det B = d_p,\quad  \scale(B) \subseteq \scale(C),\quad  \scale(B^\vee)\subseteq p\scale(C^\vee).\]
    \\[-10pt]
    \FOR{$(\A,\B) \in \L_1 \times \L_p$}
      \IF {$(\A,\B,\C)$  is  $p$-admissible}
        \STATE Append $(\A,\B)$ to $L$.
      \ENDIF
    \ENDFOR
  \ENDFOR
\ENDFOR
\RETURN L
\end{algorithmic}\hrulefill\\

\begin{remark}
  Genera of $\ZZ$-lattices can be described by the Conway--Sloane genus symbol \cite[15 \S 7]{splag}. We have implemented an enumeration of all such genus symbols with a given signature and bounds on the scales of the Jordan components in \textsc{SageMath}~\cite{sagemath} and \textsc{Hecke}/\textsc{Oscar}~\cite{hecke}.
\end{remark}

\subsection{Enumeration of conjugacy classes of isometries.}\label{enumconj}
Let $p\neq q$ be prime numbers.
In this subsection we give an algorithm which, for a given genus of $\ZZ$-lattices $\L$, computes a complete set of representatives for the isomorphism classes of lattices with isometry $(L,f)$ of order $p^i q^j$ such that $L$ is in $\L$.

Let $(L,f)$ be a lattice with isometry. As before, we will drop $f$ from the notation and simply denote it by $L$ and the corresponding isometry by $f_L$.
If $N \leq L$ is an $f$-invariant sublattice we view it as a lattice with isometry $f_N = f|_N$.

The data structure we use for lattices with isometry is a triple
$(L,f_L,G_L)$, where $G_L$ is the image of $U(L) \to O(\disc{L})$ and $U(L)$ denotes the centralizer of $f_L$ in $O(L)$.

By abuse of terminology we call such a triple a lattice with isometry as well.
So every algorithm in this section which returns lattices with isometry actually returns such triples $(L,f_L,G_L)$ (or at least a function which is able to compute $G_L$ when needed). We omit $f_L$ and $G_L$ from notation and denote the triple simply by $L$.

\begin{definition}
 Let $A$ be a lattice with an isometry of finite order $m$.
 For a divisor $l$ of $m$ denote by $H_l$ the sublattice $\ker \Phi_{l}(f_A)$ viewed as a hermitian $\ZZ[\zeta_l]$-lattice with $\zeta_l$ acting as $f_A|H_l$. Denote by $\H_l$ its genus as hermitian lattice.
 For a divisor $l \mid m$ let $A_l = \ker (f_A^l-1)$ and denote by $\A_l$ its genus as $\ZZ$-lattice. The \emph{type} of $A$ is the collection $(\A_l,\H_l)_{l \mid m}$ and will be denoted by $t(A) = t(A, f_A)$.
\end{definition}

Since we can encode a genus in terms of its symbol and can check for equivalence of two given symbols efficiently, the type is an effectively computable invariant.

\begingroup
\captionof{algorithm}{PrimitiveExtensions}\label{alg:extensions}
\endgroup
\begin{algorithmic}[1]
\REQUIRE Lattices $A,B,C'$ with isometry such that $(A,B,C')$ is $p$-admissible.
\ENSURE
  A set of representatives of the double coset $G_B \backslash S {/}G_A$ where
$S$ is the set of all primitive extensions $A\perp B\subseteq C$ with $pC \subseteq A \perp B$ and $t(C') = t(C,f_C^p)$.
\STATE Initialize the empty list $L = [\,]$.
\STATE Let $g \in \NN$ be such that $p^{2g}  \det A  \det B = \det C'$. \label{alg:extensions_g}
\STATE Let $\mu_A$ (resp. $\mu_B$)
be the minimal polynomial of $f_A$ (resp. $f_B$).
\STATE $V_A \gets \ker \mu_B(D_{f_A}|_{(p^{-1} A \cap A^\vee)/A})$
\STATE $V_B \gets \ker \mu_A(D_{f_B}|_{(p^{-1} B \cap B^\vee)/B})$
\STATE Let $\Gr_A$ be the set of $f_A$-stable subspaces of dimension $g$ of $V_A$ containing $p^l D_A$. Define $\Gr_B$ analogously.
Form the set $R$ consisting of anti-isometric pairs $(H_A,H_B)$, i.e.
$(H_A,q_A|H_A) \cong (H_B,-q_B|H_B)$,
as $(H_A,H_B)$ runs through a set of representatives of
  $\Gr_A/G_A$ and $\Gr_B/G_B$ respectively.
\FOR{ $(H_A,H_B) \in R$}\label{HAHB}
  \STATE Compute an admissible glue map $\psi_0\colon H_A \to H_B$, see \Cref{def:admissible-glue-map}. \label{discard_psi1}
  \STATE Let $S_A\leq G_A$ (resp. $S_B \leq G_B$) be the stabilizer of $H_A$ (resp. of $H_B$).
  \STATE $S_A^{\psi_0}\gets \psi_0 \im(S_A\to O(H_B)) \psi_0^{-1}$
  \STATE $\bar f_A^{\psi_0} \gets \psi_0 (f_A|H_A)\psi_0^{-1}$
  \STATE Compute an element $g \in O(H_B,\rho_l(B))$ such that $g \bar f_A^{\psi_0} g^{-1} = f_B|H_B$. If such an element does not exist, discard $\psi_0$ and continue the for loop in line 7. \label{discard_psi2}
  \STATE $\psi \gets g \circ \psi_0$, $S_A^\psi \gets gS_A^{\psi_0}g^{-1}$
  \STATE Let $O(H_B,\rho_{l+1}(B),f_B)$ be the group of isometries of $H_B$ preserving $\rho_{l+1}(B)$ and  commuting with the action of $f_B$.
  \FOR{$S_BhS_A^\psi \in S_B \backslash O(H_B,\rho_l(B),f_B) /S_A^\psi$}
    \STATE Let $\Gamma_{h\psi}$ be the graph of $h\psi$.
    \STATE Define $C$ by $C/(A\perp B) = \Gamma_{h \psi}$.
    \STATE $f_C \gets f_A \oplus f_B$
    \IF{$t(C,f_C^p) \neq t(C')$}  \label{keepC?}
      \STATE Discard $C$.
    \ENDIF
    \STATE $S_C \gets\{(a,b) \in S_A \times S_B \mid b|_{H_B} \circ h\psi = h\psi \circ a|_{H_A}\}$ \label{preserveC}
    \STATE $G_C \gets \image\left(S_C \rightarrow O(\disc{C})\right)$
    \STATE Append $(C,f_C,G_C)$ to $L$.
  \ENDFOR
\ENDFOR
\RETURN L
\end{algorithmic}\hrulefill\\

\begin{lemma}
 Algorithm \ref{alg:extensions} is correct.
\end{lemma}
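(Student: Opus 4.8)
The plan is to prove that the returned list is a transversal of $G_B \backslash S / G_A$ by checking completeness and non-redundancy separately. The underlying dictionary is the glue formalism of \Cref{sec:lat}: a primitive extension $A \perp B \subseteq C$ on which $f_C = f_A \oplus f_B$ acts as an isometry corresponds to an anti-isometric glue map $\phi \colon H_A \to H_B$ between subgroups $H_A \subseteq \disc{A}$, $H_B \subseteq \disc{B}$ satisfying the equivariance $\phi \circ D_{f_A} = D_{f_B} \circ \phi$, and two such extensions are identified by $G_A \times G_B$ exactly when their glue maps satisfy $\phi' = D_b \circ \phi \circ D_a^{-1}$ for some $a \in G_A$, $b \in G_B$. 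Thus it suffices to enumerate equivariant admissible glue maps up to this two-sided action.

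First I would determine which glue data can occur for $C \in S$. Since $pC \subseteq A \perp B$, the groups $H_A, H_B$ are $p$-torsion, hence $\FF_p$-subspaces of the $p$-torsion parts of the discriminant groups; equivariance makes $f_A|H_A$ conjugate to $f_B|H_B$, so $\mu_B(f_A)$ annihilates $H_A$ and $H_A \subseteq V_A$ (dually $H_B \subseteq V_B$). The determinant relation $|\det C| = |\det A||\det B|/[C : A \perp B]^2$ together with $t(C, f_C^p) = t(C')$, which forces $\det C = \det C'$, pins down $[C : A \perp B] = p^g$ and hence $\dim_{\FF_p} H_A = \dim_{\FF_p} H_B = g$. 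Finally \Cref{glue:estimate-level}, applicable to $L = C$ because $p^l C^\vee \subseteq C$, yields $p^l \disc{A} \subseteq H_A$, $p^l \disc{B} \subseteq H_B$, and that $\phi$ is an anti-isometry of the $\rho_{l+1}$-forms, i.e. admissible in the sense of \Cref{def:admissible-glue-map}. These are precisely the constraints defining $\Gr_A$, $\Gr_B$ and the anti-isometry condition in $R$, so every $C \in S$ has glue pair matching, up to $G_A \times G_B$, a unique entry of $R$; distinct entries of $R$ give disjoint double cosets because they lie in different $G_A$- (resp. $G_B$-) orbits.

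Next I would fix a pair $(H_A, H_B) \in R$ and analyse the set $\Psi$ of equivariant admissible glue maps $H_A \to H_B$. Any two elements of $\Psi$ differ by an isometry of $H_B$ that preserves the $\rho_{l+1}$-form and commutes with $f_B|H_B$, and conversely post-composing one element of $\Psi$ by such an isometry stays in $\Psi$; hence $\Psi$ is either empty or a torsor under $O(H_B, \rho_l(B), f_B)$. The construction of $\psi_0$ (an admissible glue map, which exists since $(H_A, H_B)$ is anti-isometric) followed by the search in line \ref{discard_psi2} for $g \in O(H_B, \rho_l(B))$ with $g\, \bar f_A^{\psi_0}\, g^{-1} = f_B|H_B$ detects exactly this dichotomy: if some $\psi' \in \Psi$ existed then $g = \psi' \psi_0^{-1}$ would be such a witness, so the discard is justified, and otherwise $\psi = g \circ \psi_0$ is a base point of $\Psi$. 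Descending to double cosets, two maps $h\psi, h'\psi \in \Psi$ give $G_A \times G_B$-equivalent extensions iff $h' \in S_B\, h\, S_A^\psi$, where $S_A = \mathrm{Stab}_{G_A}(H_A)$, $S_B = \mathrm{Stab}_{G_B}(H_B)$ and $S_A^\psi$ is the transport of $S_A$ through the equivariant $\psi$ (elements of $G_A$ or $G_B$ moving $H_A$ or $H_B$ leave the chosen representative pair and are already accounted for in $R$). Thus the inner double-coset loop enumerates the $(S_A, S_B)$-orbits on $\Psi$ without repetition, and combined with the classification of pairs in $R$ this realises $G_B \backslash S / G_A$ bijectively.

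Finally, the type test in line \ref{keepC?} removes exactly the extensions failing $t(C, f_C^p) = t(C')$, that is, those not in $S$; since the type is an isomorphism invariant of $(C, f_C)$ it is constant along each double coset, so the filtering is consistent. Correctness of the returned $G_C$ follows from line \ref{preserveC}: because $A$ and $B$ are $f_C$-characteristic (kernels of coprime factors of the minimal polynomial of $f_C$ in the situations where the routine is applied), every element of $U(C)$ decomposes as a glue-compatible pair in $U(A) \times U(B)$, whence $G_C = \im(S_C \to O(\disc{C}))$. I expect the main obstacle to be the bookkeeping of the torsor-to-double-coset reduction: verifying that the conjugated stabilizers $S_A^\psi$ and $S_B$ indeed lie in $O(H_B, \rho_l(B), f_B)$, that the change of base point is governed precisely by the two-sided $(S_B, S_A^\psi)$-action, and that selecting one pair per $G_A \times G_B$-orbit in $R$ neither merges nor splits double cosets of $S$; the remaining steps are a direct assembly of \Cref{glue:det-estimate} and \Cref{glue:estimate-level}.
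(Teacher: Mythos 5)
Your proposal is correct and follows essentially the same route as the paper's proof: you constrain the possible glue data via \Cref{glue:det-estimate} and \Cref{glue:estimate-level} so that every relevant pair $(H_A,H_B)$ appears in $R$, identify the equivariant admissible glue maps as a coset (torsor) of $O(H_B,\rho_l(B),f_B)$ built from $\psi_0$ and the conjugating element $g$, and reduce the $G_A\times G_B$-identification to the double cosets $S_B\backslash O(H_B,\rho_l(B),f_B)/S_A^\psi$, finishing with the stabilizer description of $S_C$. The only differences are presentational (you make the completeness/non-redundancy split and the type-invariance of the filter in line \ref{keepC?} explicit, which the paper leaves implicit).
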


\begin{proof}
 Suppose that $A \perp B \subseteq C$ is an equivariant
 primitive extension with $pC \subseteq A \perp B$ and $t(C,f_C^p)=t(C')$. Let $\phi \colon H_A \to H_B$ be the corresponding glue map. It is admissible by \Cref{glue:estimate-level}.
 The existence of $g$ in line \ref{alg:extensions_g} follows from $(A,B,C')$ being $p$-admissible.
 Further, $p C \subseteq A \perp B$, gives $H_A \subseteq (p^{-1}A\cap A^\vee)/A$.

 Since $\phi$ is equivariant, we get that $\mu_B(G_{f_A})$ vanishes on $H_A$.
 Hence $H_A$ is a $g$-dimensional subspace of the $\FF_p$-vector space $V_A$.
 It is stable under $f_A$. Further, by \Cref{glue:estimate-level}, it contains $p^l D_A$.
 Similarly $H_B$ is preserved by $f_B$, contains $p^l D_B$ and is contained in $V_B$. Therefore $(H_A,H_B)$ appears in the for loop in line \ref{HAHB}.

 Since $(A,B,C')$ is $p$-admissible,
 there exists an admissible glue map
 $\psi_0\colon H_A \to H_B$. It can be computed using normal forms of quadratic or bilinear forms over finite fields.
 The set of admissible glue maps from $H_A$ to $H_B$ is given by $O(H_B, \rho_l(B))\psi_0$.
 There exists an admissible equivariant glue map from $H_A$ to $H_B$ if and only if we find $g \in O(H_B,\rho_l(B))$ with
 \[g \psi_0  (f_A|H_A) = (f_B|H_B) g \psi_0.\]
 Reordering we find $g  \psi_0  (f_A|H_A) \psi_0^{-1}  g^{-1} = f_B|H_B$.
 This justifies lines \ref{discard_psi1} to \ref{discard_psi2} of the algorithm. So we continue with $\psi$ an equivariant admissible glue map.
 Now the set of equivariant admissible glue maps is $O(H_B,\rho_l(B),f_B)\psi$.
 Let $h \psi$, $h \in O(H_B,\rho_l(B),f_B)$ be an equivariant admissible glue map and let $a \in S_A$ and $b \in S_B$.
 Then
 \[b h\psi a = (b|_{H_B}) h (\psi a \psi^{-1}) \psi = h' \psi.\]
 Therefore $S_B h\psi S_A \mapsto S_B h S_A^{\psi} \psi$ defines a bijection of
 \[S_B \backslash \{\mbox{equivariant admissible glue maps } \psi\colon H_A \to  H_B\}/S_A \]
 with the double coset
\[ S_B \backslash O(H_B,\rho_l(B),f_B)/S_A^\psi.\]

 Finally, the condition on $(a,b)$ in the equation for $S_C$ in line \ref{preserveC} of the algorithm
 is indeed the one to preserve $C/(A\perp B) \leq \disc{A \perp B}$.
 Thus $S_C$ is the stabilizer of $C/(A \perp B)$ in $G_A \times G_B$.
\end{proof}

\begin{remark}
  The computation of representatives and their stabilizers in Steps~6 and~9 of \Cref{alg:extensions} can be very costly.
  In \textsc{Magma}~\cite{magma}, based on the algorithms in \cite{obrien1990}, a specialized method \textsc{OrbitsOfSpaces}
for linear actions on Grassmannians is provided.
\end{remark}

\begingroup
\captionof{algorithm}{Representatives}\label{reps}
\endgroup
 \begin{algorithmic}
  \REQUIRE A lattice with isometry $A$ such that $\Phi_n(f_A)=0$ and an integer $m$,
  or just its type $t(A)$.
  \ENSURE Representatives of isomorphism classes of lattices with isometry $B$ of order $m \cdot n$ and minimal polynomial $\Phi_{mn}$ such that $t(B,f_B^m)=t(A)$.
 \end{algorithmic}\hrulefill\\

\Cref{reps} relies on an enumeration of genera of hermitian lattices over maximal orders of number fields with bounds on the determinant and level.
Then for each genus a single representative is computed \cite[Algorithm 3.5.6]{KirschmerHabil}
and its type is compared with that of $A$.
Finally, Kneser's neighbor method \cite[\S 5]{KirschmerHabil} is used to compute representatives for the isometry classes of the genus.
\begingroup
\captionof{algorithm}{Split}\label{split}
\endgroup
\begin{algorithmic}[1]
\REQUIRE A lattice with isometry $C$ such that $\Phi_{q^d}(f_C)=0$.
\ENSURE Representatives of the isomorphism classes of lattices with isometry $M$ such that $(M,f_M^p)$ is of the same type as $C$.
\STATE Initialize an empty list $L = [\ ]$.
\FOR{ $(\A_0,\B_0) \in $ AdmissibleTriples$(p,C)$} \label{splitfor1}
  \STATE $R_1\gets$ Representatives($A_0$, $\id_{A_0}$, $q^d$) where $A_0$ is any representative of $\A_0$
  \STATE $R_2\gets$ Representatives($B_0$, $\id_{B_0}$, $pq^d$) where $B_0$ is any representative of $\B_0$
  \FOR{$(A,B) \in R_1 \times R_2$} \label{split_for} \label{splitfor2}
    \STATE $E \gets \mbox{PrimitiveExtensions}(A,B,C,p)$
    \STATE Append the elements of $E$ to $L$.
    \ENDFOR
\ENDFOR
\RETURN L
\end{algorithmic}\hrulefill\\

\begin{lemma}
 \Cref{split} is correct.
\end{lemma}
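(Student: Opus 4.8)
The plan is to establish soundness and completeness of \Cref{split}, and then the irredundancy of its output, reducing everything to the already-proven correctness of \Cref{alg:extensions}, \Cref{reps}, and \Cref{alg:admissible}. Write $q^d$ for the order of $f_C$, so that its minimal polynomial is $\Phi_{q^d}$, and recall that $p\neq q$ is the fixed prime of this subsection. The key preliminary observation is that any $M$ with $(M,f_M^p)$ of the same type as $C$ satisfies $f_M^{pq^d}=\id$ (because $f_M^p$ has order $q^d$), so the cyclotomic components of $f_M$ occur only at divisors $k\mid pq^d$. Since $\ker\Phi_k(f_M)$ contributes to $f_M^p$ a component at the order of $\zeta_k^p$, namely $k/\gcd(k,p)$, and this equals $q^d$ only for $k\in\{q^d,pq^d\}$, the hypothesis that $f_M^p$ has minimal polynomial $\Phi_{q^d}$ forces $f_M$ to have minimal polynomial dividing $\Phi_{q^d}\Phi_{pq^d}$. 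Setting $A:=\ker\Phi_{q^d}(f_M)$ and $B:=\ker\Phi_{pq^d}(f_M)$ we obtain a primitive decomposition $A\perp B\subseteq M$.

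For soundness I would fix a pair $(\A_0,\B_0)$ from the outer loop, representatives $A\in R_1$, $B\in R_2$, and an element $M\in\mathrm{PrimitiveExtensions}(A,B,C,p)$. By the correctness of \Cref{alg:extensions}, such an $M$ is a primitive extension $A\perp B\subseteq M$ with $f_M=f_A\oplus f_B$ and $t(M,f_M^p)=t(C)$, the latter being exactly the type filter applied by that algorithm; hence $(M,f_M^p)$ has the same type as $C$ and the output is sound.

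For completeness I would start from an arbitrary $M$ with $t(M,f_M^p)=t(C)$ and locate it in the algorithm. Set $g=f_M^{q^d}$; by the observation above $g$ has order dividing $p$, with $\ker\Phi_1(g)=A$ and $\ker\Phi_p(g)=B$ for $A,B$ as above, so $f_M|_A$ has minimal polynomial $\Phi_{q^d}$, $f_M|_B$ has minimal polynomial $\Phi_{pq^d}$, and $\rk B$ is divisible by $p-1$ since $B$ is a $\ZZ[\zeta_p]$-module. If $B\neq 0$ then $g$ has order exactly $p$, and \Cref{ispadmissible} applied to the underlying $\ZZ$-lattice of $M$ with the isometry $g$ shows that $(A,B,M)$ is $p$-admissible; the degenerate cases $B=0$ and $A=0$ correspond to the triples $(\C,0,\C)$ and $(0,\C,\C)$, which are $p$-admissible by the remark following the definition of $p$-admissibility. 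Since $t(M,f_M^p)=t(C)$ forces the $\ZZ$-genus of $M$ to equal that of $C$—both being the component $\A_{q^d}=\ker((f_M^p)^{q^d}-\id)=M$ of the respective type—the pair $(\G(A),\G(B))$ is returned by $\mathrm{AdmissibleTriples}(p,C)$. As $(A,(f_M|_A)^{q^d})=(A,\id)$ and $(B,(f_M|_B)^{pq^d})=(B,\id)$ have $\ZZ$-genera $\G(A)$ and $\G(B)$, the lattices with isometry $(A,f_M|_A)$ and $(B,f_M|_B)$ are represented in $R_1$ and $R_2$ by the specification of \Cref{reps}; feeding them into \Cref{alg:extensions}, and using $pM\subseteq A\perp B$ from \Cref{ex:porder}, recovers $M$ up to isomorphism.

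The step I expect to be the main obstacle is the irredundancy of the output together with the precise matching of type data. For irredundancy I would observe that $M\mapsto(\ker\Phi_{q^d}(f_M),\ker\Phi_{pq^d}(f_M))$ is canonical up to isomorphism, so distinct isomorphism classes of pairs $(A,B)\in R_1\times R_2$ produce disjoint families of $M$, while within a fixed pair the double coset $G_B\backslash S/G_A$ of \Cref{alg:extensions}—an equivariant refinement of \Cref{extensions}, valid since $\chi_{f_A}$ and $\chi_{f_B}$ are coprime—is a transversal of the isomorphism classes of extensions. The delicate point is to confirm that passing the filter $t(M,f_M^p)=t(C)$ together with the outer choice of $(\A_0,\B_0)$ is genuinely equivalent to $(M,f_M^p)$ lying in the type of $C$; this reduces to checking that the hermitian genus $\H_{q^d}$ recorded in the type is compatible with the $p$-power map $f_M\mapsto f_M^p$, which is where the interplay between the $\ZZ[\zeta_{q^d}]$-module structure and the $p$-adic glue must be handled with care.
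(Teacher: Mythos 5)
Your proof is correct and follows essentially the same route as the paper: decompose $M$ into $A=\ker \Phi_{q^d}(f_M)$ and $B=\ker\Phi_{pq^d}(f_M)$, apply \Cref{ispadmissible} to the order-$p$ isometry $f_M^{q^d}$ to see that the genera of $(A,B)$ occur in the output of \Cref{alg:admissible}, locate $(A,f_M|_A)$ and $(B,f_M|_B)$ among the outputs of \Cref{reps}, recover $M$ via \Cref{alg:extensions}/\Cref{extensions}, and get irredundancy from the canonicity of the decomposition together with the double-coset transversal (you are in fact more careful than the paper about the degenerate cases $A=0$ or $B=0$ and about why the minimal polynomial divides $\Phi_{q^d}\Phi_{pq^d}$). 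The ``delicate point'' you flag at the end is a non-issue: the filter in line \ref{keepC?} of \Cref{alg:extensions} is literally the condition $t(M,f_M^p)=t(C)$ demanded in the output specification, so soundness holds by definition and completeness holds because your $M$ satisfies this condition by hypothesis.
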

\begin{proof}
Let $M$ be a lattice with isometry such that $(M,f^p_M)$ is of the same type as $C$. Then the minimal polynomial of $f_M$ is a divisor of
$\Phi_{pq^d}\Phi_{q^d}$. Let $M_{p^dq}=\ker \Phi_{pq^d}(f_M)$ and $M_{p^d}=\ker \Phi_{q^d}(f_M)$ be the corresponding sublattices. Then $(M_{pq^d},M_{q^d},C)$ is $p$-admissible by \Cref{ispadmissible} applied to $f_M^q$.
Hence their $\ZZ$-genera appear at some point in the for loop in line \ref{splitfor1}.
Similarly, at some point in the for loop in line \ref{splitfor2}, $A \cong M_{p^dq}$ and $B\cong M_{p^d}$ as hermitian lattices. Then some lattice with isometry isomorphic to $M$ is a member of $E$ by \Cref{extensions}.

  Conversely, only lattices with isometry $(M,f_M)$ with $(M,f_M^p)$ of the same type as $C$ are contained in $E$.
No two pairs $(A,B)$ in line \ref{split_for} are isomorphic and for a given pair the extensions computed  are mutually non-isomorphic by the correctness of \Cref{alg:extensions}.
Thus no two elements of $L$ can be isomorphic.
\end{proof}

\begingroup
\captionof{algorithm}{FirstP}\label{FirstP}
\endgroup
\begin{algorithmic}[1]
\REQUIRE A lattice with isometry $C$ of order $q^e$ and $b \in \{0,1\}$.
  \ENSURE Representatives of the isomorphism classes of lattices with isometry $M$ such that $(M,f_M^p)$ is of the same type as $C$. If $b=1$, return only $M$ such that $f_M$ is of order $pq^e$.
\STATE Initialize an empty list $L = [\ ]$.
\IF {$e=0$}
\RETURN Split$(C, p)$, where in case $b = 1$ we return only those lattices $M$ with $f_M$ of order $pq^e$. \label{firstp_e0}
\ENDIF
\STATE $A_0 \gets \ker(\Phi_{q^e}(f_C))$ \label{firstp1}
\STATE $B_0 \gets \ker(f_C^{q^{e-1}}-1)$ \label{firstp2}
\STATE $\A \gets  \mbox{Split}(A_0, p)$
  \STATE $\B \gets  \mbox{FirstP}(B_0,p, 0)$
\FOR{$(A,B) \in \A \times \B$} \label{firstpAB}
    \IF {$b=1$ and $p \nmid \ord(f_A)$ and $p \nmid \ord(f_B)$}
        \STATE Discard $(A,B)$ and continue the for loop with the next pair.
    \ENDIF
    \STATE $E \gets \mbox{PrimitiveExtensions}(A,B,C,q)$
    \STATE Append the elements of $E$ to $L$.
\ENDFOR
\RETURN $L$
\end{algorithmic}\hrulefill\\

\begin{lemma}
 \Cref{FirstP} is correct.
\end{lemma}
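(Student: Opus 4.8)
The plan is to prove correctness by strong induction on $e$, i.e. on the order $q^e$ of the input $C$. Throughout, let $M$ be a lattice with isometry such that $(M,f_M^p)$ has the same type as $C$; since the type determines the order, $f_M^p$ has order $q^e$, so $f_M$ has order $q^e$ or $pq^e$. The base case $e=0$ is immediate: then $f_C=\id$, and the condition that $(M,f_M^p)$ have the same type as $C$ says precisely that $f_M$ has order dividing $p$. This is exactly what \Cref{split} enumerates, and line \ref{firstp_e0} returns $\mbox{Split}(C,p)$ with the order restriction for $b=1$ matching the requirement $\ord(f_M)=pq^0=p$. So assume $e\geq 1$.

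\emph{Completeness.} Given such an $M$, I split it canonically along $q$ by setting $\tilde A=\ker\Phi_{q^e}(f_M^p)$ and $\tilde B=\ker\big((f_M^p)^{q^{e-1}}-1\big)=\ker(f_M^{pq^{e-1}}-1)$. As kernels of polynomials in $f_M^p$ both are $f_M$-invariant, and $x^{q^e}-1=\Phi_{q^e}(x)\,(x^{q^{e-1}}-1)$ gives $\tilde A\oplus\tilde B=M\otimes\QQ$. The isometry $f_M^{pq^{e-1}}$ has order $q$ with $\ker\Phi_1=\tilde B$ and $\ker\Phi_q=\tilde A$, so by \Cref{ispadmissible} the triple $(\tilde B,\tilde A,M)$ is $q$-admissible and $qM\subseteq\tilde A\perp\tilde B\subseteq M$. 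Restricting the equality $t(M,f_M^p)=t(C)$ to the component $\H_{q^e}$ shows $(\tilde A,f_{\tilde A}^p)$ has the same type as $A_0=\ker\Phi_{q^e}(f_C)$, so $\tilde A$ occurs in $\A=\mbox{Split}(A_0,p)$; restricting to the divisors $l\mid q^{e-1}$ shows $(\tilde B,f_{\tilde B}^p)$ has the same type as $B_0=\ker(f_C^{q^{e-1}}-1)$, so by the induction hypothesis (applied to $B_0$, of order dividing $q^{e-1}$) $\tilde B$ occurs in $\B=\mbox{FirstP}(B_0,p,0)$. Since $(M,f_M^p)$ and $C$ lie in the same $\ZZ$-genus, the genus triple $(\tilde A,\tilde B,C)$ is $q$-admissible, so \Cref{alg:extensions} is applicable, and its correctness lemma together with the prescribed type filter $t(M,f_M^p)=t(C)$ then produces a representative isomorphic to $M$. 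Hence every valid $M$ is returned.

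\emph{Soundness, the order filter, and non-redundancy.} Conversely, each $(A,B)\in\A\times\B$ satisfies $(A,f_A^p)\sim A_0$ and $(B,f_B^p)\sim B_0$, so any extension $M$ returned by $\mbox{PrimitiveExtensions}(A,B,C,q)$ has $f_M=f_A\oplus f_B$, whence $f_M^p=f_A^p\oplus f_B^p$ exhibits a $q$-power-index primitive extension of $A_0\perp B_0$ whose type is verified to equal $t(C)$; thus $(M,f_M^p)$ has the same type as $C$. Moreover $\ord(f_M)=\lcm(\ord(f_A),\ord(f_B))$ has $q$-part $q^e$ and equals $pq^e$ exactly when $p\mid\ord(f_A)$ or $p\mid\ord(f_B)$, which is precisely the set of pairs kept when $b=1$; so the order filter is correct. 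Finally, the assignment $M\mapsto(\tilde A,\tilde B,\,M/(\tilde A\perp\tilde B))$ is intrinsic, since the summands are the canonical kernels above; thus isomorphic outputs force isomorphic $(\tilde A,\tilde B)$ and conjugate glue. As \Cref{split} and, inductively, \Cref{FirstP} return pairwise non-isomorphic representatives and \Cref{alg:extensions} returns a transversal of the glue double coset for each fixed $(A,B)$, no two returned lattices with isometry are isomorphic.

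\emph{Main obstacle.} The delicate point is the interaction of the two primes in the final gluing: the primitive extension $\tilde A\perp\tilde B\subseteq M$ is taken with $q$-power index, yet the type that must be matched is that of $(M,f_M^p)$, not of $(M,f_M^q)$. The argument hinges on the observation that the glue subgroup $M/(\tilde A\perp\tilde B)$ is the same whether $M$ carries $f_M$ or its power $f_M^p$, so that the $q$-admissible extension structure tracked by \Cref{alg:extensions} is compatible with the $f_M^p$-type filter. Making this compatibility precise, and verifying that the canonical $q$-splitting places the two summands exactly in the images of \Cref{split} and of the recursive call, is the crux of the proof.
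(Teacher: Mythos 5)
Your proof is correct and takes essentially the same route as the paper's: the identical canonical $q$-splitting (your $\tilde A=\ker\Phi_{q^e}(f_M^p)$ and $\tilde B=\ker(f_M^{pq^{e-1}}-1)$ coincide with the paper's $\ker\Phi_q(f)$ and $\ker\Phi_1(f)$ for $f=f_M^{pq^{e-1}}$, since $\Phi_q(x^{q^{e-1}})=\Phi_{q^e}(x)$), the same appeal to \Cref{ispadmissible} for $q$-admissibility, and the same induction on $e$ with \Cref{split} and the recursive call handling the two summands, followed by \Cref{alg:extensions}. The paper's proof is terser — it leaves implicit the soundness, $b=1$ order-filter, and non-redundancy checks you spell out, and it silently glosses over the prime mismatch (gluing at $q$ while filtering by the type of $f_M^p$) that you correctly identify and resolve.
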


\begin{proof}
Let $M$ be a lattice with isometry such that $t(M,f_M^p)=t(C)$. Then $f_M^{pq^e}-1=0$.
Set $f=f_M^{pq^{e-1}}$. We see that $A_0= \ker(\Phi_q(f))$
and $B_0 = \ker(\Phi_1(f))$. Therefore $A_0 \perp B_0 \subseteq M$ is a primitive extension
and $(A_0,B_0,M)$ is $q$-admissible.

If $e=0$, then $f_C$ is the identity. So the call of \Cref{split} in line \ref{firstp_e0} returns the correct result.
Otherwise we proceed by induction on $e$. Note that $\Phi_{q^e}(f_{A_0})= 0$ so
the input to Split is valid. Further the order of $f_{B_0}$ is a divisor of $q^{e-1}$. Thus in line \ref{firstpAB} we have $(\Phi_{pq^e}\Phi_{q^e})(f_A)=0$
and $f_B^{pq^{e-1}} = 1$ (possibly $p \nmid \ord{f_B}$).
  Note that $(A,B,C)$ in line \ref{firstpAB} is indeed $q$-admissible, because $(A_0,B_0,C)$ is.
\end{proof}

\begingroup
\captionof{algorithm}{PureUp}\label{PureP}
\endgroup
\begin{algorithmic}[1]
\REQUIRE A lattice with isometry $C$ such that $\prod_{i=0}^e\Phi_{p^dq^i}(f_C)=0$ for $d>0$, $e\geq 0$.
\ENSURE Representatives of the isomorphism classes of lattices with isometry $M$ such that $(M,f_M^p)$ is of the same type as $C$.
\STATE initialize an empty list $L$
\IF {$e=0$}
\RETURN Representatives$(C,p)$
\ENDIF
\STATE $A_0 \gets \ker(\Phi_{p^dq^e}(f_C))$
\STATE $B_0 \gets A_0^\perp$,
\STATE $\A\gets \mbox{Representatives}(A_0,p)$
\STATE $\B = $ PureUp$(B_0,p)$
\FOR{$(A,B) \in \A \times \B$}
    \STATE $E \gets \mbox{Extensions}(A,B,C,q)$
    \STATE append the elements of $E$ to $L$.
\ENDFOR
\RETURN $L$
\end{algorithmic}\hrulefill\\

\begin{lemma}
\Cref{PureP} is correct.
\end{lemma}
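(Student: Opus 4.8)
The plan is to argue by induction on $e$, closely following the correctness proofs for \Cref{split} and \Cref{FirstP}. First I would record the shape of the isometries that can occur. If $(M, f_M^p)$ has the same type as $C$, then every cyclotomic factor of $f_M^p$ is some $\Phi_{p^d q^i}$ with $0 \le i \le e$; since $d > 0$ forces $p \mid p^d q^i$, each eigenvalue $\lambda$ of $f_M$ must have order $p^{d+1} q^i$ (it is the unique $N$ with $N/\gcd(N,p) = p^d q^i$). Hence the minimal polynomial of $f_M$ divides $\prod_{i=0}^e \Phi_{p^{d+1}q^i}$. In the base case $e = 0$ this forces the minimal polynomial to be $\Phi_{p^{d+1}}$, and by the specification of \Cref{reps} the call Representatives$(C, p)$ returns precisely the lattices with isometry of order $p^{d+1}$, minimal polynomial $\Phi_{p^{d+1}}$ and $t(M, f_M^p) = t(C)$; this settles the base case.

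For the inductive step ($e \ge 1$) I would split off the top $q$-power. Set $A = \ker \Phi_{p^{d+1}q^e}(f_M)$ and $B = A^\perp = \ker\big(\prod_{i=0}^{e-1}\Phi_{p^{d+1}q^i}(f_M)\big)$, so that $A \perp B \subseteq M$ is a primitive extension whose two pieces have coprime characteristic polynomials. Applying \Cref{ispadmissible} to the order-$q$ isometry $g = f_M^{p^{d+1}q^{e-1}}$, whose fixed part is $B$ and whose $\Phi_q$-part is $A$, shows that $(A, B, M)$ is $q$-admissible. Next I would identify the two pieces with the recursive data: raising to the $p$-th power sends the primitive $p^{d+1}q^e$-th roots to primitive $p^d q^e$-th roots, so $(A, f_A^p)$ has the type of the top piece $A_0 = \ker \Phi_{p^d q^e}(f_C)$, while $f_A$ has minimal polynomial $\Phi_{p^{d+1}q^e}$; by \Cref{reps} this means $A$ is isomorphic to a member of $\A$, the list returned by Representatives$(A_0, p)$. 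Likewise $(B, f_B^p)$ has the type of $B_0 = A_0^\perp$, and since $B_0$ satisfies $\prod_{i=0}^{e-1}\Phi_{p^d q^i}(f_{B_0}) = 0$, the inductive hypothesis gives that $B$ is isomorphic to a member of $\B = $ PureUp$(B_0, p)$. As the characteristic polynomials of $f_A$ and $f_B$ are coprime, \Cref{extensions} together with the correctness of \Cref{alg:extensions} shows that $M$ is recovered, up to isomorphism, among the primitive extensions produced by the call to \Cref{alg:extensions} with arguments $(A, B, C, q)$; thus $M$ occurs in the returned list.

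For the converse and the absence of repetitions I would argue exactly as in \Cref{split}. Every lattice with isometry appended to $L$ is a $q$-admissible primitive extension of some $A \in \A$ and $B \in \B$, and the specifications of \Cref{reps}, the inductive hypothesis and the correctness of \Cref{alg:extensions} guarantee that the resulting $M$ satisfies $t(M, f_M^p) = t(C)$. Uniqueness follows because the decomposition $M \mapsto (\ker \Phi_{p^{d+1}q^e}(f_M),\, \ker \Phi_{p^{d+1}q^e}(f_M)^\perp)$ is canonical, so an isomorphism class of $M$ determines the pair $(A, B)$ up to isomorphism; distinct pairs therefore yield non-isomorphic outputs, and for a fixed pair \Cref{alg:extensions} already returns a transversal of the double coset $G_B \backslash S / G_A$. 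I expect the main obstacle to be the bookkeeping at the gluing step: one must check that the type filter applied inside \Cref{alg:extensions} really coincides with the target condition $t(M, f_M^p) = t(C)$ under the present choice of extension prime $q$, and that passing to double cosets by the unitary groups $G_A, G_B$ is compatible with the canonical top-$q$-power decomposition, so that no isomorphism class is lost or counted twice.
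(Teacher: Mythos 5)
Your proof is correct and takes essentially the same approach as the paper: induction on $e$, with the base case handled by \Cref{reps} and the inductive step splitting off the top cyclotomic piece $\ker \Phi_{p^{d+1}q^e}(f_M)$, applying \Cref{ispadmissible} to an auxiliary order-$q$ power of $f_M$, and invoking \Cref{extensions} and \Cref{alg:extensions} for completeness and non-redundancy --- precisely the ``details similar to the proof of \Cref{FirstP}'' that the paper leaves implicit. The two bookkeeping points you flag (the type-filter power $p$ versus the gluing prime $q$ inside \Cref{alg:extensions}, and compatibility of the double-coset count with the canonical decomposition) are real loosenesses in the paper's algorithm specifications, and your reading of them is the intended one.
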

\begin{proof}
If $e=0$, then $\Phi_{p^d}(f_C)=0$, so Representatives does the job.
Let $M$ be in the output of PureUp. Since $d>0$, we have $\prod_{i=0}^e\Phi_{p^{d+1}q^i}(f_M)=0$. Therefore $M$ is a valid input to PureUp and we can proceed by induction on $e$. The details are
similar to the proof of \Cref{FirstP}.
\end{proof}

\begingroup
\captionof{algorithm}{NextP}\label{NextP}
\endgroup
\begin{algorithmic}[1]
\REQUIRE A lattice with isometry $C$ of order $p^dq^e$ where $q \neq p$ are primes.
\ENSURE Representatives of the isomorphism classes of lattices with isometry $M$ of order $p^{d+1}q^e$ such that $(M,f_M^p)$ is of the same type as $C$.
\STATE initialize an empty list $L$
\IF {$d=0$}
    \RETURN FirstP($C,p,1)$
\ENDIF
\STATE $B_0 \gets \ker( f_C^{p^{d-1}q^e}-1)$
\STATE $A_0 \gets B_0^\perp = \ker \prod_{i=0}^e \Phi_{p^dq^i}(f_C)$
\STATE $\A \gets \mbox{PureUp}(A_0, p)$
\STATE $\B \gets \mbox{NextP}(B_0, p)$
\FOR{$(A,B) \in \A \times \B$}
    \STATE $E \gets \mbox{Extensions}(A,B,C,p)$
    \STATE append the elements of $E$ to $L$
\ENDFOR
\RETURN $L$
\end{algorithmic}\hrulefill\\
\begin{lemma}
\Cref{NextP} is correct.
\end{lemma}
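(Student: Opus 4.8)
The plan is to induct on $d$, following the pattern of the correctness proofs of \Cref{FirstP} and \Cref{PureP}. For the base case $d=0$ the input $C$ has order $q^e$ and we seek all $M$ of order $pq^e$ with $t(M,f_M^p)=t(C)$; this is exactly what \Cref{FirstP} computes when called with the flag $b=1$, which discards the lattices whose isometry has order prime to $p$. Hence correctness for $d=0$ is immediate from \Cref{FirstP}.

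For the inductive step $d>0$, I would first show that every target $M$ decomposes canonically. Let $M$ have order $p^{d+1}q^e$ with $t(M,f_M^p)=t(C)$ and set $h=f_M^{p^dq^e}$, an isometry of order $p$. An eigenvalue computation identifies $B:=\ker\Phi_1(h)=\ker(f_M^{p^dq^e}-1)$ with the part of $M$ on which $f_M$ has $p$-power order at most $p^d$, and $A:=\ker\Phi_p(h)=B^\perp=\ker\prod_{i=0}^e\Phi_{p^{d+1}q^i}(f_M)$ with the part on which $f_M$ has $p$-power order exactly $p^{d+1}$. Applying \Cref{ispadmissible} to $h$ shows that $(A,B,M)$ is $p$-admissible, that $A\perp B\subseteq M$ is an equivariant primitive extension, and that $pM\subseteq A\perp B$ (\Cref{ex:porder}).

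Next I would match the summands to the recursive calls. Since raising to the $p$-th power lowers each $p$-power order by one, $(A,f_A^p)$ has the same type as the $p^d$-part $A_0=\ker\prod_{i=0}^e\Phi_{p^dq^i}(f_C)$ of $C$, so $A$ is returned by $\mathrm{PureUp}(A_0,p)$ by \Cref{PureP} (its hypotheses $\prod_i\Phi_{p^dq^i}(f_{A_0})=0$ and $d>0$ hold). Likewise $(B,f_B^p)$ has the same type as $B_0=\ker(f_C^{p^{d-1}q^e}-1)$, whose $p$-power order is strictly below $p^d$, so $B$ is returned by $\mathrm{NextP}(B_0,p)$ by the inductive hypothesis. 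Thus the pair $(A,B)$ occurs in the double loop, and the correctness of \Cref{alg:extensions} (equivalently \Cref{extensions}, the characteristic polynomials of $f_A$ and $f_B$ being coprime) guarantees that a lattice isomorphic to $M$ appears in $\mathrm{Extensions}(A,B,C,p)$. Conversely, everything the algorithm produces has this decomposition with $t(M,f_M^p)=t(C)$ and order $p^{d+1}q^e$, so no spurious classes arise.

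Finally I would check non-redundancy exactly as in the proof of \Cref{split}: the decomposition $M\mapsto(A,B)$ is canonical, the representatives listed in $\mathrm{PureUp}(A_0,p)$ and $\mathrm{NextP}(B_0,p)$ are pairwise non-isomorphic, and for each fixed pair \Cref{alg:extensions} returns the primitive extensions up to the double-coset action of the unitary groups, i.e.\ up to isomorphism of lattices with isometry. Hence each isomorphism class of $M$ is produced exactly once. I expect the main obstacle to be the order bookkeeping in the inductive identification: one must verify that $B_0$ is a valid $\mathrm{NextP}$ input and that the $p$-power order of $B$ is precisely one more than that of $B_0$, so that no intermediate $p$-layer between $B_0$ and $A_0$ is lost or double-counted. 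This is exactly the point where the eigenvalue analysis of $h=f_M^{p^dq^e}$ must be done with care; once it is in place, the remaining steps are routine given the already-established correctness of \Cref{FirstP}, \Cref{PureP}, and \Cref{alg:extensions}.
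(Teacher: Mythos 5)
Your proposal is correct and takes essentially the same approach as the paper: induction on $d$, the base case $d=0$ delegated to \Cref{FirstP} with $b=1$, and for $d>0$ the decomposition of a target $M$ via $h=f_M^{p^dq^e}$ into the pure part $A$ (matched against $\mathrm{PureUp}(A_0,p)$) and the complement $B$ (matched against $\mathrm{NextP}(B_0,p)$ by induction), glued back by \Cref{alg:extensions}. The paper's own proof is only a three-line sketch that checks the recursive calls are valid and defers everything else to the proofs of \Cref{FirstP} and \Cref{PureP}; your write-up fills in exactly those deferred details (the eigenvalue identification of $A$ and $B$, the $p$-admissibility via \Cref{ispadmissible}, and the double-coset non-redundancy).
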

\begin{proof}
 If $d=0$, then $f_C$ has order $q^e$. Hence we can call FirstP. For $d>0$, $A_0$ is a valid input for PureUp.
The proof proceeds by induction on $d$ since $f_{B_0}$ has order at most $p^{d-1}q^e$
\end{proof}

By calling NextP on a complete set of representatives of the types of lattices with isometry of order $p^dq^e$, we can obtain a complete set of representatives for the isomorphism classes of lattices with isometry of order $p^{d+1}q^e$.
By iterating this process we have an algorithm to enumerate representatives for all isomorphism classes of lattices with isometries of a given order $p^d q^e$.

\begin{remark}\label{rem:latticeK3}
  For the application to classifying finite groups of automorphisms of K3 surfaces we note the following:
  \begin{enumerate}
    \item
     We only enumerate those lattices with the correct signatures and discard at each stage the lattices which are negative definite and contain $(-2)$-vectors since they do not lead to isometries preserving the ample cone.
    \item
Let $G$ be a finite subgroup of automorphisms of a complex K3 surface.
Recall that $[G:G_s]=n$ satisfies $\varphi(n) \leq 20$ and $n \neq 60$.
The integers with $\varphi(n) \leq 20$ and three prime factors are
$30$, $42$ $60$, $66$ with $\varphi(n) = 8 , 12 , 16, 20$.
      Suppose $n = 66$. Since $\varphi(66)=20>12$ we have $G_s=1$. So $G$ is cyclic and we know by \cite{keum2015} that the pair is unique. We can treat $42$ with similar arguments. Finally, $30$ is treated by hand with the help of some of the algorithms described above.
  \end{enumerate}

  The actual computation was carried out using \textsc{SageMath}~\cite{sagemath}, \textsc{Pari}~\cite{PARI}, \textsc{GAP}~\cite{GAP4} and \textsc{Magma}~\cite{magma}.
\end{remark}

\subsection{Computation of the group $G_L$}\label{subsec:computeGL}
The algorithms of the previous section for enumerating isomorphism classes of lattices with isometry
require as input for each lattice with isometry $L$ the group $G_L$, which is the image of
the natural map $U(L) \to U(D_L)$. Recall that we use a recursive approach 
and for primitive extensions $C$ of lattices with isometry $A$ and $B$, the group
$G_C$ can be determined using $G_A$ and $G_B$ (see \Cref{alg:extensions}). It is therefore sufficient to explain how $G_L$ can computed for the lattices constructed in \Cref{reps}, which form the base case of the recursive strategy.
We therefore consider lattices with isometry $L$ such that $f_L$ has irreducible minimal polynomial and $\ZZ[f_L]$ is the maximal order of $\QQ[f_L]$.
We distinguish the following four cases:

\begin{enumerate}
  \item
    The lattice $L$ is definite. Then $O(L)$ is finite and can be computed using an algorithm of Plesken and Souvignier \cite{Plesken-Souvignier}.
  \item
    The lattice $L$ is indefinite of rank $2$ and $f_L = \pm 1$. For this situation the computation of $G_L$ will be explained in the remainder of this section.
  \item
    The lattice $L$ is indefinite of rank $\geq 3$ and $f_L = \pm 1$. In this case, Miranda--Morison theory \cite{miranda-morrisonI, miranda-morrisonII} along with some algorithms by Shimada \cite{shimada2018} solve the problem.
    A short account of this is given in \Cref{mmtheory}.
  \item
    The automorphism satisfies $f_L \neq \pm 1$. This will be addressed in \Cref{hermitianmm},
    where we extend the theory of Miranda--Morison to the hermitian case.
\end{enumerate}

We end this section by describing the computation of $G_L$ in case~(2).
Therefore let $L$ be be an indefinite binary lattice over $\ZZ$ and $V = L \otimes \QQ$ the ambient quadratic space of discriminant $d \in \QQ^\times/(\QQ^\times)^2$.

It follows from \cite[\S 5]{Eichler1974} that we may assume that $V$ is a two-dimensional étale $\QQ$-algebra and $L \subseteq V$ is a $\ZZ$-lattice of rank $2$.
More precisely, $V$ is isomorphic to the Clifford algebra $C^+$, which in turn is isomorphic to $\QQ(\sqrt{d})$. The $\QQ$-algebra $V$ is a quadratic extension of $\QQ$ if and only if $d$ is not a square, which is the case if and only if $V$ is anisotropic. If $d$ is a square, then $V \cong \QQ \times \QQ$.
If $\sigma \colon V \to V$ denotes the non-trivial automorphism of $V$ as a $\QQ$-algebra,
then the quadratic form $q$ on $V$ is given by $q(x) = x \sigma(x)$ for $x \in V$.
Note that $\sigma \in O(V)$ and $\det(\sigma) = -1$.

Every element $y \in V$ induces an endomorphism $\tau_y \colon V \to V, \, x \mapsto yx$ of determinant $\det(\tau_y) = y \sigma(y) = q(y)$.
For a subset $X \subseteq V$ set $X^1 = \{ x \in X \mid q(x) = 1 \}$.
The proper automorphism group $SO(V)$ of $V$ is equal to $\{ \tau_y \mid y \in V^1\}$
and $O(V) = \{ \tau_y, \sigma \tau_y \mid y \in V^1 \}$.
We call two $\ZZ$-lattices $I, J$ of $V$ equivalent, if there exists $\alpha \in V^1$ such that $I = \alpha J$.
Finally set
\[
  \Lambda = \{ x \in V \mid xL \subseteq L \},
\]
which is a $\ZZ$-order of $V$.

\begin{proposition}
  The following hold:
  \begin{enumerate}
    \item
      We have $SO(L) = \{ \tau_y \mid y \in (\Lambda^\times)^1 \}$.
    \item
      If $L$ is not equivalent to $\sigma(L)$, then $O(L) = SO(L)$.
    \item
      If $L$ is equivalent to $\sigma(L)$, say $L = \alpha \sigma(L)$, then
      $O(L) = \langle SO(L), \sigma \tau_{\sigma(\alpha)} \rangle$.
  \end{enumerate}
\end{proposition}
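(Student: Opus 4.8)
The plan is to exploit the coset decomposition $O(V) = SO(V) \sqcup \sigma\, SO(V)$ recorded above and to test membership in $O(L)$ one coset at a time. Throughout I would use two elementary facts about the involution: every $y \in V^1$ is a unit of $V$ with $y^{-1} = \sigma(y)$, since $y\sigma(y) = q(y) = 1$; and $\sigma$ is an involution preserving $q$, so that $\sigma(y) \in V^1$ whenever $y \in V^1$.

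For part (1), I would test when a proper isometry $\tau_y$ with $y \in V^1$ stabilises $L$. As $\tau_y(L) = yL$, the condition $\tau_y \in SO(L)$ is exactly $yL = L$. If $y \in (\Lambda^\times)^1$, then $y, y^{-1} \in \Lambda$ give $yL \subseteq L$ and $y^{-1}L \subseteq L$, hence $yL = L$. Conversely, if $yL = L$ with $y \in V^1$, then $yL \subseteq L$ forces $y \in \Lambda$, while $yL = L$ gives $L = y^{-1}L$, whence $y^{-1} \in \Lambda$; thus $y \in \Lambda^\times$ and, as $q(y) = 1$, $y \in (\Lambda^\times)^1$. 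This yields $SO(L) = \{\tau_y \mid y \in (\Lambda^\times)^1\}$.

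For parts (2) and (3), the key computation is $\sigma\tau_y(x) = \sigma(yx) = \sigma(y)\sigma(x)$, so that $(\sigma\tau_y)(L) = \sigma(y)\,\sigma(L)$. Hence an improper isometry $\sigma\tau_y$ with $y \in V^1$ lies in $O(L)$ if and only if $\sigma(y)\,\sigma(L) = L$, that is, if and only if $L$ is equivalent to $\sigma(L)$. If no such equivalence exists, the non-trivial coset of $O(L)$ is empty and $O(L) = SO(L)$, proving (2). If instead $L = \alpha\,\sigma(L)$ with $\alpha \in V^1$, then taking $y = \sigma(\alpha)$ gives $(\sigma\tau_{\sigma(\alpha)})(L) = \sigma(\sigma(\alpha))\,\sigma(L) = \alpha\,\sigma(L) = L$, so $\sigma\tau_{\sigma(\alpha)} \in O(L)$.

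To finish (3) I would show that this single element represents the whole non-trivial coset. Both generators of $\langle SO(L), \sigma\tau_{\sigma(\alpha)}\rangle$ lie in $O(L)$, giving one inclusion. For the reverse, let $g \in O(L) \setminus SO(L)$; then $(\sigma\tau_{\sigma(\alpha)})^{-1} g$ lies in $O(L)$ and has determinant $(-1)(-1) = 1$, hence lies in $SO(L)$, so $g \in \sigma\tau_{\sigma(\alpha)} \cdot SO(L)$. Thus $O(L) = \langle SO(L), \sigma\tau_{\sigma(\alpha)}\rangle$. The only conceptually delicate point is the translation, in (2)--(3), of the existence of orientation-reversing isometries into the lattice-equivalence $L \sim \sigma(L)$; once $(\sigma\tau_y)(L) = \sigma(y)\,\sigma(L)$ is in hand, both statements follow from the coset structure, and I expect no serious obstacle beyond careful bookkeeping with $\sigma$.
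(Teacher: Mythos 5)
Your proposal is correct and follows essentially the same route as the paper: both rest on the extension of isometries of $L$ to $V$, the coset structure $O(V) = SO(V) \sqcup \sigma\, SO(V)$, and the computations $\tau_y(L) = yL$ and $(\sigma\tau_y)(L) = \sigma(y)\,\sigma(L)$, which translate stabilization of $L$ into membership in $(\Lambda^\times)^1$ and equivalence of $L$ with $\sigma(L)$. The only cosmetic difference is in part (3), where the paper exhibits the explicit factorization $\sigma\tau_{\sigma(\beta)} = \sigma\tau_{\sigma(\alpha)}\tau_{\sigma(\beta\alpha^{-1})}$ while you reach the same conclusion by the determinant/coset argument.
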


\begin{proof}
  First note that for $y \in V$ we have $\tau_y(L) \subseteq L$ if and only if $y \in \Lambda$.
  This shows part~(1).

  Any isometry of $L$ extends uniquely to an isometry of $V$ and is---if the determinant is not $1$---thus of the form $\sigma \tau_\alpha$ for some $\alpha \in V$.
  Hence $L = (\sigma \tau_\alpha)(L) = \sigma(\alpha) \sigma(L)$, that is, $L$ and $\sigma(L)$ are equivalent.
  This shows part~(2).

  Now assume that $L = \alpha \sigma(L)$.
  Then $(\sigma \tau_{\sigma(\alpha)})(L) = L$ and thus $\sigma \tau_{\sigma(\alpha)} \in O(L) \setminus SO(L)$.
  If $\sigma \tau_{\sigma(\beta)} \in O(L)$ is any non-proper isometry, then $\sigma(\beta) L = \sigma(L) = \sigma(\alpha) L$ and thus $\sigma(\beta \alpha^{-1}) \in (\Lambda^\times)^1$.
  This shows part~(3), since $\sigma \tau_{\sigma(\beta)} = \sigma \tau_{\sigma(\alpha)}\tau_{\sigma(\beta \alpha^{-1})}$.
\end{proof}
\begin{remark}
  We briefly describe how the previous result can be turned into an algorithm for determining generators of $O(L)$ for an indefinite binary lattice.
  We may assume that the ambient space $V$ is an étale $\QQ$-algebra of dimension two.
  The group $\Lambda^\times$ is a finitely generated abelian group and generators
  can be computed as described in~\cite{Bley2005, Kluners2005}.
  Given generators of $\Lambda^\times$, determining generators of $(\Lambda^\times)^1$ is just a kernel computation.
  Finally, testing whether two $\ZZ$-lattices of $V$ are equivalent can be accomplished using \cite{Bley2005, Marseglia2020}.
\end{remark}

\section{Quadratic Miranda--Morrison theory} \label{mmtheory}

In this section we review classical Miranda--Morrison theory for even indefinite $\ZZ$-lattices $L$ of rank at least $3$,
as introduced by Miranda and Morrison in \cite{miranda-morrisonI, miranda-morrisonII}. Akyol and Degtyarev~\cite{sextics} incorporated sign structures to study connected components of the moduli spaces of plane sextics. We follow their example.

The purpose of this is twofold.
First this allows us to sketch the computation of the image of 
\[O(L) \to O(\disc{L})\]
settling case (3) in \Cref{subsec:computeGL}.
Second by incorporating the action on the sign structure, we obtain a way to
compute the image of
\[O^+(L) \to O(\disc{L})\]
which yields the number of connected components of the moduli space $\F_H$ (see \Cref{quadruplewithtriple} and \Cref{rem:mmapplication}).

We denote by $\AA$ %
the ring of finite adeles and by $\ZZ_\AA$ %
the ring of finite integral adeles.
For a ring $R$ set $\Gamma_R= \{\pm 1\} \times R^\times/(R^\times)^2$.
We define $O^\sharp(L\otimes R)$ as the kernel of $O(L \otimes R) \to O(D_L \otimes R)$.
We note that $\Gamma_\QQ$ has a natural diagonal embedding into $\Gamma_\AA$ and $D_L \cong D_L \otimes \ZZ_\AA\cong \disc{L \otimes \ZZ_\AA}$ naturally.

The homomorphisms
\[\sigma_p \colon O(L\otimes \QQ_p) \to \Gamma_{\QQ_p}, \quad g \mapsto (\det(g),\spin(g)).\]
induce a homomorphism
\[ \sigma\colon O(L \otimes \AA) \to \Gamma_{\AA}.\]
Let $\Sigma^\sharp(L \otimes {\ZZ_p})$ be the image of $O^\sharp(L \otimes {\ZZ_p})$
under $\sigma_p$.
We set $\Sigma(L) =\sigma(O(L\otimes \ZZ_\AA))=\prod_p \Sigma(L \otimes \ZZ_p)$
and $\Sigma^\sharp(L) = \sigma(O^\sharp(L \otimes \ZZ_\AA))=\prod_p \Sigma^\sharp(L\otimes \ZZ_p)$.
By \cite[VII 12.11]{miranda-morrison} we have $\Sigma(L\otimes \ZZ_p)=\Sigma^\sharp(L \otimes {\ZZ_p})=\Gamma_{\ZZ_p}$ whenever $L\otimes {\ZZ_p}$ is unimodular.
By \cite[IV.2.14 and IV.5.9]{miranda-morrison} the natural map $O(L\otimes \AA) \to O(\disc{L})$ is surjective.
The following commutative diagram with exact rows and columns summarizes the situation (where by abuse of notation we denote restriction of $\sigma$ by $\sigma$ as well).
\[
\begin{tikzcd}
  1\arrow[r]& O^\sharp(L\otimes \ZZ_\AA) \arrow[r]\arrow[d] &O(L\otimes \ZZ_\AA) \arrow[r]\arrow[d,"\sigma"]& O(\disc{L})\arrow[r]\arrow[d] & 1\\
1\arrow[r]& \Sigma^\sharp(L)\arrow[d] \arrow[r] &\Sigma(L)\arrow[r]\arrow[d] &\Sigma(L)/\Sigma^\sharp(L)\arrow[d] \arrow[r] &1\\
 &1 &1 &1 &
\end{tikzcd}\]

If $V$ is an indefinite $\QQ$-lattice of rank $\geq 3$, then the restriction $O(V) \to \Gamma_{\QQ}$ of $\sigma$ is surjective by \cite[VIII 3.1]{miranda-morrison}.

\begin{theorem}\cite[VIII 5.1]{miranda-morrison}\label{thm:miranda-morrison}
Let $L$ be an indefinite $\ZZ$-lattice of rank at least $3$. Then we have the following exact sequence
\[1 \to O^\sharp(L) \to O(L) \to O(D_L) \xrightarrow{\bar \sigma} \Sigma(L)/(\Sigma^\sharp(L)\cdot (\Gamma_\QQ \cap \Sigma(L))) \to 1. \]
\end{theorem}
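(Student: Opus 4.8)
The sequence splits naturally into two parts. The segment $1 \to O^\sharp(L) \to O(L) \to O(D_L)$ is exact by the very definition of $O^\sharp(L)$ as the kernel of the natural map $O(L) \to O(D_L)$, so the entire content lies in constructing $\bar\sigma$, proving it surjective, and identifying its kernel with the image of $O(L) \to O(D_L)$. The plan is to read everything off the commutative diagram above. To define $\bar\sigma$, given $\gamma \in O(D_L)$ I would use surjectivity of the top row to lift $\gamma$ to some $\tilde\gamma \in O(L\otimes\ZZ_\AA)$, form $\sigma(\tilde\gamma) \in \Sigma(L)$, and let $\bar\sigma(\gamma)$ be its class in $\Sigma(L)/(\Sigma^\sharp(L)\cdot(\Gamma_\QQ\cap\Sigma(L)))$. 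Two lifts of $\gamma$ differ by an element of $O^\sharp(L\otimes\ZZ_\AA)$, whose $\sigma$-image is exactly $\Sigma^\sharp(L)$; since we quotient by $\Sigma^\sharp(L)$, the map $\bar\sigma$ is well defined. Surjectivity of $\bar\sigma$ is then immediate from surjectivity of $\sigma$ on $O(L\otimes\ZZ_\AA)$ together with surjectivity of the top row.

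It remains to show $\image(O(L)\to O(D_L)) = \ker\bar\sigma$. For the inclusion $\subseteq$, if $\gamma = D_g$ with $g \in O(L)$, then the diagonal image of $g$ in $O(L\otimes\ZZ_\AA)$ is a lift of $\gamma$, and $\sigma(g) = (\det g, \spin g)$ lies in $\Gamma_\QQ \cap \Sigma(L)$ because $g$ is a global isometry preserving $L$ integrally; hence $\bar\sigma(\gamma) = 0$. This direction is easy.

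The reverse inclusion $\supseteq$ is the crux. Starting from $\gamma \in \ker\bar\sigma$ with an adelic lift $\tilde\gamma$, I would first write $\sigma(\tilde\gamma) = s\cdot c$ with $s \in \Sigma^\sharp(L)$ and $c \in \Gamma_\QQ\cap\Sigma(L)$, choose $h \in O^\sharp(L\otimes\ZZ_\AA)$ with $\sigma(h) = s$, and replace $\tilde\gamma$ by $\tilde\gamma h^{-1}$; this still lifts $\gamma$ and now satisfies $\sigma(\tilde\gamma) = c \in \Gamma_\QQ$. Using the surjectivity of $O(V)\to\Gamma_\QQ$ recorded above, I would pick a global $g_0 \in O(V)$ with $\sigma(g_0) = c$. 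Then $g_0^{-1}\tilde\gamma \in O(L\otimes\AA)$ has trivial determinant and trivial spinor norm at every place, so it lifts to the adelic points of $\mathrm{Spin}(V)$. At this point the decisive tool is strong approximation for $\mathrm{Spin}(V)$, applicable since $V = L\otimes\QQ$ is indefinite of rank at least $3$, so that $\mathrm{Spin}(V)$ is simply connected and isotropic at the real place: applying it relative to the open subgroup lying over $O^\sharp(L\otimes\ZZ_\AA)$ factors $g_0^{-1}\tilde\gamma = g_1 h'$ with $g_1 \in O(V)$ a global isometry of spinor norm one and $h' \in O^\sharp(L\otimes\ZZ_\AA)$. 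Setting $g = g_0 g_1 \in O(V)$ gives $\tilde\gamma = g h'$, whence $g = \tilde\gamma (h')^{-1} \in O(L\otimes\ZZ_\AA)$; being simultaneously global and adelically integral, $g$ preserves $L = V \cap \bigcap_p (L\otimes\ZZ_p)$, so $g \in O(L)$, and $D_g = \gamma$ because $h'$ acts trivially on $D_L$. I expect this strong approximation step --- converting the purely adelic spinor data into a single global isometry that still respects $L$ and induces $\gamma$ on $D_L$ --- to be the main obstacle: the surjectivity of $O(V)\to\Gamma_\QQ$ only accounts for the determinant and spinor norm, and it is strong approximation that upgrades this to an honest element of $O(L)$.
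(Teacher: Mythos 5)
Your proof is correct and follows essentially the same route the paper takes: the paper cites Miranda--Morrison for this statement, but its own proof of the signed analogue (\Cref{thm:miranda-morrison-signed}) is exactly your argument --- lift $\gamma$ adelically, absorb $\Sigma^\sharp(L)$ by an element of $O^\sharp(L\otimes\ZZ_\AA)$, match the remaining class in $\Gamma_\QQ\cap\Sigma(L)$ by a global isometry via surjectivity of $O(V)\to\Gamma_\QQ$, and finish with strong approximation for the spin group to produce a global element preserving $L$ and inducing $\gamma$ on $D_L$. The only cosmetic difference is that you phrase the approximation step as a factorization through an open subgroup over $O^\sharp(L\otimes\ZZ_\AA)$, whereas the paper asks for a global $f$ with $\sigma(f)=1$ agreeing with the adelic element on all $L\otimes\ZZ_p$ and approximating it at the bad primes; these are equivalent formulations of the same step.
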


We need an analogous sequence with $O(L)$ replaced by $O^+(L)$ to compute connected components of the coarse moduli space of $H$-markable K3 surfaces.
Define $\Gamma_\QQ^+$ as the kernel of $\Gamma_\QQ \to \{\pm 1\}$, $(d,s) \mapsto \sign(ds)$. Then for any indefinite $\QQ$-lattice $V$ of rank $\geq 3$ the homomorphism $\sigma^+\colon O^+(V) \to \Gamma_\QQ^+$ is surjective.

\begin{theorem}\label{thm:miranda-morrison-signed}
 Let $L$ be an indefinite $\ZZ$-lattice of rank at least $3$. Then we have the following exact sequence
\[O^+(L) \xrightarrow{D_+} O(D_L) \xrightarrow{\bar \sigma_+} \Sigma(L)/(\Sigma^\sharp(L)\cdot (\Gamma_\QQ^+ \cap \Sigma(L))) \to 1. \]
 \end{theorem}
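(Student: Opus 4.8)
The plan is to mirror the proof of the unsigned statement \Cref{thm:miranda-morrison} (that is, \cite[VIII 5.1]{miranda-morrison}), keeping track of the behaviour at the real place, in the spirit of \cite{sextics}. Throughout write $V = L\otimes\QQ$ and let $\epsilon\colon O(V)\to\{\pm1\}$, $g\mapsto\sign(\det(g)\spin(g))$, so that $O^+(L)=O(L)\cap\ker\epsilon$ and $\Gamma_\QQ^+=\ker(\epsilon_\Gamma)$ with $\epsilon_\Gamma\colon\Gamma_\QQ\to\{\pm1\}$, $(d,s)\mapsto\sign(ds)$; note $\epsilon=\epsilon_\Gamma\circ\sigma_\QQ$. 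The map $\bar\sigma_+$ is defined exactly as $\bar\sigma$: lift $g\in O(\disc L)$ to $\tilde g\in O(L\otimes\ZZ_\AA)$ using the surjection in the top row of the big diagram, and set $\bar\sigma_+(g)=\sigma(\tilde g)$ modulo $\Sigma^\sharp(L)\cdot(\Gamma_\QQ^+\cap\Sigma(L))$. This is well defined because two lifts differ by an element of $O^\sharp(L\otimes\ZZ_\AA)$, whose $\sigma$-image lies in $\Sigma^\sharp(L)$.

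Surjectivity of $\bar\sigma_+$ is formal and identical to the unsigned case: by definition $\sigma$ maps $O(L\otimes\ZZ_\AA)$ onto $\Sigma(L)$, the further projection onto $\Sigma(L)/(\Sigma^\sharp(L)(\Gamma_\QQ^+\cap\Sigma(L)))$ is surjective, and it factors through $O(\disc L)$ since $O^\sharp(L\otimes\ZZ_\AA)$ lands in $\Sigma^\sharp(L)$. For the inclusion $\image D_+\subseteq\ker\bar\sigma_+$, take $h\in O^+(L)$; viewing $h$ adelically gives a lift of $D_+(h)$, and $\sigma(h)$ is the diagonal image of $\sigma_\QQ(h)=(\det h,\spin h)\in\Gamma_\QQ$. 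As $h$ is orientation preserving we have $\epsilon(h)=1$, i.e. $\sigma_\QQ(h)\in\Gamma_\QQ^+$, and since $h$ preserves $L$ its diagonal image lies in $\Sigma(L)$. Hence $\bar\sigma_+(D_+(h))$ is represented by an element of $\Gamma_\QQ^+\cap\Sigma(L)$ and is therefore trivial.

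The substance is the reverse inclusion $\ker\bar\sigma_+\subseteq\image D_+$, which is the strong approximation step of Miranda--Morrison carried out with signs. Let $g\in\ker\bar\sigma_+$ and choose an adelic lift $\tilde g$. From $\bar\sigma_+(g)=1$ we may write, after adjusting $\tilde g$ inside its $O^\sharp(L\otimes\ZZ_\AA)$-coset (which only alters $\sigma(\tilde g)$ by $\Sigma^\sharp(L)$), that $\sigma(\tilde g)$ equals the diagonal image of some $\gamma_0\in\Gamma_\QQ^+$. By the surjectivity of $\sigma^+\colon O^+(V)\to\Gamma_\QQ^+$ recorded just before the theorem, there is a rational orientation-preserving isometry $h_0\in O^+(V)$ with $\sigma_\QQ(h_0)=\gamma_0$. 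Then $h_0^{-1}\tilde g\in O(L\otimes\AA)$ has trivial $\sigma$-invariant, i.e. it lies in the kernel of $\sigma$, the adelic spinor kernel. Since $V$ is indefinite of rank at least $3$, strong approximation for the spin group --- the very input that drives the proof of \Cref{thm:miranda-morrison} --- yields a global isometry $h\in O(L)$ with $D(h)=g$, obtained from $h_0$ by correcting with elements of the spinor kernel.

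It remains to observe that this $h$ is orientation preserving, which is the only genuinely new point. The rational isometry $h_0$ was chosen in $O^+(V)$, and every correcting factor lies in the spinor kernel $\ker\sigma_\QQ$, on which $\det=1$ and $\spin$ is trivial, so that $\epsilon=1$; thus the spinor kernel is contained in $O^+(V)$ and $h\in O(L)\cap O^+(V)=O^+(L)$. Therefore $g=D_+(h)\in\image D_+$, completing exactness at $O(\disc L)$. I expect the main obstacle to be expository rather than mathematical: isolating from the Miranda--Morrison and Akyol--Degtyarev machinery precisely the assertions that the rational lift may be taken orientation preserving and that the strong-approximation corrections never change the sign, so that the classical argument transports verbatim to $O^+(L)$ and $\Gamma_\QQ^+$ without any new estimate.
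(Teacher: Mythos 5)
Your proposal is correct and follows essentially the same route as the paper's proof: after normalizing the adelic lift modulo $O^\sharp(L\otimes\ZZ_\AA)$ so its spinor invariants lie in $\Gamma_\QQ^+\cap\Sigma(L)$, you produce a rational orientation-preserving lift via the surjectivity of $\sigma^+\colon O^+(V)\to\Gamma_\QQ^+$ and then correct by strong approximation in the spinor kernel, noting that such corrections have trivial $\det$ and $\spin$ and hence preserve orientation. The only differences are expository: you additionally spell out well-definedness of $\bar\sigma_+$ and the formal surjectivity, which the paper leaves implicit.
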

 \begin{proof}
We prove $\ker \bar\sigma_+ \subseteq \im D_+$:
Let $\bar g \in O(D_L)$ and suppose that $\bar\sigma_+(\bar g) = 1$.
This means that $\bar g$ lifts to an element $g \in O(L\otimes \ZZ_\AA)$ with $D_g=\bar g$ and
$\sigma(g)\in \Sigma^\sharp(L)\cdot (\Gamma_\QQ^+ \cap \Sigma(L))$.
After multiplying $g$ with an element in $O^\sharp(L\otimes \ZZ_\AA)$, we may assume that
\[\sigma(g) \in \Gamma_\QQ^+ \cap \Sigma(L).\]
Hence there exists an element $h \in O(L\otimes \QQ)$ with $\sigma(h)=\sigma(g)$.

Since $\sigma(h^{-1}g)=1$ and $h^{-1}g(L\otimes \ZZ_p)=L \otimes \ZZ_p$ for all but finitely many primes, we can use the strong approximation theorem (see e.g.\cite[5.1.3]{KirschmerHabil}, \cite{kneser1966}) to get $f \in O(L\otimes \QQ)$ with $\sigma(f)=1$ and $f(L \otimes {\ZZ_p}) = h^{-1}g(L \otimes {\ZZ_p})$ at all primes and approximating $h^{-1}g$ at the finitely many primes dividing the discriminant. This yields (once the approximation is good enough) $D_f = D_{h^{-1}g}$ (cf \cite[VIII 2.2]{miranda-morrison}).

By construction, $h f \in O(L \otimes \QQ)$ preserves $L$
and \[D_{hf} = D_{h} \circ D_f = D_{h} \circ D_{h^{-1}g} = D_g\]
as desired.
We have $\sigma(hf) = \sigma(h)  \in \Gamma_\QQ^+$. So $hf \in O^+(L)$.

We prove $\ker \bar \sigma_+ \supseteq \im D_+$: Let $ g \in O^+(L)$. Then $\sigma(g) \in \Gamma^+_\QQ$ and since $O(L) \subseteq O(L\otimes \ZZ_\AA)$ we have $\sigma(g) \in \Sigma(L)$ as well.
 \end{proof}

The group $\Sigma(L)$ appearing in \Cref{thm:miranda-morrison,thm:miranda-morrison-signed} is infinite and infinitely generated. However its quotient
by $\Sigma^\sharp(L)$ is a finite group. We explain how to write it in terms of finite groups only, so that it may be represented in a computer.
Let $T$ be the set of primes with $\Sigma^\sharp(L\otimes \ZZ_p)=\Sigma(L\otimes \ZZ_p)=\Gamma_{\ZZ_p}$ and $S$ its complement. We know that $S$ is contained in the set of primes dividing $\det L$.
We can project the quotient $\Sigma(L)/\Sigma^{\sharp}(L)$ isomorphically
to a subquotient of the finite group $ \Gamma_S' = \prod_{p \in S} \Gamma_{\QQ_p}$ and are thus reduced to a finite computation.

Indeed, denote by $\pi_S\colon \Gamma_\AA \to \Gamma_S'$ the natural projection.
Set
\[\Sigma_S(L) = \pi_S(\Sigma(L)), \quad \Sigma^\sharp_S(L)= \pi_S(\Sigma^\sharp(L)),\quad
\Gamma_T=\prod_{p \in T} \Gamma_{\ZZ_p},\]
\[\Gamma_S = \pi_S(\Gamma_\QQ\cap  \Gamma_T \times \Gamma_S') \qquad
\Gamma_S^+=\pi_S(\Gamma^+_\QQ\cap \Gamma_T \times \Gamma_S').\]

$\Gamma_S^+$ is spanned by the images of $\{(1,p) \mid p \in S\} \cup \{(-1,-1)\} \subseteq \Gamma_\QQ^+\subseteq \Gamma_\AA$ under $\pi_S$. Since $\Gamma_\QQ/\Gamma_\QQ^+$ is spanned by $(-1,1)$,
$\Gamma_S$ is spanned by the generators of $\Gamma_S^+$ together with $\pi_S((-1,1))$.

\begin{proposition}
The projection $\pi_S$ induces isomorphisms
\[\Sigma(L)/(\Sigma^\sharp(L)\cdot (\Gamma_\QQ^+ \cap \Sigma(L)))\cong
\Sigma_{S}(L)/(\Sigma^\sharp_S(L)\cdot (\Gamma_S^+ \cap \Sigma_S(L)))\]
and
\[\Sigma(L)/(\Sigma^\sharp(L)\cdot (\Gamma_\QQ \cap \Sigma(L)))\cong
\Sigma_{S}(L)/(\Sigma^\sharp_S(L)\cdot (\Gamma_S \cap \Sigma_S(L))).\]
\end{proposition}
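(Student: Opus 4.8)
The plan is to realize both claimed isomorphisms as the map induced by $\pi_S$ on the respective quotients, and to verify surjectivity and injectivity directly. The structural fact that makes everything work is the splitting of $\Sigma(L)$ and $\Sigma^\sharp(L)$ according to the partition of the primes into $S$ and $T$. Since $\Sigma(L)=\prod_p\Sigma(L\otimes\ZZ_p)$ and $\Sigma^\sharp(L)=\prod_p\Sigma^\sharp(L\otimes\ZZ_p)$ with $\Sigma(L\otimes\ZZ_p)=\Sigma^\sharp(L\otimes\ZZ_p)=\Gamma_{\ZZ_p}$ for every $p\in T$, the restriction of $\pi_S$ to $\Sigma(L)$ is surjective onto $\Sigma_S(L)$ with kernel $\prod_{p\in T}\Gamma_{\ZZ_p}=\Gamma_T$, and crucially this kernel is already contained in $\Sigma^\sharp(L)$. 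Likewise $\pi_S(\Sigma^\sharp(L))=\Sigma_S^\sharp(L)$ holds by definition. Thus $\pi_S$ will descend to a surjection of the quotients as soon as I control the images of the arithmetic subgroups $\Gamma_\QQ^+\cap\Sigma(L)$ and $\Gamma_\QQ\cap\Sigma(L)$.

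First I would prove the key lemma
\[\pi_S(\Gamma_\QQ^+\cap\Sigma(L))=\Gamma_S^+\cap\Sigma_S(L),\]
together with its unsigned analogue obtained by deleting every $+$. The inclusion $\subseteq$ is immediate from the definition of $\Gamma_S^+$. For the reverse inclusion, take $x\in\Gamma_S^+\cap\Sigma_S(L)$. By the definition of $\Gamma_S^+$ there is a diagonal element $\gamma\in\Gamma_\QQ^+$, integral at every prime of $T$, with $\pi_S(\gamma)=x$. The point is that $\gamma$ then automatically lies in $\Sigma(L)$: at a prime $p\in T$ its component lies in $\Gamma_{\ZZ_p}=\Sigma(L\otimes\ZZ_p)$ by integrality, while at a prime $p\in S$ its component equals $x_p\in\Sigma(L\otimes\ZZ_p)$ because $\pi_S(\gamma)=x\in\Sigma_S(L)$. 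Hence $\gamma\in\Gamma_\QQ^+\cap\Sigma(L)$ with $\pi_S(\gamma)=x$, giving $\supseteq$. This local bookkeeping is the one genuinely non-formal step, and I expect it to be the main (if modest) obstacle, since it is exactly where the defining property of $T$ — that the local determinant-spinor images $\Sigma(L\otimes\ZZ_p)$ are as large as possible — is used.

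With the lemma in hand the remainder is a short diagram chase. Surjectivity of the induced map on quotients follows from $\pi_S(\Sigma(L))=\Sigma_S(L)$ together with the identity $\pi_S(\Sigma^\sharp(L)\cdot(\Gamma_\QQ^+\cap\Sigma(L)))=\Sigma_S^\sharp(L)\cdot(\Gamma_S^+\cap\Sigma_S(L))$. For injectivity I would take $\sigma\in\Sigma(L)$ whose image lies in $\Sigma_S^\sharp(L)\cdot(\Gamma_S^+\cap\Sigma_S(L))$, write $\pi_S(\sigma)=s\,t$ with $s\in\Sigma_S^\sharp(L)$ and $t\in\Gamma_S^+\cap\Sigma_S(L)$, and lift $s$ to some $a\in\Sigma^\sharp(L)$ and, via the key lemma, $t$ to some $b\in\Gamma_\QQ^+\cap\Sigma(L)$. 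Then $\sigma a^{-1}b^{-1}\in\ker(\pi_S|_{\Sigma(L)})=\Gamma_T\subseteq\Sigma^\sharp(L)$, so $\sigma\in\Sigma^\sharp(L)\cdot(\Gamma_\QQ^+\cap\Sigma(L))$, which is precisely what injectivity demands. The second isomorphism is obtained verbatim upon replacing $\Gamma_\QQ^+$ and $\Gamma_S^+$ by $\Gamma_\QQ$ and $\Gamma_S$ throughout, using the unsigned form of the key lemma.
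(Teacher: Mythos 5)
Your proof is correct and takes essentially the same route as the paper: both arguments rest on the key identity $\pi_S(\Gamma_\QQ^+\cap\Sigma(L))=\Gamma_S^+\cap\Sigma_S(L)$ (which the paper merely asserts and you actually prove via the local bookkeeping at primes in $S$ and $T$), together with the fact that $\ker(\pi_S|_{\Sigma(L)})=\Gamma_T\subseteq\Sigma^\sharp(L)$. Your surjectivity-plus-injectivity check is exactly the paper's computation of the kernel $\pi_S^{-1}\bigl(\Sigma_S^\sharp(L)\cdot(\Gamma_S^+\cap\Sigma_S(L))\bigr)=\Sigma^\sharp(L)\cdot(\Gamma_\QQ^+\cap\Sigma(L))$ followed by the homomorphism theorem, with the unsigned case handled identically.
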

\begin{proof}
Note that $\pi_S(\Gamma_\QQ^+ \cap \Sigma(L)) = \Gamma_S^+ \cap \Sigma_S(L)$.
Therefore
\[K:=\pi_S^{-1}(\Sigma_S^\sharp(L)\cdot(\Gamma^+_S \cap \Sigma_S(L)))=\Sigma^\sharp(L)\cdot(\Gamma_\QQ^+ \cap \Sigma(L)).\]
Hence the surjection
\[\psi \colon \Sigma(L) \to
\Sigma_{S}(L)/(\Sigma^\sharp_S(L)\cdot (\Gamma_S^+ \cap \Sigma_S(L)))\]
induced by $\pi_S$ has kernel $K$. We conclude by applying the homomorphism theorem to $\psi$.
To prove the second isomorphism remove the $+$.
\end{proof}

The groups $\Sigma_S(L)$ and $\Sigma_S^\sharp(L)$ are found in the tables in \cite[VII]{miranda-morrison} in terms of the discriminant form of $L$ and its signature pair.

\begin{proposition}\label{plusindexcompute}
 Let $L$ be an indefinite $\ZZ$-lattice of rank at least $3$ and $J$ a subgroup of the image of the natural map $D \colon O(L) \to O(\disc{L})$. Set $J^+ = D(O^+(L)) \cap J$ and let $K = \ker D=O^\sharp(L)$.
 Then $[J:J^+]=| \sigma_+(J)|$ and \[[K:K^+]= [\Gamma_\QQ \cap \Sigma^\sharp(L): \Gamma_\QQ^+ \cap \Sigma^\sharp(L)] =
 [\Gamma_S \cap \Sigma^\sharp_S(L): \Gamma_S^+ \cap \Sigma^\sharp_S(L)].\]
\end{proposition}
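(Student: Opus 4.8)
The plan is to treat the two index computations separately: $[J:J^+]$ falls straight out of the signed Miranda--Morrison sequence, whereas $[K:K^+]$ requires first an explicit description of the image of $O^\sharp(L)$ under $\sigma$ and then the same finite reduction that was used for $\Sigma(L)/\Sigma^\sharp(L)$ in the preceding proposition.

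First I would dispose of $[J:J^+]$. By \Cref{thm:miranda-morrison-signed} the homomorphism $\bar\sigma_+$ is defined on $O(\disc{L})$ and exactness there gives $\ker\bar\sigma_+=\im D_+=D(O^+(L))$. Hence $J^+=D(O^+(L))\cap J=\ker\bar\sigma_+\cap J$, so with $\sigma_+:=\bar\sigma_+|_J$ the map $\sigma_+$ has kernel $J^+$, and the first isomorphism theorem yields $[J:J^+]=|\sigma_+(J)|$.

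For the first equality in $[K:K^+]$ the key step is the identity $\sigma(O^\sharp(L))=\Gamma_\QQ\cap\Sigma^\sharp(L)$. The inclusion $\subseteq$ is immediate, because $O^\sharp(L)\subseteq O(L)\cap O^\sharp(L\otimes\ZZ_\AA)$ forces $\sigma(g)$ to lie simultaneously in the diagonal $\Gamma_\QQ$ and in $\Sigma^\sharp(L)$. For $\supseteq$ I would mimic the proof of \Cref{thm:miranda-morrison-signed}: given $\gamma\in\Gamma_\QQ\cap\Sigma^\sharp(L)$, choose $a\in O^\sharp(L\otimes\ZZ_\AA)$ with $\sigma(a)=\gamma$, and, using surjectivity of $O(L\otimes\QQ)\to\Gamma_\QQ$, an $h\in O(L\otimes\QQ)$ with $\sigma(h)=\gamma$; then $\sigma(h^{-1}a)=1$ and strong approximation produces $f\in O(L\otimes\QQ)$ with $\sigma(f)=1$ and $D_f=D_{h^{-1}a}$, so that $hf\in O^\sharp(L)$ has the same finite-adelic $\sigma$ as $\gamma$. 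Since the square-class map $\QQ^\times/(\QQ^\times)^2\hookrightarrow\prod_{p<\infty}\QQ_p^\times/(\QQ_p^\times)^2$ is injective (a rational square class that is a square at every finite place is trivial, the place $p=2$ being decisive), the real sign is pinned down too and $\sigma(hf)=\gamma$. Granting this, $O^+(L)=\{g\in O(L)\mid\sigma(g)\in\Gamma_\QQ^+\}$ shows $K^+=\ker(\sign\circ\sigma|_K)$, whence $[K:K^+]=|\sign(\sigma(K))|=[\Gamma_\QQ\cap\Sigma^\sharp(L):\Gamma_\QQ^+\cap\Sigma^\sharp(L)]$.

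For the second equality I would invoke the finite reduction of the preceding proposition: $\pi_S$ carries $\Gamma_\QQ\cap\Sigma^\sharp(L)$ onto $\Gamma_S\cap\Sigma^\sharp_S(L)$ and $\Gamma_\QQ^+\cap\Sigma^\sharp(L)$ onto $\Gamma_S^+\cap\Sigma^\sharp_S(L)$ (using $\Sigma^\sharp(L\otimes\ZZ_p)=\Gamma_{\ZZ_p}$ for $p\in T$), and it remains to check that $\pi_S$ induces an isomorphism of the two quotients so that the indices coincide. I expect the genuinely delicate point to be exactly this last bookkeeping. The class in $\Gamma_\QQ/\Gamma_\QQ^+$ that records orientation is detected by the real spinor norm, whose shadow under $\sigma$ sits at the finite places; the identification in the previous paragraph recovers it precisely because \emph{all} finite places are available, but after projecting to the $S$-local data one must verify that no orientation-reversing class of $\Gamma_\QQ\cap\Sigma^\sharp(L)$ is flattened, i.e. that $\ker\bigl(\pi_S|_{\Gamma_\QQ\cap\Sigma^\sharp(L)}\bigr)\subseteq\Gamma_\QQ^+$. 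Together with the strong-approximation identification of $\sigma(O^\sharp(L))$, this is where the transfer of the purely lattice-theoretic index $[K:K^+]$ into the arithmetic of $\Gamma_\QQ$ and $\Sigma^\sharp(L)$ has to be handled with care.
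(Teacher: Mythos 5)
Your handling of the first two assertions is correct and coincides with the paper's own (much terser) proof: the paper likewise obtains $[J:J^+]=|\sigma_+(J)|$ from the exactness in \Cref{thm:miranda-morrison-signed} via $J^+=\ker(\bar\sigma_+|_J)$, and obtains the first equality for $[K:K^+]$ from the strong-approximation identities $\sigma(O^\sharp(L))=\Gamma_\QQ\cap\Sigma^\sharp(L)$ and $\sigma(K^+)=\Gamma_\QQ^+\cap\Sigma^\sharp(L)$; your expanded approximation argument, including the remark that a rational square class that is trivial at all finite places is trivial, is exactly the intended justification of these identities.

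The gap is the second equality, which you explicitly defer (``it remains to check\ldots''). Your reduction of it is accurate: $\pi_S$ does carry $\Gamma_\QQ\cap\Sigma^\sharp(L)$ onto $\Gamma_S\cap\Sigma^\sharp_S(L)$ and $\Gamma_\QQ^+\cap\Sigma^\sharp(L)$ onto $\Gamma_S^+\cap\Sigma^\sharp_S(L)$ (for $p\in T$ one has $\Sigma^\sharp(L\otimes\ZZ_p)=\Gamma_{\ZZ_p}$, so a preimage in $\Gamma_\QQ\cap(\Gamma_T\times\Gamma_S')$ of an element of $\Sigma^\sharp_S(L)$ automatically lies in $\Sigma^\sharp(L)$), so the equality of indices is equivalent to your kernel condition $\ker\bigl(\pi_S|_{\Gamma_\QQ\cap\Sigma^\sharp(L)}\bigr)\subseteq\Gamma_\QQ^+$. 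But that condition is not mere bookkeeping: it fails whenever $-1$ is a square in $\QQ_p$ for every $p\in S$, and also when $S=\emptyset$. For instance, for $L=U\perp U\perp A_4$ one has $S=\{5\}$, and $(1,-1)\in\Gamma_\QQ\cap\Sigma^\sharp(L)$ lies in $\ker\pi_S$ (as $-1\in(\QQ_5^\times)^2$) but not in $\Gamma_\QQ^+$. Correspondingly, the reflection $\tau_v$ in a vector $v\in U\perp U$ with $v^2=2$ lies in $O^\sharp(L)\setminus O^+(L)$, so $[K:K^+]=2$, while $\Gamma_S=\Gamma_S^+$ (because $\pi_5((-1,1))=\pi_5((-1,-1))$), so the right-hand index is $1$. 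Hence this step cannot be completed as stated: one needs an extra hypothesis (some $p\in S$ with $-1\notin(\QQ_p^\times)^2$, e.g.\ $2\in S$), or a definition of $\Gamma_S,\Gamma_S^+$ that remembers the real place. The mechanism that saves the analogous reduction in the proposition preceding this one --- there the kernel $\Gamma_T\times\{1\}$ of $\pi_S$ is absorbed into $\Sigma^\sharp(L)$, which sits inside the denominator --- is unavailable here, since the denominator $\Gamma_\QQ^+\cap\Sigma^\sharp(L)$ does not absorb it. For what it is worth, the paper's own proof also stops after the strong-approximation step and never addresses the second equality, so your instinct that this is the genuinely delicate point is exactly right; it is in fact a gap in the paper as well.
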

\begin{proof}
We have $J^+ = \ker (\bar \bar \sigma_+) \cap J = \ker (\bar \sigma_+|_J)$.
Therefore $J/J_+ \cong \sigma_+(J)$.

The strong approximation theorem implies the equality
$\sigma(O^\sharp(L)) = \Sigma^\sharp(L) \cap \Gamma_\QQ$ and that
$\sigma(O^\sharp(L)^+) = \Sigma^\sharp(L) \cap \Gamma_\QQ$.
\end{proof}

\begin{remark}
 The theorems allow us to compute the image of $O(L) \to O(D_L)$ for $L$
 an indefinite $\ZZ$-lattice of rank at least $3$. Namely, one computes generators of $O(D_L)$ and lifts them $p$-adically to elements of $L\otimes \ZZ_p$ with sufficient precision. Then one can use these lifts to compute their spinor norm. See the work of Shimada \cite{shimada2018} for further details. An Algorithm for $p$-adic lifting and generators for $O(D_L)$ are given in \cite{brandhorst-veniani2023}.
\end{remark}

\section{Hermitian Miranda--Morrison theory}\label{hermitianmm}
Let $L$ be a lattice with isometry with irreducible minimal polynomial.
In this section we use the transfer construction to compute the image of
$U(L) \to D(\disc{L})$, thus settling case (4) of \Cref{subsec:computeGL}.  To this end we develop the analogue of Miranda--Morrison theory for hermitian lattices over the ring of integers of a number field.

\subsection{Preliminaries on hermitian lattices.}
In this section we recall some basics on hermitian lattices over the ring of integers of a number field or a local field. See \cite{KirschmerHabil} for an overview of the theory.

Let $K$ be a finite extension of $F=\QQ$ (global case) or $F=\QQ_p$ (local case) and $E$ an étale $K$-algebra of dimension $2$.
Let $\O$ be the maximal order of $E$ and $\o$ be the maximal order of $K$.

\begin{definition}
 For $E$ an \'etale $K$-algebra, we denote by
 $\Tr^E_K \colon E \to K$ the trace and by
  $\FD_{E/K}^{-1}=\{ x \in E \mid \Tr^E_K(x \O) \subseteq \o \}$
 the inverse of the different.
\end{definition}

In the local case, we let $\FP\subseteq \O$ be the largest ideal invariant under the involution of $E/K$ and $\fp$ the maximal ideal of $\o$. Define $e$ by $\FP^e = \FD_{E/K}$ and $a$ by $\FP^a = \FD_{E/F}$.

\begin{definition}
 Let $(L,h)$ be a hermitian $\mathcal O$-lattice. Its scale is the ideal $\scale(L)=h(L,L)\subseteq \O$ and its norm is the ideal $\norm(L)=\sum \{ \sprodq{x}{}\o \mid x \in L \} \subseteq \o$.
\end{definition}
It is known that
\begin{equation}\label{norm-vs-scale}
 \FD_{E/K} \scale(L) \subseteq \norm(L) \subseteq \scale(L).
\end{equation}

\subsection{The trace lattice}
In this subsection $(L,h)$ is a hermitian $\O$-lattice. By transfer we obtain its trace $\ZZ_F$-lattice $(L,b)$ with $b = \Tr^E_{F} \circ h$.
Our primary interest is in even $\ZZ$-lattices. So our next goal is to give necessary and sufficient conditions for the trace lattice to be integral and even.

The hermitian dual lattice is
\[L^\sharp = \{x \in L \otimes E \mid h(x,L) \subseteq \O\}\]
and $L^\vee = (L,b)^\vee$ is the dual lattice with respect to the trace form.

\begin{proposition}\label{trace-integral}
We have
\[L^\vee = {\FD_{E/F}^{-1}} L^\sharp.\]
The trace form on $L$ is integral if and only if $\scale(L) {\FD_{E/F}} \subseteq \O$.
\end{proposition}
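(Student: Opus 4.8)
The plan is to deduce both assertions from a single duality property of the relative codifferent. Write $\ZZ_F$ for the ring of integers of $F$ (so $\ZZ_F=\ZZ$ or $\ZZ_p$), and recall that, in analogy with the definition of $\FD_{E/K}^{-1}$, the inverse different is $\FD_{E/F}^{-1}=\{x\in E\mid \Tr^E_F(x\O)\subseteq \ZZ_F\}$, which is by construction the largest $\O$-submodule of $E$ whose image under $\Tr^E_F$ lies in $\ZZ_F$. The key lemma I would isolate is: for any $\O$-submodule $\mathfrak a\subseteq E$ one has $\Tr^E_F(\mathfrak a)\subseteq \ZZ_F$ if and only if $\mathfrak a\subseteq \FD_{E/F}^{-1}$. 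Both directions are immediate from the definition once one uses that $\mathfrak a$ is stable under multiplication by $\O$: if $c\in\mathfrak a$ then $c\O\subseteq\mathfrak a$, so $\Tr^E_F(c\O)\subseteq \ZZ_F$, i.e.\ $c\in\FD_{E/F}^{-1}$; conversely $\Tr^E_F(\FD_{E/F}^{-1})\subseteq \ZZ_F$ by taking $1\in\O$.

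For the first equality I would observe that, for $x\in L\otimes E$, the set $h(x,L)=\{h(x,y)\mid y\in L\}$ is an $\O$-submodule of $E$: since $h(x,\lambda y)=\bar\lambda\,h(x,y)$ and $\overline{\O}=\O$, it is closed under multiplication by $\O$. By definition $x\in L^\vee$ means $\Tr^E_F(h(x,y))\in\ZZ_F$ for all $y\in L$, i.e.\ $\Tr^E_F(h(x,L))\subseteq \ZZ_F$, which by the lemma is equivalent to $h(x,L)\subseteq \FD_{E/F}^{-1}$. Multiplying by the integral $\O$-ideal $\FD_{E/F}$ and using $\FD_{E/F}\,h(x,L)=h(\FD_{E/F}\,x,L)$ (linearity of $h$ in the first argument), this reads $h(\FD_{E/F}\,x,L)\subseteq\O$, that is $\FD_{E/F}\,x\subseteq L^\sharp$, which in turn is equivalent to $x\in \FD_{E/F}^{-1}L^\sharp$ because $\FD_{E/F}\FD_{E/F}^{-1}=\O$. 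This chain of equivalences gives $L^\vee=\FD_{E/F}^{-1}L^\sharp$.

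The second statement follows from the same lemma applied to $\mathfrak a=\scale(L)=h(L,L)$, which is again an $\O$-submodule of $E$ by sesquilinearity. The trace lattice $(L,b)$ is integral precisely when $b(L,L)=\Tr^E_F(h(L,L))\subseteq \ZZ_F$, and by the lemma this holds if and only if $\scale(L)\subseteq\FD_{E/F}^{-1}$, equivalently $\scale(L)\,\FD_{E/F}\subseteq\O$. I expect no serious obstacle here: the only points requiring care are that $h(x,L)$ and $\scale(L)$ are genuine $\O$-modules (so that the codifferent duality applies) and that $\FD_{E/F}$ is handled correctly as the relative different of $E/F$---in the local case via $\FP^{a}=\FD_{E/F}$, in the global case via its defining trace-dual property or the tower formula $\FD_{E/F}=\FD_{E/K}\,(\FD_{K/F}\O)$---both of which are standard. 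Since duals, scales and differents are compatible with localization, it also suffices to verify everything locally, which reduces the global case to the setting where $\O$ is a product of discrete valuation rings.
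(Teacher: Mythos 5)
Your proof is correct. Note that the paper offers no argument to compare against—it states ``The proof is left to the reader''—and your route (the codifferent-duality lemma $\Tr^E_F(\mathfrak a)\subseteq \ZZ_F \iff \mathfrak a\subseteq \FD_{E/F}^{-1}$ for $\O$-submodules $\mathfrak a$, combined with invertibility of $\FD_{E/F}$ to pass between $h(x,L)\subseteq\FD_{E/F}^{-1}$ and $\FD_{E/F}x\subseteq L^\sharp$) is exactly the standard argument the authors intend. You also correctly flag the only genuinely delicate points: that $h(x,L)$ and $\scale(L)$ are $\O$-modules (via $\overline{\O}=\O$ and sesquilinearity), and that $\FD_{E/F}\FD_{E/F}^{-1}=\O$ holds even when $E$ is a split étale algebra, since $\O$ is then a product of Dedekind domains and the codifferent decomposes componentwise.
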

The proof is left to the reader.
We continue by determining the parity of the trace lattice.
To this end,
we first establish that the transfer construction behaves well with respect to completions, as expected.

For a place $\nu$ of $K$ we use the following notation:
For an $\o$-module $M$ denote by $M_\nu = M \otimes \o_\nu$ the completion of $M$ at $\nu$ and similar for $K$-vector spaces.

\begin{proposition}\label{prop:tracedecompose}
  Let $F = \QQ$, $E/K$ a degree two extension of number fields and $p$ a prime number.
  Then
  \begin{equation}\label{decomp-local}
    (L,\Tr^E_\QQ \circ h) \otimes \ZZ_p = \bigperp_{\nu \mid p} (L_\nu, \Tr^{E_\nu}_{\QQ_p} \circ h_\nu),
  \end{equation}
  where $\nu$ runs over all places extending the $p$-adic place of $\QQ$.
\end{proposition}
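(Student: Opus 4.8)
The plan is to reduce the statement to two standard inputs: the splitting of the semilocal ring $\o \otimes_\ZZ \ZZ_p$ into its local factors, and the compatibility of the trace with base change. First I would recall that, since $K/\QQ$ is separable, there is a canonical isomorphism of $\ZZ_p$-algebras
\[\o \otimes_\ZZ \ZZ_p \cong \prod_{\nu \mid p} \o_\nu,\]
and correspondingly $\O \otimes_\ZZ \ZZ_p \cong \prod_{\nu \mid p} \O_\nu$, where $\O_\nu$ is the maximal order of $E_\nu = E \otimes_K K_\nu$. Let $(e_\nu)_{\nu \mid p}$ be the associated system of orthogonal idempotents in $\o \otimes \ZZ_p$. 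Tensoring the $\O$-lattice $L$ with $\ZZ_p$ (using flatness of $\ZZ_p$ over $\ZZ$) and cutting by these idempotents yields the module decomposition
\[L \otimes_\ZZ \ZZ_p = \bigoplus_{\nu \mid p} e_\nu (L \otimes \ZZ_p) = \bigoplus_{\nu \mid p} L_\nu,\]
where $L_\nu = L \otimes_\o \o_\nu$ is an $\O_\nu$-lattice carrying the hermitian form $h_\nu$ obtained from $h$ by base change.

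The second step is to transport the trace form through this decomposition. The crucial identity is that, under the isomorphism $E \otimes_\QQ \QQ_p \cong \prod_{\nu \mid p} E_\nu$, the base-changed trace $\Tr^E_\QQ \otimes \id_{\QQ_p}$ corresponds to the map $(x_\nu)_\nu \mapsto \sum_{\nu \mid p} \Tr^{E_\nu}_{\QQ_p}(x_\nu)$. I would establish this either from the tower formula $\Tr^E_\QQ = \Tr^K_\QQ \circ \Tr^E_K$ combined with the analogous statement for $K/\QQ$, or directly from the description of the trace of an étale algebra as the sum over embeddings, which partition according to the places above $p$. Granting this, for $x, y \in L \otimes \ZZ_p$ with components $x_\nu, y_\nu \in L_\nu$ we obtain
\[(b \otimes \ZZ_p)(x, y) = (\Tr^E_\QQ \otimes \id_{\QQ_p})(h(x, y)) = \sum_{\nu \mid p} \Tr^{E_\nu}_{\QQ_p}(h_\nu(x_\nu, y_\nu)).\]

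From this formula both assertions of \eqref{decomp-local} follow at once. If $x \in L_{\nu_0}$ and $y \in L_{\nu_1}$ with $\nu_0 \neq \nu_1$, then $x_\nu = 0$ for $\nu \neq \nu_0$ and $y_\nu = 0$ for $\nu \neq \nu_1$, so every summand vanishes; hence the distinct $L_\nu$ are mutually orthogonal with respect to the trace form. Moreover, the restriction of $b \otimes \ZZ_p$ to a single $L_\nu$ is precisely $\Tr^{E_\nu}_{\QQ_p} \circ h_\nu$, i.e. the trace lattice of $(L_\nu, h_\nu)$. This is the claimed orthogonal decomposition. I expect the only genuine point requiring care to be the compatibility of the global trace with the product of local traces; the rest is formal, relying on the flatness of $\ZZ_p$ and the splitting of the semilocal order. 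The payoff, exploited immediately afterwards when determining the parity of $(L,b)$, is that integrality and evenness can now be checked place by place via \Cref{trace-integral} and the chain \eqref{norm-vs-scale}.
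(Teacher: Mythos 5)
Your proposal is correct and follows essentially the same route as the paper's proof: split the semilocal order $\o \otimes \ZZ_p \cong \prod_{\nu \mid p} \o_\nu$, use the resulting idempotents to decompose $(L,h)\otimes \ZZ_p$ into hermitian $\O_\nu$-lattices, and then check that $\Tr^{E\otimes \QQ_p}_{\QQ_p} = \sum_{\nu \mid p}\Tr^{E_\nu}_{\QQ_p}$ so the trace form respects the decomposition. The paper verifies the trace identity by viewing the trace as the trace of the left-multiplication endomorphism, while you invoke the tower formula or the sum-over-embeddings description; these are interchangeable justifications of the same standard fact.
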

\begin{proof}
To this end consider the canonical isomorphism $K \otimes \QQ_p \cong \prod_{\nu \mid p} K_\nu$ where the product runs over the prolongations of the $p$-adic valuation to $K$.
By \cite[Chap. II, \S3, Prop. 4]{Serre1979} this induces a canonical isomorphism
\[\o \otimes \ZZ_p \cong \prod_{\nu \mid p} \o_\nu\]
where $\o_\nu$ is the maximal order of $K_\nu$.
We obtain a corresponding canonical decomposition (using a system of primitive idempotents)
\begin{equation}\label{decomp-localp}
(L,h) \otimes \ZZ_p = \bigperp_{\nu \mid p} (L_\nu,h_\nu).
\end{equation}
  where each summand is a hermitian $\O_\nu$-lattice.
  Note that $\O_\nu$ is indeed the maximal order of $E_\nu$.
The decomposition $E \otimes \QQ_p \cong \prod_{\nu \mid p} E_\nu$
and viewing $\Tr$ as the trace of the left multiplication endomorphism shows that
\[\Tr^{E \otimes \QQ_p}_{\QQ_p}= \sum_{\nu \mid p} \Tr^{E_\nu}_{\QQ_p}.\]
Therefore the trace commutes with the decomposition in \cref{decomp-localp}.
\end{proof}

\begin{lemma}\label{normhelp}
  Let $K$ be a non-archimedian dyadic local field of characteristic $0$.
  Assume that $B \subseteq \o$ is a $\ZZ$-module such that $\Tr^E_K(\O) \subseteq B$, $1 \in B$ and $\Nr^E_K(\O)B \subseteq B$. Then $\o \subseteq B$.
\end{lemma}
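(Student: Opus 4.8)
The plan is to reduce the statement to a finite-dimensional problem over $W:=\o/2\o$ and then split into cases according to the behaviour of $E/K$. Throughout, the trace hypothesis will handle the tame/split situation and the norm hypothesis will carry the wildly ramified one.

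First I would exploit the norm hypothesis to pin down a large part of $B$. Since $1\in B$ and $\Nr^E_K(\O)B\subseteq B$, we get $\Nr^E_K(\O)\subseteq B$; in particular $a^2=\Nr^E_K(a)\in B$ for every $a\in\o$. The polarization $2ab=(a+b)^2-a^2-b^2$ then lies in $B$, and since the products $ab$ additively generate $\o$, this yields $2\o\subseteq B$. Consequently it suffices to prove that the image $\overline B$ of $B$ in $W=\o/2\o$ is all of $W$: as $2\o\subseteq B\subseteq\o$, the equality $\overline B=W$ forces $B=\o$, and $\overline B$ is automatically an $\FF_2$-subspace of $W$. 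Next I would dispose of the easy cases. If $E\cong K\times K$ is split, or if $E/K$ is an unramified field extension, then $\Tr^E_K(\O)=\o$ (in the unramified case one picks the generator $\theta$ of $\O=\o[\theta]$ so that its reduction has nonzero trace in the residue extension, forcing $\Tr^E_K(\theta)$ to be a unit), whence $\o=\Tr^E_K(\O)\subseteq B$.

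The substantial case, and the main obstacle, is when $E/K$ is ramified, which for a dyadic $K$ is necessarily wild; here the trace is typically a proper ideal and the work is done by the norm. Since $E/K$ is totally ramified I would write $\O=\o[\pi_E]$ for a uniformizer $\pi_E$ of $E$ and set $n=\Nr^E_K(\pi_E)$; because $f_{E/K}=1$ one has $v_K(n)=1$, so $n$ is a uniformizer of $\o$ and $\bar n\in\overline B$. The key structural point is that $\overline B$ is a module over the subring $W^2=\{\bar x^2\mid \bar x\in W\}$ of squares, since $W^2\subseteq\overline{\Nr^E_K(\O)}\subseteq\overline B$ and $\overline{\Nr^E_K(\O)}\,\overline B\subseteq\overline B$. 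Hence $\overline B\supseteq W^2+W^2\bar n$, and it remains to prove the identity $W^2+W^2\bar n=W$.

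This last identity is where the dyadic subtlety concentrates, and I would establish it by a filtration argument. Writing $r=v_K(2)$, we have $W=\o/\fp^{\,r}$. If $r=1$ then $W$ is the residue field, Frobenius is bijective, $W^2=W$, and there is nothing to prove. If $r\ge 2$ I would use that squaring is additive in characteristic $2$ to identify $W^2$ with the span of the even graded pieces $c\pi^{2i}$ ($c\in k$, $2i\le r-1$), while multiplication by the uniformizer image $\bar n$ shifts these into the odd graded pieces; as $\bar n$ has valuation exactly $1$, the leading coefficients stay surjective onto the residue field in each odd degree. Thus $W^2$ and $W^2\bar n$ together surject onto every graded piece $\fp^{\,j}/\fp^{\,j+1}$ for $0\le j\le r-1$, and a standard successive-approximation argument gives $W^2+W^2\bar n=W$. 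Therefore $\overline B=W$ and $B=\o$. The crux throughout is precisely that the norm of a uniformizer of $E$ furnishes the odd-degree generator that the squares alone miss in the wildly ramified dyadic setting.
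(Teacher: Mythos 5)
Your proof is correct, but it takes a genuinely different route from the paper's in the only substantial case, the ramified one. The paper also dispatches the split and unramified cases via $\Tr^E_K(\O) = \o$, but for $E/K$ ramified it invokes two external results: Johnson's description of the norm group, namely $\o^\times = \Nr^E_K(\O^\times) \cup (1+u_0)\Nr^E_K(\O^\times)$ with $u_0\o = \fp^{e-1}$, and Serre's formula $\Tr^E_K(\FP^i) = \fp^{\lfloor (i+e)/2 \rfloor}$, which together show $u_0 \in \Tr^E_K(\O) \subseteq B$; since $1 \in B$ and norms multiply $B$ into itself, this puts all of $\o^\times$ inside $B$, and then $\o = \Nr^E_K(\O)\,\o^\times \subseteq B$ because the norm of a uniformizer of $E$ is a uniformizer of $K$. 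You instead use the polarization identity $2ab = (a+b)^2 - a^2 - b^2$ together with the fact that squares of $\o$ are norms to get $2\o \subseteq B$ unconditionally, reduce to the characteristic-two quotient $W = \o/2\o$, and prove $W^2 + W^2\bar n = W$ (with $n = \Nr^E_K(\pi_E)$ a uniformizer) by a graded/successive-approximation argument using additivity of Frobenius and surjectivity of Frobenius on the finite residue field; each step there checks out, including the subgroup property of $W^2 + W^2\bar n$ needed for the approximation. What the paper's route buys is brevity, at the cost of the citations to Johnson and Serre; what your route buys is self-containedness (the only arithmetic input is $v_K(\Nr^E_K(\pi_E)) = f_{E/K} v_E(\pi_E) = 1$), plus the sharper observation that in the ramified dyadic case the hypothesis $\Tr^E_K(\O) \subseteq B$ is never used — the norm hypothesis and $1 \in B$ already suffice. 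One phrase to tighten: $W^2$ is not literally "the span of the even graded pieces" (it need not be a $k$-subspace compatible with the grading); the statement you actually use, and which is true, is that for each even $j < v_K(2)$ the set $W^2$ contains elements of valuation exactly $j$ realizing every leading coefficient in the residue field, and multiplication by $\bar n$ does the same in odd valuations.
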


\begin{proof}
If $E/K$ is split or unramified, then $\Tr^E_K(\O)=\o$. So let $E/K$ be ramified.
  By~\cite[\S 6]{Johnson1968} there exists $u_0 \in K$ such that $\o^\times = \Nr^E_K(\O^\times) \cup (1+u_0) N^E_K(\O^\times)$ and $u_0 \o = \fp^{e-1}$. Since $\fp^{e-1} \subseteq \fp^{\lfloor\frac{e}{2}\rfloor}=\Tr^E_K(\O)$ by \cite[Ch. V,\S3, Lemma 3]{Serre1979}, we have $(1+u_0) \Nr^E_K(\O^\times) \subseteq B$.
  Thus $\o^\times \subseteq B$ and therefore $\Nr^E_K(\O) \o^\times \subseteq B$. As $E/K$ is ramified it follows that $\o= \Nr^E_K(\O) \o^\times.$
\end{proof}

The following proof is inspired by \cite[3.1.9]{michael2015}.
\begin{proposition}\label{eventracelocal}
Let $K$ be a non-archimedian local field of characteristic $0$ and $(L,h)$ a hermitian $\O$-lattice.
The trace form $\Tr^E_F \circ h$ is even if and only if
  $\norm(L) \subseteq \FD_{K/F}^{-1}$.
\end{proposition}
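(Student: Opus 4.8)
The plan is to translate evenness into a statement about the $K$-linear trace and then exploit the $\O$-module structure of $L$ together with \Cref{normhelp}. Since $\Tr^E_F = \Tr^K_F \circ \Tr^E_K$ and $\Tr^E_K(\alpha) = 2\alpha$ for every $\alpha \in K$ (multiplication by $\alpha$ on the two-dimensional $K$-algebra $E$ has trace $2\alpha$), and since $h(v,v) = \overline{h(v,v)}$ lies in $K$, I first record the identity $\Tr^E_F(h(v,v)) = 2\,\Tr^K_F(h(v,v))$. Consequently the trace form is even if and only if $\Tr^K_F(h(v,v)) \in \ZZ_p$ for every $v \in L$. On the other side, unwinding the definition of the inverse different, $\norm(L) \subseteq \FD_{K/F}^{-1}$ means precisely that $h(v,v) \in \FD_{K/F}^{-1}$, i.e.\ $\Tr^K_F(y\,h(v,v)) \in \ZZ_p$ for all $y \in \o$ and all $v \in L$. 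With this reformulation the implication from the norm condition to evenness is immediate by taking $y = 1$.

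For the converse I would upgrade the single trace condition (the case $y=1$) to the full codifferent condition using the action of $\O$. Write $b = \Tr^E_F \circ h$ for the trace form; evenness implies that $b$ is integral. From $h(v, \bar z v) = z\,h(v,v)$ and $\bar z v \in L$ one obtains, for every $z \in \O$ and $v \in L$, the identity $\Tr^K_F(\Tr^E_K(z)\,h(v,v)) = \Tr^E_F(z\,h(v,v)) = b(v, \bar z v) \in \ZZ_p$. I would then set
\[ B = \{\, y \in \o \mid \Tr^K_F(y\,h(w,w)) \in \ZZ_p \text{ for all } w \in L \,\}. \]
This $B$ is a $\ZZ$-submodule of $\o$; it contains $1$ because $b$ is even, it contains $\Tr^E_K(\O)$ by the identity just displayed, and it satisfies $\Nr^E_K(\O)\,B \subseteq B$ because $\Nr^E_K(\mu)\,h(w,w) = h(\mu w, \mu w)$ with $\mu w \in L$. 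These are exactly the hypotheses of \Cref{normhelp}, which yields $\o \subseteq B$, i.e.\ the desired condition $\Tr^K_F(y\,h(w,w)) \in \ZZ_p$ for all $y \in \o$ and $w \in L$.

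The main obstacle, and the only place where genuine work happens, is the dyadic case, which is why \Cref{normhelp} is invoked: when $2$ fails to be a unit the additive trace information alone does not recover $\o$, and one must combine it with the multiplicative norm information, precisely the interplay encoded in that lemma. In the non-dyadic case the argument short-circuits, since $\Tr^E_K(\O)$ is an $\o$-ideal containing $\Tr^E_K(1) = 2 \in \o^\times$ and hence equals $\o$, so that $\o = \Tr^E_K(\O) \subseteq B$ directly with no bootstrapping. In writing this up I would be most careful about the verification that $\Tr^E_K(\O) \subseteq B$, where one passes from integrality of $b$ to the trace condition through the identity $\Tr^E_F(z\,h(w,w)) = b(w, \bar z w)$, and about noting that it is evenness, not mere integrality, that supplies $1 \in B$.
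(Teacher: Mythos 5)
Your proof is correct and takes essentially the same route as the paper's: the forward direction via the identity $\Tr^E_F = 2\Tr^K_F$ on elements of $K$, and the converse by checking that your set $B$ satisfies exactly the hypotheses of \Cref{normhelp} ($1 \in B$ from evenness, $\Tr^E_K(\O) \subseteq B$ from integrality of the trace form applied to $b(v,\bar z v)$, and $\Nr^E_K(\O)B \subseteq B$ from the $\O$-module structure of $L$). The only difference is cosmetic: where the paper simply asserts the reduction to $F=\QQ_2$, you spell out the non-dyadic short-circuit $\o = \Tr^E_K(\O) \subseteq B$, which is a welcome clarification.
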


\begin{proof}
  Suppose that $\norm(L)  \subseteq {\FD_{K/F}^{-1}}$.
We have
  \[\Tr^E_F(h(x,x))= 2 \Tr^K_F(h(x,x)) \in 2 \Tr^K_F(\norm(L)) \subseteq 2 \Tr^K_F({\FD_{K/F}^{-1}})=2 \ZZ_F.\]

Now suppose that the trace form is even. In particular it is integral.
We may assume that $F = \QQ_2$ is dyadic.
Let $B$ be the set of all $\omega \in K$ such that
$\Tr^{E}_F(\omega h(x,x))\subseteq 2\ZZ_2$ for all $x \in L$. Then $\lambda \bar \lambda \omega h(x,x)=\omega h(\lambda x,\lambda x)$ for $\lambda \in \O$ gives $\Nr(\O) B \subseteq B$ and $1 \in B$. We calculate
  \[\Tr^E_F((\lambda +\bar \lambda) h(x,x)) = \Tr^E_F (\lambda h(x,x))+ \Tr^E_F (\overline{ \lambda h(x,x)}) = 2 \Tr^E_F(h(\lambda x,x)) \in 2\ZZ_2 \]
  for all $\lambda \in \O$.
This gives $\Tr^E_K(\O) \subseteq B$.
By \Cref{normhelp} $\o \subseteq B$.
  Therefore $\Tr^K_F(\o h(x,x))\subseteq \ZZ_2$ which means $h(x,x)\in {\FD_{K/F}^{-1}}$.
\end{proof}

We show that the same result holds in the global setting.

\begin{corollary}\label{trace}
Let $F=\QQ$ and $E/K$ a degree two extension of number fields. Let $(L,h)$ be a hermitian $\O$-lattice.
The trace form $\Tr^E_F \circ h$ is even if and only if
$\norm(L) \subseteq {\FD_{K/F}^{-1}}$.
\end{corollary}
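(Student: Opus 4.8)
The plan is to deduce this global statement from its local counterpart \Cref{eventracelocal} by a localization argument, exploiting that both evenness of a $\ZZ$-lattice and containment of fractional $\o$-ideals are local properties that can be checked one completion at a time.

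First I would reduce evenness of the trace lattice to a $p$-adic condition. A $\ZZ$-lattice $(M,b)$ is even if and only if $b(x,x)\in 2\ZZ_p$ for every $x \in M$ and every prime $p$, since $2\ZZ = \QQ \cap \bigcap_p 2\ZZ_p$; by density and continuity this is equivalent to evenness of each completion $(M,b)\otimes\ZZ_p$. Hence $(L,\Tr^E_\QQ \circ h)$ is even if and only if $(L,\Tr^E_\QQ\circ h)\otimes\ZZ_p$ is even for all $p$.

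Next I would invoke \Cref{prop:tracedecompose} to write each completion as the orthogonal direct sum $\bigperp_{\nu\mid p}(L_\nu, \Tr^{E_\nu}_{\QQ_p}\circ h_\nu)$ over the places $\nu$ of $K$ above $p$. Since an orthogonal direct sum of lattices is even precisely when each summand is even, the global trace form is even if and only if $(L_\nu, \Tr^{E_\nu}_{\QQ_p}\circ h_\nu)$ is even for every place $\nu$ of $K$. Applying the local result \Cref{eventracelocal} to each summand, now with local field $K_\nu$ and base $\QQ_p$, rephrases this as $\norm(L_\nu)\subseteq \FD_{K_\nu/\QQ_p}^{-1}$ for all $\nu$.

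It then remains to recognize these local conditions, taken together, as the single global containment $\norm(L)\subseteq\FD_{K/F}^{-1}$. This follows from the local-global principle for containment of fractional $\o$-ideals, namely $\mathfrak a\subseteq\mathfrak b \iff \mathfrak a\o_\nu\subseteq\mathfrak b\o_\nu$ for all $\nu$, combined with the compatibility of both sides with localization: the norm ideal satisfies $\norm(L)\,\o_\nu = \norm(L_\nu)$, while the different localizes as $\FD_{K/F}\,\o_\nu = \FD_{K_\nu/\QQ_p}$. The one step requiring genuine care is precisely this last compatibility—matching the completion of the global different with the local different appearing in \Cref{eventracelocal}, and verifying that the norm ideal commutes with completion—but both are standard facts about differents and about finitely generated modules over Dedekind domains. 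Assembling the chain of equivalences yields the claim.
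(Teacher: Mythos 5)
Your proposal is correct and follows essentially the same route as the paper's proof: decompose each completion via \Cref{prop:tracedecompose}, use that an orthogonal sum is even if and only if each summand is, apply \Cref{eventracelocal} place by place, and conclude from the compatibility of the norm ideal (and the different) with completion. The only difference is that you spell out the local-global reduction and the localization identities $\norm(L)\o_\nu=\norm(L_\nu)$, $\FD_{K/F}\o_\nu=\FD_{K_\nu/\QQ_p}$ more explicitly than the paper, which compresses this into the remark $\nu(\norm(L))=\nu(\norm(L_\nu))$.
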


\begin{proof}
We use \Cref{prop:tracedecompose} and 
note that the orthogonal sum is even if and only if each summand is even.
We apply \Cref{eventracelocal} to each summand and obtain the condition that
$\norm(L_\nu) \subseteq \FD^{K_\nu}_{\QQ_2}$ for all $\nu \mid 2$.
We conclude with $\nu(\norm(L)) = \nu(\norm(L_\nu))$.
\end{proof}

Note that even implies integral and similarly ${\FD_{E/K}} \scale(L) \subseteq \norm(L)\O \subseteq {\FD_{K/F}^{-1}}\O$ implies that $\scale(L) \subseteq {\FD_{E/F}^{-1}}$ which matches up perfectly with \Cref{trace-integral} and~\Cref{trace}.

\subsection{Discriminant form and transfer}
Suppose that $L \subseteq L^\vee := {\FD_{E/F}^{-1}} L^\sharp$. Define a torsion hermitian form on $D_L=L^\vee/L$ as follows
\[\bar h\colon D_L \times D_L \to E/{\FD_{E/F}^{-1}}, \quad ([x],[y]) \mapsto [h(x,y)].\]
Suppose further that $\norm(L) \subseteq {\FD_{K/F}^{-1}}$, that is, the trace form on $L$ is even. Then we define the
torsion quadratic form
\[\bar q\colon D_L \to K{/}{\FD_{K/F}^{-1}},\quad [x] \mapsto [h(x,x)].\]
Note that $\Tr^E_K({\FD_{E/F}^{-1}})={\FD_{K/F}^{-1}}$.
For a lattice $L$ with even trace form we set $U(D_L)$ to be the group of $\O$-linear automorphisms of $D_L$ preserving $\bar q$.

\begin{proposition}
Let $(L,h)$ be a hermitian $\O$-lattice with even trace form and
 $g \in U(D_L)$. Then $g$ preserves $\bar h$.
\end{proposition}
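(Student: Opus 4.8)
The plan is to reconstruct the torsion hermitian form $\bar h$ from the torsion quadratic form $\bar q$ by means of a polarization identity that exploits the full $\O$-module structure on $D_L$. The crucial point is that, because $h$ is hermitian, the off-diagonal part of $\bar q$ is governed by the relative trace $\Tr^E_K$ rather than by multiplication by $2$. This lets the argument run uniformly, in particular at the dyadic primes, where $\bar q$ genuinely carries more information than its associated bilinear form and where a naive ``divide by two'' polarization would fail.

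First I would fix $x,y \in L^\vee$ and $\lambda \in \O$ and expand
\[
 h(\lambda x + y,\lambda x + y) = \Nr^E_K(\lambda)\, h(x,x) + \Tr^E_K\big(\lambda\, h(x,y)\big) + h(y,y),
\]
using that $\lambda h(x,y) + \bar\lambda h(y,x) = \lambda h(x,y) + \overline{\lambda h(x,y)} = \Tr^E_K(\lambda h(x,y))$ and that $\Nr^E_K(\lambda)=\lambda\bar\lambda \in \o$. Reducing modulo $\FD_{K/F}^{-1}$ and noting that $\Nr^E_K(\lambda)$ acts on the $\o$-module $K/\FD_{K/F}^{-1}$, this yields the polarization formula
\[
 \bar q(\lambda[x]+[y]) - \Nr^E_K(\lambda)\,\bar q([x]) - \bar q([y]) = \big[\Tr^E_K(\lambda\, h(x,y))\big] \in K/\FD_{K/F}^{-1}.
\]
Since $g$ is $\O$-linear we have $g(\lambda[x]+[y]) = \lambda\, g[x] + g[y]$, and since $g$ preserves $\bar q$ each of the three terms on the left-hand side is unchanged upon replacing $[x],[y]$ by $g[x],g[y]$. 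Writing $gx,gy \in L^\vee$ for chosen representatives of $g[x],g[y]$ and putting $c := h(gx,gy) - h(x,y) \in E$, subtraction of the two instances of the polarization formula gives
\[
 \Tr^E_K(\lambda c) \in \FD_{K/F}^{-1} \qquad \text{for every } \lambda \in \O.
\]

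To finish I would pass from $K$ back to $E$ via transitivity of the trace. Because $\FD_{K/F}^{-1} = \{w \in K \mid \Tr^K_F(w\,\o)\subseteq \ZZ_F\}$, applying $\Tr^K_F$ to the containment above and using $\Tr^E_F = \Tr^K_F\circ\Tr^E_K$ shows $\Tr^E_F(\lambda c) \in \ZZ_F$ for all $\lambda \in \O$, that is $\Tr^E_F(c\,\O)\subseteq \ZZ_F$. By the defining property of the inverse different this says precisely $c \in \FD_{E/F}^{-1}$, i.e. $\bar h(g[x],g[y]) = \bar h([x],[y])$, as required. Everything here is purely formal in terms of traces and differents, so the argument applies verbatim in both the local and the global setting; in the global case one may alternatively localize first via \Cref{prop:tracedecompose} and argue place by place.

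The only genuine care needed, and the place where the hermitian setting differs from the classical $\ZZ$-lattice one, is the very first step: one must phrase the polarization through $\Tr^E_K$ so that no division by $2$ ever occurs, and one must use the identity for \emph{all} scalars $\lambda \in \O$, not merely $\lambda = 1$. Indeed $\lambda = 1$ alone would only pin down $\Tr^E_K(c)$, whereas ranging over all of $\O$ detects $c$ itself through the nondegeneracy of the $F$-trace form on the étale algebra $E$ that underlies the definition of $\FD_{E/F}^{-1}$. The remaining verifications, namely that $c$ is well defined modulo $\FD_{E/F}^{-1}$ independently of the chosen representatives, and that $\Nr^E_K(\lambda)$ acts on the relevant torsion modules, are routine.
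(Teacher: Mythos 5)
Your proof is correct and follows essentially the same route as the paper's: both polarize the hermitian form so that $\Tr^E_K(\lambda h(x,y))$ is expressed through values of $\bar q$, exploit the $\O$-linearity of $g$ to let the scalar range over all of $\O$, and conclude that the discrepancy $c$ lies in $\FD_{E/F}^{-1}$ by the trace characterization of the inverse different. The only cosmetic differences are that you build the scalar $\lambda$ into the polarization identity itself, whereas the paper compares $\alpha\delta(x,y)$ with $\delta(\alpha x,y)$ modulo $\FD_{E/F}^{-1}$, and that you finish via transitivity of the trace $\Tr^E_F=\Tr^K_F\circ\Tr^E_K$ where the paper invokes $\FD_{K/F}^{-1}\FD_{E/K}^{-1}=\FD_{E/F}^{-1}$; these two endgames rest on equivalent facts.
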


\begin{proof}
 For $x \in L^\vee$ let $gx$ denote a representative of $g(x+L)$.
  Let $x,y \in L^\vee$. Then
  \[\Tr^E_K h(x,y) = h(x+y,x+y) - h(x,x) - h(y,y).\]
 Set $\delta(x,y) = h(x,y) - h(gx,gy)$. We have to prove that $\delta(x,y) \in {\FD_{E/F}^{-1}}$.
  Since $g$ preserves $\bar q$,
  \[\Tr^E_K( \delta(x,y)) \in {\FD_{K/F}^{-1}} =\Tr^E_K({\FD_{E/F}^{-1}}). \]
 By the $\O$-linearity of $g$ we have $\alpha gx - g\alpha x \in L$ for all $\alpha \in \O$.
  Therefore, using $L^\vee = {\FD_{E/F}^{-1}}L^\sharp$, we have for any $\alpha \in \O$ that $\alpha \delta(x,y) \equiv \delta(\alpha x,y) \mod {\FD_{E/F}^{-1}}$. Hence
  \[\Tr^E_K(\O \delta(x,y)) \subseteq {\FD_{K/F}^{-1}}.\]
  This means that $\delta(x,y) \in {\FD_{K/F}^{-1}}{\FD_{E/K}^{-1}}={\FD_{E/F}^{-1}}$.
\end{proof}

Recall that for an even lattice with isometry $(L,b,f)$ we have defined $U(D_L)$ as the centralizer of $D_f$ in $O(D_L)$. By the transfer construction in \Cref{transfer} we may view $L$ as a hermitian lattice as well.
The following proposition reconciles the two definitions of $U(D_L)$.

\begin{proposition}\label{UDLtransfer}
  Let $(L,b,f)$ be an even lattice with isometry with irreducible minimal polynomial and $(L,h)$ the corresponding hermitian $\ZZ[f]$-lattice.
  Let $E \cong \QQ[f]$, $K\cong \QQ[f+f^{-1}]$ and $\O$ be the maximal order of $E$. Suppose that $(L,h)$ is invariant under $\O$, that is, $(L, h)$ is a hermitian $\O$-lattice (this is true if $\ZZ[f] = \O$).
 Then $U(D_L)$ is the centralizer of $D_f$ in $O(D_L)$.
\end{proposition}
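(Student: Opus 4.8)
The plan is to compare the two descriptions of $U(\disc{L})$ directly. Unravelling the definitions, the transfer group $U(\disc{L})$ consists of the $\O$-linear automorphisms of $\disc{L}$ preserving $\bar q$, whereas the centralizer of $D_f$ in $O(\disc{L})$ consists of the automorphisms preserving the $\QQ/2\ZZ$-valued discriminant form $q_L$ which commute with $D_f$. First I would dispose of the module-theoretic part: since $\O=\ZZ[f]$ is generated by $f$ as a ring, an automorphism $g$ of $\disc{L}$ commutes with $D_f$ if and only if it commutes with the action of all of $\O$, i.e. if and only if $g$ is $\O$-linear. It then remains to show that, for an $\O$-linear $g$, preservation of $q_L$ is equivalent to preservation of $\bar q$. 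Recalling that $b(x,x)=\Tr^E_F(h(x,x))=2\Tr^K_F(h(x,x))$, so that $q_L(\bar x)$ is the class of $2\Tr^K_F(h(x,x))$ modulo $2\ZZ$, the implication ``$g$ preserves $\bar q$ $\Rightarrow$ $g$ preserves $q_L$'' is immediate from $\Tr^K_F(\FD_{K/F}^{-1})\subseteq\ZZ$. Together with the previous paragraph this already gives the inclusion $U(\disc{L})\subseteq$ centralizer.

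For the reverse inclusion let $g$ be $\O$-linear and preserve $q_L$, and set $\delta(x)=h(gx,gx)-h(x,x)\in K$ for $x\in L^\vee$. I would first upgrade preservation of $q_L$ to preservation of the hermitian bilinear form $\bar h$. Indeed $g$ preserves the associated $\QQ/\ZZ$-valued bilinear form $b_L(\bar x,\bar y)=\Tr^E_F(h(x,y))\bmod\ZZ$, so with $\epsilon(x,y)=h(gx,gy)-h(x,y)$ one has $\Tr^E_F(\epsilon(x,y))\in\ZZ$ for all $x,y\in L^\vee$; by $\O$-linearity $\epsilon(\lambda x,y)=\lambda\,\epsilon(x,y)$, whence $\Tr^E_F(\O\,\epsilon(x,y))\subseteq\ZZ$ and therefore $\epsilon(x,y)\in\FD_{E/F}^{-1}$ by the definition of the inverse different. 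In particular $\delta(x)=\epsilon(x,x)\in\FD_{E/F}^{-1}$. Since $\delta(x)\in K$ and at every non-dyadic place $\nu$ of $K$ the relative trace $\Tr^E_K$ is surjective onto $\o$, one has $\FD_{E/F}^{-1}\cap K=\FD_{K/F}^{-1}$ there, which already yields $\delta(x)\in\FD_{K/F,\nu}^{-1}$ away from $2$.

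The remaining, and main, difficulty is the quadratic refinement at the dyadic places, where $\FD_{E/F}^{-1}\cap K$ is strictly larger than $\FD_{K/F}^{-1}$ and preservation of $\bar h$ is genuinely weaker than preservation of $\bar q$. Here I would localise as in \Cref{prop:tracedecompose} and mimic the proof of \Cref{eventracelocal}: fixing a place $\nu\mid 2$ I set $B=\{\omega\in\o\mid \Tr^K_F(\omega\,\delta(x))\in\ZZ_2\ \text{for all}\ x\in L^\vee\}$, a $\ZZ$-submodule of $\o$. Then $1\in B$ encodes preservation of $q_L$; the identity $\Nr^E_K(\lambda)\,\delta(x)=\delta(\lambda x)$, valid because $g$ is $\O$-linear and $L^\vee$ is $\O$-stable, gives $\Nr^E_K(\O)\,B\subseteq B$; and $\Tr^K_F(\Tr^E_K(\lambda)\,\delta(x))=\Tr^E_F(\lambda\,\delta(x))\in\ZZ_2$ for $\lambda\in\O$, using $\delta(x)\in\FD_{E/F}^{-1}$, gives $\Tr^E_K(\O)\subseteq B$. \Cref{normhelp} then forces $\o\subseteq B$, i.e. $\delta(x)\in\FD_{K/F,\nu}^{-1}$ at the dyadic places as well. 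Combining all places yields $\delta(x)\in\FD_{K/F}^{-1}$, that is $\bar q(gx)=\bar q(x)$, so $g$ preserves $\bar q$ and lies in $U(\disc{L})$. The step I expect to require the most care is precisely this dyadic verification of the hypotheses of \Cref{normhelp}: building the universal quantifier over $x$ into the definition of $B$ is what turns the norm–multiplicativity identity $\Nr^E_K(\lambda)\,\delta(x)=\delta(\lambda x)$ into the stability $\Nr^E_K(\O)\,B\subseteq B$, and without it the norm trick does not close.
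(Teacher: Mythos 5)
Your proposal is correct and is essentially the paper's own proof: both reduce via $\disc{L}=\bigperp_\nu \disc{L_\nu}$ to the local case, reduce the claim to showing $\delta(x)=h(gx,gx)-h(x,x)\in\FD_{K/F}^{-1}$, and close the dyadic case by feeding the set $B$ into the norm trick of \Cref{normhelp} with exactly the three verifications $1\in B$, $\Nr^E_K(\O)B\subseteq B$, $\Tr^E_K(\O)\subseteq B$. Your write-up is in fact slightly more careful than the paper's at two points — you build the universal quantifier over $x$ into the definition of $B$ (the paper fixes a single $x$, for which the stability $\Nr^E_K(\O)B\subseteq B$ does not strictly close, since $\Nr^E_K(\lambda)\delta(x)=\delta(\lambda x)$ refers to a different vector), and you make explicit the preservation of $\bar h$ that the paper compresses into ``as in the proof of \Cref{eventracelocal}'' — while your justification of $\O$-linearity quietly assumes $\ZZ[f]=\O$, which is the same level of assertion as the paper's unproved claim in the general $\O$-invariant case.
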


\begin{proof}
It is clear that $U(D_L)$ centralizes $D_f$. So let $g \in O(D_L)$ centralize $D_f$. This implies that $g$ is $\O$-linear.
It remains to show that $g$ preserves $\bar q$.
Since $D_L = \bigperp_{\nu} D_{L_\nu}$ and $U(\disc{L})=\prod_{\nu}U(\disc{L_\nu})$, we may assume that $K$ is complete.

For $x + L \in D_L$ write $gx$ for a representative of $g(x+L)$.
  Set $\delta = h(x,x)-h(gx,gx)$. We have to show that $\delta \in \Tr^E_K({\FD_{E/F}^{-1}})=\FD_{K/F}^{-1}$.
Since $g$ preserves the discriminant form $q_{(L,b)}$, we have
$2\Tr^K_F(\delta)=\Tr^E_F(\delta) \in  2 \ZZ_F$.
Let $B$ be the set of all $\omega \in \o$ such that $ \Tr^K_F(\omega \delta) \in   \ZZ_F$. As in the proof of \Cref{eventracelocal} one sees that $\Nr(\O) B \subseteq B$, $1 \in B$ and $\Tr^E_K(\O) \subseteq B$. Then \Cref{normhelp} provides $B = \o$.
Thus $\Tr^K_F(\O \delta) \subseteq \ZZ_F$, i.e. $\delta \in {\FD_{K/F}}$.
\end{proof}

\begin{remark}
 \Cref{UDLtransfer} provides a practical way to compute $U(D_L)$. We can write down a system of generators for $O(D_L)$. Then
 the computation of a centralizer is a standard task in computational group theory.
\end{remark}

\subsection{Local surjectivity of $\mathbf{U(L) \to U(D_L)}$}\label{subsec:localsurjectivity}
In this subsection we assume that $K$ is a non-archimedian local field of characteristic $0$, $\pi$ a prime element of $\O$, $p = \pi \bar \pi$,
and $L$ a hermitian $\O$-lattice with $\norm(L)\subseteq \FD_{K/F}^{-1}$, that is, its trace lattice is even.
Recall that $\FP\subseteq \O$ is the largest ideal invariant under the involution of $E/K$, $\fp$ the maximal ideal of $\o$ and the integers $a, e$ satisfy $\FP^e = \FD_{E/K}$ and $\FP^a = \FD_{E/F}$.

If $E/K$ is a ramified field extension then by \cite[Ch. V,\S3, Lemma 3]{Serre1979} we have for all $i \in \ZZ$ that $\Tr(\FP^i) = \fp^{\lfloor \frac{i+e}{2}\rfloor}$. Therefore $\Tr(\FP^{1-e})=\o$ and $\Tr(\FP^{2-e})=\fp$. So there exists $\rho \in E$ with $\rho \O = \FP^{1-e}$ and $\Tr(\rho)=1$. If $E/K$ is an unramified field extension, then we find $\rho \in \O^\times$ with $\Tr(\rho)=1$. If $E = K \times K$, then we can take $\rho = (1, 0) \in \o\times \o=\mathcal{O}$ which satisfies $\Tr(\rho) = 1$ as well.

For a hermitian matrix $G \in E^{n \times n}$ set $\scale(G)=\scale(L)$ and $\norm(G)=\norm(L)$, where $L$ is the free $\O$-lattice with gram matrix $G$.

\begingroup
\captionof{algorithm}{Hermitian lift}\label{alg:lift}
\endgroup
\begin{algorithmic}[1]
\REQUIRE
$0\leq l \in \ZZ$, $\rho \in E$,
$G =\bar G^t \in E^{n\times n}$,
$F \in \GL_n(\O)$ such that
\begin{itemize}
 \item $\Tr^E_K(\rho) =1$,
\item  $\scale(G^{-1})\subseteq \FP^{1+a}$, $\rho\norm(G^{-1})\subseteq \FP^{1+a}$,
\item $R:=G - FG\bar F^T$ with $\scale(R) \subseteq \FP^{l-a}$, $\rho\norm(R) \subseteq \FP^{l-a}$.
\end{itemize}
  \ENSURE $F' \in \GL_n(\O)$ such that for $l' = 2l + 1$ and $R' = G - F'G \bar F'^t$ the following hold
\begin{itemize}
  \item $F' \equiv F \mod \FP^{l} \O^{n \times n}\pi^{-a}G^{-1} \subseteq\FP^{l+1}\O^{n \times n}$,
  \item $\scale(R') \subseteq \FP^{l'-a}$, $\rho \norm(R') \subseteq \FP^{l'-a}$.
\end{itemize}
\STATE $R \gets G - F G \bar F^t$
\STATE Write $R = U + D + \bar U^t$ with $U$ upper triangular and $D$ diagonal.
\RETURN $F + (U + \rho D)\bar F^{-t}G^{-1}$
\end{algorithmic}\hrulefill \\

\begin{theorem}
\Cref{alg:lift} is correct.
\end{theorem}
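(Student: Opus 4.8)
The plan is to set $X = F'-F$ and reduce the entire statement to scale and norm estimates for the single Hermitian matrix $R' = G - \bar{F'}^t G F' $. Write $A^\ast = \bar A^t$ for the conjugate transpose, so $G^\ast = G$ and $\bar{F'}^t = (F')^\ast$, and by the algorithm $X = Y(F^\ast)^{-1}G^{-1}$ with $Y = U + \rho D$. Since $R = G - FGF^\ast$ is Hermitian, writing $R = U + D + U^\ast$ with $U$ strictly upper triangular and $D$ diagonal forces $D^\ast = D$, i.e. the diagonal entries of $R$ lie in $K$. First I would derive the key algebraic identity. Expanding $F'G(F')^\ast = (F+X)G(F^\ast+X^\ast)$ gives $R' = R - FGX^\ast - XGF^\ast - XGX^\ast$. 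Using $X^\ast = G^{-1}F^{-1}(U+\rho D)^\ast$ one finds $XGF^\ast = U+\rho D$ and $FGX^\ast = (U+\rho D)^\ast = U^\ast + \bar\rho D$, whose sum is $U + U^\ast + (\rho+\bar\rho)D$. Here the hypothesis $\Tr^E_K(\rho)=\rho+\bar\rho = 1$ produces exactly $U+U^\ast+D = R$, so the linear terms cancel $R$ and
\[R' = -XGX^\ast = -YMY^\ast, \qquad M = (F^\ast)^{-1}G^{-1}F^{-1}.\]
Since $M = PG^{-1}P^\ast$ with $P=(F^\ast)^{-1}\in\GL_n(\O)$, the matrices $M$ and $G^{-1}$ are gram matrices of isometric Hermitian lattices; in particular $\scale(M)=\scale(G^{-1})$ and $\norm(M)=\norm(G^{-1})$.

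Next I would read off the congruence and the scale bound. The off-diagonal entries of $Y$ are entries of $R$, hence lie in $\FP^{l-a}$ by $\scale(R)\subseteq\FP^{l-a}$, while its diagonal entries are $\rho R_{ii}\in\rho\norm(R)\subseteq\FP^{l-a}$; thus $\scale(Y)\subseteq\FP^{l-a}$. Factoring $X = \bigl[Y(F^\ast)^{-1}\pi^a\bigr]\,\pi^{-a}G^{-1}$ and using $\pi\O=\FP$ shows the bracketed matrix has entries in $\FP^l$, so $X\in\FP^l\O^{n\times n}\pi^{-a}G^{-1}$; combined with $\scale(G^{-1})\subseteq\FP^{1+a}$ this gives the inclusion $\FP^l\O^{n\times n}\pi^{-a}G^{-1}\subseteq\FP^{l+1}\O^{n\times n}$, hence $F'\equiv F$ as asserted. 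In particular $F' = F+X\in\O^{n\times n}$ and $\det F'$ reduces to the unit $\det F$ modulo $\FP$, so $F'\in\GL_n(\O)$. The scale bound for $R'=-YMY^\ast$ is then immediate: every entry is a sum of products $Y_{ik}M_{kl}\overline{Y_{jl}}\in\FP^{l-a}\FP^{1+a}\FP^{l-a}=\FP^{2l+1-a}$, so $\scale(R')\subseteq\FP^{l'-a}$.

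I expect the norm bound $\rho\norm(R')\subseteq\FP^{l'-a}$ to be the main obstacle, because the crude scale estimate only yields $\rho R'_{ii}\in\FP^{2l+2-a-e}$, which is too weak once $\FP$ is wildly ramified ($e\geq 2$). The decisive tool is the supporting lemma $\rho\,\Tr^E_K(\FP^j)\subseteq\FP^j$ for all $j$. In the ramified case this follows from $\Tr(\FP^j)=\fp^{\lfloor(j+e)/2\rfloor}$ (Serre, as already cited), the identity $\rho\O=\FP^{1-e}$, and the elementary inequality $2\lfloor(j+e)/2\rfloor\geq j+e-1$; the unramified case follows from $\rho\in\O^\times$ and the split case from $\rho=(1,0)$ directly. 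With this lemma I would decompose $\norm(R')=\sum_i\o\,R'_{ii}+\sum_{i<j}\Tr^E_K(\O\,R'_{ij})$. The off-diagonal generators satisfy $\rho\,\Tr^E_K(\O R'_{ij})\subseteq\rho\,\Tr^E_K(\FP^{2l+1-a})\subseteq\FP^{2l+1-a}$ by the lemma with $j=2l+1-a$.

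For the diagonal generators the crude bound fails, so I would instead use the refined expansion, valid because $M$ is Hermitian with $M_{kk}\in K$,
\[R'_{ii} = -\sum_{k}\Nr(Y_{ik})M_{kk} \;-\; \sum_{k<l}\Tr^E_K\!\bigl(Y_{ik}\overline{Y_{il}}M_{kl}\bigr).\]
In the first sum $\Nr(Y_{ik})\in\fp^{\,l-a}$ (so $\Nr(Y_{ik})\O=\FP^{2(l-a)}$) and $\rho M_{kk}\in\rho\norm(M)\subseteq\FP^{1+a}$, whence $\rho\,\Nr(Y_{ik})M_{kk}\in\FP^{2(l-a)}\FP^{1+a}=\FP^{2l+1-a}$; the second sum lands in $\rho\,\Tr^E_K(\FP^{2l+1-a})\subseteq\FP^{2l+1-a}$ again by the trace lemma. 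This gives $\rho R'_{ii}\in\FP^{2l+1-a}$, hence $\rho\norm(R')\subseteq\FP^{l'-a}$, completing the verification of all the output guarantees.
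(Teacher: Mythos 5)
Your first half coincides with the paper's own proof, up to renaming: the paper sets $X=(U+\rho D)\bar F^{-t}$ (your $Y\bar F^{-t}$), obtains the same identity $R'=-XG^{-1}\bar X^t$ from the cancellation $U+\bar U^t+\Tr^E_K(\rho)D=R$, deduces $X\equiv 0 \bmod \FP^{l-a}$ from $\scale(R)+\rho\norm(R)\subseteq\FP^{l-a}$, and gets the congruence, the invertibility of $F'$, and the scale bound exactly as you do. The genuine divergence is the norm bound, and there your argument has a gap.

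Your supporting lemma $\rho\,\Tr^E_K(\FP^j)\subseteq\FP^j$ is proved from $\rho\O=\FP^{1-e}$ (ramified), $\rho\in\O^\times$ (unramified), $\rho=(1,0)$ (split). None of these is among the hypotheses of \Cref{alg:lift}: the algorithm only assumes $\Tr^E_K(\rho)=1$, $\rho\norm(G^{-1})\subseteq\FP^{1+a}$ and $\rho\norm(R)\subseteq\FP^{l-a}$. The properties you invoke are those of the particular $\rho$ constructed in the text preceding the algorithm (and used when the algorithm is called in the proof of \Cref{Usurj}); they do not follow from the input conditions. Concretely, in the ramified case replace that $\rho$ by $\rho+\omega$ with $\omega$ skew of valuation $\nu_\FP(\omega)=-e$: the trace is still $1$, and for inputs $G$ whose $\norm(G^{-1})$ has large enough valuation (say $G=p^{-N}I_n$, $N\gg0$) both norm hypotheses hold, yet now $\nu_\FP(\rho)=-e<1-e$ and your lemma fails exactly at $j=2l+1-a$ (since $a\equiv e\bmod 2$, $j+e$ is odd, the extremal case), giving only $\rho\,\Tr^E_K(\FP^{j})\subseteq\FP^{j-1}$. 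This breaks your bounds on the off-diagonal generators $\rho\,\Tr^E_K(\O R'_{ij})$ and on the cross terms of $R'_{ii}$. So as written you have proven correctness only for the paper's special $\rho$, not for all inputs allowed by the algorithm's specification (the conclusion is still true for those inputs, but not by your argument).

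The repair is the paper's one-line norm estimate, which needs no trace estimates and no property of $\rho$ beyond the stated hypotheses: since $\FP=\pi\O$ is principal and all entries of $Y\bar F^{-t}$ lie in $\FP^{l-a}$, one can write $Y\bar F^{-t}=\pi^{l-a}X_0$ with $X_0\in\O^{n\times n}$, whence $R'=-YMY^*=-p^{l-a}X_0G^{-1}\bar X_0^t$ with $p=\pi\bar\pi\in\o$. Every value $uR'\bar u^t$ with $u\in\O^n$ then lies in $p^{l-a}\norm(G^{-1})$, so $\norm(R')\subseteq p^{l-a}\norm(G^{-1})$ and $\rho\norm(R')\subseteq\FP^{2(l-a)}\,\rho\norm(G^{-1})\subseteq\FP^{2(l-a)}\FP^{1+a}=\FP^{2l+1-a}$; here the hypothesis $\rho\norm(G^{-1})\subseteq\FP^{1+a}$ does all the work. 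Note that your diagonal computation $-\sum_k\Nr(Y_{ik})M_{kk}-\sum_{k<m}\Tr^E_K(Y_{ik}\overline{Y_{im}}M_{km})$ is precisely $-h_M(y_i,y_i)$ with $y_i=\pi^{l-a}w_i$ and $w_i\in\O^n$, so it already equals $-p^{l-a}h_M(w_i,w_i)\in p^{l-a}\norm(G^{-1})$; pulling out the scalar $\pi^{l-a}$ before expanding, rather than after, is what makes the trace lemma unnecessary and closes the gap.
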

\begin{proof}
 With $X=(U + \rho D)\bar F^{-t}$ and $F' = F + XG^{-1}$, we calculate
 \begin{eqnarray*}
 F' G \bar F'^t%
 &=& F G \bar F^t + U + \bar U^t + \Tr^E_K(\rho)D + XG^{-1} \bar X^t\\
 &= & G + X G^{-1} \bar X^t.
 \end{eqnarray*}
  Hence $R' = -XG^{-1}X^t$.
Since $R \equiv 0 \mod \scale(R)$, $D\equiv 0 \mod \norm(R)$, we have
  $U + \rho D \equiv 0 \mod \scale(R)+\rho \norm(R) \subseteq \FP^{l-a}$, hence $U + \rho D \equiv 0 \mod  \FP^{l-a}$. Together with $F \in \GL_n(\O)$ this implies
\[X \equiv 0 \mod \FP^{l-a}.\]
Hence
\[\scale(R')=\scale(X G^{-1} \overline{X}^t) \subseteq \FP^{2l-2a}\scale(G^{-1})\subseteq\FP^{2l+1-a}\]
and
\[\rho \norm(R') = \FP^{2l-2a}\rho\norm(G^{-1})\subseteq\FP^{2l+1-a}=\FP^{2l+1-a}.\]
It remains to show that $F' \in \GL_n(\O)$.
Since $\scale(G^{-1})\subseteq \FP^{1+a}$, we have $\FP^l \O^{n \times n} \pi^{-a}G^{-1} \subseteq \FP^{l+1}\O^{n \times n}$. Therefore $F \equiv F' \mod \FP^{l+1}$.
\end{proof}

\begin{theorem}\label{Usurj}
  Let $K$ be a non-archimedian local field of characteristic $0$ and $L$ a hermitian $\O$-lattice with even trace lattice.
 Then
  $U(L) \to U(D_L)$
 is surjective.
\end{theorem}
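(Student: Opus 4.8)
The plan is to prove surjectivity by a Newton/Hensel iteration whose engine is \Cref{alg:lift}. Since $K$ is local we may assume $L$ is free, fix a Gram matrix $G=\bar G^t$ of the hermitian form, and choose $\rho\in E$ with $\Tr^E_K(\rho)=1$ as constructed just above \Cref{alg:lift}. Given $\bar g\in U(D_L)$, the first task is to produce an initial $\O$-linear lift $F_0\in\GL_n(\O)$ inducing $\bar g$ on $D_L=L^\vee/L$ whose isometry defect $R_0=G-F_0G\bar F_0^t$ is $\FP$-adically small. Producing \emph{some} $F_0$ inducing $\bar g$ is immediate: lift the automorphism of $L^\vee/L$ to an $\O$-linear automorphism of $L^\vee$ preserving $L$. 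The content is that, because $\bar g$ preserves the torsion hermitian form $\bar h$ with values in $E/\FD_{E/F}^{-1}$ and the torsion quadratic form $\bar q$ with values in $K/\FD_{K/F}^{-1}$, the congruences $h(F_0x,F_0y)\equiv h(x,y)$ translate into scale and norm bounds $\scale(R_0)\subseteq\FP^{l_0-a}$ and $\rho\,\norm(R_0)\subseteq\FP^{l_0-a}$ for a suitable starting value $l_0$; here the even-trace hypothesis $\norm(L)\subseteq\FD_{K/F}^{-1}$ (\Cref{eventracelocal}) is exactly what yields the norm bound, while the standing conditions $\scale(G^{-1}),\rho\,\norm(G^{-1})\subseteq\FP^{1+a}$ record integrality and evenness of the trace lattice.

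Once the hypotheses of \Cref{alg:lift} hold for $(l_0,\rho,G,F_0)$, I would iterate: the algorithm returns $F_1$ with defect controlled by $l_1=2l_0+1$, and feeding $F_1$ back in produces a sequence $F_0,F_1,F_2,\dots$ with precisions $l_k\to\infty$. The output guarantee $F_{k+1}\equiv F_k\bmod\FP^{l_k}\O^{n\times n}\pi^{-a}G^{-1}\subseteq\FP^{l_k+1}\O^{n\times n}$ shows that $(F_k)$ is $\FP$-adically Cauchy, so by completeness of $\O$ it converges to some $g\in\O^{n\times n}$, with $g\in\GL_n(\O)$ because $g\equiv F_0\bmod\FP$. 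Since $R_k=G-F_kG\bar F_k^t\to 0$, the limit satisfies $gG\bar g^t=G$, that is, $g\in U(L)$. It remains to check that $g$ still induces $\bar g$ on $D_L$: each correction $F_{k+1}-F_k$ lies in $\FP^{l_k}\O^{n\times n}\pi^{-a}G^{-1}$, and because $\pi^{-a}G^{-1}$ identifies the coordinates of $L^\vee=\FD_{E/F}^{-1}L^\sharp$ with those of $L$, for $l_k\geq 0$ the correction maps $L^\vee$ into $L$ and hence acts trivially on $D_L$. Therefore $D_{F_k}=\bar g$ for all $k$ and $D_g=\bar g$, completing the proof.

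The main obstacle is the base case of the first paragraph: constructing $F_0$ and verifying that evenness and integrality of the trace lattice yield \emph{precisely} the scale and norm inequalities demanded as input to \Cref{alg:lift}. This is most delicate in the dyadic case $p=2$, where one must track the quadratic, and not merely the bilinear, refinement. That is exactly where $\Tr^E_K(\rho)=1$ is used, to symmetrise the diagonal (norm) part $\rho D$ of the correction, and where the free/bound dichotomy of the modules $\rho_k$ must be respected so that $\bar q$, and not only $\bar h$, is preserved in the limit. I expect the remaining steps, convergence and the triviality of the corrections on $D_L$, to be routine once the defect bounds are in place.
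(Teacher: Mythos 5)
Your strategy is the same as the paper's: represent $\bar g \in U(D_L)$ by a matrix $F_0 \in \GL_n(\O)$ acting on $L^\vee \cong \O^n$, observe that preservation of $\bar h$ and $\bar q$ bounds the defect $R_0 = G - F_0 G\bar F_0^t$, and then iterate \Cref{alg:lift} to converge to an isometry inducing $\bar g$; your convergence argument and the observation that the corrections lie in $\Hom(L^\vee,L)$ and hence do not change the induced map on $D_L$ are fine. The gap is in your base case, exactly where you locate the difficulty: it is \emph{not} true that integrality and evenness of the trace lattice yield the standing hypotheses $\scale(G^{-1}),\ \rho\norm(G^{-1}) \subseteq \FP^{1+a}$ of \Cref{alg:lift}. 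Integrality gives only $\scale(L) \subseteq \FD_{E/F}^{-1} = \FP^{-a}$, and since the Gram matrix of $L$ in these coordinates is $p^{-a}G^{-1}$, this yields $\scale(G^{-1}) = p^{a}\scale(L) \subseteq \FP^{a}$; the exponent $1+a$ encodes the \emph{strict} inclusion $\scale(L) \subsetneq \FD_{E/F}^{-1}$, which fails whenever $L$ has a $\FD_{E/F}^{-1}$-modular Jordan component. This is fatal to the iteration: the initial precision supplied by preservation of $\bar h$ and $\bar q$ is $l = 0$, i.e.\ $\scale(R_0), \rho\norm(R_0) \subseteq \FP^{-a}$, and with $\scale(G^{-1}) = \FP^{a}$ the output of the algorithm satisfies only $\scale(R') \subseteq \FP^{2(l-a)}\scale(G^{-1}) = \FP^{-a}$ --- no gain --- while the guarantee $F' \equiv F \bmod \FP^{l+1}$ degenerates (since $\pi^{-a}\scale(G^{-1}) = \O$) to no congruence at all, so the sequence is not even Cauchy. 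In the same situation your ``immediate'' lift $F_0$ need not be invertible either: for $L = L^\vee$ the zero map lifts $\id_{D_L}$.

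The missing ingredient is the paper's opening reduction. Split $L = M \perp N$ with $M$ being $\FD_{E/F}^{-1}$-modular and $\scale(N) \subsetneq \FD_{E/F}^{-1}$; since $\FD_{E/F}^{-1}L^\sharp/L \cong \FD_{E/F}^{-1}N^\sharp/N$, the modular part contributes nothing to $D_L$, so one may lift over $N$ and extend by $\id_M$. After replacing $L$ by $N$, one has $\scale(G^{-1}) \subseteq \FP^{1+a}$, and, choosing $\rho$ with $\rho\O = \FP^{1-e}$ in the ramified case and $\rho \in \O^\times$ otherwise, also $\rho\norm(G^{-1}) \subseteq \FP^{1+a}$; moreover $L \subseteq \FP L^\vee$, so any $\O$-linear lift of $\bar g$ is surjective modulo $\FP$ and hence lies in $\GL_n(\O)$ by Nakayama. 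With this reduction inserted, your argument is precisely the paper's proof.
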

\begin{proof}
 We take an orthogonal splitting $L = M \perp N$ with $M$ being
 ${\FD_{E/F}^{-1}}$-modular and $\scale(N) \subsetneq {\FD_{E/F}^{-1}}$.
 Then ${\FD_{E/F}^{-1}} L^\sharp/L \cong {\FD_{E/F}^{-1}} N^\sharp/N$.
 After replacing $L$ with by $N$ we may and will assume that $ \scale(L) \subsetneq {\FD_{E/F}^{-1}}$.

 Recall that $\FP^a =\FD_{E/F}$.
  Identify $L^\vee={\FD_{E/F}^{-1}}L^\sharp$ with $\O^n= \O^{1 \times n}$ by choosing a basis. Let $G$ be the respective gram matrix of ${\FD_{E/F}^{-1}}L^\sharp$. We have $L = \O^n \pi^{-a}  G^{-1}$ and $p^{-a} G^{-1}$ is the corresponding Gram matrix of $L$.
 Therefore
 \[\scale(G^{-1}) \subseteq \FP^{1+a} \text{ and }\norm(G^{-1})\subseteq \FP^{e+a}.\]

 If $E/K$ is unramified or split, then $e=0$, $\scale(G^{-1})=\norm(G^{-1})$ by \cref{norm-vs-scale} and we find $\rho \in \mathcal{O}$ with $\Tr(\rho)=1$. Therefore $\rho\norm(G^{-1}) =\rho \scale(G^{-1}) \subseteq \scale(G^{-1})\subseteq \FP^{1+a}$ holds.
 If $E/K$ is ramified we find $\rho \in E$ with $\rho \O = \FP^{1-e}$. Then
 $\rho \norm(G^{-1}) \subseteq \FP^{1-e+e+a}=\FP^{1+a}$ as well.

 Let $f \in U(\disc{L})$ be represented by $F \in \GL_n(\O)$,
 that is,
 \[f(x + L) = x F + L = x F + \O^{n} \pi^{-a}G^{-1} .\]
 Set $R = G - F G \bar F^t$. Since $f$ preserves $\bar h$ and $\bar q$, we have
 \[\scale(R) \subseteq \FP^{-a} \text{ and } \rho\norm(R) \subseteq \rho\mathcal{D}_{K/F}^{-1}\subseteq\rho\FP^{e-a}\subseteq\FP^{-a}.\]
 where in the last equality we used that $\rho\FP^e \subseteq \mathcal{O}$ irrespective of $E/K$ being inert, split or ramified.
 Set $F_0 = F$. We inductively define a sequence by setting $F_{i+1}$ to be the output of
 \Cref{alg:lift} with $l = 2^i-1$, $F \gets F_i$, $G \gets G$ as given and
  $R \gets R_i:= G - F_i G \bar F_i^t$. 
 Then $\scale(R_i) \subseteq \FP^{2^i-1 -a }$ and $\rho\norm(R_i) \subseteq \FP^{2^i-1-a}$.
 Since $F_i \equiv F_{i+1} \mod \FP^{2^i}$, the sequence $(F_i)_{i \in \NN}$ converges.
 Its limit is the desired lift.
\end{proof}

For a hermitian lattice $L$ with even trace lattice, we denote by $U^\sharp(L)$ the kernel of $U(L) \to U(\disc{L})$.

\subsection{Local to global}
Let $E/K$ be a quadratic extension of number fields with non-trivial automorphism $\bar{\phantom{x}} \colon E \to E$.
Let $\o$ be the maximal order of $K$ and $\O$ the maximal order of $E$.
In this section $L$ is a hermitian $\O$-lattice with even trace lattice.
The goal of this subsection is to compute
the image of the natural map
\[D\colon U(L) \rightarrow U(\disc{L}).\]

Denote by $\AA_K$ the ring of finite adeles of $K$.
Denote by $\o_{\AA}$ the ring of integral finite adeles of $K$.
We have natural isomorphisms $\disc{L} \cong \disc{L} \otimes \o_{\AA} \cong \disc{L \otimes \o_{\AA}}$.
Via the diagonal embedding we view $K$ as a subring of $\AA_K$.
This induces the inclusion $U(V)\subseteq U(L \otimes \AA_K)$.
Let $\det \colon U(L \otimes \AA_K) \to \prod_\fp E_\fp$ denote the componentwise determinant.
Set
\[\F(E) = \{(x)_\fp \in \prod_{\fp} E_\fp \mid x \in E,\,  x \bar x = 1\},\]\\[-25pt]
\begin{align*}
&\F(L_\fp) = \det(U(L_\fp)), &&\F^\sharp(L_\fp)=\det(U^\sharp(L_\fp)), \\
&\F(L) =\det(U(L \otimes \o_{\AA})), & &\F^\sharp(L) = \det(U^\sharp(L\otimes \o_{\AA}))
\end{align*}
Note that $U(L \otimes \o_{\AA}) \to U(\disc{L})$ is surjective by \Cref{Usurj}.
The following commutative diagram
\[
\begin{tikzcd}
  1\arrow[r]& U^\sharp(L\otimes \o_{\AA}) \arrow[r]\arrow[d,"\det"] &U(L\otimes \o_{\AA}) \arrow[r]\arrow[d,"\det"]& U(\disc{L})\arrow[r]\arrow[d] & 1\\
1\arrow[r]& \F^\sharp(L)\arrow[d] \arrow[r] &\F(L)\arrow[d]\arrow[r] & \F(L)/\F^\sharp(L)\arrow[d]\arrow[r]&1\\
& 1   &1 &1\\
\end{tikzcd}\]
with exact rows and columns summarizes the situation.

\begin{proposition}\label{kernel:weak-approx}
Let $V$ be a non-degenerate hermitian space over $E/K$.
Then $\det(U(V))=\F(E)$.
\end{proposition}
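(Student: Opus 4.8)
The plan is to reduce the adelic identity to a single global statement about the $E$-valued determinant and then prove that by an explicit construction of unitary maps. Set $E^{1}=\{x\in E\mid x\bar x=1\}$. By definition $\F(E)$ is precisely the image of $E^{1}$ under the diagonal embedding $E\hookrightarrow \prod_{\fp}E_{\fp}$. On the other side, $U(V)$ is embedded diagonally in $U(L\otimes \o_{\AA})\subseteq U(L\otimes\AA_{K})=\prod_{\fp}U(V_{\fp})$, and for $g\in U(V)$ the componentwise determinant returns the diagonal tuple $(\det_E g)_{\fp}$, where $\det_E g\in E$ denotes the ordinary $E$-linear determinant of $g$ acting on $V$. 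Hence $\det(U(V))$ is the diagonal image of the subset $\det_E(U(V))\subseteq E$, and the proposition is equivalent to the equality $\det_E(U(V))=E^{1}$ inside $E$.

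For the inclusion $\det_E(U(V))\subseteq E^{1}$ I would fix a basis of $V$ with Gram matrix $G$ and write the matrix $M$ of $g\in U(V)$; the unitarity condition $\overline{M}^{t}GM=G$ gives, upon taking determinants and cancelling $\det G\neq 0$, the relation $\overline{\det M}\,\det M=1$, i.e. $\det_E(g)\in E^{1}$. For the reverse inclusion (surjectivity) I would use that a non-degenerate hermitian space over the field $E$ admits an orthogonal basis (see \cite{KirschmerHabil}); in particular there is an anisotropic vector $v\in V$, so $V=Ev\perp (Ev)^{\perp}$. Given $\lambda\in E^{1}$, define $g$ to act as multiplication by $\lambda$ on $Ev$ and as the identity on $(Ev)^{\perp}$. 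Since $h(\lambda v,\lambda v)=\lambda\bar\lambda\,h(v,v)=h(v,v)$, the map $g$ lies in $U(V)$, and clearly $\det_E g=\lambda$. Thus every $\lambda\in E^{1}$ is realized, proving $E^{1}\subseteq\det_E(U(V))$ and hence the proposition.

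The argument is short and I do not expect a genuine obstacle; the only point requiring care is the existence of an anisotropic vector, which is exactly where the hypotheses that $V$ is non-degenerate and $E$ is a field are used. Should one wish to allow $E$ to be a split \'etale algebra $K\times K$ — which does not occur in this subsection but is consistent with the general transfer setup — the same conclusion holds: there $U(V)\cong \GL_{n}(K)$ with $\det_E g=(\det_{K}A,(\det_{K}A)^{-1})$ for the induced $A\in\GL_{n}(K)$, and since $E^{1}=\{(t,t^{-1})\mid t\in K^{\times}\}$ the surjectivity of $\det_{K}$ on $\GL_{n}(K)$ again yields the claim.
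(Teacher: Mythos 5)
Your proof is correct and follows essentially the same route as the paper: the containment $\det(U(V))\subseteq\F(E)$ is the standard determinant computation, and surjectivity is obtained exactly as the paper does it, by splitting off a one-dimensional (anisotropic) subspace $Ev$ and letting $\lambda\in E^{1}$ act as scalar multiplication there and as the identity on $(Ev)^{\perp}$. Your extra remarks (the explicit reduction of the adelic statement to $\det_E(U(V))=E^{1}$ and the aside on the split algebra $K\times K$) are harmless elaborations of what the paper leaves implicit.
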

\begin{proof}
 We know $\det(U(V))\subseteq \F(E)$. The other inclusion is clear when $\dim_E V$ is one.
 Since we can always split a subspace of dimension one, the statement follows.
\end{proof}
Set $\O_\AA:= \O \otimes \o_\AA$.
For an isometry $f\colon L \to M$ of hermitian $\O_{\AA}$-lattices, we denote by $D_f=(D_{f_\fp})_\fp$ the induced map on the discriminant forms.
Let $L \subseteq L^\vee$ be a hermitian $\O_{\AA}$-lattice. Then $\sigma \in O(L \otimes \AA_K)$ induces an isometry $\sigma \colon L \to \sigma(L)$ of $\O_{\AA}$-lattices and an isometry
$D_\sigma\colon \disc{L} \to \disc{\sigma(L)}$ of the respective discriminant groups.

\begin{proposition}\label{kernel:global}
 Let $L$ be an indefinite hermitian $\O$-lattice with $\rk(L)\geq 2$.
  For $\sigma \in U(L\otimes \AA_K)$ the following are equivalent:
 \begin{enumerate}
   \item There is a map $\varphi \in U(L \otimes K)$ such that $D_\varphi=D_\sigma$ and $\varphi(L \otimes \o_{\AA})=\sigma(L\otimes \o_{\AA})$.
  \item $\det(\sigma) \in \F(E)\cdot \F^\sharp(L)$.
 \end{enumerate}
\end{proposition}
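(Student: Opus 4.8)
The plan is to run the local--global argument of the quadratic theory (\Cref{thm:miranda-morrison-signed}) in the unitary setting, with the unitary determinant $\det\colon U\to E^1$, where $E^1=\{x\in E\mid x\bar x=1\}$, playing the role of the pair $(\det,\spin)$ and the special unitary group $\mathrm{SU}(V)=\ker\!\big(\det\colon U(V)\to E^1\big)$ playing the role of the spin group (equivalently of $O^\sharp$). Strong approximation for $\mathrm{SU}(V)$ will be the engine, exactly as strong approximation for the spin group drives the proof of \Cref{thm:miranda-morrison-signed}.

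The implication (1)$\Rightarrow$(2) is the formal half. Given $\varphi\in U(V)=U(L\otimes K)$ with $D_\varphi=D_\sigma$ and $\varphi(L\otimes\o_\AA)=\sigma(L\otimes\o_\AA)$, I would view $\varphi$ diagonally in $U(L\otimes\AA_K)$ and set $\tau=\varphi^{-1}\sigma$. Then $\tau(L\otimes\o_\AA)=L\otimes\o_\AA$, so $\tau\in U(L\otimes\o_\AA)$, while $D_\tau=D_\varphi^{-1}\circ D_\sigma=\id$ places $\tau$ in $U^\sharp(L\otimes\o_\AA)$. Hence $\det\tau\in\F^\sharp(L)$ and $\det\varphi\in\det(U(V))=\F(E)$ by \Cref{kernel:weak-approx}, so $\det\sigma=\det\varphi\cdot\det\tau\in\F(E)\cdot\F^\sharp(L)$.

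For (2)$\Rightarrow$(1) I would proceed in three steps. First, writing $\det\sigma=d\,s$ with $d\in\F(E)$ and $s\in\F^\sharp(L)=\det(U^\sharp(L\otimes\o_\AA))$, I pick $\beta\in U^\sharp(L\otimes\o_\AA)$ with $\det\beta=s$ and replace $\sigma$ by $\sigma\beta^{-1}$; since $\beta$ preserves $L\otimes\o_\AA$ and acts trivially on $\disc{L}$, this changes neither $\sigma(L\otimes\o_\AA)$ nor $D_\sigma$, so any $\varphi$ found for $\sigma\beta^{-1}$ also works for $\sigma$, and I may assume $\det\sigma=d=(x)_\fp$ for a single global $x\in E^1$. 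Second, by \Cref{kernel:weak-approx} there is a global $\varphi_0\in U(V)$ with $\det\varphi_0=x$; then $\tau:=\varphi_0^{-1}\sigma$ lies in $\mathrm{SU}(L\otimes\AA_K)$, and since $\sigma$ and $\varphi_0$ preserve $L_\fp$ at almost all $\fp$, it satisfies $\tau(L_\fp)=L_\fp$ for all but finitely many $\fp$. Third, I apply strong approximation to the open subgroup
\[ W=\prod_{\fp\nmid\mathfrak{d}}\mathrm{SU}(L_\fp)\times\prod_{\fp\mid\mathfrak{d}}\mathrm{SU}^\sharp(L_\fp)\subseteq \mathrm{SU}(L\otimes\AA_K),\]
where $\mathfrak{d}$ is the finite set of primes with $\disc{L_\fp}\neq 0$ and $\mathrm{SU}^\sharp=\mathrm{SU}\cap U^\sharp$; this subgroup is open because each factor is open compact and almost all factors equal $\mathrm{SU}(L_\fp)$. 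Density of $\mathrm{SU}(V)$ lets me choose $\psi\in\mathrm{SU}(V)\cap\tau W$. Then $\tau^{-1}\psi\in\mathrm{SU}(L_\fp)$ at every prime gives $\psi(L_\fp)=\tau(L_\fp)$ throughout, and $\tau^{-1}\psi\in\mathrm{SU}^\sharp(L_\fp)$ at the ramified primes (the only ones with nonzero discriminant) gives $D_\psi=D_\tau$. Setting $\varphi=\varphi_0\psi\in U(V)$ then yields $\varphi(L\otimes\o_\AA)=\sigma(L\otimes\o_\AA)$ and $D_\varphi=D_{\varphi_0}\circ D_\psi=D_{\varphi_0}\circ D_\tau=D_\sigma$.

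The hard part will be the legitimacy of strong approximation, which is exactly where the hypotheses $\rk(L)\geq 2$ and $L$ indefinite are consumed. I would record that $\mathrm{SU}(V)$ is a semisimple simply connected $K$-group (an outer form ${}^2A_{n-1}$ of $\mathrm{SL}_n$ with $n=\rk L$), that it is non-trivial precisely because $n\geq 2$, and that it is non-compact at some archimedean place of $K$ because $V$ is indefinite there; these are the Kneser--Platonov conditions guaranteeing that $\mathrm{SU}(V)$ is dense in $\mathrm{SU}(L\otimes\AA_K)$. The remainder is the bookkeeping above, verifying that the prescribed finite-adelic precision forces $\psi(L_\fp)=\tau(L_\fp)$ everywhere and $D_\psi=D_\tau$ at the ramified primes.
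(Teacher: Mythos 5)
Your proof is correct and takes essentially the same route as the paper's: the forward implication is the same formal computation, and for the converse you, like the paper, use \Cref{kernel:weak-approx} to realize the $\F(E)$-part of the determinant by a global isometry, absorb the $\F^\sharp(L)$-part into an element of $U^\sharp(L\otimes \o_{\AA})$, reduce to the determinant-one case, and conclude by strong approximation at the finitely many primes dividing the discriminant (and where the lattices differ). The only difference is expository: the paper invokes Kneser's strong approximation as a black box producing the global element with matching lattice and discriminant action, whereas you unwind it via density of $\mathrm{SU}(V)$ in the finite-adelic group together with an explicit open subgroup, which has the minor merit of making visible exactly where the hypotheses $\rk(L)\geq 2$ and indefiniteness are consumed.
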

\begin{proof}
  First suppose that a map $\varphi$ as in (1) exists.
  Since $\varphi(L\otimes \o_{\AA})=\sigma(L\otimes \o_{\AA})$ and $D_\varphi = D_\sigma$, we have $\varphi^{-1} \circ \sigma \in U^\sharp(L\otimes \o_{\AA})$.
  Thus
\[\det(\sigma) \in\F(E)\cdot \F^\sharp(L).\]

Now suppose that $\det(\sigma) \in \F(E)\cdot \F^\sharp(L)$.
Then there exists $u \in \F(E)$ and $\rho \in U^\sharp(L \otimes {\o_{\AA}})$ such that
$\det(\sigma) = u \det(\rho)$. By \Cref{kernel:weak-approx} there exists $\psi \in U(L\otimes K)$ with $\det(\psi)=u$.
Let $\phi := \psi^{-1} \circ \sigma \circ \rho^{-1}$.
Then $\det(\phi)=1$.
By the strong approximation theorem \cite{kneser1966}, there exists
$\eta \in U(L\otimes K)$ with $\eta(L \otimes \o_{\AA})=\phi(L\otimes \o_{\AA})$ and $D_\eta = D_\phi$
(approximate $\phi$ at the finitely many primes dividing the discriminant and those with $\phi_\fp(L_\fp)\neq L_\fp$).
Set $\varphi := \psi \circ \eta \in U(L \otimes K)$.
Then
\[\varphi(L\otimes \o_{\AA}) = (\psi \circ \eta)(L\otimes \o_{\AA}) = (\psi \circ \phi)(L\otimes \o_{\AA}) = (\sigma \circ \rho^{-1})(L\otimes \o_{\AA})=\sigma(L\otimes \o_{\AA}).\]
  Further
\[D_{\varphi} =D_{\psi} \circ D_{\eta}= D_{\sigma} \circ D_{\rho}^{-1} = D_{\sigma}\]
since $D_{\rho}$ is the identity because $\rho\in U^\sharp(L \otimes \o_{\AA})$.
\end{proof}

\begin{theorem}\label{hermitianUtoUDL}
 Let $L$ be an indefinite hermitian $\O$-lattice with $\rk(L)\geq 2$. Then there is an exact sequence
 \[U(L)\rightarrow U(\disc{L}) \xrightarrow[]{\delta} \F(L)/(\F(E)\cap \F(L))\cdot \F^\sharp(L) \rightarrow 1\]
 where $\delta$ is induced by the determinant.
\end{theorem}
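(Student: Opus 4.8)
The plan is to mimic the proof of the classical Miranda--Morrison sequence of \Cref{thm:miranda-morrison}, replacing the orthogonal groups and the determinant--spinor pair $\sigma$ by the hermitian unitary groups and the adelic determinant, and reading off exactness from the commutative diagram with exact rows and columns displayed just before the statement. The map $\delta$ is defined by lifting: given $\bar g \in U(\disc{L})$, the surjectivity of $U(L\otimes\o_\AA) \to U(\disc{L})$ noted right after the diagram (a consequence of the local surjectivity \Cref{Usurj}) lets me choose $\sigma \in U(L\otimes\o_\AA)$ with $D_\sigma = \bar g$, and I set $\delta(\bar g) = [\det(\sigma)]$ in the target quotient $\F(L)/(\F(E)\cap\F(L))\cdot\F^\sharp(L)$.

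First I would check that $\delta$ is a well-defined homomorphism and is surjective; both are immediate from the diagram. Any two lifts of $\bar g$ differ by an element of $U^\sharp(L\otimes\o_\AA)$, whose determinant lies in $\F^\sharp(L)$ and is therefore killed in the quotient, so $\delta$ does not depend on the chosen lift; multiplicativity of $\det$ then makes it a homomorphism. Surjectivity is automatic, since by definition every element of $\F(L)$ is $\det(\sigma)$ for some $\sigma \in U(L\otimes\o_\AA)$, and such a $\sigma$ represents $\delta(D_\sigma)$.

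It remains to prove exactness at $U(\disc{L})$, i.e. $\ker\delta = \im\bigl(D\colon U(L)\to U(\disc{L})\bigr)$. The inclusion $\im D \subseteq \ker\delta$ is the easy half: for $\varphi \in U(L)$ the diagonally embedded $\varphi$ is itself a lift of $D_\varphi$ inside $U(L\otimes\o_\AA)$, and since $\varphi$ is unitary its determinant satisfies $\det(\varphi)\,\overline{\det(\varphi)}=1$, so $\det(\varphi) \in \F(E)\cap\F(L)$ and hence $\delta(D_\varphi)=1$.

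The hard part will be the reverse inclusion $\ker\delta \subseteq \im D$, and this is exactly where \Cref{kernel:global} (and, through it, the strong approximation theorem together with the surjectivity $\det(U(V))=\F(E)$ of \Cref{kernel:weak-approx}) carries the argument. Given $\bar g \in \ker\delta$, I lift it to $\sigma \in U(L\otimes\o_\AA) \subseteq U(L\otimes\AA_K)$; then $\det(\sigma) \in (\F(E)\cap\F(L))\cdot\F^\sharp(L) \subseteq \F(E)\cdot\F^\sharp(L)$, so condition~(2) of \Cref{kernel:global} is satisfied. Applying that proposition produces $\varphi \in U(L\otimes K)$ with $D_\varphi = D_\sigma = \bar g$ and $\varphi(L\otimes\o_\AA) = \sigma(L\otimes\o_\AA) = L\otimes\o_\AA$, the last equality holding because $\sigma$ preserves the lattice $L\otimes\o_\AA$. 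A global isometry preserving every localization preserves $L$ itself, so $\varphi \in U(L)$ with $D(\varphi) = \bar g$, as required. The whole argument thus reduces to the local surjectivity of \Cref{Usurj} and the adelic-to-global descent of \Cref{kernel:global}; the genuinely delicate input hidden in the latter is strong approximation for the unitary group, which is what forces the standing hypotheses that $L$ be indefinite and $\rk(L)\geq 2$.
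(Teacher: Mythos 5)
Your proposal is correct and takes essentially the same route as the paper's proof: lift $\bar g$ to $U(L\otimes \o_{\AA})$ using the local surjectivity of \Cref{Usurj}, characterize membership in the kernel of $\delta$ via \Cref{kernel:global}, and descend to $U(L) = U(L\otimes K)\cap U(L\otimes \o_{\AA})$. You merely spell out the well-definedness and surjectivity of $\delta$ and the easy inclusion $\im D \subseteq \ker\delta$ in more detail than the paper does.
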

\begin{proof}

Let $\hat\gamma \in U(\disc{L})$ and lift it to some $\gamma \in U(L \otimes \o_{\AA})$ with $D_{\gamma}=\bar \gamma$.
By \Cref{kernel:global}, $\gamma$ lies in $U(L\otimes K)$ if and only if $\det(\gamma) \in \F(E)\cdot \F^\sharp(L)$
which is equivalent to
\[\det(\gamma) \in (\F(E)\cdot \F^\sharp(L)) \cap \F(L)= (\F(E) \cap \F(L)) \cdot \F^\sharp(L) \]
and this does not depend on the choice of lift $\gamma$ of $\bar \gamma$.
We conclude with the general fact that $U(L) = U(L \otimes K) \cap U(L \otimes \o_{\AA})$.
\end{proof}
In order to make \Cref{hermitianUtoUDL} effective,
we will compute the groups $\F^\sharp(L_\fp)$ and $\F(L_\fp)$ in  \Cref{kergen}. See \Cref{detUsharp} for the exact values.

\begin{remark}\label{rem:hermitianUtoUDL}
Let $S$ be the set of primes of $K$ dividing the order of $\disc{L}$.
For practical purposes we note that $\F(L_\fp)=\F^\sharp(L_\fp)$ for the primes not in $S$.
Hence $\F(L)/\F^\sharp(L)\cong \prod_{\fp \in S}\F(L_\fp)/\F^\sharp(L_\fp)$ and it is enough to compute $\delta_\fp$ for the primes in $S$.
This can be achieved by lifting $\bar \gamma \in U(\disc{L_\fp})$ to some $\gamma \in U(L_\fp)$ with sufficient precision using \Cref{alg:lift}.
\end{remark}

\subsection{Generation of $\mathbf{U^\sharp(L)}$ by symmetries.}\label{kergen}

Let $K$ be a finite extension of $F= \QQ_p$ and $E/K$ a ramified quadratic extension. Let $\Tr = \Tr^E_K$ be the trace.
Recall that $\FP\subseteq \O$ is the largest ideal invariant under the involution of $E/K$, $\fp$ the maximal ideal of $\o$ and the integers $a, e$ satisfy $\FP^e = \FD_{E/K}$ and $\FP^a = \FD_{E/F}$.
Note that as $E/K$ is ramified we have $a \equiv e \mod 2$.
Let $\pi \in \O$ be a prime element and $p = \pi \bar \pi$.
For any $v \equiv e \mod 2$,
there exists a skew element $\omega \in E^\times$ with $\nu_\FP(\omega)=v$.

Let $V$ be a non-degenerate hermitian space over $E$.
In what follows $L$ is a full $\O$-lattice in $V$ with even trace form.
Therefore its scale and norm satisfy
\[\scale(L)=:\FP^i\subseteq \FP^{-a}\text{ and }\norm(L)=:\fp^k\subseteq \FD_{K/F}^{-1} = \FP^{e-a}.\]
This gives the inequalities $0 \leq i+a$ and $0 \leq 2k+a-e$ and by \cref{norm-vs-scale} $i \leq 2k \leq i+e$.
We say that $L$ is \emph{subnormal} if $\norm(L)\O\subsetneq \scale(L)$, i.e., $i<2k$. A sublattice of rank two is called a plane and a sublattice of rank one a line.
By \cite[Propositions 4.3, 4.4]{Jacobowitz1962}, the lattice $L$ decomposes into an orthogonal direct sum of lines and subnormal planes.

The group $U^\sharp(L)$ is the kernel of the natural map
\[U(L) \rightarrow U(\disc{L}).\]
For $\varphi \in U(L)$ we have $\varphi \in U^\sharp(L)$ if and only if $(\varphi - \id_L)(L^\vee) \subseteq L$.
For $x,y \in L$ we write $x \equiv y \mod \FP^i$ if $x-y \in \FP^i L$.

We single out the elements of $U(V)$ fixing a hyperplane -- the symmetries.
\begin{definition}
  Let $V$ be a hermitian space, $s \in V$ and $\sigma \in E^\times$ with $\sprodq{s}{s}=\Tr(\sigma)$. We call the linear map
  \[S_{s,\sigma} \colon V \to V, \quad x \mapsto x -\sprod x s \sigma^{-1}s\]
  a \emph{symmetry} of $V$. It preserves the hermitian form $\sprod{\cdot\,}{\cdot}$.
  If $s$ is isotropic, then we have $\det(S_{s, \sigma}) = 1$ and
  otherwise $\det(S_{s,\sigma})=-\overline\sigma/\sigma$.
  The inverse is given by $S_{s,\sigma}^{-1}=S_{s,\bar \sigma}$.
  Note that the symmetry $S_{s, \sigma}$ of $V$ preserves $L$
  if $s \in L$ and $\sprod L s \subseteq \O \sigma$.
  We denote the subgroup of $U(L)$ generated by the symmetries preserving $L$ by $S(L)$ and set $S^\sharp(L) = U^\sharp(L) \cap S(L)$.
 \end{definition}

By \cite{brandhorsthofmann2021} symmetries generate the unitary group $S(L)=U(L)$ if $\O/\FP \neq \FF_2$. Otherwise one has to include so called rescaled Eichler isometries, which are isometries fixing subspaces of codimension $2$.
Fortunately, as we will see, symmetries suffice to generate $U^\sharp(L)$. The condition that the trace form on $L$ is even eliminates all the technical difficulties of \cite{brandhorsthofmann2021}.

\begin{lemma}\label{ker:congruence}
 Let $\varphi \in U^\sharp(L)$ and $x \in L$.
  Then $\varphi(x) - x \in \sprod{x}{L} \FP^a L$.
\end{lemma}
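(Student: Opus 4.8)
The plan is to reduce everything to the defining inclusion of $U^\sharp(L)$ together with the identification of the trace dual. Recall from the discussion preceding the lemma that $\varphi \in U^\sharp(L)$ is equivalent to $(\varphi - \id_L)(L^\vee) \subseteq L$, and that by \Cref{trace-integral} we have $L^\vee = {\FD_{E/F}^{-1}} L^\sharp = \FP^{-a} L^\sharp$. Since $\varphi - \id_L$ is $\O$-linear, the condition $(\varphi - \id_L)(\FP^{-a} L^\sharp) \subseteq L$ is equivalent to
\[(\varphi - \id_L)(L^\sharp) \subseteq \FP^a L.\]
This is the only input about $\varphi$ that I will use.

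Next I would localize the statement at a single element. Fix $x \in L$ and set $\mathfrak{a} = \sprod{x}{L}$, which is a fractional $\O$-ideal: it is an $\O$-submodule of $E$ because $h$ is conjugate-linear in the second argument and $\bar \O = \O$, and since $\O$ is a discrete valuation ring in the present local setting, $\mathfrak{a}$ is invertible (indeed $\mathfrak{a} = \FP^{j}$ for some $j$). The key observation is that $\mathfrak{a}^{-1} x \subseteq L^\sharp$: for $z \in \mathfrak{a}^{-1}$ and any $y \in L$ one has $\sprod{zx}{y} = z \sprod{x}{y} \in z\mathfrak{a} \subseteq \O$, so $zx \in L^\sharp$ by the definition of the hermitian dual lattice.

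Finally I would combine the two observations. Applying the $\O$-linear map $\varphi - \id_L$ to $\mathfrak{a}^{-1}x \subseteq L^\sharp$ and invoking the displayed inclusion gives
\[\mathfrak{a}^{-1}\bigl(\varphi(x) - x\bigr) = (\varphi - \id_L)(\mathfrak{a}^{-1}x) \subseteq (\varphi - \id_L)(L^\sharp) \subseteq \FP^a L.\]
Multiplying through by the invertible ideal $\mathfrak{a}$ and using $\mathfrak{a}\mathfrak{a}^{-1} = \O \ni 1$ then yields $\varphi(x) - x \in \mathfrak{a}\FP^a L = \sprod{x}{L}\FP^a L$, which is the claim.

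I do not expect a genuine obstacle here: the content is entirely the bookkeeping of fractional ideals, and the argument is short. The two points requiring care are the passage $L^\vee = \FP^{-a}L^\sharp$ and the invertibility of $\mathfrak{a}$, which rely respectively on \Cref{trace-integral} and on $\O$ being a discrete valuation ring; the sesquilinearity convention (with $h$ linear in the first slot and conjugate-linear in the second) must also be tracked so that $\sprod{zx}{y} = z\sprod{x}{y}$ comes out with the factor $z$ and not $\bar z$.
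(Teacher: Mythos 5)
Your proof is correct and takes essentially the same route as the paper's: both arguments hinge on the inclusion $\sprod{x}{L}^{-1}x \subseteq L^\sharp$, the identification $L^\vee = \FP^{-a}L^\sharp$ from \Cref{trace-integral}, the defining property $(\varphi-\id)(L^\vee)\subseteq L$ of $U^\sharp(L)$, and multiplication by the invertible ideal $\sprod{x}{L}\FP^{a}$. The only cosmetic difference is that you move $\FP^{-a}$ to the right-hand side (rewriting the membership condition as $(\varphi-\id)(L^\sharp)\subseteq \FP^{a}L$) before applying it to $\sprod{x}{L}^{-1}x$, whereas the paper keeps $\FP^{-a}$ attached to $\sprod{x}{L}^{-1}x$ inside $L^\vee$; the content is identical.
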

\begin{proof}
For any $x \in L$, the inclusion
$\sprod{x}{L}^{-1}x \subseteq L^\sharp$ gives
\[\FP^{-a}\sprod{x}{L}^{-1} x \subseteq \FP^{-a}L^\sharp = L^\vee .\]
  Hence, $(\varphi(x) -x)\sprod{x}{L}^{-1}\FP^{-a} \subseteq (\varphi - \id_V)(L^\vee) \subseteq L$.
  Multiply by the ideal $\sprod{x}{L}\FP^{a}$ to reach the conclusion.
\end{proof}

\begin{lemma}\label{lem:refl-ker}
  Let $S_{s, \sigma}$ be a symmetry of $V$ with $s \in \FP^{i+a} L$. Then $S_{s, \sigma}
  \in U^\sharp(L)$ if $\FP^{2i+a} \subseteq \sigma \O$.
\end{lemma}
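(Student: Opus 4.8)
The plan is to verify directly the two conditions that characterize membership in $U^\sharp(L)$: first that $S_{s,\sigma}$ preserves $L$, so that $S_{s,\sigma}\in U(L)$, and second that $(S_{s,\sigma}-\id_V)(L^\vee)\subseteq L$, which by the characterization recorded just above (for $\varphi\in U(L)$ one has $\varphi\in U^\sharp(L)$ iff $(\varphi-\id_L)(L^\vee)\subseteq L$) is exactly what is needed. Both reductions come down to elementary $\FP$-valuation estimates for the values $\sprod{x}{s}$, as $x$ ranges first over $L$ and then over $L^\vee$. I would use throughout only that $\FP$ is self-conjugate, so $\overline{\FP^m}=\FP^m$ for all $m$, that the hermitian form is $E$-linear in the first and conjugate-linear in the second argument (as forced by the $E$-linearity of $S_{s,\sigma}$), and that $\scale(L)=\sprod{L}{L}=\FP^i$.

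For the first point I would estimate $\sprod{L}{s}$. Since $s\in\FP^{i+a}L$ and $\overline{\FP^{i+a}}=\FP^{i+a}$, conjugate-linearity in the second slot gives $\sprod{L}{s}\subseteq\overline{\FP^{i+a}}\,\sprod{L}{L}=\FP^{i+a}\scale(L)=\FP^{2i+a}$. The hypothesis $\FP^{2i+a}\subseteq\sigma\O$ then yields $\sprod{L}{s}\subseteq\O\sigma$, while $0\leq i+a$ gives $s\in\FP^{i+a}L\subseteq L$. These are precisely the two sufficient conditions for a symmetry to preserve $L$ recorded in the definition of $S_{s,\sigma}$, so $S_{s,\sigma}\in U(L)$.

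For the sharp condition I would estimate $\sprod{x}{s}$ for $x\in L^\vee$. Writing $L^\vee=\FD_{E/F}^{-1}L^\sharp=\FP^{-a}L^\sharp$ and using $\sprod{L^\sharp}{L}\subseteq\O$, linearity in the first argument and conjugate-linearity in the second give $\sprod{L^\vee}{s}\subseteq\FP^{-a}\,\overline{\FP^{i+a}}\,\sprod{L^\sharp}{L}\subseteq\FP^{-a}\FP^{i+a}\O=\FP^{i}$. In the ramified case $\O$ is a discrete valuation ring with maximal ideal $\FP$, so the hypothesis $\FP^{2i+a}\subseteq\sigma\O$ is equivalent to $\nu_\FP(\sigma)\leq 2i+a$; hence for $x\in L^\vee$ we have $\sprod{x}{s}\sigma^{-1}\in\FP^{i}\FP^{-(2i+a)}=\FP^{-(i+a)}$. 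Multiplying by $s\in\FP^{i+a}L$ then gives $(S_{s,\sigma}-\id_V)(x)=-\sprod{x}{s}\sigma^{-1}s\in\FP^{-(i+a)}\FP^{i+a}L=L$, so $(S_{s,\sigma}-\id_V)(L^\vee)\subseteq L$. Combined with $S_{s,\sigma}\in U(L)$, this places $S_{s,\sigma}$ in $U^\sharp(L)$.

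I expect no serious obstacle here, since the whole argument is a bookkeeping of $\FP$-valuations. The one point demanding care is the consistent use of conjugate-linearity together with the self-conjugacy of $\FP$, and the reading of the divisibility hypothesis as the single inequality $\nu_\FP(\sigma)\leq 2i+a$, which is exactly what makes the two exponent computations close up to land in $L$ on the nose. It is worth noting that the relation $\sprodq{s}{s}=\Tr(\sigma)$, which is part of $S_{s,\sigma}$ being a well-defined symmetry, plays no role in these estimates.
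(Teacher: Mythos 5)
Your proof is correct and takes essentially the same route as the paper: the paper's entire proof is exactly your second estimate, $(\id_V-S_{s,\sigma})(\FP^{-a}L^\sharp)=\sprod{\FP^{-a}L^\sharp}{s}\sigma^{-1}s\subseteq \FP^{i}\sigma^{-1}s\subseteq L$. Your separate verification that $S_{s,\sigma}$ preserves $L$ is left implicit in the paper (it also follows from the sharp estimate itself, since $L\subseteq L^\vee$ forces $S_{s,\sigma}(L)\subseteq L$ and hence $S_{s,\sigma}(L)=L$ by a volume argument), so including it is harmless but not a different approach.
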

\begin{proof}
  We have  $(\id_V-S_{s,\sigma})(\FP^{-a}L^\sharp) = \sprod{\FP^{-a}L^\sharp}{s}\sigma^{-1}s \subseteq \FP^{i} \sigma^{-1} s \subseteq L$.
\end{proof}

\begin{lemma}\label{line}
 Let $x, x' \in L$ with $\sprodq{x}{x}=\sprodq{x'}{x'}$,
 $\sprod{x}{L}=\sprod{x'}{L}=\FP^i$ and $x \equiv x' \mod \FP^{i+a}$.
  Then there is an element $\varphi \in S^\sharp(L)$ with $\varphi(x) = x'$.
\end{lemma}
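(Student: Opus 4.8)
The plan is to realise $\varphi$ as a product of at most two symmetries. The natural first attempt is the single symmetry $S=S_{x-x',\,\sigma}$ with axis $s=x-x'$ and $\sigma=\sprod{x}{x-x'}$. Using $\sprodq{x}{x}=\sprodq{x'}{x'}$ together with $\sprod{x'}{x}=\overline{\sprod{x}{x'}}$ and the fact that $\sprodq{x}{x}\in K$, a short computation gives $\Tr^E_K(\sigma)=\sprod{x-x'}{x-x'}$, so $S$ is a genuine symmetry; and since $\sprod{x}{s}\sigma^{-1}=1$ it satisfies $S(x)=x'$. To place $S$ in $S^\sharp(L)$ I would invoke \Cref{lem:refl-ker}: the congruence hypothesis $x\equiv x'\bmod\FP^{i+a}$ gives $s\in\FP^{i+a}L$ (this congruence is precisely the necessary condition coming from \Cref{ker:congruence}), so the only remaining issue is to control $\nu_\FP(\sigma)$.

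The key observation is that because $s\in\FP^{i+a}L$ and $\sprod{x}{L}=\FP^i$ one always has $\sigma\in\sprod{x}{\FP^{i+a}L}=\FP^{i+a}\sprod{x}{L}=\FP^{2i+a}$, that is $\nu_\FP(\sigma)\ge 2i+a$. \Cref{lem:refl-ker} asks for $\FP^{2i+a}\subseteq\sigma\O$, i.e. $\nu_\FP(\sigma)\le 2i+a$. Hence in the \emph{generic case} $\nu_\FP(\sigma)=2i+a$ the single symmetry already lies in $S^\sharp(L)$ and we are done.

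In the \emph{degenerate case} $\nu_\FP(\sigma)>2i+a$ (which includes $\sigma=0$, occurring when $x-x'$ is isotropic) I would route through an intermediate vector. The goal is to produce $z=x+t$ with $t\in\FP^{i+a}L$, $\sprodq{z}{z}=\sprodq{x}{x}$, and $\nu_\FP(\sprod{x}{t})=2i+a$. Setting $S_1=S_{x-z,\,\sprod{x}{x-z}}$ and $S_2=S_{x'-z,\,\sprod{x'}{x'-z}}$, each is a symmetry (by the same trace identity, using $\sprodq{z}{z}=\sprodq{x}{x}=\sprodq{x'}{x'}$) sending $x\mapsto z$, respectively $x'\mapsto z$, so $\varphi:=S_2^{-1}\circ S_1$ sends $x\mapsto x'$. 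Here $S_1\in S^\sharp(L)$ by \Cref{lem:refl-ker}, since $\nu_\FP(\sprod{x}{x-z})=\nu_\FP(\sprod{x}{t})=2i+a$ by construction. For $S_2$ note $x'-z=-(x-x')-t$, whence $\sprod{x'}{x'-z}$ equals $-\sprod{x}{t}$ up to terms of strictly larger valuation (namely $\sigma$ and contributions in $\FP^{3i+2a}$); as $\nu_\FP(\sprod{x}{t})=2i+a<\nu_\FP(\sigma)$, its valuation is again exactly $2i+a$, so $S_2\in S^\sharp(L)$ as well. Since inverses of symmetries are symmetries ($S_{s,\sigma}^{-1}=S_{s,\bar\sigma}$) and $S^\sharp(L)$ is a group, $\varphi\in S^\sharp(L)$.

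The main obstacle is the existence of $t$: I must solve $\sprodq{t}{t}=\Tr^E_K(\sprod{x}{t})$ within $\FP^{i+a}L$ while keeping $\nu_\FP(\sprod{x}{t})=2i+a$. My approach is to start from a vector $u\in L$ attaining the scale, $\nu_\FP(\sprod{x}{u})=i$, and take $t_0=\lambda u$ with $\nu_\FP(\lambda)=i+a$ to fix the valuation of $\sprod{x}{t}$; the resulting norm defect $\sprodq{t_0}{t_0}-\Tr^E_K(\sprod{x}{t_0})\in K$ is then removed by a further perturbation chosen orthogonally to $x$, so that $\sprod{x}{t}$ and its leading term are unchanged. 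This reduces the correction to representing the defect by the restriction of the form to $x^\perp\cap\FP^{i+a}L$. The congruences $a\equiv e\bmod 2$ and $2k+a\ge e$ of the local set-up ensure that this defect lies in $\fp^{\lfloor(2i+a+e)/2\rfloor}$, exactly the range attainable, so the representation can be carried out by successive approximation, invoking the decomposition of $L$ into lines and subnormal planes and the existence of skew elements of each prescribed valuation recorded before the lemma. This representation-within-a-coset step, respecting the $\FP$-adic valuation, is the hard part; the remaining verifications are the routine valuation bookkeeping sketched above.
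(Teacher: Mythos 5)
Your generic case coincides with the paper's first case and is correct. The genuine gap is in your degenerate case, and it sits exactly where you admit the work is: the existence of the intermediate vector $z$. Note first that the gap is unavoidable within your framework: any factorization $\varphi = S_2^{-1}\circ S_1$ with $S_1(x)=z=S_2(x')$ forces $\sprodq{z}{z}=\sprodq{x}{x}$, because symmetries are isometries. So your route genuinely requires producing a vector of prescribed norm in a prescribed congruence class, i.e.\ solving $\sprodq{t}{t}+\Tr^E_K(\sprod{x}{t})=0$ with $t\in\FP^{i+a}L$ and $\nu_\FP(\sprod{x}{t})=2i+a$, and your treatment of this equation is not a proof. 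Concretely: (i) the assertion that the norm defect lies in ``exactly the range attainable'' by the form on $x^{\perp}\cap\FP^{i+a}L$ is unjustified -- which elements a local hermitian form represents is not governed by valuations alone (the restricted form may be anisotropic), and a successive approximation needs an exact solution modulo some power of $\FP$ to start from, which is precisely what is missing; (ii) when $\rk L=1$ the perturbation space $x^{\perp}\cap\FP^{i+a}L$ is zero, yet the degenerate case occurs there (take $x'=ux$ for a norm-one unit $u\neq 1$ with $u\equiv 1\bmod\FP^{i+a+1}$), so your construction of $t$ has nothing to work with; (iii) your check that $S_2\in S^\sharp(L)$ discards $\sprod{x'-x}{t}\in\FP^{3i+2a}$ as having ``strictly larger valuation'' than $2i+a$, which needs $i+a>0$; but $i=-a$ (the $\FD_{E/F}^{-1}$-modular case) is allowed by the hypotheses of \Cref{line}, and then this cross term can cancel against $\sprod{x}{t}$.

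The idea you are missing -- and which the paper uses -- is that the intermediate vector should not be prescribed but obtained as the \emph{image of $x'$ under an auxiliary symmetry already lying in $S^\sharp(L)$}: then equal norms and all hypotheses of the lemma are inherited for free, and no representation problem ever arises. Choose $s\in\FP^{i+a}L$ with $\sprod{s}{x}\O=\sprod{s}{x'}\O=\FP^{2i+a}$, imposing both conditions simultaneously (possible since $\sprod{x}{L}=\sprod{x'}{L}=\FP^i$; if a first choice works for $x$ but not $x'$, add a vector that works for $x'$). For this fixed axis, the admissible twists are the coset $\sprodq{s}{s}\rho+\{\text{skew elements}\}$; since $2i+a\equiv e\bmod 2$ there is a skew $\omega$ with $\nu_\FP(\omega)=2i+a$, while $\nu_\FP(\sprodq{s}{s}\rho)\geq 2i+2a+2k+1-e>2i+a$. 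Hence $\sigma:=\sprodq{s}{s}\rho+\omega$ satisfies $\Tr^E_K(\sigma)=\sprodq{s}{s}$ and has valuation exactly $2i+a$, so $S_{s,\sigma}\in S^\sharp(L)$ by \Cref{lem:refl-ker}. Now
\[\sprod{x}{x-S_{s,\sigma}(x')}=\sprod{x}{x-x'}+\sprod{x}{s}\sprod{s}{x'}\bar\sigma^{-1}\]
has valuation exactly $2i+a$ (the second summand does; the first has strictly larger valuation by the degeneracy assumption), and $x\equiv S_{s,\sigma}(x')\bmod\FP^{i+a}L$, so the pair $\bigl(x,S_{s,\sigma}(x')\bigr)$ satisfies the hypotheses of \Cref{line} and falls into your generic case; composing the resulting symmetry with $S_{s,\sigma}^{-1}$ gives $\varphi$. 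This is where $\rho$, the skew elements, and the parity $a\equiv e\bmod 2$ actually belong: they decouple $\sigma$ from any target vector, removing the norm equation instead of attempting to solve it.
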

\begin{proof}
 Note that $\sprod{x}{x-x'} \in \FP^{2i+a}$.
  If $\sprod{x}{x-x'}\O =\FP^{2i+a}$. Then with $\sigma = \sprod{x}{x-x'}$ and $s = x - x'$ we have $S_{s, \sigma}(x) = x'$, and \Cref{lem:refl-ker} implies that 
  $S_{s,\sigma} \in U^\sharp(L)$.

 If $\sprod{x}{x-x'}\O \subseteq \FP^{2i+a+1}$,
 choose $s \in \FP^{i+a}L$ with
 \[\sprod{s}{x}\O = \sprod{s}{x'}\O = \FP^{2i+a} \]
 which is possible since $\sprod{x}{L}=\sprod{x'}{L}=\FP^i$.
 We have $\nu_\FP(\sprodq s s \rho)\geq 2i+2a+2k+1-e > 2i + a$ and $2i+a \equiv e \mod 2$.
 With $\omega \in E$ a skew element of valuation $2i+a$, $\sigma := \sprodq{s}{s}\rho + \omega$ satisfies $\nu_\FP(\sigma)=2i+a$.
 By \Cref{lem:refl-ker} we have
 $S_{s,\sigma} \in S^\sharp(L)$.
  Then \[\sprod{x}{x - S_{s,\sigma}(x')}=\sprod{x}{x-x'} + \sprod{x}{s}\sprod{s}{x'}\bar \sigma^{-1}\]
  gives $\sprod{x}{x - S_{s,\sigma}(x')}\O = \FP^{2i+a}$. Further $x \equiv S_{s,\sigma}(x') \mod \FP^{i+a}$. Thus by the first case we can map $x$ to $S_{s,\sigma}(x')$.
\end{proof}

\begin{lemma}\label{plane}
  Let $L =  P \perp M$, with $P$ a subnormal plane.
  Then $U^\sharp(L) = S^\sharp(L)U^\sharp(M)$.
\end{lemma}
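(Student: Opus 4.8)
The plan is to prove the two inclusions separately. The inclusion $S^\sharp(L)\,U^\sharp(M) \subseteq U^\sharp(L)$ is immediate: $S^\sharp(L) \subseteq U^\sharp(L)$ by definition, and any $\psi \in U^\sharp(M)$, extended by $\id_P$ on $P$, lies in $U^\sharp(L)$, since $L = P \perp M$ gives $L^\vee = P^\vee \perp M^\vee$ and hence $(\id_P \oplus \psi - \id_L)(L^\vee) = (\psi - \id_M)(M^\vee) \subseteq M \subseteq L$. As $U^\sharp(L)$ is a group, the product is contained in it.

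For the reverse inclusion I would fix $\varphi \in U^\sharp(L)$ and a basis $u, v$ of the plane $P$, with $u$ chosen to generate $\scale(P)$. The idea is to correct $\varphi$ by symmetries lying in $S^\sharp(L)$ until it fixes $P$ pointwise; once $\varphi|_P = \id_P$, it preserves $P^\perp \cap L = M$ (here nondegeneracy of $P$ is used) and its restriction lies in $U^\sharp(M)$, again because $L^\vee = P^\vee \perp M^\vee$. This reduces everything to producing the required symmetries.

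First I would fix $u$. Writing $\FP^i = \sprod{u}{L}$, \Cref{ker:congruence} gives $\varphi(u) - u \in \FP^{i+a}L$, while $\sprodq{\varphi(u)}{\varphi(u)} = \sprodq{u}{u}$ and $\sprod{\varphi(u)}{L} = \sprod{u}{L}$ because $\varphi$ is an isometry of $L$. Hence \Cref{line}, applied with $x = u$ and $x' = \varphi(u)$, yields $\psi_1 \in S^\sharp(L)$ with $\psi_1(u) = \varphi(u)$, so $\varphi_1 := \psi_1^{-1}\varphi \in U^\sharp(L)$ fixes $u$. Next I would fix $v$ without moving $u$. Setting $w := \varphi_1(v)$, the relations $\sprod{u}{w} = \sprod{u}{v}$ (as $\varphi_1$ fixes $u$ and is an isometry), $w - v \in \FP^{j+a}L$ with $\FP^j = \sprod{v}{L}$ (by \Cref{ker:congruence}), $\sprodq{w}{w} = \sprodq{v}{v}$ and $\sprod{w}{L} = \sprod{v}{L}$ all hold. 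I would then re-run the construction of \Cref{line} to send $w$ to $v$, taking every reflecting vector inside $u^\perp$ so that the resulting symmetries fix $u$. In the generic case the single symmetry $S_{w-v,\sigma}$ with $\sigma = \sprod{w}{w-v}$ suffices: its axis $w - v$ is orthogonal to $u$ (since $\sprod{u}{w-v} = 0$), so it fixes $u$, and it lies in $S^\sharp(L)$ by \Cref{lem:refl-ker}. This gives $\psi_2 \in S^\sharp(L)$ fixing $u$ with $\psi_2(w) = v$, whence $\varphi_2 := \psi_2\varphi_1$ fixes both $u$ and $v$, i.e. fixes $P$ pointwise. Then $\varphi_2 \in U^\sharp(M)$ and $\varphi = \psi_1\psi_2^{-1}\varphi_2 \in S^\sharp(L)\,U^\sharp(M)$, as desired.

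The main obstacle is the degenerate subcase of the second correction, where $\sprod{w}{w-v}$ is too deep for $S_{w-v,\sigma}$ to be admissible. Following the second case of \Cref{line}, this is handled by first applying an auxiliary symmetry to bring $w$ into generic position relative to $v$; the novelty here is that its reflecting vector $s$ must additionally be chosen in $u^\perp \cap L$ so that $u$ remains fixed, while still satisfying $\sprod{s}{v}\O = \FP^{2j+a}$. Proving that such an $s$ exists is precisely where the hypothesis that $P$ is a \emph{subnormal} plane enters: via Jacobowitz's normal form for subnormal planes one checks that $u^\perp$ already meets $L$ in vectors pairing optimally with $v$. I expect this valuation bookkeeping---guaranteeing that the auxiliary axis can be taken orthogonal to $u$---to be the technical heart of the argument.
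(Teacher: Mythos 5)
Your proposal is correct and follows essentially the same route as the paper's proof: fix $u$ first via \Cref{line}, then correct $v$ by symmetries whose axes lie in $u^{\perp}$ (the direct symmetry with axis $v-\varphi(v)$ and $\sigma=\sprod{v}{v-\varphi(v)}$ in the generic case, preceded by an auxiliary symmetry in the degenerate case), and finally restrict to $M$. The existence statement you defer to ``valuation bookkeeping'' is exactly what the paper supplies: it takes $v' = u - \pi^{i}p^{k-i}v \in u^{\perp}\cap L$ (subnormality $i<2k$ guarantees $\pi^{i}p^{k-i}\in \O$ and $\sprod{v'}{v}\O=\FP^{i}$), sets $s=\pi^{i+a}v'$ and $\sigma=\rho\sprodq{s}{s}+\omega$ with $\omega$ skew of valuation $2i+a$, and then checks via \Cref{lem:refl-ker} and an explicit computation of the $u$-coefficient $\gamma'$ of the image of $\varphi(v)$ that this symmetry lies in $S^{\sharp}(L)$, fixes $u$, and produces a configuration with $\nu_{\FP}(\gamma')=i+a$, i.e.\ reduces to your generic case.
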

\begin{proof}
  By~\cite{Jacobowitz1962} there exists a basis $u, v \in P$ with $\sprodq{u}{u} = p^k$, $\sprodq{v}{v}\in \fp^{k}$ and
  $\sprod{u}{v}=\pi^i$. Note that $L$ subnormal implies $i<2k$. Let $\varphi \in U^\sharp(L)$.
  By \Cref{line} there exists a symmetry $S \in S^\sharp(L)$ with $S(u) = \varphi(u)$. Therefore we may and will assume that $\varphi(u) = u$.

  Write $\varphi(v) = \gamma u + \delta v + m$ for some $m \in \FP^{i+a}L$ and $\gamma,1-\delta \in \FP^{i+a}$.
  Then we have
  \[\sprod{v}{v-\varphi(v)}\O=(-\bar \gamma \bar \pi^i+(1-\bar\delta)\sprodq{v}{v})\O\]
  \begin{equation}\label{deltaunit}
    \sprod{u}{v-\varphi(v)} = \sprod{u}{v}-\sprod{\varphi(u)}{\varphi(v)} = 0.
  \end{equation}
  The symmetry $S_{s,\sigma} \in U(L\otimes E)$ with $s=v - \varphi(v)$ and $\sigma = \sprod{v}{v-\varphi(v)}$ preserves $u$ and maps $v$ to $\varphi(v)$.
  If $\nu_\FP(\gamma) = i+a$, then
  \[ \nu_\FP((1 - \bar \delta)\sprodq v v)  \geq i + a + 2k > 2i + a = \nu_\FP(-\bar \gamma \bar \pi^i). \]
  Thus $\sprod{v}{v-\varphi(v)}\O = \FP^{2i+a}$.
  It follows that $S_{s,\sigma} \in S^\sharp(L)$ by \Cref{lem:refl-ker} and we are done.

  Let now $\nu_\FP(\gamma)> i +a$. We
  consider $v' = u - \pi^i p^{k-i}v \in L$. It satisfies
  \[\sprod u {v'} = 0, \quad \sprod{v'}{v}\equiv \pi^i \mod \FP^{i+1} \quad \text{and} \quad v' \equiv u \mod \FP.\]
  In particular, $v_\FP(\sprod {v'}v) = i$.
  Set $s = \pi^{i+a}v'$ and let $\omega\in E$ be a skew element such that $\nu_\FP(\omega) = 2i + a$.
  Since $\nu_\FP(\sprodq{s}{s} \rho) %
  > 2i+a$, the element $\sigma = \rho \sprodq{s}{s} + \omega$ satisfies $\Tr(\sigma) = \sprodq{s}{s}$ and
  $\nu_\FP(\sigma) = 2i + a$.

  We have $S_{s,\sigma} \in S^\sharp(L)$,
  $S_{s,\sigma}(u) = u$ and $S_{s,\sigma}(\varphi(v)) = \gamma' u + \delta' v + w$
  with $\gamma',\delta' \in \O$ and
  \[\gamma' = \gamma - \sprod{\gamma u + \delta v}{\pi^{i+a}v'}\sigma^{-1} \pi^{i+a}
  = \gamma - \delta \sprod{v}{v'} p^{i+a}\sigma^{-1}.\]
  Since $\delta \in \O^\times$ by \cref{deltaunit} and $\nu_\FP(\gamma)>i+a$, we have
  $\nu_\FP(\gamma') = i+a$. We conclude as in the first case.
\end{proof}

\begin{theorem}\label{ker:ramif}
 Let $E/K$ be ramified. Then we have $U^\sharp(L) = S^\sharp(L)$.
\end{theorem}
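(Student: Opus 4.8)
The plan is to induct on $\rk(L)$. By \cite[Propositions 4.3, 4.4]{Jacobowitz1962} the lattice splits as an orthogonal direct sum $L = L_1 \perp M$ in which $L_1$ is either a line or a subnormal plane and $M$ is again a hermitian $\O$-lattice with even trace form. The inclusion $S^\sharp(L) \subseteq U^\sharp(L)$ holds by definition, so only $U^\sharp(L) \subseteq S^\sharp(L)$ needs proof. The base cases $\rk L \leq 1$ are immediate: for a line a single application of \Cref{line} suffices, as explained in the line case below.

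Before the induction step I would record two stability facts for a line summand $L_1 = \O x$ with orthogonal complement $M = (\O x)^{\perp L}$, so that $L^\vee = (\O x)^\vee \perp M^\vee$ orthogonally and $\sprod{\O x}{M} = 0$. First, any symmetry $S_{s,\sigma}$ with $s \in M$ extends by the identity on $\O x$ to a symmetry of $L$ with the same defining data; since $\sprod{\O x}{s} = 0$ one has $\sprod{L}{s} = \sprod{M}{s}$, hence $(\id - S_{s,\sigma})(L^\vee) = \sprod{M^\vee}{s}\sigma^{-1}s \subseteq M$, so the extension lies in $U^\sharp(L)$ whenever $S_{s,\sigma} \in U^\sharp(M)$; thus taking products of symmetries embeds $S^\sharp(M)$ into $S^\sharp(L)$. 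Second, if $\varphi \in U^\sharp(L)$ fixes $x$, then for $m \in M$ we get $\sprod{\varphi(m)}{x} = \sprod{\varphi(m)}{\varphi(x)} = \sprod{m}{x} = 0$, and since $\sprod{x}{x} \neq 0$ this forces $\varphi(m) \in M$, so $\varphi$ preserves $M$; moreover, for $m \in M^\vee \subseteq L^\vee$ one has $(\varphi - \id)(m) \in V_M \cap L = M$, whence $\varphi|_M \in U^\sharp(M)$.

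With these in hand the induction step splits in two cases. If $L_1 = P$ is a subnormal plane, then \Cref{plane} gives $U^\sharp(L) = S^\sharp(L)\,U^\sharp(M)$; by the inductive hypothesis $U^\sharp(M) = S^\sharp(M)$, which embeds into $S^\sharp(L)$ by the first stability fact, so $U^\sharp(L) = S^\sharp(L)$. If instead $L_1 = \O x$ is a line, let $\varphi \in U^\sharp(L)$ and set $\sprod{x}{L} = \FP^i$. Because $\varphi$ is an isometry of $L$ we have $\sprod{\varphi(x)}{L} = \FP^i$ and $\sprod{\varphi(x)}{\varphi(x)} = \sprod{x}{x}$, while \Cref{ker:congruence} yields $\varphi(x) - x \in \sprod{x}{L}\FP^a L = \FP^{i+a}L$, i.e. $x \equiv \varphi(x) \bmod \FP^{i+a}$. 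Thus \Cref{line} provides $\psi \in S^\sharp(L)$ with $\psi(x) = \varphi(x)$. Then $\psi^{-1}\varphi \in U^\sharp(L)$ fixes $x$, so by the second stability fact $(\psi^{-1}\varphi)|_M \in U^\sharp(M) = S^\sharp(M)$ by induction; since $\psi^{-1}\varphi$ acts as the identity on $\O x$, it is the image of this restriction under $S^\sharp(M) \hookrightarrow S^\sharp(L)$, and therefore $\varphi = \psi\,(\psi^{-1}\varphi) \in S^\sharp(L)$.

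I expect the genuine content to reside entirely in \Cref{line}, \Cref{plane}, and \Cref{ker:congruence}, all of which I may invoke; the theorem itself is then an assembly by induction. The points requiring care are the two stability facts---that restricting a kernel element to an orthogonal summand stays in the kernel, and that symmetries of a summand lift to symmetries of $L$ preserving the $\sharp$-condition---both of which reduce to the splitting $L^\vee = (\O x)^\vee \perp M^\vee$ and the orthogonality $\sprod{\O x}{M} = 0$. The main subtlety, rather than a true obstacle, is to ensure that the line $\O x$ fed into \Cref{line} is exactly the orthogonal summand produced by the Jacobowitz decomposition, so that $M$ inherits the even-trace hypothesis and the induction applies.
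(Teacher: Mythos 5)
Your proof is correct and follows essentially the same route as the paper: induction on the rank using the Jacobowitz decomposition into a line or subnormal plane, with the plane case handled by \Cref{plane} and the line case by \Cref{ker:congruence} plus \Cref{line}. The paper's own proof is just a terser version of this, leaving implicit the two stability facts you spell out (that symmetries of an orthogonal summand lift to $S^\sharp(L)$, and that a kernel element fixing the line restricts to $U^\sharp$ of the complement).
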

\begin{proof}
We proceed by induction on the rank of $L$. We know that
$L = M \perp N$ with $M$ a line or a subnormal plane. By \Cref{line,plane} we have $U^\sharp(L) = S^\sharp(L)U^\sharp(N)$. By induction $U^\sharp(N)=S^\sharp(N)$.
\end{proof}

\begin{remark}
For $E/K$ unramified or $E=K \times K$ one can prove that $U^\sharp(L)= S^\sharp(L)$ as well. Since we do not need this result for the computation of $\det(U^\sharp(L))$, the proof is omitted.
\end{remark}

\subsection{Determinants of the kernel}
We use the same assumptions and notation as in \Cref{subsec:localsurjectivity}.
In particular we are in the local setting.
Let $\delta \in E$ be of norm $\delta \bar \delta = 1$ and $x \in V$ be anisotropic.
A \emph{quasi-reflection} is a map of the form
\[\tau_{x,\delta}\colon V \rightarrow V, y \mapsto y + (\delta-1)\frac{\sprod{y}{x}}{\sprodq{x}{x}} x.\]
We have $\tau_{x,\delta} \in U(V)$ and $\det(\tau_{x,\delta}) = \delta$.
Let $s = x$ and $\sigma = \sprodq{x}{x}(1-\delta)^{-1}$.
Then $\tau_{x,\delta}=S_{s,\sigma}$.
Conversely, if $s$ is anisotropic and $\sigma \in E$ with $\Tr(\sigma)=\sprodq{s}{s}$, set $\delta = - \bar \sigma / \sigma$, then
$S_{s,\sigma} = \tau_{x,\delta}$. Thus the quasi-reflections are exactly the symmetries at anisotropic vectors. The symmetries at isotropic vectors are called transvections.

\begin{lemma}\label{ker:refl}
Let $x \in L$ be primitive, anisotropic and $\delta \in E$ of norm $\delta \bar \delta = 1$. Then
$\tau_{x,\delta} \in U^\sharp(L)$ if and only if
$(\delta - 1) \in \FP^{a} \sprodq{x}{x}.$
\end{lemma}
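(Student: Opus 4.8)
The plan is to test membership in $U^\sharp(L)$ by the criterion recalled at the beginning of this subsection, namely that a unitary isometry $\varphi$ lies in $U^\sharp(L)$ exactly when $(\varphi-\id_L)(L^\vee)\subseteq L$, where $L^\vee = \FD_{E/F}^{-1}L^\sharp = \FP^{-a}L^\sharp$. First I would record the elementary computation
\[(\tau_{x,\delta}-\id_V)(y) = (\delta-1)\sprodq{x}{x}^{-1}\sprod{y}{x}\,x \qquad (y\in V),\]
so that the image $(\tau_{x,\delta}-\id_V)(L^\vee)$ is the rank-one $\O$-module $(\delta-1)\sprodq{x}{x}^{-1}\,\sprod{L^\vee}{x}\,x$ sitting on the line $Ex$. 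The whole question thus reduces to two pieces of ideal arithmetic on this line.

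The key step is to evaluate the ideal $\sprod{L^\vee}{x} = \FP^{-a}\sprod{L^\sharp}{x}$, and this is where primitivity of $x$ enters. Since $\O$ is local, $L$ is free and a primitive $x$ extends to an $\O$-basis $x=e_1,\dots,e_n$ of $L$; taking the dual basis $f_1,\dots,f_n$ of $L^\sharp$ determined by $\sprod{e_i}{f_j}=\delta_{ij}$, I would compute $\sprod{f_j}{x}=\overline{\sprod{x}{f_j}}=\delta_{1j}$, whence $\sprod{L^\sharp}{x}=\O$ and therefore $\sprod{L^\vee}{x}=\FP^{-a}$.

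It then remains to convert the containment
\[(\tau_{x,\delta}-\id_V)(L^\vee) = (\delta-1)\sprodq{x}{x}^{-1}\FP^{-a}\,x \subseteq L\]
into the asserted ideal condition. Using primitivity once more in the form $Ex\cap L = \O x$, an element $\alpha x$ with $\alpha\in E$ lies in $L$ if and only if $\alpha\in\O$; hence the containment holds precisely when $(\delta-1)\sprodq{x}{x}^{-1}\FP^{-a}\subseteq\O$, i.e. when $(\delta-1)\in\FP^a\sprodq{x}{x}$. For the forward implication this is immediate from the membership criterion. For the converse I would observe that $(\tau_{x,\delta}-\id_V)(L^\vee)\subseteq L$ forces $\tau_{x,\delta}(L)\subseteq L$, and since $\det(\tau_{x,\delta})=\delta\in\O^\times$ the restriction of $\tau_{x,\delta}$ to $L$ is an $\O$-automorphism, so $\tau_{x,\delta}\in U(L)$ and thus in $U^\sharp(L)$.

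The only genuinely non-formal point is the evaluation $\sprod{L^\sharp}{x}=\O$: it is exactly the primitivity of $x$ that guarantees $x$ pairs onto the full unit ideal against $L^\sharp$, and if $x$ were divisible by a prime element this ideal---and with it the whole criterion---would shrink accordingly. Everything else is bookkeeping with the fixed exponent $\FP^a=\FD_{E/F}$ and the conjugation-invariance of $\sprodq{x}{x}$.
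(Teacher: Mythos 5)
Your proof is correct and takes essentially the same route as the paper's: both reduce membership in $U^\sharp(L)$ to the containment $(\tau_{x,\delta}-\id_V)(L^\vee)\subseteq L$, identify the image as the fractional-ideal multiple $(\delta-1)\sprodq{x}{x}{}^{-1}\FP^{-a}\,\O x$ of $x$, and convert this to the stated ideal condition using primitivity. You merely spell out what the paper compresses into ``the lemma follows since $x$ is primitive'' --- the dual-basis evaluation $\sprod{L^\sharp}{x}=\O$, the identity $Ex\cap L=\O x$, and the verification in the converse direction that $\tau_{x,\delta}$ indeed preserves $L$ (hence lies in $U(L)$) --- all of which are sound.
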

\begin{proof}
 We have $\tau_{x,\delta} \in U^\sharp(L)$ if and only if
 $(\tau_{x,\delta}-\id_V)(L^\vee)\subseteq L$. This amounts to $(\delta-1)\FP^{-a}\sprodq{x}{x}^{-1}x \in L$.
 The lemma follows since $x$ is primitive.
\end{proof}

For $i\geq 0$ set
\begin{eqnarray*}
 \E_0 & = &\{u \in \O^\times \mid u \bar u =1\}\\
\E^i &=& \{u \in \E_0 \mid u \equiv 1 \mod \FP^i\}.
\end{eqnarray*}
Note that $\E_0 = \E^0 = \E^{e-1}$, $\E_1:= \{u \bar u ^{-1} \mid u \in \O^\times\} = \E^e$ and $[\E_0:\E_1]=2$ by \cite[3.4, 3.5]{Kirschmer2019}.

\begin{theorem}\label{detUsharp}
 Let $F=\QQ_p$, $K/F$ a finite field extension, $E/K$ an étale $K$-algebra of dimension $2$ with absolute different $\FP^a:=\FD_{E/F}$.
 Suppose that $L$ is a hermitian $\O$-lattice with $\fp^k:=\norm(L) \subseteq \FD_{K/F}^{-1}$.
 Then $\det(U^\sharp(L)) =:\F^\sharp(L) = \E^{2k+a}$.
\end{theorem}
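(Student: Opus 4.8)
The plan is to establish the two inclusions $\det(U^\sharp(L))\subseteq \E^{2k+a}$ and $\det(U^\sharp(L))\supseteq \E^{2k+a}$ separately, in each case reducing to quasi-reflections via the generation theorem \Cref{ker:ramif}. Throughout, $\det$ is a homomorphism from $U(L)$ into the abelian group $\E_0$ of norm-one units, so it suffices to control determinants of generators and to produce determinants realizing the full subgroup $\E^{2k+a}\le \E_0$. I would treat the ramified case $E/K$ first, since all of \Cref{line}, \Cref{plane} and \Cref{ker:ramif} are available there.

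For the inclusion $\det(U^\sharp(L))\subseteq \E^{2k+a}$ I would argue as follows. By \Cref{ker:ramif} we have $U^\sharp(L)=S^\sharp(L)$, and inspecting the proofs of \Cref{line} and \Cref{plane} shows that $U^\sharp(L)$ is generated by symmetries that individually lie in $U^\sharp(L)$ (each is verified to lie in $S^\sharp(L)$ via \Cref{lem:refl-ker}). Since $\E^{2k+a}$ is a subgroup of $\E_0$, it is then enough to bound the determinant of each such generating symmetry. A symmetry at an isotropic vector is a transvection of determinant $1$, hence harmless. For a symmetry $S_{s,\sigma}\in U^\sharp(L)$ at an anisotropic $s\in L$, write $s=\pi^{j}s_0$ with $s_0\in L$ primitive; then $S_{s,\sigma}$ equals the quasi-reflection $\tau_{s_0,\delta}$ with $\delta=\det(S_{s,\sigma})$, and \Cref{ker:refl} gives $\delta-1\in\FP^{a}\sprodq{s_0}{s_0}$. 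Because $s_0\in L$ we have $\sprodq{s_0}{s_0}\in\norm(L)=\fp^{k}$, so $\nu_\FP(\sprodq{s_0}{s_0})\ge 2k$ as $E/K$ is ramified, whence $\delta\equiv 1\bmod\FP^{2k+a}$, i.e. $\delta\in\E^{2k+a}$.

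For the reverse inclusion I would realize the whole group from a single vector. As $\norm(L)=\fp^{k}$ is generated over the discrete valuation ring $\o$ by the values $\sprodq{x}{x}$, there exists $x_0\in L$ with $\nu_\fp(\sprodq{x_0}{x_0})=k$; such $x_0$ is anisotropic and primitive, the latter since any proper multiple would have strictly larger norm valuation. In the ramified case $\sprodq{x_0}{x_0}\O=\FP^{2k}$, so $\FP^{a}\sprodq{x_0}{x_0}=\FP^{2k+a}$, and \Cref{ker:refl} shows that for every $\delta\in\E_0$ with $\delta\equiv 1\bmod\FP^{2k+a}$ the quasi-reflection $\tau_{x_0,\delta}$ lies in $U^\sharp(L)$ and has determinant $\delta$. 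This gives $\det(U^\sharp(L))\supseteq\E^{2k+a}$, finishing the ramified case. The remaining \'etale cases ($E/K$ unramified or $E=K\times K$) are handled by exactly the same two-sided argument, now invoking the generation statement recorded in the Remark following \Cref{ker:ramif}, with the determinant of each quasi-reflection again controlled by \Cref{ker:refl} together with the norm ideal $\norm(L)$.

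The point I expect to be genuinely delicate is the first inclusion, and specifically the assertion that the generators furnished by \Cref{ker:ramif} may be taken to be symmetries lying in $U^\sharp(L)$ itself, rather than merely symmetries of $L$ whose product happens to land in $U^\sharp(L)$; this is precisely what licenses applying \Cref{ker:refl} term by term to bound $\det$. The constructions in \Cref{line} and \Cref{plane} do produce such symmetries, so the real work is reading off that the reduction proceeds through $U^\sharp$-symmetries. The passage to a primitive representative $s_0$ of an anisotropic $s$ is then a routine normalization required to apply \Cref{ker:refl}, and the numerical heart of the statement is simply the identity $\nu_\FP(\fp^{k}\O)=2k$ in the ramified case, which converts the norm bound $\sprodq{s_0}{s_0}\in\fp^{k}$ into the exponent $2k+a$.
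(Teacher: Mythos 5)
Your handling of the ramified case is correct and is essentially the paper's own proof: the containment $\E^{2k+a}\subseteq\det(U^\sharp(L))$ comes from a norm generator (which, as you rightly check and the paper leaves implicit, is automatically primitive, so \Cref{ker:refl} applies), and the reverse containment comes from \Cref{ker:ramif}, determinant-one transvections, and the bound $(\delta-1)\in\FP^a\sprodq{x}{x}\subseteq\FP^a\norm(L)\O=\FP^{2k+a}$ for quasi-reflections lying in $U^\sharp(L)$. You are also right that this last step needs the generators furnished by \Cref{ker:ramif} to lie individually in $U^\sharp(L)$, not merely to multiply into it; the proofs of \Cref{line} and \Cref{plane} do produce symmetries certified to be in $U^\sharp(L)$ via \Cref{lem:refl-ker}, so your explicit justification of this point is sound and in fact more careful than the paper's wording.

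The genuine gap is in the unramified and split cases. There you invoke ``the generation statement recorded in the Remark following \Cref{ker:ramif}'', but that Remark is an assertion without proof: the paper states that $U^\sharp(L)=S^\sharp(L)$ ``can be proved'' in those cases and omits the proof precisely because the computation of $\det(U^\sharp(L))$ does not require it. So, as written, your proof of $\det(U^\sharp(L))\subseteq\E^{2k+a}$ outside the ramified case rests on an unproven statement. The paper's route avoids this entirely: by \Cref{ker:congruence}, every $\varphi\in U^\sharp(L)$ satisfies $\varphi\equiv\id \bmod \scale(L)\FP^{a}$, hence $\det(\varphi)\in\E^{i+a}$ where $\FP^i=\scale(L)$; and when $E/K$ is unramified or split one has $\norm(L)\O=\scale(L)$, so $i=2k$ and $\det(U^\sharp(L))\subseteq\E^{i+a}=\E^{2k+a}$ with no generation theorem at all. (This scale-based bound is exactly what fails in the ramified case, where subnormal lattices have $i<2k$ and $\E^{i+a}$ is strictly too big, which is why the symmetry generation theorem is proved there and only there.) Replacing your appeal to the Remark by this \Cref{ker:congruence} argument closes the gap; without such a replacement the non-ramified cases remain unproved.
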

\begin{proof}
 Let $x \in L$ be a norm generator, i.e. $\sprodq{x}{x} \o = \norm(L) = \fp^k$.
 Let $\delta \in \E^{2k+a}$. Then
 \[(\delta-1) \in \FP^{a}\sprodq{x}{x}\subseteq \FP^{a}\norm(L).\]
 By \Cref{ker:refl} we have $\tau_{x,\delta} \in U^\sharp(L)$ and so $\delta \in \det(U^\sharp(L))$. Hence
 \[\E^{2k+a} \subseteq \det(U^\sharp(L)).\]

 Let $\FP^i:=\scale(L)$ and $\varphi \in U^\sharp(L)$. By \Cref{ker:congruence} $\varphi \equiv \id \mod \FP^{i+a}$. Thus $\det(\varphi) \equiv 1 \mod \FP^{i+a}$, that is, $\det(\varphi) \in \E^{i+a}$.

 If $E/K$ is unramified, then $\scale(L)=\norm(L)\O$, so that $i=2k$ and
 \[\det(U^\sharp(L)) \subseteq \E^{i+a}=\E^{2k+a}.\]

 Now suppose that $E/K$ is ramified.
 By \Cref{ker:ramif} the group $U^\sharp(L) = S^\sharp(L)$ is generated by symmetries.
 Since transvections have determinant one,
 it is enough to consider the determinants of the quasi-reflections in $U^\sharp(L)$.
 Let $\tau_{x,\delta} \in U^\sharp(L)$ be a quasi-reflection. Recall that $\det(\tau_{x,\delta}) = \delta$.
 By \Cref{ker:refl} we have $(\delta -1) \in \FP^a \sprodq{x}{x} \subseteq \FP^a \norm(L)$.
 This proves $\det(U^\sharp(L)) \subseteq \E^{2k + a}$.
\end{proof}

\subsection{Computing in $\F(L_\fp)/\F^\sharp(L_\fp)$}
Let $E/K$ be a quadratic extension number fields with rings of integers $\O$ and $\o$ respectively. Let $L$ a hermitian $\O$-lattice.
Determining the image of $\delta$ in \Cref{hermitianUtoUDL} requires the computation in the finite quotient $\F(L_\fp)/\F^\sharp(L_\fp)$, where $\fp$ is a prime ideal of $\o$ (see also \Cref{rem:hermitianUtoUDL}).
To simplify notation we now assume that $K$ is a local field of characteristic $0$, $E/K$ an étale $K$-algebra of dimension $2$ and the notation as in \Cref{kergen}.
Hence our aim is to be able to do computations in $\F(L)/\F^\sharp(L)$.
As we are only interested in computing in the abelian group as opposed to determining it completely, it is sufficient to describe the computation of the supergroup $\E_{0}/\F^\sharp(L)$.
By \Cref{detUsharp} we know that $\F^\sharp(L) = \E^i$ for some $i \in \ZZ_{\geq 0}$.
It is thus sufficient to describe the computation of $\E_{0}/\E^i$.
By definition this group is isomorphic to $\ker(\overline \Nr_i)$, where
\[ \overline \Nr_i \colon \O^\times{/}(1 + \FP^i) \longrightarrow \o^\times{/}\Nr(1 + \FP^i), \bar u \longmapsto \overline{\Nr(u)}. \]
Depending on the structure of the extension $E/K$, this kernel can be described as follows:
\begin{itemize}
  \item
    If $E \cong K \times K$, then $\E_0/\E^i$ is isomorphic to $(\o/\fp^i)^\times$. 
  \item
  If $E/K$ is an unramified extension of local fields, then $\E_0/\E^i$ is isomorphic to 
  the kernel of the map $(\O/\FP^i)^\times \rightarrow (\o/\fp^i)^\times, \, \bar u \mapsto \overline{\Nr(u)}$. 
\item
  If $E/K$ is a ramified extension of local fields, then the situation is more complicated due to the norm not being surjective.
  Using the fact that by definition we have 
\[ \E^i/\E^{i+1} \cong \ker \left((1 + \FP^i)/(1 + \FP^{i + 1}) \longrightarrow \Nr(1 + \FP^i)/\Nr(1 + \FP^{i+1}),\,  \overline{u} \longmapsto \overline{\Nr(u)}\right), \] 
  this quotient can be determined using explicit results on the image of the multiplicative groups $1 + \FP^i$ under the norm map, found for example in
    \cite[Chap. V]{Serre1979}.
    Applying this iteratively we obtain $\E_0/\E^i$.
\end{itemize}
    In all three cases the computations of the quotient groups $\E_0/\E^i$ reduce to determining unit groups of residue class rings or kernels of morphisms between such groups. These unit groups are finitely generated abelian groups, whose structure can be determined using classical algorithms from algebraic number theory, see for example \cite[Sec. 4.2]{Cohen2000}.

\section{Fixed Points}\label{sec:fixed}
We classify the fixed point sets of purely non-symplectic automorphisms of finite order $n$ on complex K3 surfaces.
We only use the description of the fixed locus for $n=p$ a prime (see e.g. \cite{artebani-sarti-taki}); hence providing an independent proof in the known cases and completing the classification in all other cases.

Given the action of $\sigma$ on some lattice $L \cong H^2(X,\ZZ)$, we want to derive the invariants $((a_1,\dots a_s),k,l,g)$
of the fixed locus $X^\sigma$ as defined in the introduction.
The topological and holomorphic Lefschetz' fixed point formula \cite[Thm. 4.6]{atiyah-singer1968} yield the following relations
\[\sum_{i=1}^s a_i -2k + l(2 - 2g)
= 2 + \Tr \sigma^*|H^2(X,\CC)\]
and
\[1+\zeta_n^{-1} = \sum_{i=1}^s \frac{a_i}{(1-\zeta_n^{i+1})(1-\zeta_n^{-i})}+l(1-g) \frac{1+\zeta_n}{(1-\zeta_n)^2}.\]
We adopt the following strategy:
By induction, we know the invariants of the fixed loci of $\sigma^p$ for $p \mid n$.
Note that $\sigma$ acts with order dividing $p$ on $X^{\sigma^p}$ and $X^\sigma \subseteq X^{\sigma^p}$.
From the fixed loci of $\sigma^p$, we derive
the obvious upper bounds on $k$, $l$ and $g$.
Then for each possible tuple $(k,l,g)$, we find all $(a_1,\dots a_s)$ satisfying the Lefschetz formulas, which amounts to enumerating integer points in a bounded polygon.
The result is a finite list of possibilities for the invariants of $X^\sigma$.

In what follows, we derive compatibility conditions coming from the description of $X^\sigma$ as the fixed point set of the action of $\sigma$ on $X^{\sigma^p}$.
\begin{lemma}\label{fixed points on curves}
 Let $P$ be an isolated fixed point of type $i$ of $\sigma$. Let $p m = n$ with $p$ prime. Then $P$ is a fixed point of $\sigma^p$ of type
 \[t(i) = \min\{i + 1 \bmod m,n -i \bmod m\} - 1.\]
 Moreover, we have
 \[a_i(\sigma) \leq a_{t(i)}(\sigma^p) \quad \mbox{ and } \quad \sum_{\{i | t(i)=j\}}a_i(\sigma) \equiv a_{j}(\sigma^p) \mod p\]
  where $1 \leq j \leq (m-1)/2$.
\end{lemma}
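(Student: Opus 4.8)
The plan is to reduce the statement to the linear model of $\sigma$ at $P$ and then to a counting argument for the permutation that $\sigma$ induces on the fixed locus of $\sigma^p$. First I would determine $t(i)$. By Cartan's lemma $\sigma$ is linearizable at $P$, so in suitable local coordinates $\sigma(x,y)=(\zeta_n^{i+1}x,\zeta_n^{-i}y)$; raising to the $p$-th power and using $\zeta_n^p=\zeta_m$ with $m=n/p$ gives $\sigma^p(x,y)=(\zeta_m^{i+1}x,\zeta_m^{-i}y)$. Thus $\sigma^p$ is a purely non-symplectic automorphism of order $m$ fixing $P$, with eigenvalue exponents $i+1$ and $-i$ read modulo $m$. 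Its type at $P$ is the unique $t$ with $0\le t\le\lfloor(m-1)/2\rfloor$ and $\{t+1,-t\}\equiv\{i+1,-i\}\pmod m$ as unordered pairs. Choosing the representatives of $i+1$ and $n-i\equiv -i$ in $\{1,\dots,m\}$, their sum is $\equiv n+1\equiv 1\pmod m$ and lies in $\{2,\dots,2m\}$, hence equals $m+1$; since the type-$t$ pair is $\{t+1,\,m-t\}$ with $t+1\le (m+1)/2\le m-t$, matching forces $t+1=\min\{(i+1)\bmod m,\,(n-i)\bmod m\}$, which gives $t(i)=\min\{(i+1)\bmod m,\,(n-i)\bmod m\}-1$. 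The value $t(i)=0$ occurs precisely when one representative equals $m$, i.e.\ when an eigenvalue of $\sigma^p$ at $P$ is $1$ and $P$ lies on a curve fixed by $\sigma^p$.

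Next I would prove the inequality and the congruence together. Fix $1\le j\le(m-1)/2$ and let $F_j$ be the set of isolated fixed points of $\sigma^p$ of type $j$, a finite set with $|F_j|=a_j(\sigma^p)$. Since $\sigma$ commutes with $\sigma^p$ it maps $X^{\sigma^p}$ to itself, and because $d\sigma$ conjugates $d(\sigma^p)$ at a point to $d(\sigma^p)$ at its image it preserves the eigenvalue pair, hence the type; so $\sigma$ restricts to a permutation of $F_j$. As every point of $F_j$ is fixed by $\sigma^p$, this permutation factors through $\langle\sigma\rangle/\langle\sigma^p\rangle\cong\mu_p$ and has order dividing the prime $p$, so its orbits have size $1$ or $p$ and $a_j(\sigma^p)=|F_j|\equiv|F_j^{\sigma}|\pmod p$. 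It remains to identify $F_j^\sigma$: a point fixed by $\sigma$ and isolated of type $j\ge 1$ for $\sigma^p$ cannot have $\sigma$-eigenvalue $1$, since the eigenvalues of $\sigma^p$ are the $p$-th powers of those of $\sigma$ and $j\ge1$ forbids the eigenvalue $1$ for $\sigma^p$; hence it is an isolated fixed point of $\sigma$ of some type $i$ with $t(i)=j$, and conversely every type-$i$ point of $\sigma$ with $t(i)=j$ lies in $F_j^\sigma$ by the first paragraph. Therefore $F_j^\sigma=\bigsqcup_{\{i\,:\,t(i)=j\}}\{\text{type-}i\text{ points of }\sigma\}$, so $|F_j^\sigma|=\sum_{\{i\,:\,t(i)=j\}}a_i(\sigma)$, which is the congruence. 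The inequality $a_i(\sigma)\le a_{t(i)}(\sigma^p)$ follows at once, as the type-$i$ points of $\sigma$ form a subset of $F_{t(i)}$ whenever $t(i)\ge 1$.

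The two local computations are routine; the only genuine subtlety, and the step I would be most careful about, is the bookkeeping of the type formula in its degenerate range. One must fix the residue convention (representatives in $\{1,\dots,m\}$, so that the two exponents sum to $m+1$ and $\min-1$ lands in $\{0,\dots,\lfloor(m-1)/2\rfloor\}$) and keep track of the case $t(i)=0$, where $P$ leaves the isolated locus of $\sigma^p$ and moves onto a fixed curve. This case is correctly excluded by restricting the congruence to $1\le j\le(m-1)/2$, but it is exactly where a naive reading of the formula (using representatives in $\{0,\dots,m-1\}$) would produce the wrong value.
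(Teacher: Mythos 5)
Your proof is correct and follows the same route as the paper's: linearize $\sigma$ at $P$ to read off the type of $\sigma^p$, then observe that $\sigma$ permutes the set of type-$j$ isolated fixed points of $\sigma^p$ with orbits of size $1$ or $p$, so that counting the $\sigma$-fixed points in this set gives both the congruence and the inequality. The paper's proof is a three-sentence compression of exactly this argument; your added care about residue representatives in $\{1,\dots,m\}$ and the degenerate case $t(i)=0$ merely supplies details the paper leaves implicit.
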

\begin{proof}
  In local coordinates $\sigma^p(x,y) = (\zeta_m^{i+1},\zeta_m^{-i})$. Note that $\sigma$ acts on the set of fixed points of type $j$ of $\sigma^p$. Hence the number of fixed points $ \sum_{\{i|t(i)=j\}}a_i(\sigma)$ of $\sigma$ is congruent to the order $a_j(\sigma^p)$ of this set modulo $p$.
\end{proof}

In particular, from the invariants of $\sigma$, we can infer how many isolated fixed points of $\sigma$ lie on a fixed curve of $X^{\sigma^p}$. More precisely, by \Cref{fixed points on curves} $\sum_{\{i|t(i)=0\}}a_i(\sigma)$ is the number of isolated fixed points of $\sigma$ which lie on a fixed curve of $\sigma^p$.
The number of such points is bounded above and below by the following lemma.
\begin{lemma}
 Let $p$ be a prime number, $C$ a smooth curve of genus $g$ and $\sigma \in \Aut(C)$ an automorphism of order $p$ with $C/\sigma$ of genus $g'$. Then $\sigma$ fixes
 \[r = \frac{2g -2 - p(2g'-2)}{p-1}\]
 points. In particular, given $p$ and $g$ there is a finite number of possibilities for $r$.
 Note that for $g = 0$ we have $r = 2$.
\end{lemma}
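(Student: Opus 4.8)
The plan is to apply the Riemann--Hurwitz formula to the quotient map. Since $\sigma$ generates a finite cyclic group $\langle \sigma \rangle$ of order $p$ acting on the smooth projective curve $C$ over $\CC$, the quotient $C' = C/\langle \sigma \rangle$ is again a smooth projective curve and the projection $\pi \colon C \to C'$ is a finite morphism of degree $p$. As we work in characteristic $0$, all ramification is tame, so Riemann--Hurwitz reads
\[2g - 2 = p(2g' - 2) + \sum_{P \in C}(e_P - 1),\]
where $e_P$ denotes the ramification index of $\pi$ at $P$ and $g'$ is the genus of $C'$.

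The key step is to identify the ramification divisor, and here the primality of $p$ does all the work. The stabilizer of any point $P \in C$ is a subgroup of $\langle \sigma \rangle$, hence is either trivial or all of $\langle \sigma \rangle$. In the first case $\pi$ is unramified at $P$, so $e_P = 1$; in the second case $P$ is a fixed point of $\sigma$ and $\pi$ is totally ramified there, so $e_P = p$. Thus the ramification points are exactly the $r$ fixed points of $\sigma$, each contributing $e_P - 1 = p-1$. Substituting $\sum_P (e_P - 1) = r(p-1)$ into the displayed formula and solving for $r$ yields
\[r = \frac{2g - 2 - p(2g' - 2)}{p - 1},\]
as claimed. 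There is no genuine obstacle in this argument: the only point requiring care is this identification of the ramification locus with the fixed locus, which is immediate once the primality of $p$ is invoked.

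Finally I would verify the two closing assertions. For $g = 0$ the curve is $C \cong \PP^1$ and any automorphism of finite order is conjugate to $z \mapsto \zeta_p z$, which fixes the two points $0, \infty$ and has quotient $\PP^1$; hence $g' = 0$ and the formula returns $r = 2(p-1)/(p-1) = 2$, consistent with the general statement. For the finiteness claim, note that $r$ is a non-negative integer and that $g' \geq 0$ forces $2g' - 2 \geq -2$, whence
\[0 \leq r \leq \frac{2g - 2 + 2p}{p - 1}.\]
Since this is a bounded interval, only finitely many values of $r$, and equivalently of $g'$, are possible for fixed $p$ and $g$.
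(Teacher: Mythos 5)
Your proof is correct and follows essentially the same route as the paper: the paper's proof likewise applies the Hurwitz formula to $\pi\colon C \to C/\sigma$, noting that $\pi$ is ramified precisely at the fixed points with multiplicity $p$ (which is exactly your stabilizer argument via the primality of $p$), and solves $2g-2 = p(2g'-2) + (p-1)r$ for $r$. Your additional verification of the $g=0$ case and the finiteness bound is correct and simply makes explicit what the paper leaves to the reader.
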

\begin{proof}
 The canonical map $\pi\colon C \to C/\sigma$ is ramified precisely in the fixed points and with multiplicity $p$.
 By the Hurwitz formula $2g -2 = p (2 g' - 2) + (p-1)r$.
\end{proof}

Inductively carrying out this strategy, we obtain a unique possibility in most cases and in the remaining $33$ cases two possibilities. We call
the corresponding automorphisms ambiguous. In what follows we disambiguate by using elliptic fibrations.
We say that a curve $C$ is fixed by $\sigma$ if $\sigma|_C=\id_C$. If merely $\sigma(C)=C$ we say that it is invariant.
\begin{lemma}\label{lemma:sect}
 Let $p$ be a prime divisor of $n$. Suppose that $\sigma^{n/p}$ fixes an elliptic curve $E$. Denote by $\pi\colon X \to \PP^1$ the elliptic fibration induced by the linear system $|E|$.
 Suppose $\sigma$ has no isolated fixed points on $E$ and $\sigma^m$ leaves invariant a section of $\pi$ for some $m \mid n, m \neq n$.
 Then $\sigma$ fixes $E$ if and only if $\pi$ admits a $\sigma$-invariant section.
\end{lemma}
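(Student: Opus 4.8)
The plan is to exploit that $\sigma$ preserves the fibration $\pi$ and to reduce everything to the induced actions on the base $\PP^1$ and on $E$, together with the group law on the fibres. First I would note that $\sigma$ normalizes $\langle\sigma^{n/p}\rangle$, hence permutes the components of $X^{\sigma^{n/p}}$; consequently $\sigma(E)$ is again a smooth genus-one curve in $X^{\sigma^{n/p}}$, so $\sigma(E)\cdot E=0$ and $\sigma(E)^2=0$, which forces $\sigma(E)$ to be a fibre of $\pi$ and $\sigma^*[E]=[E]$. Thus $\sigma$ carries fibres to fibres and sections to sections and induces $\bar\sigma\in\Aut(\PP^1)$. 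Next I would establish $\sigma(E)=E$: otherwise $\sigma$ interchanges $E$ with a second genus-one curve $E'$ of $X^{\sigma^{n/p}}$, which forces $p=2$ and $X^{\sigma^{n/p}}=E\sqcup E'$; restricting the given invariant section to identify $\sigma$ on it with $\bar\sigma$ then makes $\bar\sigma$ an involution with $\sigma^{n/p}$ acting trivially on that section, so $\sigma^{n/p}$ would fix a rational section pointwise, contradicting $X^{\sigma^{n/p}}=E\sqcup E'$.

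Once $\sigma(E)=E$, the hypothesis that $\sigma$ has no isolated fixed points on $E$ yields the dichotomy that $\sigma|_E$ is either $\id_E$ or a fixed-point-free automorphism of the genus-one curve $E$, i.e. a nontrivial translation. Here I use that $X^{\sigma^{n/p}}$ is a disjoint union of smooth curves and points, so no fixed curve of $\sigma$ (which lies in $X^{\sigma^{n/p}}$) can meet $E$; hence any fixed point of a nontrivial $\sigma|_E$ would be an isolated fixed point of $\sigma$ lying on $E$. The backward implication is then immediate: a $\sigma$-invariant section $S$ meets $E$ in the point $S\cap E$, which $\sigma$ fixes, so $\sigma|_E$ has a fixed point and therefore $\sigma|_E=\id_E$, that is, $\sigma$ fixes $E$.

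For the forward implication I would construct an invariant section by averaging the given $\sigma^m$-invariant section $S$ over its $\langle\sigma\rangle$-orbit via the group law of $\pi$: since $\sigma$ fixes $E$ pointwise, all translates $\sigma^i(S)$ meet $E$ in the same point, and the fibrewise sum over the orbit is a section fixed by $\sigma$. The point needing care, which I expect to be the main obstacle, is that this sum is literally $\sigma$-equivariant only after fixing a $\sigma$-invariant zero section; equivalently, one must show that the obstruction class in $\widehat{H}^1(\langle\sigma\rangle,\mathrm{MW}(\pi))$ measuring the failure of the affine $\sigma$-action on the set of sections to admit a fixed point vanishes. Here the remaining hypotheses should enter: $\sigma^m(S)=S$ kills this class after multiplication by $m$, while non-symplecticity forces $\bar\sigma\neq\id$ (a purely fibrewise translation would preserve the holomorphic two-form and hence be symplectic), so that $\sigma$ acts on the base with a second fixed fibre and acts trivially on $E$ through the specialization map $\mathrm{MW}(\pi)\to E$; I would use this to split off the fixed part of $\mathrm{MW}(\pi)$ and force the obstruction to vanish. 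Verifying this vanishing is the heart of the argument, whereas the preservation of the fibration, the translation dichotomy, and the backward direction are short.
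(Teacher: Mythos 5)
Your backward implication is correct and coincides with the paper's: a $\sigma$-invariant section meets $E$ in a $\sigma$-fixed point, and since the components of $X^{\sigma^{n/p}}$ are pairwise disjoint, a non-isolated fixed point of $\sigma$ on $E$ forces $\sigma|_E=\id_E$; your justification of the dichotomy (identity or fixed-point-free on $E$) via disjointness of the fixed-locus components is sound. The preliminary claim $\sigma(E)=E$ is true, but your argument for it is shakier than necessary: if $\sigma$ exchanged $E$ with a second pointwise-fixed elliptic curve $E'$, then $\bar\sigma$ would exchange the base points $[E],[E']$, hence be an involution of $\PP^1$, while $\overline{\sigma^{n/2}}$ is a non-trivial involution fixing $[E]$ (its derivative there is the action $-1$ on the normal bundle of $E$); this is already impossible inside the cyclic group $\langle\bar\sigma\rangle=\{\id,\bar\sigma\}$, with no section needed and no case analysis on parities.

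The genuine gap is the forward implication, and you flag it yourself. Recasting the existence of a $\sigma$-invariant section as the vanishing of a class in $\hat H^1(\langle\sigma\rangle,\mathrm{MW}(\pi))$ is only a restatement of the problem, and the ingredients you cite do not produce the vanishing: the class is automatically $m$-torsion (every $H^1$ of $\ZZ/m\ZZ$ is killed by $m$), so "$\sigma^m(S)=S$ kills the class after multiplication by $m$" yields nothing; and knowing that the cocycle $\sigma(S)\ominus S$ lies in $\ker\bigl(\mathrm{ev}\colon \mathrm{MW}(\pi)\to E\bigr)$ does not place it in $\mathrm{im}(1-\sigma_*)$ --- for an arbitrary finite-order automorphism of a finitely generated abelian group one has $\ker(\mathrm{Norm})\cap\ker(\mathrm{ev})\not\subseteq\mathrm{im}(1-\sigma_*)$, and proving this inclusion in the present geometric situation is exactly as hard as the lemma itself. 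The paper avoids Mordell--Weil considerations entirely by a local argument at the single point $P=E\cap S$: arguing the contrapositive, if $\sigma$ fixed $E$ pointwise while $\sigma(S)\neq S$, then $P$ is a $\sigma$-fixed point through which the three distinct $\sigma^m$-invariant curves $E$, $S$ and $\sigma(S)$ all pass, and after linearizing $\sigma$ at $P$ (it acts as $(x,y)\mapsto(\zeta_n x,y)$ with $E=\{x=0\}$) this configuration contradicts the local description of the action; hence $\sigma$ acts on $E$ as a translation. This local analysis at $P$, which your write-up never considers, is the one idea that makes the hard direction short; without it, or without an actual proof of your cohomological vanishing, the proposal does not prove the lemma.
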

\begin{proof}
 Since we assume that $\sigma$ has no isolated fixed points on $E$,
 either $\sigma$ fixes $E$ entirely or no point on $E$ at all.

 Suppose that $S$ is a $\sigma$-invariant section. Then $E \cap S$ is a fixed point. Therefore $\sigma$ fixes $E$.

Conversely, suppose that there is no $\sigma$-invariant section. By assumption we find a $\sigma^m$-invariant section $S$ which must satisfy $\sigma(S) \neq S$.
If $\sigma$ acts trivially on $E$, then $\{P\}=E \cap S \cap \sigma(S)$ is a fixed point. The three $\sigma^m$-invariant curves $E$, $S$ and $\sigma(S)$ pass through $P$. This contradicts the local description of the action around $P$.  Therefore $\sigma$ must act as a translation on $E$.
\end{proof}

\begin{lemma}\label{identifyE}
 Let $\tau$ be an automorphism of prime order $p$ of a K3 surface $X$ acting trivially on $\NS(X)$ and fixing an elliptic curve $E$. If $f \in \NS(X)$ is isotropic, primitive and nef such that $f^\perp /\ZZ f$ is not an overlattice of a root lattice, then $f = [E]$.
\end{lemma}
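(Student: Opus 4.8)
The plan is to show that $[E]$ itself satisfies all the hypotheses imposed on $f$, and that it is the \emph{only} primitive isotropic nef class for which $f^\perp/\ZZ f$ fails to be an overlattice of a root lattice. First I would record that $[E]$ is a legitimate candidate. By adjunction on the K3 surface $X$ one has $[E]^2 = 2g(E) - 2 = 0$, so $[E]$ is isotropic; being an irreducible curve of non-negative self-intersection, $[E]$ is nef; and since the genus-$1$ fibration induced by $|[E]|$ has $E$ as a fibre and K3 surfaces admit no multiple fibres (by the canonical bundle formula), $[E]$ is primitive. Thus $[E]$ is isotropic, primitive and nef.

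Next I reduce to a comparison in the hyperbolic lattice $\NS(X)$ of signature $(1,\rho-1)$. If $f\cdot [E]=0$, then $f$ and $[E]$ are two isotropic classes lying in the closure of the same (nef) component of the positive cone with vanishing mutual product; by the light-cone structure they are proportional, hence equal, both being primitive. So it remains to rule out $f\cdot[E]=d>0$ under the assumption $f\neq[E]$, by showing that then $f^\perp/\ZZ f$ \emph{is} an overlattice of a root lattice, contradicting the hypothesis.

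So suppose $d=f\cdot[E]>0$. Since $\tau$ acts trivially on $\NS(X)$ we have $\tau^* f = f$, so $\tau$ preserves the elliptic fibration $\pi_f\colon X\to\PP^1$ induced by $|f|$ and acts on the base. As $d>0$ the fixed curve $E$ is a multisection, so $\pi_f(E)=\PP^1$; because $\tau$ fixes $E$ pointwise it must act trivially on $\pi_f(E)=\PP^1$, hence $\tau$ preserves every fibre of $\pi_f$. Since $\tau$ is non-symplectic (it fixes the curve $E$) it acts on $H^0(X,\Omega_X^2)$ by a primitive $p$-th root of unity, and, the base action being trivial, it acts by this same root of unity on the relative holomorphic $1$-form; therefore on a general fibre $\tau$ restricts to an order-$p$ automorphism acting nontrivially on $H^1$, with no eigenvalue $1$.

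Finally I would translate this into the root-lattice statement. All $(-2)$-classes in $f^\perp$ are represented by vertical $(-2)$-curves (an irreducible curve meeting the nef class $f$ in zero is contained in a fibre), so the root sublattice $R\subseteq f^\perp/\ZZ f$ is exactly the lattice spanned by the components of reducible fibres, and $f^\perp/\ZZ f$ is a root overlattice precisely when the Mordell--Weil rank of the Jacobian of $\pi_f$ vanishes (Shioda--Tate). The complementary (transverse, or Mordell--Weil) part of $f^\perp/\ZZ f$ arises from the variation of the fibre cohomology $R^1\pi_{f*}\QQ$, on which the fibrewise order-$p$ action of $\tau$ has no invariants by the eigenvalue computation above; hence $\tau$ acts without nonzero fixed vectors on this part. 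But $\tau$ acts trivially on all of $\NS(X)$, so it acts trivially on this subquotient. The two facts force the transverse part to vanish, i.e.\ the Mordell--Weil rank is $0$ and $f^\perp/\ZZ f$ is an overlattice of a root lattice, the desired contradiction. The main obstacle is precisely this last step: making rigorous that the non-root part of $f^\perp/\ZZ f$ carries the fibrewise $\tau$-action (so that triviality on $\NS(X)$ clashes with fixed-point-freeness), uniformly in the Jacobian and non-Jacobian cases; the earlier steps are standard facts about K3 surfaces and their lattices.
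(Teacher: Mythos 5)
Your proposal takes a genuinely different route from the paper's. The paper never splits into cases on $f\cdot[E]$: it observes that $\tau$, acting trivially on $\NS(X)$, is central in $\Aut(X)$, so $E$ and hence the fibration $\pi$ defined by $|E|$ are preserved by the whole automorphism group; by Shioda--Tate, $\Aut(X)$ is therefore virtually abelian of rank equal to the Mordell--Weil rank of (the Jacobian of) $\pi$, and a virtually abelian $\Aut(X)$ admits at most one elliptic fibration of positive Mordell--Weil rank. Since the hypothesis on $f^\perp/\ZZ f$ forces the fibration defined by $f$ to have positive Mordell--Weil rank (again Shioda--Tate), that fibration must coincide with $\pi$, so $f=[E]$. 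You instead never touch $\Aut(X)$: the case $f\cdot[E]=0$ is handled correctly by the hyperbolic-lattice argument (two isotropic classes with zero product are proportional, and primitivity plus nefness gives equality), and in the case $f\cdot[E]>0$ you show $\tau$ acts trivially on the base of $\pi_f$ and on each smooth fibre with eigenvalues $\zeta_p^{\pm 1}\neq 1$ on $H^1$, aiming to force Mordell--Weil rank zero and a contradiction. Both routes rest on Shioda--Tate; the paper's buys brevity from the uniqueness of the positive-rank fibration, yours stays entirely with the single automorphism $\tau$.

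The step you flag as the main obstacle is the only thing missing, and it can be closed cleanly without ever invoking the Jacobian, so the Jacobian/non-Jacobian uniformity you worry about is a non-issue. Use the Leray spectral sequence of $\pi_f\colon X\to \PP^1$ with $\QQ$-coefficients. Since the base is a curve, $E_2^{1,1}=H^1(\PP^1,R^1\pi_{f*}\QQ)$ supports no nonzero differentials, so the Leray filtration on $H^2(X,\QQ)$ has $L^2=\QQ f$ and $L^1/L^2\cong H^1(\PP^1,R^1\pi_{f*}\QQ)$; moreover a divisor class lies in $L^1$ exactly when it is orthogonal to $f$ and to every component of every fibre, because its image in each stalk of $R^2\pi_{f*}\QQ$ is the vector of intersection numbers with the components of that fibre. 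Hence the transverse part, namely the classes orthogonal to $f$ and to all vertical components taken modulo $\QQ f$, embeds $\tau$-equivariantly into $H^1(\PP^1,R^1\pi_{f*}\QQ)$. As $\tau$ acts over the identity of the base, it acts on the sheaf $R^1\pi_{f*}\QQ$; semisimplicity of $\QQ[\ZZ/p\ZZ]$ makes taking invariants exact, so $H^1(\PP^1,R^1\pi_{f*}\QQ)^{\tau}=H^1\bigl(\PP^1,(R^1\pi_{f*}\QQ)^{\tau}\bigr)$, and your eigenvalue computation shows the invariant subsheaf has vanishing stalks over all non-critical values, hence is a skyscraper and has no $H^1$. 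Since $\tau$ is trivial on $\NS(X)$, the transverse part lies in these vanishing invariants and is therefore zero; by Shioda--Tate, $f^\perp/\ZZ f$ is then an overlattice of the root lattice spanned by the vertical components, contradicting the hypothesis. With this argument inserted, your proof is complete and correct.
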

\begin{proof}
 Since $\tau$ acts trivially on $\NS(X)$, it lies in the center of the automorphism group $\Aut(X)$ and fixes $E$. Hence every automorphism leaves $E$ invariant, i.e., $E$ is a curve canonically defined on $X$.
 Let $\pi\colon X \to \PP^1$ be the genus one fibration defined by $|E|$.
The fibration $\pi$ is canonically defined, therefore $\Aut(X)$ is virtually abelian of rank $t$ given by the rank of the Mordell--Weil group of (the Jacobian of) $\pi$.
Let $R$ be the root sublattice of $[E]^\perp/\ZZ[E]$. Then by the Shioda--Tate formula \cite[5.2]{shioda1990} $t=\rk \NS(X)-\rk R -2$. Since $\Aut(X)$ is virtually abelian, there is at most one elliptic fibration of positive rank. By \cite[\S 3]{shafarevich1972}, $f$ is the class of a fiber of an elliptic fibration. Since $f^\perp/\ZZ f$ is not an overlattice of a root lattice, the Shioda--Tate formula implies that it has positive Mordell--Weil rank. Hence it must coincide with $\pi$ and so $[E] = f$.
\end{proof}

\begin{lemma}\label{hassection}
 Let $p$ be a prime divisor of $n$. Suppose that $\sigma^p$ fixes an elliptic curve $E$ inducing the elliptic fibration $\pi$. Set $\tau=\sigma^{n/p}$ and $N= \NS(X)^{\tau}$.
 Let $f\in \NS(X)^\sigma$ be isotropic and primitive such that $f^{\perp N}/\ZZ f$ is not an overlattice of a root lattice.
 Then $\pi$ has a $\tau$-invariant section if and only if $\langle f, N \rangle = \ZZ$.
 Similarly, $\pi$ has a $\sigma$-invariant section if and only if $\langle f ,\NS(X)^\sigma \rangle=\ZZ$.
\end{lemma}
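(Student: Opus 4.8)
The plan is first to identify $f$ with the fibre class $[E]$ of $\pi$ and then to convert ``$\pi$ has an invariant section'' into ``there is an invariant class meeting a fibre once''. I would argue the identification exactly as in \Cref{identifyE}: after replacing $f$ by $-f$ if necessary, $f$ is isotropic, primitive and nef, hence the fibre class of some genus one fibration; the hypothesis that $f^{\perp N}/\ZZ f$ is not an overlattice of a root lattice forces this fibration to have positive Mordell--Weil rank, and the resulting uniqueness of a positive-rank fibration identifies it with $\pi$, so that $f=[E]$ and $x.f$ is the intersection number of $x$ with a fibre. Since $\NS(X)^\sigma\subseteq N$ and the two equivalences are formally identical, I would prove the statement for $\tau$ and obtain the one for $\sigma$ by replacing $\langle\tau\rangle$ with $\langle\sigma\rangle$ throughout.

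One implication is immediate: a $\tau$-invariant section $S$ has $[S]\in N$ and meets every fibre once, so $[S].f=1$ and hence $\langle f,N\rangle=\ZZ$. The content is the converse.

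For the converse I would start from $D\in N$ with $D.f=1$ and, after adding a suitable multiple of $f\in N$, arrange that $D^2=-2$; since $D.f=1>0$ and $f$ is nef, Riemann--Roch on the K3 surface forces $D$ itself (rather than $-D$) to be effective. As $[D]$ is $\tau$-invariant, $\tau$ preserves the complete linear system $|D|$ and therefore its divisorial fixed part. In the favourable case, where the moving part of $|D|$ is vertical (a multiple of $f$), the fixed part carries a single horizontal component, a section $S$ with $S.f=1$; the uniqueness of this horizontal component inside a $\tau$-invariant divisor makes $S$ itself $\tau$-invariant. Intrinsically this is the assertion that the $\tau$-invariant degree one class $D$ restricts on the generic fibre $X_\eta$, a smooth genus one curve over $K=\CC(\PP^1)$, to an invariant class of degree one, whose unique effective representative is by Riemann--Roch a single rational point, fixed by $\tau$ and spreading out to the desired section.

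The hard part is that $\tau$ (and, in the second statement, $\sigma$) need not act trivially on the base $\PP^1$: the induced automorphism may rotate about the point carrying $E$ or, when $p=2$, interchange two fixed fibres, so that $\tau$ acts only semilinearly on $X_\eta$ over $K$ and the canonical-point argument requires descent. This is the same phenomenon as the alternative in which the invariant $1$-section is realised not by a genuine section but by an auxiliary $\tau$-invariant elliptic pencil $M$ with $M.f=1$. I expect to dispose of both by replacing such an $M$ with the class $M-f\in N$, which has square $-2$ and still meets a fibre once, and by invoking---exactly as in \Cref{identifyE}---that the root-lattice hypothesis yields positive Mordell--Weil rank and hence a canonically determined negative section; the finiteness and uniqueness of the relevant $(-2)$-curves then force the $\tau$-invariant effective divisor produced above to contain an honest $\tau$-invariant section as its horizontal part. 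The $\sigma$-statement follows by the identical argument applied to the full cyclic group $\langle\sigma\rangle$, using $\NS(X)^\sigma\subseteq N$.
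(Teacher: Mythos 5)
Your proposal has the same two-step skeleton as the paper's proof (identify $f=[E]$, then convert an invariant class of fibre-degree one into an invariant section), but both steps contain genuine gaps. The main one is the identification $f=[E]$. You apply \Cref{identifyE} directly on $X$, but its hypotheses fail there: it requires an automorphism acting trivially on \emph{all} of $\NS(X)$, and its Shioda--Tate argument concerns the root sublattice of $f^{\perp \NS(X)}/\ZZ f$, whereas in \Cref{hassection} the automorphism $\tau$ acts trivially only on $N=\NS(X)^{\tau}$ and the hypothesis concerns $f^{\perp N}/\ZZ f$. These conditions are not interchangeable: $f^{\perp N}/\ZZ f$ can fail to be an overlattice of a root lattice while $f^{\perp \NS(X)}/\ZZ f$ is one (a sublattice of a root lattice may contain no roots at all, e.g.\ $\ZZ(e_1+e_2)$ inside $A_1\perp A_1$), so positive Mordell--Weil rank of the $f$-fibration \emph{on $X$} does not follow from the hypothesis; moreover, without a canonically defined fibration, $\Aut(X)$ need not be virtually abelian, so you also lose the uniqueness of positive-rank fibrations that \Cref{identifyE} relies on. The paper closes exactly this gap with a deformation argument: mark $(X,\langle\tau\rangle)$, deform to $(X',G',\eta')$ with $\NS(X')^{G'}=\NS(X')$, transport $E$ and $f$ along the family, apply \Cref{identifyE} on $X'$ (where the invariant lattice \emph{is} the N\'eron--Severi lattice, so the hypothesis is about the right lattice), and pull $f'=[E']$ back to $f=[E]$. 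This idea is absent from your proposal. A smaller but also genuine gap is your opening reduction: replacing $f$ by $-f$ does not make an isotropic primitive class nef; one needs an element of the Weyl group $W(\NS(X))$ moving $f$ into the nef cone that moreover \emph{commutes with} $\sigma$, which is the Oguiso--Sakurai input \cite{oguiso-sakurai} the paper quotes, and which is what keeps $N$, $\NS(X)^\sigma$ and all hypotheses unchanged.

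In the converse direction, your ``favourable case'' is essentially the paper's argument: the fixed part of $|D|$ is canonically attached to the invariant class, hence is an invariant divisor, and its unique horizontal multiplicity-one component is a section (any irreducible curve $C$ with $C\cdot f=1$ maps finitely and birationally, hence isomorphically, onto $\PP^1$), which is then invariant. But your treatment of the remaining case is not a proof: replacing an invariant pencil $M$ with $M-f$ merely produces another effective invariant class of square $-2$ and fibre-degree one, i.e.\ returns you to the starting point, and ``the root-lattice hypothesis yields \dots a canonically determined negative section'' is neither defined nor justified. In fact the case you are worried about is vacuous: every irreducible curve meeting a fibre once is a rigid section with square $-2$; if the moving part $M$ had $M\cdot f=1$, each member of $|M|$ would contain one of the countably many sections as a component, none of which is a fixed component of $|M|$, so $|M|$ would be a countable union of proper linear subsystems, which is impossible over $\CC$. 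Hence the section always lies in the fixed part, and no descent along the action on the base (your last worry) is needed. The paper's own formulation---write the invariant effective class $s$ with $s^2=-2$, $f\cdot s=1$ as a sum of irreducible curves and take the unique component meeting $f$ once---packages this same argument without the linear-system dichotomy.
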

\begin{proof}
By \cite[Lemma 1.7 and Theorem 1.8]{oguiso-sakurai} there exists an element $\delta$ of the  Weyl group $W(\NS(X))$ commuting with $\sigma$ such that $\delta(f)$ is nef. Hence we may assume  that $f$ is nef.
In order to apply \Cref{identifyE} set $\tau = \sigma^{n/p}$ and consider $G=\langle \tau \rangle$. Choose a marking $\eta\colon H^2(X,\ZZ)\to L$ and set $H= \eta\rho_X(G) \eta^{-1}$. Then we can deform the $H$-marked K3 surface $(X,G,\eta)$ to $(X',G',\eta')$ such that
$\NS(X')^{G'} = \NS(X')$. Let $E'$ be the elliptic curve fixed by $G'$. It satisfies $\eta'([E']) = \eta([E])$. Note that $f' = \eta'^{-1} \circ \eta(f)$ is still nef since $\eta'(\NS(X'))\subseteq \eta(\NS(X))$.
Hence by \Cref{identifyE} we get $f'= [E']$ which gives $f = [E]$.

Finally, if $\langle [E], \NS(X)\rangle = \ZZ$, we can find $s \in \NS(X)$ with $\langle [E],s\rangle=1$ and $s^2=-2$. After possibly replacing $s$ by $-s$ we may assume that $s$ is effective.
We can write $s = \delta_1 + \dots + \delta_n$ for $(-2)$-curves $\delta_i \in \NS(X)$. Now, $1=\langle f , s \rangle$ and $\langle f, \delta_i \rangle \geq 0$ ($f$ is nef) imply that $\langle f , \delta_i \rangle =1$ for a single $1\leq i \leq n $; $\delta_i$ is the desired section. Note that $\tau$ (respectively $\sigma$) preserves $\delta_i$ if and only if it preserves $s$.
\end{proof}

Among the $22$ ambiguous automorphisms $\sigma$ of order $n=4$, there are $16$ cases where $\sigma^2$ fixes a single elliptic curve,
$4$ cases where $\sigma^2$ fixes a curve of genus $2$ and $3$, $5$, $7$ or $9$ rational curves, and in the remaining $2$ cases $\sigma^2$ fixes two curves of genus one.
The ambiguity is whether $\sigma$ fixes some curve or not.

First let $\sigma^2$ fix a unique elliptic curve $E$ of genus $1$. This means that $\sigma$ is compatible with an elliptic fibration $\pi\colon X \to \PP^1$.
Moreover, $\sigma^2$ leaves invariant a section $S$ of $\pi$ because $L^{\sigma^2}$ contains a copy of the hyperbolic plane $U$. Note that $\sigma^2$ must act non-trivially on the base $\PP^1$ of the fibration, as otherwise its action at the tangent space to the point in $E\cap S$ would be trivial.
Hence $\sigma^2$ has exactly two fixed points in $\PP^1$ giving two invariant fibers; one is $E$ and the other one we call $C$.
The rational curves fixed by $\sigma^2$ must be components of the fiber $C$, because all fixed points lie in $E \cup C$.
We fix the fiber type of $C$ and consider each fiber type separately.
We know that $C$ is a singular fiber of Kodaira type $I_{4m}$, $m=1,2,3,4$ or $IV^*$. They correspond to $3,3,3,4$ and $3$ ambiguous cases.\\[4pt]
The following figure shows the dual graph of the irreducible components of $C$. Each node corresponds to a smooth rational curve and two nodes are joined by an edge if and only if the corresponding curves intersect. The square nodes are curves fixed pointwise by $\sigma^2$ and the round nodes are curves which are invariant but not fixed by $\sigma^2$.
The automorphism $\sigma$ acts on the graph with order dividing $2$ and maps squares to squares.

\begin{center}
 \begin{tikzpicture}[scale=0.5]
\node (label) at (1,1) {$IV^*$};
  \begin{scope}[every node/.style={draw, fill=black!10,                      inner sep=1.6pt}]
  \node (2) at (2,0) {};
  \node (4) at (4,0) {};
  \node (4b) at (4,2) {};
  \node (6) at (6,0) {};
 \end{scope}
 \begin{scope}[circle,every node/.style={fill, inner sep=1.5pt}]
  \node (4a) at (4,1) {};
  \node (3) at (3,0) {};
  \node (5) at (5,0) {};
 \end{scope}
  \path [-] (4) edge node {} (4a);
  \path [-] (4a) edge node {} (4b);
  \path [-] (2) edge node {} (3);
  \path [-] (3) edge node {} (4);
  \path [-] (4) edge node {} (5);
  \path [-] (5) edge node {} (6);
 \end{tikzpicture}\quad
 \begin{tikzpicture}[scale=0.5]
\node (label) at (-1,1) {$I_{4}$};
  \begin{scope}[every node/.style={draw, fill=black!10,                      inner sep=1.6pt}]
  \node (01) at (0,1) {};
  \node (10) at (1,0) {};
 \end{scope}
 \begin{scope}[circle,every node/.style={fill, inner sep=1.5pt}]
  \node (00) at (0,0) {};
  \node (11) at (1,1) {};
 \end{scope}
  \path [-] (00) edge node {} (01);
  \path [-] (01) edge node {} (11);
  \path [-] (10) edge node {} (11);
  \path [-] (00) edge node {} (10);
 \end{tikzpicture}\quad
 \begin{tikzpicture}[scale=0.5]
\node (label) at (-1,1) {$I_{8}$};
  \begin{scope}[every node/.style={draw, fill=black!10,                      inner sep=1.6pt}]
  \node (01) at (0,1) {};
  \node (10) at (1,0) {};
  \node (21) at (2,1) {};
  \node (30) at (3,0) {};
 \end{scope}
 \begin{scope}[circle,every node/.style={fill, inner sep=1.5pt}]
  \node (00) at (0,0) {};
  \node (11) at (1,1) {};
  \node (20) at (2,0) {};
  \node (31) at (3,1) {};
 \end{scope}
  \path [-] (00) edge node {} (01);
  \path [-] (00) edge node {} (10);
  \path [-] (01) edge node {} (11);
  \path [-] (11) edge node {} (21);
  \path [-] (21) edge node {} (31);
  \path [-] (10) edge node {} (20);
  \path [-] (20) edge node {} (30);
  \path [-] (30) edge node {} (31);
 \end{tikzpicture}\quad
  \begin{tikzpicture}[scale=0.5]
\node (label) at (-1,1) {$I_{4m}$};
  \begin{scope}[every node/.style={draw, fill=black!10,                      inner sep=1.6pt}]
  \node (01) at (0,1) {};
  \node (10) at (1,0) {};
  \node (41) at (4,1) {};
  \node (50) at (5,0) {};
 \end{scope}
 \begin{scope}[circle,every node/.style={fill, inner sep=1.5pt}]
  \node (00) at (0,0) {};
  \node (11) at (1,1) {};
  \node (40) at (4,0) {};
  \node (51) at (5,1) {};
 \end{scope}
  \path [-] (00) edge node {} (01);
  \path [-] (00) edge node {} (10);
  \path [-] (01) edge node {} (11);
  \path [dotted] (11) edge node {} (41);
  \path [-] (41) edge node {} (51);
  \path [dotted] (10) edge node {} (40);
  \path [-] (40) edge node {} (50);
  \path [-] (50) edge node {} (51);
 \end{tikzpicture}\\[4pt]
\end{center}

We start by determining the fixed locus in the case that $C$ is of type $I_{4m}$.
Lefschetz calculations show that for the given $\sigma^2$ we have the following possibilities for the fixed locus $X^\sigma$ of $\sigma$:
four isolated points $((4),0,0,1)$ or four isolated points and a curve of genus one $((4),0,1,1)$. %

The automorphism $\sigma^2$ fixes every second node of the circular $I_{4m}$ configuration and the zero section intersects a $\sigma^2$-fixed curve in $I_{4m}$.

The action of $\sigma$ on the intersection graph is visible on the lattice side, since we can identify the class $F$ of $E$ by \Cref{identifyE} and choose simple roots in $F^\perp$ giving its components.
Carrying out this computation gives the following $3$ cases.
\begin{enumerate}
\item The curves in $I_{4m}$ are rotated by $\sigma$. Then $C$ does not have any $\sigma$ fixed points and $E$ contains $4$ isolated fixed points for 0.4.4.9, 0.4.3.6, 0.4.2.5, 0.4.1.3.

\item The automorphism $\sigma$ acts as a reflection on the graph $I_{4m}$ leaving invariant two of the $\sigma^2$-fixed curves and the section $S$ passes through one of them. Then $I_{4m}$ contains $4$ isolated fixed points and $E$ does not contain any isolated fixed points. However the intersection $S \cap E$ is fixed but cannot be isolated. Hence $\sigma$ fixes $E$ for 0.4.4.7, 0.4.3.8, 0.4.2.3, 0.4.1.6.

\item The automorphism $\sigma$ acts as a reflection leaving invariant two of the $\sigma^2$-fixed curves and the section $S$ does not pass through them. Then $\sigma$ fixes 4 isolated points on $I_{4m}$ and no isolated points on $E$. By \Cref{lemma:sect} $\sigma$ does not fix $E$ for 0.4.4.8, 0.4.3.7, 0.4.2.4, 0.4.1.4, 0.4.1.5.
\end{enumerate}

Let $C = IV^*$ and $\sigma$ ambiguous. We know that $\sigma$ fixes $6$ isolated points, one rational curve and possibly $E$ of genus $1$.
The central curve as well as the three leaves must be fixed by $\sigma^2$. There are $3$ possible actions.
It can leave invariant each component of the $IV^*$ fiber. Then the central component is fixed and the leaves carry two fixed points each. Hence the action on $E$ does not have an isolated fixed point. Therefore $\sigma$ fixes $E$ if and only if some section is preserved. This is the case for 0.4.3.11 but not for 0.4.3.10.
In the third case $\sigma$ swaps two of the branches. Therefore the central component cannot be fixed by $\sigma$, so it contains $2$ isolated fixed points.
The invariant leaf must be the fixed rational curve. Then there are $4$ fixed points left, they must lie on $E$ giving 0.4.3.9.
This settles the first $16$ ambiguous cases.

In the next $4$ ambiguous cases $\sigma^2$ fixes a curve of genus $2$ and $3$, $5$, $7$ or $9$ rational curves.
In each case we know that $\sigma$ fixes exactly $4$ isolated fixed points. The ambiguity is whether $\sigma$ fixes $1$ rational curve and the genus $2$ curve, or no curve at all.
For each case we exhibit a $\sigma$-invariant hyperbolic plane $U$. Since $\sigma$ fixes a curve of genus $2$, $\Aut(X)$ is finite, and so $K = U^\perp$ is a root lattice. Then $\NS(X)=U\perp K$ and $K$ determines the ADE-types of the singular fibers of the $\sigma$-equivariant fibration induced by $U$. The square nodes are fixed by $\sigma^2$ while the round nodes are not.
\begin{center}
\begin{minipage}{10cm}
 \begin{tikzpicture}[scale=0.5]
  \node (label) at (-3,0) {$U \perp 2E_8$};
 \begin{scope}[every node/.style={draw, fill=black!10,                      inner sep=1.6pt}]
  \node (0) at (0,0) {};
  \node (2) at (2,0) {};
  \node (4) at (4,0) {};
  \node (6) at (6,0) {};
  \node (8) at (8,0) {};
  \node (10) at (10,0) {};
  \node (12) at (12,0) {};
  \node (14) at (14,0) {};
  \node (16) at (16,0) {};
 \end{scope}
 \begin{scope}[circle,every node/.style={fill, inner sep=1.5pt}]
  \node (1) at (1,0) {};
  \node (3) at (3,0) {};
  \node (2a) at (2,1) {};
  \node (5) at (5,0) {};
  \node (7) at (7,0) {};
  \node (9) at (9,0) {};
  \node (11) at (11,0) {};
  \node (13) at (13,0) {};
  \node (15) at (15,0) {};
  \node (14a) at (14,1) {};
 \end{scope}
  \path [-] (14) edge node {} (14a);
  \path [-] (2) edge node {} (2a);
  \path [-] (0) edge node {} (1);
  \path [-] (1) edge node {} (2);
  \path [-] (2) edge node {} (3);
  \path [-] (3) edge node {} (4);
  \path [-] (4) edge node {} (5);
  \path [-] (5) edge node {} (6);
  \path [-] (6) edge node {} (7);
  \path [-] (7) edge node {} (8);
  \path [-] (8) edge node {} (9);
  \path [-] (9) edge node {} (10);
  \path [-] (10) edge node {} (11);
  \path [-] (11) edge node {} (12);
  \path [-] (12) edge node {} (13);
  \path [-] (13) edge node {} (14);
  \path [-] (14) edge node {} (15);
  \path [-] (15) edge node {} (16);
 \end{tikzpicture}

 \begin{tikzpicture}[scale=0.5]
 \node (label) at (-3,0) {$U \perp 2E_7$};
 \begin{scope}[every node/.style={draw, fill=black!10,                      inner sep=1.6pt}]
  \node (2) at (2,0) {};
  \node (4) at (4,0) {};
  \node (6) at (6,0) {};
  \node (8) at (8,0) {};
  \node (10) at (10,0) {};
  \node (12) at (12,0) {};
  \node (14) at (14,0) {};
 \end{scope}
 \begin{scope}[circle,every node/.style={fill, inner sep=1.5pt}]
  \node (1) at (1,0) {};
  \node (3) at (3,0) {};
  \node (4a) at (4,1) {};
  \node (5) at (5,0) {};
  \node (7) at (7,0) {};
  \node (9) at (9,0) {};
  \node (11) at (11,0) {};
  \node (13) at (13,0) {};
  \node (15) at (15,0) {};
  \node (12a) at (12,1) {};
 \end{scope}
  \path [-] (12) edge node {} (12a);
  \path [-] (4) edge node {} (4a);
  \path [-] (1) edge node {} (2);
  \path [-] (2) edge node {} (3);
  \path [-] (3) edge node {} (4);
  \path [-] (4) edge node {} (5);
  \path [-] (5) edge node {} (6);
  \path [-] (6) edge node {} (7);
  \path [-] (7) edge node {} (8);
  \path [-] (8) edge node {} (9);
  \path [-] (9) edge node {} (10);
  \path [-] (10) edge node {} (11);
  \path [-] (11) edge node {} (12);
  \path [-] (12) edge node {} (13);
  \path [-] (13) edge node {} (14);
  \path [-] (14) edge node {} (15);
 \end{tikzpicture}

 \begin{tikzpicture}[scale=0.5]
\node (label) at (-5,0) {$U \perp 2D_6$};
  \begin{scope}[every node/.style={draw, fill=black!10,                      inner sep=1.6pt}]
  \node (2) at (2,0) {};
  \node (4) at (4,0) {};
  \node (6) at (6,0) {};
  \node (8) at (8,0) {};
  \node (10) at (10,0) {};
 \end{scope}
 \begin{scope}[circle,every node/.style={fill, inner sep=1.5pt}]
  \node (2a) at (2,1) {};
  \node (10a) at (10,1) {};
  \node (4a) at (4,1) {};
  \node (8a) at (8,1) {};
  \node (1) at (1,0) {};
  \node (3) at (3,0) {};
  \node (5) at (5,0) {};
  \node (7) at (7,0) {};
  \node (9) at (9,0) {};
  \node (11) at (11,0) {};
 \end{scope}
  \path [-] (2) edge node {} (2a);
  \path [-] (4) edge node {} (4a);
  \path [-] (8) edge node {} (8a);
  \path [-] (10) edge node {} (10a);
  \path [-] (1) edge node {} (2);
  \path [-] (2) edge node {} (3);
  \path [-] (3) edge node {} (4);
  \path [-] (4) edge node {} (5);
  \path [-] (5) edge node {} (6);
  \path [-] (6) edge node {} (7);
  \path [-] (7) edge node {} (8);
  \path [-] (8) edge node {} (9);
  \path [-] (9) edge node {} (10);
  \path [-] (10) edge node {} (11);
 \end{tikzpicture}

 \begin{tikzpicture}[scale=0.5]
\node (label) at (-6,0) {$U \perp 2D_4\perp 2 A_1$};
 \begin{scope}[every node/.style={draw, fill=black!10,                      inner sep=1.6pt}]
  \node (2) at (2,0) {};
  \node (4) at (4,0) {};
  \node (6) at (6,0) {};
 \end{scope}
 \begin{scope}[circle,every node/.style={fill, inner sep=1.5pt}]
  \node (2a) at (2,1) {};
  \node (2b) at (2,-1) {};
  \node (6a) at (6,1) {};
  \node (6b) at (6,-1) {};
  \node (1) at (1,0) {};
  \node (3) at (3,0) {};
  \node (5) at (5,0) {};
  \node (7) at (7,0) {};
  \node (4a) at (4,1) {};
  \node (4b) at (4,2) {};
  \node (4c) at (4,-1) {};
  \node (4d) at (4,-2) {};
 \end{scope}
  \path [-] (2) edge node {} (2a);
  \path [-] (2) edge node {} (2b);
  \path [-] (6) edge node {} (6a);
  \path [-] (6) edge node {} (6b);
  \path [-] (4) edge node {} (4a);
  \path [-] (4) edge node {} (4c);
  \path [-] (4a) edge [bend left=20] node{} (4b);
  \path [-] (4a) edge [bend right=20] node{} (4b);
  \path [-] (4c) edge [bend left=20] node{} (4d);
  \path [-] (4c) edge [bend right=20] node{} (4d);
  \path [-] (1) edge node {} (2);
  \path [-] (2) edge node {} (3);
  \path [-] (3) edge node {} (4);
  \path [-] (4) edge node {} (5);
  \path [-] (5) edge node {} (6);
  \path [-] (6) edge node {} (7);
 \end{tikzpicture}
\end{minipage}
 \end{center}
 Note that $\sigma$ must act non-trivially on the graph, because otherwise it has too many fixed points or fixed curves.
 Since $\sigma$ maps squares to squares, we see that $\sigma$ must act as a reflection preserving the central square.
 However, the corresponding curve cannot be fixed, because the two adjacent ones and hence the corresponding intersection points with the central node are swapped.
 Thus $\sigma$ cannot fix a curve and the four ambiguous cases are settled.

Consider the ambiguous cases
0.4.5.12,  0.4.5.14 0.6.2.29, 0.6.3.36, 0.8.1.7, 0.8.1.8, 0.8.2.8, 0.8.2.10, 0.9.1.3, 0.9.1.4 and 0.10.1.11.
The question is whether or not $\sigma$ fixes an elliptic curve.
In each case Lefschetz calculations show that there are no isolated fixed points on the elliptic curve in question. Hence in view of
\Cref{lemma:sect}
this can be decided by whether or not there exists a section of the corresponding fibration. This is settled by \Cref{hassection}.
Indeed, we can randomly search until we find $f \in L^\sigma$ corresponding to $[E]$.

For  0.10.2.1 we have the two possibilities $((0, 0, 1, 6), 0, 0, 1)$ and  $((5, 0, 0, 0), 1, 0, 1)$ for the fixed locus of $\sigma$. We know that $\sigma^2$ fixes an elliptic curve $E$ and one rational curve. It is the central component of the invariant fiber $C$ of type $\tilde{E_7}$. Since $\NS(X)^\sigma$ has rank $6$, $\sigma$ must act non-trivially on $\tilde{E_7} \subseteq \NS(X)$.
This means that it swaps two of the three arms of the configuration. Hence it cannot act trivially on the central component and so $\sigma$ cannot fix a rational curve.
The fixed locus is $((0, 0, 1, 6), 0, 0, 1)$.

For the last ambiguous case 0.12.1.12 the automorphism $\sigma$ has a unique fixed point and the ambiguity is whether or not it fixes an elliptic curve. We know that $\sigma^i$ for $i=2,3,4,6$ fixes a unique elliptic curve $E_i$.
Since $E_2\subseteq E_4,E_6$, we have $E_2=E_4=E_6$ and similarly $E_6 \subseteq E_3$ implies $E_6=E_3$. Therefore $E=E_i$ is independent of $i$.
Now $\sigma^3$ and $\sigma^4$ fix $E$ hence their product $\sigma^7$ fixes $E$ as well. But $\sigma \in \langle \sigma^7\rangle$ and so $\sigma$ fixes the elliptic curve $E$. The fixed locus of 0.12.1.12 is therefore $((1, 0, 0, 0, 0), 0, 1, 1)$.
 
\bibliographystyle{amsplain}
\bibliography{../unitary}
\newpage
\appendix

\section{Finite groups with mixed action on a K3 surface}
\label{appendixA}
The following table lists all finite groups $G$ admitting a faithful, saturated, mixed action on some K3 surface, their symplectic subgroups $1\neq G_s < G$ as well as the number $k(G)$ of deformation types.
Note that in 3 cases an entry appears twice because
the normal subgroup $G_s < G$ does not lie in the same $\Aut(G)$-orbit.
The notation $G = G_s.\mu_n$ means that $G$ is an extension of $G_s$ by $\mu_n$. It may or may not split. Our notation for the groups $G_s$ follows Hashimoto \cite{Hashimoto2012}. Isomorphism classes of groups will be referred to
either using standard notation for classical families or
using the id as provided by the library of small groups \cite{Besche2002}.

\renewcommand{\arraystretch}{1.2}
{\small
\begin{longtable}{ccc|ccc|ccc}
  \caption{Finite groups with faithful, saturated non-symplectic action on some K3 surface}\label{table:action}\\
  $G$&$\mathrm{id} $& $k(G)$ & $G$&$\mathrm{id} $& $k(G)$ & $G$ & id & $k(G)$ \\
  \hline
  \endhead
\rowcolor{lightgray}
$C_2.\mu_{2}$&(4, 1)&5 & $C_3.\mu_{15}$&(45, 2)&1   &  $C_4.\mu_{12}$&(48, 22)&1\\
$C_2.\mu_{2}$&(4, 2)&354 & $C_3.\mu_{18}$&(54, 4)&1   &  $C_4.\mu_{12}$&(48, 23)&1\\
\rowcolor{lightgray}
$C_2.\mu_{3}$&(6, 2)&26 & $C_3.\mu_{18}$&(54, 9)&1   &  $C_4.\mu_{12}$&(48, 24)&1\\
$C_2.\mu_{4}$&(8, 1)&3 & $C_2^2.\mu_{2}$&(8, 2)&4   &  $D_6.\mu_{2}$&(12, 4)&140\\
\rowcolor{lightgray}
$C_2.\mu_{4}$&(8, 2)&200 & $C_2^2.\mu_{2}$&(8, 3)&40   &  $D_6.\mu_{3}$&(18, 3)&21\\
$C_2.\mu_{5}$&(10, 2)&6 & $C_2^2.\mu_{2}$&(8, 5)&330  &  $D_6.\mu_{4}$&(24, 5)&18\\
\rowcolor{lightgray}
$C_2.\mu_{6}$&(12, 2)&11 & $C_2^2.\mu_{3}$&(12, 3)&4  &  $D_6.\mu_{5}$&(30, 1)&1\\
$C_2.\mu_{6}$&(12, 5)&99 & $C_2^2.\mu_{3}$&(12, 5)&11  &  $D_6.\mu_{6}$&(36, 12)&33\\
\rowcolor{lightgray}
$C_2.\mu_{7}$&(14, 2)&4 & $C_2^2.\mu_{4}$&(16, 3)&73 & $D_6.\mu_{8}$&(48, 4)&4\\
$C_2.\mu_{8}$&(16, 5)&50 & $C_2^2.\mu_{4}$&(16, 5)&1 & $D_6.\mu_{10}$&(60, 11)&1\\
\rowcolor{lightgray}
$C_2.\mu_{9}$&(18, 2)&2 & $C_2^2.\mu_{4}$&(16, 6)&2 & $D_6.\mu_{12}$&(72, 27)&2\\
$C_2.\mu_{10}$&(20, 2)&1 & $C_2^2.\mu_{4}$&(16, 10)&77 & $C_2^3.\mu_{2}$&(16, 3)&3\\
\rowcolor{lightgray}
$C_2.\mu_{10}$&(20, 5)&12 & $C_2^2.\mu_{6}$&(24, 10)&13 & $C_2^3.\mu_{2}$&(16, 10)&2\\
$C_2.\mu_{12}$&(24, 2)&2 & $C_2^2.\mu_{6}$&(24, 13)&19 & $C_2^3.\mu_{2}$&(16, 11)&34\\
\rowcolor{lightgray}
$C_2.\mu_{12}$&(24, 9)&24 & $C_2^2.\mu_{6}$&(24, 15)&14 & $C_2^3.\mu_{2}$&(16, 14)&72\\
$C_2.\mu_{14}$&(28, 4)&4 & $C_2^2.\mu_{8}$&(32, 5)&14 & $C_2^3.\mu_{3}$&(24, 13)&2\\
\rowcolor{lightgray}
$C_2.\mu_{15}$&(30, 4)&3 & $C_2^2.\mu_{8}$&(32, 36)&6 & $C_2^3.\mu_{4}$&(32, 6)&6\\
$C_2.\mu_{16}$&(32, 16)&3 & $C_2^2.\mu_{9}$&(36, 3)&2 & $C_2^3.\mu_{4}$&(32, 7)&3\\
\rowcolor{lightgray}
$C_2.\mu_{18}$&(36, 5)&3   &  $C_2^2.\mu_{12}$&(48, 21)&4 & $C_2^3.\mu_{4}$&(32, 22)&31\\
$C_2.\mu_{20}$&(40, 9)&3   &  $C_2^2.\mu_{12}$&(48, 31)&3 & $C_2^3.\mu_{4}$&(32, 45)&7\\
\rowcolor{lightgray}
$C_2.\mu_{24}$&(48, 23)&3  &  $C_2^2.\mu_{12}$&(48, 44)&1 & $C_2^3.\mu_{6}$&(48, 31)&1\\
$C_2.\mu_{28}$&(56, 8)&1   &  $C_2^2.\mu_{18}$&(72, 16)&2 & $C_2^3.\mu_{6}$&(48, 49)&5\\
\rowcolor{lightgray}
$C_2.\mu_{30}$&(60, 13)&2  &  $C_4.\mu_{2}$&(8, 1)&3 & $C_2^3.\mu_{7}$&(56, 11)&1\\
$C_3.\mu_{2}$&(6, 1)&46    &  $C_4.\mu_{2}$&(8, 2)&98 & $C_2^3.\mu_{7}$&(56, 11)&1\\
\rowcolor{lightgray}
$C_3.\mu_{2}$&(6, 2)&51    &  $C_4.\mu_{2}$&(8, 3)&102 & $C_2^3.\mu_{8}$&(64, 4)&3\\
$C_3.\mu_{3}$&(9, 2)&26    &  $C_4.\mu_{2}$&(8, 4)&3 & $C_2^3.\mu_{8}$&(64, 87)&1\\
\rowcolor{lightgray}
$C_3.\mu_{4}$&(12, 1)&15   &  $C_4.\mu_{3}$&(12, 2)&7 & $C_2^3.\mu_{12}$&(96, 196)&2\\
$C_3.\mu_{4}$&(12, 2)&15   &  $C_4.\mu_{4}$&(16, 2)&21 & $D_8.\mu_{2}$&(16, 7)&6\\
\rowcolor{lightgray}
$C_3.\mu_{5}$&(15, 1)&4    &  $C_4.\mu_{4}$&(16, 4)&23 & $D_8.\mu_{2}$&(16, 8)&2\\
$C_3.\mu_{6}$&(18, 3)&38   &  $C_4.\mu_{4}$&(16, 5)&10 & $D_8.\mu_{2}$&(16, 11)&202\\
\rowcolor{lightgray}
$C_3.\mu_{6}$&(18, 5)&36   &  $C_4.\mu_{4}$&(16, 6)&9 & $D_8.\mu_{2}$&(16, 13)&11\\
$C_3.\mu_{7}$&(21, 2)&1    &  $C_4.\mu_{5}$&(20, 2)&1 & $D_8.\mu_{3}$&(24, 10)&3\\
\rowcolor{lightgray}
$C_3.\mu_{8}$&(24, 1)&3    &  $C_4.\mu_{6}$&(24, 2)&1 & $D_8.\mu_{4}$&(32, 9)&10\\
$C_3.\mu_{8}$&(24, 2)&3    &  $C_4.\mu_{6}$&(24, 9)&5 & $D_8.\mu_{4}$&(32, 11)&4\\
\rowcolor{lightgray}
$C_3.\mu_{9}$&(27, 2)&3    &  $C_4.\mu_{6}$&(24, 10)&8 & $D_8.\mu_{4}$&(32, 25)&19\\
$C_3.\mu_{10}$&(30, 1)&4   &  $C_4.\mu_{6}$&(24, 11)&1 & $D_8.\mu_{4}$&(32, 38)&1\\
\rowcolor{lightgray}
$C_3.\mu_{10}$&(30, 4)&2   &  $C_4.\mu_{8}$&(32, 3)&4 & $D_8.\mu_{6}$&(48, 26)&1\\
$C_3.\mu_{12}$&(36, 6)&5   &  $C_4.\mu_{8}$&(32, 12)&4 & $D_8.\mu_{6}$&(48, 45)&4\\
\rowcolor{lightgray}
$C_3.\mu_{12}$&(36, 8)&4   &  $C_4.\mu_{10}$&(40, 9)&2 & $D_8.\mu_{8}$&(64, 6)&5\\
$C_3.\mu_{14}$&(42, 6)&2   &  $C_4.\mu_{12}$&(48, 20)&1 & $D_8.\mu_{12}$&(96, 52)&1\\
\rowcolor{lightgray}
$Q_8.\mu_{2}$&(16, 8)&7 & $C_2^4.\mu_{4}$&(64, 32)&2 & $A_{3,3}.\mu_{2}$&(36, 10)&11\\
$Q_8.\mu_{2}$&(16, 9)&1 & $C_2^4.\mu_{4}$&(64, 60)&2 & $A_{3,3}.\mu_{2}$&(36, 13)&10\\
\rowcolor{lightgray}
$Q_8.\mu_{2}$&(16, 12)&2 & $C_2^4.\mu_{4}$&(64, 90)&1 & $A_{3,3}.\mu_{3}$&(54, 5)&2\\
$Q_8.\mu_{2}$&(16, 13)&11 & $C_2^4.\mu_{4}$&(64, 193)&1 & $A_{3,3}.\mu_{3}$&(54, 13)&3\\
\rowcolor{lightgray}
$Q_8.\mu_{3}$&(24, 3)&2 & $C_2^4.\mu_{5}$&(80, 49)&1 & $A_{3,3}.\mu_{4}$&(72, 21)&1\\
$Q_8.\mu_{3}$&(24, 11)&1 & $C_2^4.\mu_{6}$&(96, 70)&1 & $A_{3,3}.\mu_{4}$&(72, 39)&1\\
\rowcolor{lightgray}
$Q_8.\mu_{4}$&(32, 11)&5 & $C_2^4.\mu_{6}$&(96, 197)&1 & $A_{3,3}.\mu_{4}$&(72, 45)&3\\
$Q_8.\mu_{4}$&(32, 38)&2 & $C_2^4.\mu_{6}$&(96, 228)&1 & $A_{3,3}.\mu_{6}$&(108, 25)&4\\
\rowcolor{lightgray}
$Q_8.\mu_{6}$&(48, 26)&1 & $C_2^4.\mu_{6}$&(96, 229)&1 & $A_{3,3}.\mu_{6}$&(108, 36)&1\\
$Q_8.\mu_{6}$&(48, 32)&2 & $C_2^4.\mu_{7}$&(112, 41)&1 & $A_{3,3}.\mu_{6}$&(108, 38)&4\\
\rowcolor{lightgray}
$Q_8.\mu_{6}$&(48, 33)&1 & $C_2^4.\mu_{7}$&(112, 41)&1 & $A_{3,3}.\mu_{6}$&(108, 43)&1\\
$Q_8.\mu_{6}$&(48, 46)&1 & $C_2^4.\mu_{8}$&(128, 48)&1 & $A_{3,3}.\mu_{8}$&(144, 185)&1\\
\rowcolor{lightgray}
$D_{10}.\mu_{2}$&(20, 3)&3 & $C_2^4.\mu_{10}$&(160, 235)&1 & $Hol(C_5).\mu_{2}$&(40, 12)&9\\
$D_{10}.\mu_{2}$&(20, 4)&19 & $C_2^4.\mu_{12}$&(192, 994)&1 & $Hol(C_5).\mu_{3}$&(60, 6)&1\\
\rowcolor{lightgray}
$D_{10}.\mu_{3}$&(30, 2)&3 & $C_2 \times D_8.\mu_{2}$&(32, 6)&3 & $Hol(C_5).\mu_{4}$&(80, 30)&1\\
$D_{10}.\mu_{4}$&(40, 5)&3 & $C_2 \times D_8.\mu_{2}$&(32, 7)&1 & $C_7:C_3.\mu_{2}$&(42, 1)&4\\
\rowcolor{lightgray}
$D_{10}.\mu_{4}$&(40, 12)&4 & $C_2 \times D_8.\mu_{2}$&(32, 27)&14 & $C_7:C_3.\mu_{2}$&(42, 2)&2\\
$D_{10}.\mu_{5}$&(50, 3)&1 & $C_2 \times D_8.\mu_{2}$&(32, 28)&2 & $C_7:C_3.\mu_{3}$&(63, 3)&1\\
\rowcolor{lightgray}
$D_{10}.\mu_{6}$&(60, 6)&1 & $C_2 \times D_8.\mu_{2}$&(32, 28)&2 & $C_7:C_3.\mu_{4}$&(84, 2)&1\\
$D_{10}.\mu_{6}$&(60, 10)&2 & $C_2 \times D_8.\mu_{2}$&(32, 30)&1 & $C_7:C_3.\mu_{6}$&(126, 7)&1\\
\rowcolor{lightgray}
$D_{10}.\mu_{8}$&(80, 28)&1 & $C_2 \times D_8.\mu_{2}$&(32, 34)&4 & $C_7:C_3.\mu_{6}$&(126, 10)&1\\
$D_{10}.\mu_{10}$&(100, 14)&1 & $C_2 \times D_8.\mu_{2}$&(32, 39)&2 & $S_4.\mu_{2}$&(48, 48)&74\\
\rowcolor{lightgray}
$D_{10}.\mu_{12}$&(120, 17)&1 & $C_2 \times D_8.\mu_{2}$&(32, 43)&2 & $S_4.\mu_{3}$&(72, 42)&2\\
$A_4.\mu_{2}$&(24, 12)&40 & $C_2 \times D_8.\mu_{2}$&(32, 46)&46 & $S_4.\mu_{4}$&(96, 186)&8\\
\rowcolor{lightgray}
$A_4.\mu_{2}$&(24, 13)&47 & $C_2 \times D_8.\mu_{2}$&(32, 48)&1 & $S_4.\mu_{6}$&(144, 188)&2\\
$A_4.\mu_{3}$&(36, 11)&7 & $C_2 \times D_8.\mu_{2}$&(32, 49)&13 & $2^4C_2.\mu_{2}$&(64, 32)&2\\
\rowcolor{lightgray}
$A_4.\mu_{4}$&(48, 30)&10 & $C_2 \times D_8.\mu_{4}$&(64, 12)&1 & $2^4C_2.\mu_{2}$&(64, 138)&8\\
$A_4.\mu_{4}$&(48, 31)&7 & $C_2 \times D_8.\mu_{4}$&(64, 34)&2 & $2^4C_2.\mu_{2}$&(64, 202)&12\\
\rowcolor{lightgray}
$A_4.\mu_{6}$&(72, 42)&5 & $C_2 \times D_8.\mu_{4}$&(64, 67)&5 & $2^4C_2.\mu_{2}$&(64, 215)&3\\
$A_4.\mu_{6}$&(72, 47)&4 & $C_2 \times D_8.\mu_{4}$&(64, 71)&2 & $2^4C_2.\mu_{2}$&(64, 216)&1\\
\rowcolor{lightgray}
$A_4.\mu_{12}$&(144, 123)&1 & $C_2 \times D_8.\mu_{4}$&(64, 90)&7 & $2^4C_2.\mu_{2}$&(64, 226)&4\\
$D_{12}.\mu_{2}$&(24, 6)&4 & $C_2 \times D_8.\mu_{4}$&(64, 92)&4 & $2^4C_2.\mu_{2}$&(64, 241)&1\\
\rowcolor{lightgray}
$D_{12}.\mu_{2}$&(24, 8)&2 & $C_2 \times D_8.\mu_{4}$&(64, 99)&2 & $2^4C_2.\mu_{3}$&(96, 70)&1\\
$D_{12}.\mu_{2}$&(24, 14)&54 & $C_2 \times D_8.\mu_{4}$&(64, 101)&1 & $2^4C_2.\mu_{4}$&(128, 621)&1\\
\rowcolor{lightgray}
$D_{12}.\mu_{3}$&(36, 12)&7 & $C_2 \times D_8.\mu_{4}$&(64, 102)&1 & $2^4C_2.\mu_{4}$&(128, 645)&1\\
$D_{12}.\mu_{4}$&(48, 14)&3 & $C_2 \times D_8.\mu_{4}$&(64, 196)&2 & $2^4C_2.\mu_{4}$&(128, 850)&3\\
\rowcolor{lightgray}
 $D_{12}.\mu_{4}$&(48, 35)&3 & $C_2 \times D_8.\mu_{4}$&(64, 199)&1 & $2^4C_2.\mu_{4}$&(128, 853)&1\\
$D_{12}.\mu_{6}$&(72, 28)&2 & $C_2 \times D_8.\mu_{8}$&(128, 2)&1 & $2^4C_2.\mu_{4}$&(128, 1090)&1\\
\rowcolor{lightgray}
$D_{12}.\mu_{6}$&(72, 30)&1 & $C_2 \times D_8.\mu_{8}$&(128, 50)&1 & $2^4C_2.\mu_{6}$&(192, 1000)&1\\
$D_{12}.\mu_{6}$&(72, 48)&5 & $C_2 \times D_8.\mu_{8}$&(128, 206)&1 & $Q_8 * Q_8.\mu_{2}$&(64, 134)&2\\
\rowcolor{lightgray}
$D_{12}.\mu_{8}$&(96, 27)&1 & $SD_{16}.\mu_{2}$&(32, 40)&2 & $Q_8 * Q_8.\mu_{2}$&(64, 138)&4\\
$D_{12}.\mu_{8}$&(96, 106)&1 & $SD_{16}.\mu_{2}$&(32, 42)&2 & $Q_8 * Q_8.\mu_{2}$&(64, 139)&1\\
\rowcolor{lightgray}
$D_{12}.\mu_{12}$&(144, 79)&1 & $SD_{16}.\mu_{2}$&(32, 43)&6 & $Q_8 * Q_8.\mu_{2}$&(64, 257)&1\\
$C_2^4.\mu_{2}$&(32, 27)&6 & $SD_{16}.\mu_{3}$&(48, 26)&1 & $Q_8 * Q_8.\mu_{2}$&(64, 264)&4\\
\rowcolor{lightgray}
$C_2^4.\mu_{2}$&(32, 46)&5 & $SD_{16}.\mu_{4}$&(64, 124)&1 & $Q_8 * Q_8.\mu_{2}$&(64, 266)&1\\
$C_2^4.\mu_{2}$&(32, 51)&7 & $SD_{16}.\mu_{4}$&(64, 125)&1 & $Q_8 * Q_8.\mu_{3}$&(96, 201)&1\\
\rowcolor{lightgray}
$C_2^4.\mu_{3}$&(48, 49)&2 & $SD_{16}.\mu_{6}$&(96, 180)&1 & $Q_8 * Q_8.\mu_{3}$&(96, 204)&1\\
$C_2^4.\mu_{3}$&(48, 50)&1 & $A_{3,3}.\mu_{2}$&(36, 9)&2 & $Q_8 * Q_8.\mu_{4}$&(128, 134)&1\\
\rowcolor{lightgray}
$Q_8 * Q_8.\mu_{4}$&(128, 522)&1 & $C_2 \times S_4.\mu_{4}$&(192, 1469)&1 & $2^4D_6.\mu_{6}$&(576, 8656)&1\\
$Q_8 * Q_8.\mu_{4}$&(128, 524)&2 & $T_{48}.\mu_{2}$&(96, 189)&1 & $S_5.\mu_{2}$&(240, 189)&12\\
\rowcolor{lightgray}
$Q_8 * Q_8.\mu_{4}$&(128, 1633)&1 & $T_{48}.\mu_{2}$&(96, 193)&2 & $L_2(7).\mu_{2}$&(336, 208)&8\\
$Q_8 * Q_8.\mu_{6}$&(192, 201)&1 & $T_{48}.\mu_{3}$&(144, 122)&1 & $L_2(7).\mu_{2}$&(336, 209)&4\\
\rowcolor{lightgray}
$Q_8 * Q_8.\mu_{6}$&(192, 1504)&1 & $T_{48}.\mu_{6}$&(288, 900)&1 & $L_2(7).\mu_{4}$&(672, 1046)&1\\
$Q_8 * Q_8.\mu_{6}$&(192, 1509)&1 & $A_5.\mu_{2}$&(120, 34)&7 & $4^2A_4.\mu_{2}$&(384, 591)&1\\
\rowcolor{lightgray}
$Q_8 * Q_8.\mu_{8}$&(256, 332)&1 & $A_5.\mu_{2}$&(120, 35)&12 & $4^2A_4.\mu_{2}$&(384, 18134)&1\\
$Q_8 * Q_8.\mu_{12}$&(384, 5816)&1 & $A_5.\mu_{3}$&(180, 19)&2 & $4^2A_4.\mu_{2}$&(384, 18135)&2\\
\rowcolor{lightgray}
$3^2C_4.\mu_{2}$&(72, 39)&2 & $A_5.\mu_{4}$&(240, 91)&1 & $4^2A_4.\mu_{2}$&(384, 18235)&2\\
$3^2C_4.\mu_{2}$&(72, 40)&6 & $A_5.\mu_{6}$&(360, 119)&1 & $4^2A_4.\mu_{2}$&(384, 18236)&1\\
\rowcolor{lightgray}
$3^2C_4.\mu_{2}$&(72, 41)&1 & $A_5.\mu_{6}$&(360, 122)&1 & $4^2A_4.\mu_{3}$&(576, 5129)&1\\
$3^2C_4.\mu_{2}$&(72, 45)&5 & $\Gamma_{25}a_1.\mu_{2}$&(128, 928)&3 & $4^2A_4.\mu_{4}$&(768, 1083510)&1\\
\rowcolor{lightgray}
$3^2C_4.\mu_{3}$&(108, 36)&1 & $\Gamma_{25}a_1.\mu_{2}$&(128, 932)&1 & $4^2A_4.\mu_{4}$&(768, 1088651)&1\\
$3^2C_4.\mu_{4}$&(144, 120)&2 & $\Gamma_{25}a_1.\mu_{2}$&(128, 1755)&4 & $4^2A_4.\mu_{4}$&(768, 1088659)&1\\
\rowcolor{lightgray}
$3^2C_4.\mu_{4}$&(144, 185)&1 & $\Gamma_{25}a_1.\mu_{2}$&(128, 1758)&1 & $4^2A_4.\mu_{6}$&(1152, 155469)&1\\
$3^2C_4.\mu_{6}$&(216, 157)&1 & $\Gamma_{25}a_1.\mu_{2}$&(128, 1759)&1 & $H_{195}.\mu_{2}$&(384, 17948)&3\\
\rowcolor{lightgray}
$S_{3,3}.\mu_{2}$&(72, 40)&3 & $\Gamma_{25}a_1.\mu_{3}$&(192, 201)&1 & $T_{192}.\mu_{2}$&(384, 5602)&1\\
$S_{3,3}.\mu_{2}$&(72, 46)&8 & $\Gamma_{25}a_1.\mu_{4}$&(256, 6029)&1 & $T_{192}.\mu_{2}$&(384, 5608)&1\\
\rowcolor{lightgray}
$S_{3,3}.\mu_{3}$&(108, 38)&2 & $\Gamma_{25}a_1.\mu_{6}$&(384, 5837)&1 & $T_{192}.\mu_{2}$&(384, 20097)&1\\
$S_{3,3}.\mu_{4}$&(144, 115)&1 & $A_{4,3}.\mu_{2}$&(144, 183)&7 & $T_{192}.\mu_{3}$&(576, 8277)&1\\
\rowcolor{lightgray}
$S_{3,3}.\mu_{6}$&(216, 157)&1 & $A_{4,3}.\mu_{2}$&(144, 189)&3 & $T_{192}.\mu_{6}$&(1152, 157515)&1\\
$S_{3,3}.\mu_{6}$&(216, 170)&1 & $A_{4,3}.\mu_{3}$&(216, 92)&1 & $A_{4,4}.\mu_{2}$&(576, 8653)&1\\
\rowcolor{lightgray}
$2^4C_3.\mu_{2}$&(96, 70)&4 & $A_{4,3}.\mu_{3}$&(216, 164)&1 & $A_{4,4}.\mu_{2}$&(576, 8654)&1\\
$2^4C_3.\mu_{2}$&(96, 227)&9 & $A_{4,3}.\mu_{6}$&(432, 535)&1 & $A_{4,4}.\mu_{2}$&(576, 8657)&1\\
\rowcolor{lightgray}
$2^4C_3.\mu_{2}$&(96, 229)&6 & $A_{4,3}.\mu_{6}$&(432, 745)&1 & $A_{4,4}.\mu_{4}$&(1152, 157850)&1\\
$2^4C_3.\mu_{3}$&(144, 184)&2 & $N_{72}.\mu_{2}$&(144, 182)&1 & $A_6.\mu_{2}$&(720, 763)&2\\
\rowcolor{lightgray}
$2^4C_3.\mu_{4}$&(192, 184)&1 & $N_{72}.\mu_{2}$&(144, 186)&2 & $A_6.\mu_{2}$&(720, 764)&6\\
$2^4C_3.\mu_{4}$&(192, 191)&1 & $N_{72}.\mu_{4}$&(288, 841)&1 & $A_6.\mu_{2}$&(720, 766)&4\\
\rowcolor{lightgray}
$2^4C_3.\mu_{4}$&(192, 1495)&1 & $M_9.\mu_{2}$&(144, 182)&2 & $A_6.\mu_{4}$&(1440, 4595)&1\\
$2^4C_3.\mu_{5}$&(240, 191)&1 & $M_9.\mu_{2}$&(144, 187)&1 & $F_{384}.\mu_{2}$&(768, 1086051)&1\\
\rowcolor{lightgray}
$2^4C_3.\mu_{6}$&(288, 1025)&2 & $M_9.\mu_{3}$&(216, 153)&1 & $F_{384}.\mu_{2}$&(768, 1090134)&1\\
$2^4C_3.\mu_{6}$&(288, 1029)&1 & $M_9.\mu_{6}$&(432, 735)&1 & $F_{384}.\mu_{2}$&(768, 1090135)&1\\
\rowcolor{lightgray}
$C_2 \times S_4.\mu_{2}$&(96, 187)&2 & $2^4D_6.\mu_{2}$&(192, 955)&8 & $F_{384}.\mu_{4}$&(1536, 'no id')&1\\
$C_2 \times S_4.\mu_{2}$&(96, 195)&1 & $2^4D_6.\mu_{2}$&(192, 1538)&9 & $M_{20}.\mu_{2}$&(1920, 240993)&1\\
\rowcolor{lightgray}
$C_2 \times S_4.\mu_{2}$&(96, 226)&15 & $2^4D_6.\mu_{3}$&(288, 1025)&1 & $M_{20}.\mu_{2}$&(1920, 240995)&2\\
$C_2 \times S_4.\mu_{4}$&(192, 972)&2 & $2^4D_6.\mu_{4}$&(384, 5566)&1 & $M_{20}.\mu_{4}$&(3840, 'no id')&1\\
\bottomrule
\end{longtable}
}

\section{Fixed loci of purely non-symplectic automorphisms}
\label{appendixB}
 Let $X$ be a K3 surface and $\sigma \in \Aut(X)$ a purely non-symplectic automorphism of order $n$ acting by $\zeta_n$ on $\HH^0(X,\Omega_X^2)$.
 Recall that the fixed locus $X^\sigma$ is the disjoint union of $N = \sum_{i=1}^s a_i$ isolated fixed points, $k$ smooth rational curves and either a curve of genus $>1$
or $0, 1, 2$ curves of genus 1.
Denote by $l\geq 0$ the number of genus $g \geq 1$ curves fixed by $\sigma$. If no such curve is fixed, set $g=1$. Thus
\[X^\sigma = \{p_1, \dots, p_N\} \sqcup R_1 , \dots \sqcup R_k \sqcup C_1 \dots \sqcup C_l\]
where the $R_i$'s are smooth rational curves and the $C_j$'s smooth curves of genus $g$.
Let $P \in X^\sigma$ be an isolated fixed point. Recall that there are local coordinates $(x,y)$ in a small neighborhood centered at $P$ such that
\[\sigma(x,y) = (\zeta_n^{i+1}x,\zeta_n^{-i} y)\quad \mbox{  with } \quad 1 \leq i \leq s = \left\lfloor\frac{n-1}{2}\right\rfloor.\]
We call $P$ a fixed point of type $i$ and denote the number of fixed points of type $i$ by $a_i$.

In the following we list for each deformation class of $(X, \sigma)$ the invariants $((a_1, \dots, a_s), k, l, g)$ of the fixed locus of $\sigma$ and its powers.
The column labeled `K3 id' contains the label of the K3 surface in the database \cite{K3Groups}.
The following columns contain the invariants of the fixed locus of $\sigma^{n/j}$ where $n$ is the order of $\sigma$ and $j$ the label of the column.

\ExpandableInput{table_2.tex}

\ExpandableInput{table_3.tex}

\ExpandableInput{table_4.tex}

\ExpandableInput{table_5.tex}

\ExpandableInput{table_6.tex}

\ExpandableInput{table_7.tex}

\ExpandableInput{table_8.tex}

\ExpandableInput{table_9.tex}

\ExpandableInput{table_10.tex}

\ExpandableInput{table_11.tex}

\ExpandableInput{table_12.tex}

\ExpandableInput{table_14.tex}

\ExpandableInput{table_15.tex}

\ExpandableInput{table_16.tex}

\ExpandableInput{table_18.tex}

\ExpandableInput{table_20.tex}

\ExpandableInput{table_21.tex}

\ExpandableInput{table_22.tex}

\ExpandableInput{table_24_1.tex}

\ExpandableInput{table_24_2.tex}

\ExpandableInput{table_26.tex}

\ExpandableInput{table_27.tex}

\ExpandableInput{table_28.tex}

\ExpandableInput{table_30_1.tex}
\vspace{-1em}
\ExpandableInput{table_30_2.tex}

\ExpandableInput{table_32_1.tex}

\ExpandableInput{table_32_2.tex}

\ExpandableInput{table_34.tex}

\ExpandableInput{table_36_1.tex}

\ExpandableInput{table_36_2.tex}

\ExpandableInput{table_38.tex}

\ExpandableInput{table_42_1.tex}

\ExpandableInput{table_42_2.tex}

\ExpandableInput{table_singles.tex}

\end{document}